\let\old@tocline\@tocline
\let\section@tocline\@tocline
\newcommand{\subsection@dotsep}{4.5}
\newcommand{\subsubsection@dotsep}{4.5}
     \leaders\hbox{$\m@th
        \mkern \subsection@dotsep mu\hbox{.}\mkern \subsection@dotsep mu$}\hfill
\let\subsection@tocline\@tocline
\let\@tocline\old@tocline
     \leaders\hbox{$\m@th
        \mkern \subsubsection@dotsep mu\hbox{.}\mkern \subsubsection@dotsep mu$}\hfill
\let\subsubsection@tocline\@tocline
\let\@tocline\old@tocline
\let\old@l@subsection\l@subsection
\let\old@l@subsubsection\l@subsubsection
\def\@tocwriteb#1#2#3{%
  \begingroup
    \@xp\def\csname #2@tocline\endcsname##1##2##3##4##5##6{%
      \ifnum##1>\c@tocdepth
      \else \sbox\z@{##5\let\indentlabel\@tochangmeasure##6}\fi}%
    \csname l@#2\endcsname{#1{\csname#2name\endcsname}{\@secnumber}{}}%
  \endgroup
  \addcontentsline{toc}{#2}%
    {\protect#1{\csname#2name\endcsname}{\@secnumber}{#3}}}%
\newlength{\@tocsectionindent}
\newlength{\@tocsubsectionindent}
\newlength{\@tocsubsubsectionindent}
\newlength{\@tocsectionnumwidth}
\newlength{\@tocsubsectionnumwidth}
\newlength{\@tocsubsubsectionnumwidth}
\newcommand{\settocsectionnumwidth}[1]{\setlength{\@tocsectionnumwidth}{#1}}
\newcommand{\settocsubsectionnumwidth}[1]{\setlength{\@tocsubsectionnumwidth}{#1}}
\newcommand{\settocsubsubsectionnumwidth}[1]{\setlength{\@tocsubsubsectionnumwidth}{#1}}
\newcommand{\settocsectionindent}[1]{\setlength{\@tocsectionindent}{#1}}
\newcommand{\settocsubsectionindent}[1]{\setlength{\@tocsubsectionindent}{#1}}
\newcommand{\settocsubsubsectionindent}[1]{\setlength{\@tocsubsubsectionindent}{#1}}
\renewcommand{\l@section}{\section@tocline{1}{\@tocsectionvskip}{\@tocsectionindent}{}{\@tocsectionformat}}%
\renewcommand{\l@subsection}{\subsection@tocline{2}{\@tocsubsectionvskip}{\@tocsubsectionindent}{}{\@tocsubsectionformat}}%
\renewcommand{\l@subsubsection}{\subsubsection@tocline{3}{\@tocsubsubsectionvskip}{\@tocsubsubsectionindent}{}{\@tocsubsubsectionformat}}%
\newcommand{\@tocsectionformat}{}
\newcommand{\@tocsubsectionformat}{}
\newcommand{\@tocsubsubsectionformat}{}
\def\csname toc@1format\endcsname{\@tocsectionformat}
\def\csname toc@2format\endcsname{\@tocsubsectionformat}
\def\csname toc@3format\endcsname{\@tocsubsubsectionformat}
\newcommand{\settocsectionformat}[1]{\renewcommand{\@tocsectionformat}{#1}}
\newcommand{\settocsubsectionformat}[1]{\renewcommand{\@tocsubsectionformat}{#1}}
\newcommand{\settocsubsubsectionformat}[1]{\renewcommand{\@tocsubsubsectionformat}{#1}}
\newlength{\@tocsectionvskip}
\newcommand{\settocsectionvskip}[1]{\setlength{\@tocsectionvskip}{#1}}
\newlength{\@tocsubsectionvskip}
\newcommand{\settocsubsectionvskip}[1]{\setlength{\@tocsubsectionvskip}{#1}}
\newlength{\@tocsubsubsectionvskip}
\newcommand{\settocsubsubsectionvskip}[1]{\setlength{\@tocsubsubsectionvskip}{#1}}
\patchcmd{\tocsection}{\indentlabel}{\makebox[\@tocsectionnumwidth][l]}{}{}
\patchcmd{\tocsubsection}{\indentlabel}{\makebox[\@tocsubsectionnumwidth][l]}{}{}
\patchcmd{\tocsubsubsection}{\indentlabel}{\makebox[\@tocsubsubsectionnumwidth][l]}{}{}
\newcommand{\@sectypepnumformat}{}
\renewcommand{\contentsline}[1]{%
  \expandafter\let\expandafter\@sectypepnumformat\csname @toc#1pnumformat\endcsname%
  \csname l@#1\endcsname}
\newcommand{\@tocsectionpnumformat}{}
\newcommand{\@tocsubsectionpnumformat}{}
\newcommand{\@tocsubsubsectionpnumformat}{}
\newcommand{\setsectionpnumformat}[1]{\renewcommand{\@tocsectionpnumformat}{#1}}
\newcommand{\setsubsectionpnumformat}[1]{\renewcommand{\@tocsubsectionpnumformat}{#1}}
\newcommand{\setsubsubsectionpnumformat}[1]{\renewcommand{\@tocsubsubsectionpnumformat}{#1}}
\renewcommand{\@tocpagenum}[1]{%
  \hfill {\mdseries\@sectypepnumformat #1}}
\let\oldappendix\appendix
\renewcommand{\appendix}{%
  \leavevmode\oldappendix%
  \addtocontents{toc}{%
    \protect\settowidth{\protect\@tocsectionnumwidth}{\protect\@tocsectionformat\sectionname\space}%
    \protect\addtolength{\protect\@tocsectionnumwidth}{2em}}%
}
\let\oldtableofcontents\tableofcontents
\renewcommand{\tableofcontents}{%
  \vspace*{-\linespacing}% Default gap to top of CONTENTS is \linespacing.
  \oldtableofcontents}
\theoremstyle{plain}
\newtheorem{theorem}{Theorem}[section]
\newtheorem{lemma}[theorem]{Lemma}
\newtheorem{prop}[theorem]{Proposition}
\newtheorem{proposition}[theorem]{Proposition}
\newtheorem{cor}[theorem]{Corollary}
\newtheorem{corollary}[theorem]{Corollary}
\theoremstyle{definition}
\newtheorem{definition}[theorem]{Definition}
\newtheorem{remark}[theorem]{Remark}
\newtheorem{example}[theorem]{Example}
\newtheorem{ex}[theorem]{Example}
\def\CC{\mathbb{C}}
\def\DD{\mathbb{D}}
\def\NN{\mathbb{N}}
\def\RR{\mathbb{R}}
\def\R{\RR}
\def\ZZ{\mathbb{Z}}
\def\BR{\mathbb{R}}
\def\R{\mathbb{R}}
\def\Z{\mathbb{Z}}
\def\calU{\mathcal{U}}
\newcommand\cC{\mathcal{C}}
\newcommand\cE{\mathcal{E}}
\newcommand\cF{\mathcal{F}}
\newcommand\cG{\mathcal{G}}
\newcommand\cH{\mathcal{H}}
\newcommand\cK{\mathcal{K}}
\newcommand\cL{\mathcal{L}}
\newcommand\cN{\mathcal{N}}
\newcommand\cQ{\mathcal{Q}}
\newcommand\cT{\mathcal{T}}
\newcommand\cU{\mathcal{U}}
\newcommand\frF{\mathfrak{F}}
\newcommand\frL{\mathfrak{L}}
\newcommand\frM{\mathfrak{M}}
\newcommand\frc{\mathfrak{c}}
\newcommand\frf{\mathfrak{f}}
\newcommand\frg{\mathfrak{g}}
\newcommand\id{\textup{id}}
\newcommand{\Pic}{\textup{Pic}}
\newcommand{\Vect}{\textup{Vect}}
\newcommand\Hom{\textup{Hom}}
\newcommand{\isom}{\stackrel{\sim}{\to}}
\newcommand{\wt}[1]{\widetilde{#1}}
\newcommand\quash[1]{}
\newcommand\bs{\backslash}
\newcommand{\oo}{\infty}
\newcommand{\ssupp}{\mathit{ss}}
\newcommand\inthom{\mathcal{H}om}
\newcommand\ol{\overline}
\newcommand\on{\operatorname}
\newcommand\U{\mathsf U}
\newcommand\beq{\begin{equation}}
\newcommand\eeq{\end{equation}}
\newcommand\colim{\text{colim}}
\newcommand{\sh}{\mathit{sh}}
\newcommand{\mush}{\mathit{\mu sh}}
\newcommand\muhom{\mathit \mu hom}
\newcommand\Fun{\textup{Fun}}
\newcommand{\lunknot}{\prec\!\succ}
\title{Sheaf quantization in Weinstein symplectic manifolds}
\dedicatory{}
\author{David Nadler}
\thanks{}
\address{Department of Mathematics, UC Berkeley, Evans Hall, Berkeley CA 94720, USA}
\email{nadler@math.berkeley.edu}
\author{Vivek Shende}
\thanks{}
\address{Centre for Quantum Mathematics, SDU, Campusvej 55, 5230 Odense M, Denmark $\qquad \qquad$ \& Department of Mathematics, UC Berkeley, Evans Hall, Berkeley CA 94720, USA}
\email{vivek.vijay.shende@gmail.com}
\date{}
\subjclass[2010]{}
\keywords{}
   \def\MR#1{}
\begin{document}

%%%%%%%%%%%%%%%%%%%%%%%%%%%%%%%%%%%%%%%%%%%%%%%%%%%%

\begin{abstract}
Using the microlocal theory of sheaves, we 
associate a category to a Weinstein 
manifold equipped with appropriate Maslov data.  By constructing a microlocal specialization functor, we show that exact Lagrangians
give objects in the category, and that the category is invariant under Weinstein homotopy. 
\end{abstract}

%%%%%%%%%%%%%%%%%%%%%%%%%%%%%%%%%%%%%%%%%%%%%%%%%%%%

\maketitle
\thispagestyle{empty}

{\vspace{-8mm} \scriptsize
\tableofcontents
}

\newpage

\section{Introduction}

Since the groundbreaking work of Gromov and Floer, holomorphic curve methods have played a central role in 
symplectic topology.  The Fukaya category organizes many of the relevant structures: given a symplectic manifold $X$,
 Lagrangian submanifolds $L \subset X$ provide objects,  transverse intersection points $L \cap L'$ provide morphisms,
and counts of pseudoholomorphic disks with boundaries along the Lagrangian submanifolds provide the structure constants. 
Since these structure constants count solutions of a nonlinear partial differential equation, 
computing in Fukaya categories is highly nontrivial. 

Perhaps the simplest such calculation, made by Floer at the outset of the subject \cite{Floer-Witten},
is of the Floer complex $\Hom(L, L')$
when $L$ bounds no holomorphic disks, and $L'$ is a small displacement of $L$ by a Hamiltonian $H$.  In this case, $\Hom(L, L')$ is canonically identified with the Morse complex for $H|_L$.  As the Morse complex is quasi-isomorphic to
any other presentation of the cohomology of $L$,  it is natural to ask whether the Fukaya category can be constructed using 
only  topological methods.  

\vspace{2mm}

For cotangent bundles $X = T^*M$, the answer is known to be positive.  Indeed, 
the microlocal theory of sheaves on $M$, as introduced by Kashiwara-Schapira \cite{kashiwara-schapira}, has been used to investigate 
the symplectic topology of $X$, and has proven of comparable power \cite{tamarkin, guillermou-kashiwara-schapira, guillermou, 
stz, shende-conormal, chiu},
or indeed equivalent  \cite{nadlerzaslow, nadlermicrolocal, nrssz, gpsconstructible, viterbo-sheaf}, to Floer-theoretic methods.  
The calculations required in the sheaf-theoretic approach are topological and combinatorial (no differential equations
need be solved), and can be carried out in cases of central interest  \cite{FLTZ-Morelli, kuwagaki}.  Other benefits
of the sheaf-theoretic approach are 
 its direct connection with geometric representation theory
\cite{mirkovic-vilonen, rouquier-categorification, benzvi-nadler} 
 and the ability to work with general coefficients, e.g.~sheaves over 
the sphere spectrum \cite{jin-treumann, jin-BO, jin-J}.  

The purpose of the present article is to globalize the microlocal theory of sheaves from cotangent bundles to the more general class of exact symplectic manifolds called Weinstein manifolds, introduced in \cite{Weinstein-surgery, Eliashberg-Gromov-convex, Eliashberg-topological-Stein}. 
Notably, Weinstein manifolds include the symplectic manifolds determined by the K\"ahler form on a Stein manifold.  In fact, it is
known \cite{donaldson} that any compact symplectic manifold with integral symplectic form contains a symplectic divisor, realizing a multiple of the symplectic form, whose complement
is naturally Weinstein.  

Our main result -- see Theorem~\ref{thm: main} for a more  precise statement --  associates to any Weinstein manifold $W$, equipped with suitable topological `Maslov data',  a pre-triangulated differential graded (or more generally, stable  $\infty$-) category $\mathfrak{Sh}_\emptyset(W)$ enjoying  key geometric properties  of the Fukaya category. Namely, we show $\mathfrak{Sh}_\emptyset(W)$   is locally constant with respect to Weinstein deformations of $W$, and 
compact exact Lagrangian submanifolds $L \subset W$, similarly equipped with   suitable topological Maslov data, determine objects of $\mathfrak{Sh}_\emptyset(W)$ whose endormorphisms calculate the cohomology of $L$.

The considerations of the present article involve only elementary symplectic geometry and the microlocal theory of sheaves -- we do not 
rely on Floer theory at any stage.  However, it has been shown  that $\mathfrak{Sh}_\emptyset(W)$ is equivalent to 
the wrapped Fukaya category of $W$ \cite{gpsconstructible}, 
and thus may be put to the same rich range of applications.  Notably, the computability of
microsheaves has led to several recent advances in homological mirror symmetry
\cite{nadler-mirrorLG, nadler-wrapped, gammage-shende, bezrukavnikov-kapranov, shende-treumann-williams, gammage-webster-mcbreen, gammage-shende-2}. 
Moreover, in holomorphic exact symplectic settings, our approach allows perverse $t$-structures to be introduced \cite{perverse-microsheaves} and compared with Kashiwara's category \cite{CKNS2}. 
Additionally, our approach works  for general coefficients, e.g.~over any $E_\infty$-ring spectrum, with no additional difficulty. 

\vspace{2mm}

To state precise versions of our results, we next recall some basic ideas of the microlocal theory of sheaves.  We will write $sh(M)$ for the dg derived (or more generally, stable $\infty$-) category of sheaves 
on $M$ valued in $\Z$-modules (or more general stable coefficients).  Given an object $F \in sh(M)$,
which for simplicity we will call a sheaf, 
its microsupport $ss(F) \subset T^*M$ is the minimal closed subset whose complement ensures: 

\vspace{2mm}
\noindent {\bf Noncharacterstic propagation} \cite[Cor.~5.4.19]{kashiwara-schapira}, \cite{robalo-schapira}: 
for any $C^1$ function $\phi: M \to \R$ proper on the support of $F$, 
if $d\phi \notin ss(F)$ over $\phi^{-1}[a, b)$, then the following natural restriction maps are isomorphisms:
$$F(\phi^{-1}(-\infty, b)) \xrightarrow{\sim} F(\phi^{-1}(-\infty,a]) \xrightarrow{\sim} F(\phi^{-1}(-\infty, a)).$$ 

The microsupport of a sheaf is conic, i.e.~invariant under positive scaling of cotangent fibers, and  
a sheaf is locally constant if and only if its microsupport is contained in the zero-section. 
Some deeper facts that are cornerstones of the theory: the microsupport is always co-isotropic in an appropriate sense \cite[Thm.~6.5.4]{kashiwara-schapira}; 
a sheaf is constructible with respect to some subanalytic stratification if and only if its microsupport is a subanalytic Lagrangian \cite[Thm.~8.4.2]{kashiwara-schapira};
and the Riemann-Hilbert correspondence sends 
the characteristic cycle of a $D$-module to the microsupport of the sheaf of solutions \cite[Thm.~11.3.3]{kashiwara-schapira}.

\vspace{2mm}

At the most basic level, to study sheaves on $M$ `microlocally' means: given a point $p \in T^*M$, we  view all sheaves
whose microsupport is disjoint from $p$ as {\em trivial objects}.  As we vary the point $p \in T^*M$, we obtain a sheaf   of  categories  $\mush$ on $T^*M$ whose sections are called {\em microsheaves}. 
More precisely, for open $U \subset T^*M$, consider the full subcategory of sheaves $Null(U) \subset sh(M)$ 
whose microsupport is disjoint from $U$.
Then there is a natural presheaf of categories
$$\mush^{pre}(U) := sh(M) / Null(U)$$
and  one defines $\mush$ to be its sheafification. 
Some basic facts: the global sections $\mush(T^*M)$ are canonically equivalent to the original category of sheaves $\sh(M)$;
since microsupport is conic, $\mush$ is also conic, i.e.~invariant under positive scaling of cotangent fibers. 
In particular,  it is natural to view the restriction of $\mush$  away from the zero-section
as a sheaf of categories on  the  cosphere bundle $S^*M$. % = (T^*M \setminus M)/\RR_{> 0}$.   

\vspace{2mm}

To go beyond cotangent bundles to more general exact symplectic manifolds, various authors have taken the strategy of 
embedding them into \cite{nadler-mirrorLG, nadler-wrapped, gammage-shende} or gluing them out of 
\cite{bezrukavnikov-kapranov, shende-treumann-williams, gammage-webster-mcbreen, gammage-shende-2}
subsets of cotangent bundles (or cosphere bundles in the contact case).  
In the present article, we will adopt the approach of \cite{shende-microlocal}, where high-codimensional embeddings lead to an unambigious notion of microsheaves on  contact manifolds. We will  recall and improve on 
these ideas below; for now, let us state the following result  for contact manifolds, which is the conceptual starting point for our investigation: 

\begin{theorem}[see Theorem~\ref{thm: descent}] \label{thm: contact microsheaves}
Let $(V, \xi)$ be a contact manifold, and $C$ a stable compactly generated symmetric monoidal $\infty$-category.  A null-homotopy
of the $C$-valued Maslov obstruction $V \to B^2 Pic(C)$ determines a sheaf of $C$-linear stable $\infty$-categories
$\mu sh$ on $V$, locally isomorphic to microsheaves on a cosphere bundle of the same dimension. 
\end{theorem}

\begin{remark}
We discuss the {\em Maslov obstruction} in detail in Section \ref{sec:descent}; we will call a choice of null
homotopy {\em Maslov data}. 
Our proof of Theorem \ref{thm: contact microsheaves} does not in fact depend on knowing 
what the Maslov obstruction is; applying the theorem in practice, however, does. 
When $C = mod-\Z$, the Maslov obstruction was computed by Guillermou \cite{guillermou};
in particular, in that case, Maslov data exists if and only if $2c_1(\xi) = 0$ 
for $\xi$ the contact
distribution, and homotopy classes of Maslov data form a torsor for 
$[V, BPic(mod-\Z)] = H^1(V, \Z) \oplus H^2(V, \Z/2\Z)$. This is the same  
homotopical data appropriate to defining Fukaya categories; see \cite[Sec. 5.3]{gpsconstructible} for some discussion.
When $C$ is the the category of modules over the sphere spectrum,
the Maslov obstruction  is characterized by works of Jin \cite{jin-BO, jin-J} in terms of the $J$-homomorphism.

A stable  polarization of the contact distribution  always gives Maslov data;
in fact, we prove Theorem \ref{thm: contact microsheaves} by first considering the case of a stable  polarization
(as already in \cite{shende-microlocal}), then
arguing by descent.
The same holds for many other results of the present article, and the main novelty 
can  already be appreciated  in the setting of a stable polarization.  In many applications,
a stable polarization is naturally present \cite{gammage-shende, gammage-webster-mcbreen, gammage-shende-2}. 
However, the general setup of Maslov data is a natural setting  for the construction of 
perverse microsheaves \cite{perverse-microsheaves}. 
\end{remark}

 Now let us return to our original goal of producing a categorical invariant of exact symplectic manifolds.
From the above theorem for contact manifolds,
we may produce a candidate
invariant as follows.    
First, for any subset $X$ of a contact manifold $V$, we may consider the microsheaves supported in $X$; this refines the theorem to 
provide a subsheaf of full subcategories $\mush_X \subset \mush$.  
Now consider an exact symplectic manifold $(W, \lambda)$, where we are careful to regard the symplectic primitive 1-form $\lambda$ as part of the structure. 
Consider the contactization $(W \times \RR, \xi)$ with contact structure $\xi$ given by the kernel
of the contact 1-form $\lambda+dt$.  
Then, to the exact symplectic manifold $(W, \lambda)$, we may associate 
the sheaf of categories $\mu sh_{(W, \lambda)} := \mu sh_{W \times 0}|_W$, i.e.~the sheaf of microsheaves on the contactization supported over the zero-section.

To a symplectic topologist,
there are some fundamental  shortcomings of the  sheaf of categories $\mu sh_{(W, \lambda)}$, stemming from   tensions between  properties of microsheaves and the traditional notions of Lagrangian submanifolds and equivalences
of exact symplectic manifolds.  

On the one hand, in (exact) symplectic topology, one is concerned with 
exact Lagrangian submanifolds $L$, i.e., those for which $\lambda|_L$ is the derivative of a function $f$.  Relatedly, one 
regards the exact symplectic manifold $(W, \lambda)$ as equivalent to $(W, \lambda + df)$ for reasonable functions $f$, i.e., one allows deformations of the primitive 1-form.  

On the other hand, 
 the support of any microsheaf is necessarily conic (see for example Corollary~\ref{cor: conicity}),
i.e.~invariant for the Liouville vector field $Z$  characterized by $\iota_Z d\lambda = \lambda$.  
Where said support is a  Lagrangian submanifold $L$, one must have $\lambda|_L = 0$, which
does not hold for most exact Lagrangian submanifolds. Thus 
 no microsheaf supported along  them exists, and we cannot hope to assign to them  a nonzero object.
 Relatedly, the trajectories of the Liouville vector fields 
associated to different primitive $1$-forms $\lambda$ and $\lambda + df$ are typically drastically different.
 So the sheaf of categories $\mush_{(W, \lambda)}$ cannot in general agree with (the pullback under any symplectomorphism, or indeed homeomorphism, of)   the sheaf of categories 
$\mush_{(W, \lambda+df)}$. In fact,  it is not difficult to show that  one can
extract the trajectories of the corresponding Liouville vector fields
from these sheaves of categories.
   
The above issues illustrate why the sheaf of categories $\mu sh_{(W, \lambda)}$ is not a good  
  invariant of exact symplectic manifolds.
But nothing  excludes the possibility that its {\em global sections} furnish an invariant, and this will be our main object of study
$$\mathfrak{Sh}(W, \lambda) := \Gamma(W, \mu sh_{(W, \lambda)})$$
At first, it is not clear that the passage to global sections makes any progress in addressing the above issues. Notably, 
 the contactomorphism invariance of microsheaves \cite[Chap. 6]{kashiwara-schapira}, \cite{guillermou-kashiwara-schapira} is insufficient to establish the exact symplectic invariance of $\mathfrak{Sh}(W, \lambda)$.  Indeed, in general, there is no contactomorphism carrying the defining
setup for $\mush_{(W, \lambda)}$ to that of $\mush_{(W, \lambda + df)}$. One can see this by noting the trajectories of the corresponding Liouville vector fields
 are an invariant of the germ along  $W \times 0$ of the contact structures of the respective contactizations.

\vspace{2mm} 
Our main results address the above difficulties.  To state them, 
recall that an exact symplectic manifold-with-boundary $(W, \lambda)$ is said to be a Liouville domain when the Liouville vector field $Z$
is outwardly transverse to the boundary.  In this case, for a subset $\Lambda \subset \partial W$, 
we consider the following full subcategory of microsheaves: 
$$\mathfrak{Sh}_\Lambda (W, \lambda) = \{F \, | \, \mathrm{support}(F) \cap \partial W \subset \Lambda \} \subset \mathfrak{Sh}(W, \lambda)$$
When $\Lambda = \emptyset$, the $Z$-invariance of microsheaves imples that such microsheaves are 
in fact supported in the core $\mathfrak{c}(W, \lambda) \subset W$, i.e.~the maximal locus 
of $W$ that is $Z$-invariant  and disjoint from $\partial W$.  More generally, for any $\Lambda \subset \partial W$,
by the same reasoning, such microsheaves are supported in the `relative core' $\mathfrak{c}(W, \Lambda, \lambda)$, i.e.~the union of the core $\mathfrak{c}(W, \lambda)$ and the $Z$-trajectories ending in $\Lambda$.

A basic observation underlying our approach is the following.  While there is no contactomorphism 
of $(W \times \R, \lambda + dt)$ relating the naturally embedded hypersurfaces $(W, \lambda)$ and 
$(W, \lambda + df)$, there is a sequence of contactomorphisms, given by a contact lift of the Liouville flow
\cite[Prop. 2.5]{eliashberg-weinstein},
that collapses $W \times \R$ to $\mathfrak{c}(W, \lambda)$.   Our main technical work uses this sequence of contactomorphisms to construct fully faithful functors 
$$\mu sh_X(W \times \R) \to \mu sh_{\mathfrak{c}(W, \lambda)}(W \times \R)$$
when a subset $X \subset W$ and the core $\mathfrak{c}(W, \lambda)$ 
are `sufficiently isotropic' as formalized in Def.~\ref{def: sufficiently isotropic}.
Any Whitney stratified subset which is isotropic at smooth points is sufficiently isotropic \cite[Lemma 3.2.2 and 3.2.8]{li-nadler-shende}.\footnote{In \cite{li-nadler-shende}, the term sufficiently isotropic is defined to also require a third condition -- self displacebility.  We use this property also here, but it is imposed separately through our `gapped' hypotheses.}
Thus  the core of any Liouville domain 
whose Liouville field is gradient-like and Morse-Smale is sufficiently isotropic.  Similarly,
we can consider homotopies of Liouville domains that are `sufficiently Weinstein', by which we mean that the  corresponding  Liouville 
domain underlying $W \times T^* \R$ has a sufficiently 
isotropic core.   The reasons we impose these isotropicity hypotheses are explored in Remark \ref{lament}.

What follows is an application-oriented statement collecting many of our main results, proved  in
Theorem \ref{thm: quantization}, Corollary \ref{cor: weinstein quantization}, Theorem \ref{thm: invariance},
 and Proposition~\ref{prop:gen version}:

\begin{theorem} \label{thm: main} For $(W, \lambda)$ a Liouville domain, equipped with Maslov data, the category 
$\mathfrak{Sh}_\emptyset(W, \lambda)$ of compactly supported microsheaves  satisfies:

\begin{enumerate}  
 \item {\bf Lagrangian objects.}  \label{lagrangian objects} If $(W, \lambda)$ is sufficiently Weinstein and
 $L \subset W$ is a smooth compact exact Lagrangian, then 
 secondary Maslov data on $L$ determines 
 a fully faithful functor from local systems to microsheaves
 $$\psi: Loc(L) \hookrightarrow  \mathfrak{Sh}_\emptyset(W, \lambda).$$  

 \item {\bf Invariance.} \label{invariance}  If $(W, \lambda)$ and $(W, \lambda' = \lambda + df)$ are sufficiently Weinstein, then there is an embedding  $\mathfrak{Sh}_\emptyset(W, \lambda) \hookrightarrow 
 \mathfrak{Sh}_\emptyset(W, \lambda')$. 
 If $\lambda$ and $\lambda'$ admit a sufficiently Weinstein homotopy,\footnote{In fact, 
 any Liouville homotopy of Weinstein domains  can be deformed to a sufficiently 
 Weinstein homotopy \cite[Prop. 2.42]{lazarev-sylvan-tanaka}.  This was pointed out to us by 
 Wenyuan Li, who has also shown~\cite{wenyuan}  by another argument that ``sufficiently Weinstein homotopy''
 can be weakened to ``Liouville homotopy'' in the theorem.  This means that the hypothesis is removed entirely, 
 since the linear interpolation $\lambda + sdf$ is such a homotopy.} 
 this embedding is an equivalence.
 \end{enumerate} 
\end{theorem}

\begin{remark}
We prove also a relative version of the theorem, i.e.~for the category $\mathfrak{Sh}_\Lambda(W, \lambda)$, when 
the relative core $\mathfrak{c}(W, \Lambda, \lambda)$ is sufficiently Weinstein (see  
Corollary \ref{cor: relative weinstein quantization}).  It is suitable for applications to Weinstein pairs, 
and in particular for producing objects from eventually conic Lagrangians. 
\end{remark}

\begin{remark}
In the theorem, {\em secondary Maslov data} is a homotopy between the restriction of the ambient Maslov data on $X$ 
and the Maslov data corresponding to the fiber polarization of $T^*L$.  It is the same topological information
required for a Lagrangian to define an object in the Fukaya category; see~\cite[Sec. 5.3]{gpsconstructible}.  
\end{remark}

Some new technical ingredients in the proof of Theorem \ref{thm: main} include: a criterion
for full faithfulness of nearby cycles (Theorem \ref{thm: gapped specialization is fully faithful}) and a 
 result  showing that microsheaf
 categories can be realized as localizations of categories of sheaves (Theorem \ref{thm: relative antimicrolocalization}). 
These are combined into a construction of microlocal specialization (Theorem \ref{thm:specialization}), which is 
then applied in Corollary \ref{cor: weinstein quantization} to prove (\ref{lagrangian objects})
and in Theorem \ref{thm: invariance} to prove (\ref{invariance}).  
In fact, the preceding arguments take place in the stably polarizable case;  we extend to the general case by descent from $U/O$
to $B Pic(C)$ in \S\ref{sec:descent}.

As alluded to earlier, one can summarize Theorem \ref{thm: main} by saying that  the category
$\mathfrak{Sh}_\emptyset(X, \lambda)$ enjoys key geometric properties  of the Fukaya category.   On the one hand, this is no accident: 
in \cite{gpsconstructible} it is shown that for stably polarized Weinstein manifolds $(X, \lambda)$, the category 
$\mathfrak{Sh}_\emptyset(X)$ is equivalent to the ind-completion of the wrapped Fukaya category.\footnote{The restriction to the 
stably polarized case is because the descent arguments of \S\ref{sec:descent} have not been
set up on the Fukaya side.}  On the other hand, the proof in  \cite{gpsconstructible} uses in a crucial way
the antimicrolocalization of the present paper
(Theorem \ref{thm: relative antimicrolocalization}) to reduce to the case of cotangent bundles.

\vspace{2mm}
{\bf Sheaf conventions} 
For a manifold $M$, we write $\sh(M)$ for the $\infty$-category of sheaves valued in the symmetric monoidal stable compactly generated
$\infty$-category $\mathcal{C}$ of the reader's choice.  As a basic example, one can take $\sh(M)$ to be the 
dg derived category of complexes of sheaves of abelian groups on $M$ localized along the acyclic sheaves.  
Another possibility for $\mathcal{C}$ is the stable $\infty$-category of spectra.  

We appeal often to results from the foundational reference on microlocal sheaf theory 
\cite{kashiwara-schapira}.  This work 
was written in the then-current terminology of bounded derived categories.  The hypothesis of
boundedness can typically be removed, though one needs in some cases different arguments, 
such as for proper base change, where to work in the unbounded setting
one needs to use \cite{spaltenstein}, and for the noncharacteristic deformation
lemma, where one needs \cite{robalo-schapira}.  We avoid
any constructions where boundedness is fundamental (e.g. asserting that applying Verdier duality twice gives the identity).
Passing to the dg or stable $\infty$ setting is solely a matter of having
adequate foundations; in the dg setting one can use \cite{drinfelddgquotient, toenmorita} and in
general \cite{luriehttpub, lurieha}.  The latter also provide foundations for working with sheaves
of categories more generally, which we will require as well.  

For a sheaf $F \in \sh(M)$, we write $ss(F) \subset T^*M$ for its microsupport, i.e.~the locus of codirections along which the
space of local sections is nonconstant.  For $X \subset T^*M$, we write $\sh_X(M)$ for sheaves microsupported in $X$.  

\vspace{2mm}
{\bf Acknowledgements}  We thank Yuichi Ike, Chris Kuo, Tatsuki Kuwagaki, and Wenyuan Li for  many helpful  remarks on previous drafts. 
We thank German Stefenich and  Nick Rozenblyum for their comments about sheaf foundations, and 
Peter Haine for his guidance with the stable homotopy theory
of  Section~\ref{sec:descent}.   
We thank an anonymous referee for many comments.

D.N. was supported by the grants NSF DMS-1802373, and in addition NSF DMS-1440140 while in residence at MSRI during the Spring 2020 semester.  V.S. was supported by the grant NSF CAREER DMS-1654545, and by the Simons Foundation and the Centre de Recherches Math\'ematiques, through the Simons-CRM scholar-in-residence program. 

\section{Propagation and displacement}

We begin by restating the fundamental result on noncharacteristic propagation: 

\begin{proposition}\cite[Cor.~5.4.19]{kashiwara-schapira}, \cite{robalo-schapira}
 \label{lem: propagation}
For any $C^1$ function $\phi: M \to \R$ proper on the support of $F$, 
if $d\phi \notin ss(F)$ over $\phi^{-1}[a, b)$, then the following natural restriction maps are isomorphisms:
$$F(\phi^{-1}(-\infty, b)) \xrightarrow{\sim} F(\phi^{-1}(-\infty,a]) \xrightarrow{\sim} F(\phi^{-1}(-\infty, a)).$$ 
\end{proposition} 

The covector $d \phi$ is the outward conormal to a sublevel set of $\phi$.  In this section, we give conditions guaranteeing
that certain such conormals can be displaced from microsupports.

\subsection{Positively displaceable from fibers} 

\begin{definition} \label{def: pdff}
A closed subset $\Lambda$ of a cosphere bundle $S^*M$  is {\em strictly positively displaceable from fibers}
(spdff) if for any cosphere fiber $S^*_p M$, 
there is a positive contact isotopy $\phi_t$, $t \in \R$, so that
for $t$ in some punctured neighborhood of zero $(-\epsilon(p), \epsilon(p)) \setminus 0$, we have 
$\phi_t(S^*_p M) \cap V = \emptyset$. 

We say $V$ is  positively displaceable from fibers (pdff) if there is some ambient contactomorphism $f: S^*M \to S^*M$ so that $f(V)$ is spdff. 

We use the same terminology for closed conic subsets of the cotangent bundle, applying the notions to their spherical projectivizations.
\end{definition}

To explain the term ``positive'' in the above definition,
we 
briefly recall the notion of positive contact vector fields.  On a contact manifold $(V, \xi)$, a 
contact vector field is by definition one whose flow preserves the distribution $\xi$; they are 
in natural bijection with sections of $TV / \xi$.  When $\xi$ is co-oriented, it makes
sense to discuss positive vector fields; a contact form $\alpha$ identifies sections of $TV / \xi$ with
functions and positive vector fields with positive functions.  The Reeb vector field is the contact
vector field associated to the constant function $1$; in other words it is characterized
by $\alpha(v_{Reeb}) = 1$ and $d\alpha(v_{Reeb}, \cdot) = 0$.  In fact any positive vector field
is a Reeb vector field, namely the vector field corresponding to $f$ is the Reeb field for 
the contact form $f^{-1} \alpha$.  

Another way of expressing the above is in terms of the symplectization $\hat V = T^+V^{\perp \xi}$,
the space of covectors $(n, \alpha) \in T^*V$ with $\alpha|_\xi = 0$ and $\alpha = c\lambda$, with $c>0$, for any cooriented local contact form $\alpha$. For example, if $V = S^*X$ is a cosphere bundle, then $\hat V \simeq T^\circ X = T^* X \setminus X$ is the corresponding punctured cotangent bundle.  A notation for going in the opposite direction: for a conic (or conic outside a compact set) subset $Z \subset T^*X$, we write $Z^\infty := Z \cap S^*X$ for the corresponding subset of the cosphere bundle. 

We view $\hat V \to V$ as an $\R_{>0}$-bundle with $\R_{>0}$-action given by scaling $\alpha$. 
An $\R_{>0}$-equivariant trivialization $\hat N \simeq N \times \R_{>0}$ over $N$ is the same data as an $\R_{>0}$-equivariant  function $f:\hat V \to \R_{>0}$. Its Hamiltonian vector field $v_f$ is $\R_{>0}$-invariant, preserves the level-sets $f^{-1}(c) \simeq N$, for all $c\in \R_{>0}$, and the corresponding Hamiltonian flow is a contact isotopy on each. For example, if $N = S^*X$, so that $\hat N = T^\circ X$, a metric on $X$ provides such a trivialization $T^\circ X \simeq S^*X \times \R_{>0}$ with the function $|\xi|:T^\circ X \to \R_{>0}$ given by the length of covectors and  positive contact isotopy given by normalized geodesic flow.

By a positive contact isotopy, we mean the flow of a (possibly time dependent) positive
contact vector field.  The 
typical example is the geodesic flow, which corresponds to the Reeb vector
field for the form $p dq$ restricted to the cosphere bundle in a given metric.  The 
image of a given cosphere fiber under this flow is the boundary of some round ball.  
We recall that any positive contact vector field also locally determines balls.  

A family of Legendrians is positive if it is the flow of a Legendrian under a positive
contact isotopy.  (By Gray's theorem, this can be checked just on the Legendrians without
need to explicitly construct an ambient positive isotopy.) 
An important example is the geodesic flow of a cosphere.  For time positive and smaller
than the injectivity radius, the image of the cosphere is the outward conormal to a ball; 
for time negative it will be the inward conormal.  Generalizing this we have: 

\begin{lemma} \label{lem: positive perturbation of cosphere}
Let $\eta_t: S^*M \to S^*M$ be the time $t$ contactomorphism of some positive contact flow. 
For small time $t < T(x)$, 
the cosphere $\eta_t(S^*_x M)$ is the outward conormal to the boundary 
of some open topological ball $B_t(x)$, with closure $\overline{B}_t(x)$.  For 
$t < t'$ one has $B_t(x) \subset B_{t'}(x)$.

For negative time, $\eta_t(S^*_x M)$ will be similarly an inward conormal.   
\end{lemma}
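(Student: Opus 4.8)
My plan is to recognize $\eta_t(S^*_xM)$ as the conormal lift of the time-$t$ wavefront emanating from $x$, and to show this wavefront is an embedded star-shaped sphere by an ``absence of caustics for small time'' argument. Since the statement concerns only the germ of $\eta_t$ along $S^*_xM$ for small $t$, I would work in a chart identifying a neighborhood of $x$ in $M$ with a ball about $0\in\R^n$, so that $S^*M\cong\R^n\times S^{n-1}$ with contact form $\alpha=\langle p,dq\rangle$, $S^*_xM=\{0\}\times S^{n-1}$, and $\pi$ the projection to $\R^n$; let $v_t$ generate $\eta_t$ and $H_t=\alpha(v_t)>0$ be its contact Hamiltonian. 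Set $e(p,t)=\pi(\eta_t(0,p))$, defined for $|t|$ small.

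First I would extract the infinitesimal data. As $e(\cdot,0)\equiv 0$, Hadamard's lemma gives $e(p,t)=t\,F(p,t)$ with $F$ smooth and $F(p,0)=N(p):=\pi_*v_0(0,p)$. The one identity I need is $\langle p,N(p)\rangle=\alpha(v_0)(0,p)=H_0(0,p)>0$, which says $N$ is nowhere vanishing, has degree one about the origin, and is transverse to the radial directions. Invoking the recalled fact that a positive contact vector field determines balls --- equivalently that $N$ is the Legendre-type map of the $1$-homogeneous lift of $H_0$ to the fiber $T^*_xM$ --- I would conclude that $N\colon S^{n-1}\to\R^n$ is a smooth embedding onto the boundary $\partial D$ of a star-shaped open $D\ni 0$.

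Next I would propagate this to $0<|t|<T(x)$. Because $F(\cdot,t)\to N$ in $C^\infty$ as $t\to 0$ and $S^{n-1}$ is compact, for $|t|$ small $F(\cdot,t)$ is still a smooth embedding onto the boundary $\partial D_t$ of a star-shaped open $D_t\ni 0$ varying continuously with $t$, $D_0=D$; hence $e(\cdot,t)$ embeds $S^{n-1}$ onto the smooth hypersurface $x+t\,\partial D_t$. For $t\in(0,T(x))$ I would then set $B_t(x):=x+tD_t$, an open topological ball with $\partial B_t(x)=\pi(\eta_t(S^*_xM))$ and $\overline{B_t(x)}=x+t\overline{D_t}$. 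For the nesting, after shrinking $T(x)$ a routine comparison with the polar map $(p,s)\mapsto x+sN(p)$, using $e(p,t)=x+tN(p)+O(t^2)$, shows $(p,t)\mapsto e(p,t)$ is a diffeomorphism of $S^{n-1}\times(0,T(x))$ onto a punctured neighborhood of $x$; the slices $\partial B_t(x)$ are therefore pairwise disjoint, and since they collapse to $\{x\}$ as $t\to 0^+$ this forces $B_t(x)\subset B_{t'}(x)$ for $t<t'$.

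Finally I would identify $\eta_t(S^*_xM)$: it is Legendrian with $\pi$-image the smooth hypersurface $\partial B_t(x)$, so the Legendrian condition forces every covector of $\eta_t(S^*_xM)$ to annihilate $T\partial B_t(x)$; hence $\eta_t(S^*_xM)$ is one of the two conormals $T^{\pm}_{\partial B_t(x)}M$. The covector over $e(p,t)$ is the sphere-component of $\eta_t(0,p)$, which tends to $p$ as $t\to 0$, while $e(p,t)-x$ lies along $t\,N(p)$ with $\langle p,N(p)\rangle>0$ and $0\in D_t$; so for $t>0$ this covector points outward and $\eta_t(S^*_xM)=T^{+}_{\partial B_t(x)}M$, while for $t<0$ the sign of $\langle p,tN(p)\rangle$ flips and $\eta_t(S^*_xM)$ is the inward conormal. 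The step I expect to be the real obstacle is showing that $N$ is an embedding onto a star-shaped sphere and that this survives for an interval $0<|t|<T(x)$: this is precisely where positivity of the flow enters, it is the ``no caustic forms immediately'' phenomenon, and for normalized geodesic flows it reduces to the classical Gauss lemma, with the general case being exactly the recalled statement that positive contact vector fields determine balls.
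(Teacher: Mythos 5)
Your reduction to the infinitesimal map $N(p)=\pi_*v_0(0,p)$, i.e.\ the fiber derivative $\nabla_\xi \tilde H_0(p)$ of the $1$-homogeneous lift of the contact Hamiltonian, is fine, but the pivotal claim --- that $\langle p,N(p)\rangle=H_0(0,p)>0$ forces $N\colon S^{n-1}\to\R^n$ to be an embedding onto the boundary of a star-shaped domain --- is a genuine gap, and it cannot be closed from positivity alone. Positivity only says that $N(p)$ has positive component along $p$; it makes $N$ neither an immersion nor injective. Concretely, take $M=\R^2$ and the positive, $q$-independent contact Hamiltonian $H(q,p)=h(p)=1+\tfrac12\cos 3\theta$, with homogeneous lift $\tilde h(p)=|p|\,h(p/|p|)$. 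The flow is $(q,p)\mapsto(q+t\nabla\tilde h(p),p)$, so the front of $\eta_t(S^*_0\R^2)$ is exactly $t\{h(\theta)u(\theta)+h'(\theta)u'(\theta)\}$ with $u(\theta)=(\cos\theta,\sin\theta)$ (the ``hedgehog'' of $h$); its velocity is $(h+h'')u'=(1-4\cos 3\theta)\,u'$, which changes sign, so this front has cusps and self-crossings for every $t\neq 0$, even though $h>0$. So in your notation $N$ fails to be an embedding, and star-shapedness, the nesting argument, and the outward-conormal identification in your later steps all collapse with it. What is actually needed at this point is a fiberwise convexity-type condition on the Hamiltonian at $x$ (automatic for normalized geodesic or Finsler flows, which is exactly why your Gauss-lemma analogy works there), not positivity; and invoking the sentence ``any positive contact vector field locally determines balls'' is circular, since that informal remark is precisely what this lemma is meant to substantiate.

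For comparison, the paper's proof does not compute the front in $\R^n$ directly: it transports the problem through a contactomorphism $J^1S^{n-1}\cong S^*\R^n$ sending the zero section to $S^*_x\R^n$, notes that for small $t$ the image of the zero section is a $1$-jet graph $j^1f_t$ with $f_t>0$ by positivity, so its front in $\R\times S^{n-1}$ is an embedded graph, and then asserts that the passage back to $\R^n$ is the polar-coordinates map. Your computation in fact recovers the first part of this (the image is $j^1(tH_0+O(t^2))$ in that model), and it makes visible that the remaining step is exactly where your flagged obstacle lives: under the standard contactomorphism the $\R^n$-front of $j^1f$ is $f(\theta)\theta+\nabla_{S^{n-1}}f(\theta)$, not the polar image $f(\theta)\theta$, so embeddedness in $\R^n$ requires more than positivity of $f$. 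In short, your proposal correctly isolates the hard point but does not supply an argument for it, and no argument using only the stated hypotheses can.
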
 \label{lem: generalized balls}
\begin{proof}
Because the projection of the cosphere to $M$ is degenerate, it is convenient to change
coordinates before making a transversality argument.  
We may as well assume $M = \R^n$.  Consider the map 
\begin{eqnarray*} \R \times S^{n-1} & \to & \R^n \\
(r, s) & \mapsto & r s 
\end{eqnarray*} 
Viewing the domain and codomain as the images of the front projections 
$J^1 S^{n-1} \to  \R \times S^{n-1}$ and 
$S^* \R^n \to \R^n$, said map lifts uniquely 
to a contactomorphism $J^1 S^{n-1} \to S^* \R^n$. 
This contactomorphism carries the zero section of $J^1 S^{n-1}$ to the cosphere over $0$ in $S^* \R^n$.

Embeddedness of the front is a generic condition, so any sufficiently small isotopy of the 
zero section in $J^1 S^{n-1}$ will leave the front in $\R \times S^{n-1}$ embedded.  A positive isotopy
will carry the front to a subset of $\R_{> 0} \times S^{n-1}$, hence its image under the embedding
$\R_{> 0} \times S^{n-1} \hookrightarrow \R^n$ will remain embedded. 
\end{proof} 

\begin{definition} \label{def: neighborhood defining function}
In the situation of Lemma \ref{lem: positive perturbation of cosphere}, 
it is always possible to choose a 
 function $f_x: M \to \RR_{\ge 0}$ with $f_x^{-1}(t)$ the 
front projection of $\eta_t(S^*_x M)$ for small $t$.  We term such a function a {\em neighborhood defining function}
at $x$ for the flow $\eta_t$.  

Note that a similar argument to Lemma \ref{lem: positive perturbation of cosphere} shows that the positive perturbation
of the conormal to a submanifold will be the outward conormal to its boundary.  Correspondingly, we may also speak of 
the neighborhood defining functions for a submanifold.  

By the {\em injectivity radius} at a point (or submanifold) we mean the first time $T > 0$ at which $\eta_T$ applied to the conormal 
fails to have an embedded projection.  The neighborhood defining function is well defined for $0 < t < T$, and we use it to 
give sense to the notion of distance to the point or submanifold.  (For the Reeb flow for a metric, these are of course the usual
notions of injectivity radius and distance.) 
\end{definition}

\begin{remark}
    A similar argument to that of Lemma \ref{lem: generalized balls} shows 
that for any (closed) submanifold $N \subset M$, any small positive contact 
perturbation of $S^*_N M$ gives the outward conormal to a neighborhood of $N$.  We also use the 
notion of neighborhood defining function in this context.
\end{remark}

The spdff condition ensures the following: 

\begin{lemma} \label{lem: reasonable stalks}  
For $\cF\in \sh(M)$, if $\ssupp(\cF) \subset T^*M$ is spdff, then for any $x\in M$, there exists $r(x)>0$ so that for any $0<r<r(x)$, the natural restrictions are isomorphisms
$$
\xymatrix{
\Gamma(\ol B_r(x), \cF) \ar[r]^-\sim & \Gamma(B_r(x), \cF) \ar[r]^-\sim & \cF_x
}
$$   
Here, $B_r(x)$ and $\overline B_r(x)$ are the open and closed balls associated by Lemma \ref{lem: positive perturbation of cosphere} to a contact isotopy witnessing the spdff property of $\ssupp(\cF)$ at $x$.
\end{lemma} 
\begin{proof}
Consider the associated neigborhood defining function $f_x$; then $df_x |_{f_x^{-1}(r)}$ is in the direction of the outward conormals
of  $B_r(x)$.  These conormals are the images under the flow of the conormal sphere at $x$, hence
by the spdff hypothesis, are disjoint from $ss(\cF)$ for all $r < r(x)$, for some $r(x)$.  
We have 
$B_r(x) := \phi^{-1}((-\infty,r))$ and $\overline{B}_r(x) = \phi^{-1}((-\infty,r])$.  
Now by noncharacteristic propagation (Proposition \ref{lem: propagation}), we find 
$$\Gamma(\ol B_r(x), \cF) \xrightarrow{\sim} \Gamma(B_r(x), \cF) \xrightarrow{\sim} \Gamma(B_{r'}(x), \cF)$$
for any $r' < r < r(x)$.  As balls are cofinal amongst open neighborhoods, we conclude 
$\Gamma(B_r(x), \cF) \xrightarrow{\sim} \cF_x$.  
\end{proof}

\begin{remark} 
The spdff condition functions as a kind of constructibility hypothesis.  Indeed, recall from \cite[Chap. 8]{kashiwara-schapira}
that a sheaf $\cF \in \sh(M)$ is ``weakly constructible'' when it is constant when restricted to each stratum of a locally subanalytic stratification
of $M$, and has perfect stalks; one says it is ``constructible'' when the stratification is globally subanalytic (in particular
with finitely many strata) and the stalks $\cF_x$ are perfect.  Weak constructibility in this sense is equivalent
to requiring that $ss(\cF)$ is (locally) a subanalytic Lagrangian; this is well known to imply that $ss(\cF)$ is spdff, for instance with the positive displacement provided by geodesic flow.  

On the other hand recall that ``weak cohomological constructibility'' is the following representability condition: 
$$\lim_{x \in U} \Hom(G, \cF(U)) \xrightarrow{\sim} \Hom(G, \cF_x)$$
as well as a similar condition for costalks.  Cohomological constructibility requires in addition that
$\cF_x$ is perfect \cite[Def. 3.4.1]{kashiwara-schapira}.  
It follows from Lemma \ref{lem: reasonable stalks} (and its costalk version, which holds
because we formulated the spdff hypothesis including negative times) that if $ss(\cF)$ is spdff then
$\cF$ is weakly cohomologically constructible.  

Thus we see that spdff sits between weak constructibility and weak cohomological constructibility.  
A sheaf on the line $\cF \in sh(\R)$  has spdff microsupport iff it is locally constant when restricted to a locally finite stratification.
Over higher dimensional manifolds, because of the subanalyticity requirement, it is easy to produce examples of sheaves which are 
not weakly constructible but have spdff microsupport.  On the other hand we do not actually know 
an example of a sheaf which is weakly cohomologically constructible without its microsupport being 
spdff.
\end{remark}

The following   
variant of Lemma~\ref{lem: reasonable stalks} will be used to study stalks of external products of sheaves:

\begin{lemma} \label{lem: reasonable stalks of prods}  
For $i=1,2$, suppose $\cF_i\in \sh(M_i)$ with $\ssupp(\cF_i) \subset T^*M_i$ spdff. Then for any $x_i\in M_i$, for $i=1,2$, there exists $r(x_i)>0$ so that for any $0<r_i<r(x_i)$, the natural restrictions are isomorphisms
$$
\xymatrix{
\Gamma(\ol B_{r_1}(x_1) \times \ol B_{r_2}(x_2), \cF_1\boxtimes \cF_2) \ar[r]^-\sim & 
\Gamma( B_{r_1}(x_1) \times  B_{r_2}(x_2), \cF_1\boxtimes \cF_2)
\ar[r]^-\sim & (\cF_1 \boxtimes \cF_2)_{(x_1, x_2)}
}
$$   
where the notation is as in 
Lemma~\ref{lem: reasonable stalks}.
\end{lemma}

\begin{proof}
The proof is a direct adaptation of the proof of  Lemma~\ref{lem: reasonable stalks}. We must
check that $\cF_1 \boxtimes \cF_2$ is non-characteristic with respect to the outward conormal of sufficiently small polyballs 
$B_{r_1}(x_1) \times  B_{r_2}(x_2)$. 

The   outward conormal of $B_{r_1}(x_1) \times  B_{r_2}(x_2)$ consists of three types of covectors: (i) 
along the side
 $\partial B_{r_1}(x_1) \times   B_{r_2}(x_2)$, covectors $(\xi_1, 0)$ with $\xi_1\not =0$ in the
 outward conormal of $\partial B_{r_1}(x_1)$ ; (ii)
  along the side $ B_{r_1}(x_1) \times  \partial B_{r_2}(x_2)$,
  covectors
  $(0, \xi_2)$  with $\xi_2 \not = 0$ in the
 outward conormal of $\partial B_{r_2}(x_2)$; and (iii)  along the corner
$\partial B_{r_1}(x_1) \times  \partial B_{r_2}(x_2)$, covectors
$(\xi_1, \xi_2)$ with at least one of $\xi_1, \xi_2$ non-zero, and when non-zero in the respective 
 outward conormal of  
 $\partial B_{r_1}(x_1),  \partial B_{r_2}(x_2)$. 

Now by \cite[Proposition 5.4.1]{kashiwara-schapira}, 
$\ssupp(\cF_1 \boxtimes \cF_2) \subset \ssupp(\cF_1) \times \ssupp(\cF_2)$. By assumption, $\ssupp(\cF_1) \subset T^*M_1$ 
and $\ssupp(\cF_2) \subset T^*M_2$ are  individually spdff, 
hence 
$\ssupp(\cF_1)$, $\ssupp(\cF_2)$ are disjoint from the respective 
outward conormal of  
 $\partial B_{r_1}(x_1),  \partial B_{r_2}(x_2)$.
 Thus
$\ssupp(\cF_1) \times \ssupp(\cF_2)$ is  disjoint from the 
 outward conormal of $B_{r_1}(x_1) \times  B_{r_2}(x_2)$ as 
 catalogued above.
\end{proof}

In the context of the Lemma, let us note that
the authors do not in fact know whether the external product of sheaves with spdff microsupport  again has spdff microsupport.

\subsection{Displaceability, Reeb chords, and gappedness} 

We will want to formulate displaceability conditions in terms of chord lengths. 

\begin{definition}
Fix a co-oriented contact manifold $(V, \xi)$ and positive contact isotopy $\eta_t$.  
For any subset $Y \subset V$ we write $Y[s] := \eta_s(Y)$.  

Given $Y, Y' \subset V$ we define the {\em chord length spectrum} of the pair to be the set of lengths of 
Reeb trajectories from $Y$ to $Y'$: 
$$cls(Y \to Y') = \{s \in \RR \,|\, Y[s] \cap Y' \ne \emptyset\}$$
We term $cls(Y) := cls(Y \to Y)$ the chord length spectrum of $Y$. 
\end{definition}

Borrowing terminology from quantum mechanics or functional analysis, we say a subset of $\mathbb{R}$ is {\em gapped} if its intersection with some interval $(-\epsilon, \epsilon)$ contains
only $0$.  We will say a family of subsets of a contact manifold is gapped when the corresponding chord length spectrum is gapped uniformly over the family: 

\begin{definition} \label{def:gapped}
Given a parameterized family of pairs $(Y_b, Y'_b)$ in some $V$ over $b \in B$, we say it is gapped if 
there is some $\epsilon > 0$ such that for all $b \in B$, 
 $(-\epsilon, \epsilon) \cap cls(Y_b \to Y'_b) \subset \{0\}$.  
 In case $Y = Y'$, we simply
say $Y$ is gapped.  
\end{definition}

\begin{example} \label{ex: reasonable from gapped} 
Def. \ref{def: pdff} can be reformulated as: $\Lambda$ is spdff if, for every cosphere fiber $L$, there is some positive contact isotopy $\phi_t$ with respect to which  $(L, \Lambda)$ is gapped. 
\end{example}

Note that in the notion of gapped, some positive contact isotopy is assumed given and fixed.  We will
often specify this explicitly and say that the family is ``gapped with respect to $\eta_t$''.  In the typical case
where $\eta_t$ is generated by a time independent positive Hamiltonian and hence is Reeb flow for some 
fixed contact form $\alpha$, we may say that the family is ``gapped with respect to $\alpha$''.  

We also say: 

\begin{enumerate}
\item 
$Y$ is {\em $\epsilon$-chordless} if $(0, \epsilon] \cap cls(Y) = \emptyset$. 
\item  
$Y$ is {\em chordless} if $cls(Y) = \{0\}$. 
\end{enumerate}

\begin{example}
Smooth Legendrians are $\epsilon$-chordless for any positive isotopy.  This may be seen by an argument similar to 
the proof of Lemma \ref{lem: positive perturbation of cosphere}.  
\end{example}

\begin{example} \label{ex: subanalytic locally chordless}
The curve selection lemma can be used to show that a subanalytic subset which is Legendrian at 
all smooth points is $\epsilon$-chordless for the Reeb flow for an analytic metric.  
\end{example}

\begin{example}
An example of a singular Legendrian which is not $\epsilon$-chordless for the geodesic flow: 
consider the Legendrian in $S^* \RR^2$
whose front projection is the union of the $x$-axis and the graph of $e^{-1/x^2} \sin(1/x)$. 
\end{example}

\begin{example}
A submanifold $W \subset V$ is said to be a (exact) 
symplectic hypersurface if, 
for some choice of contact form $\lambda$, the restriction $d\lambda|_W$ provides a symplectic form.  Such a hypersurface is  $\epsilon$-chordless, since 
the Reeb flow is along the kernel of $d \lambda$.  

Such a hypersurface is said to be Liouville if $\lambda|_W$ gives $W$ the structure of a Liouville
domain or manifold.   Liouville domains themselves are the subject of much inquiry; in particular
the subclass of Weinstein domains, see \cite{cieliebak-eliashberg}.  
The elementary contact geometry of
such Liouville hypersurfaces is studied in e.g. \cite{avdek, eliashberg-weinstein}.  
\end{example}

\begin{example}
In particular, if $\Lambda \subset V$ is a compact subset for which there exists a Liouville hypersurface $W \subset V$ with $\Lambda = Core(W)$ (``$\Lambda$ admits a ribbon''), 
then $\Lambda$ is $\epsilon$-chordless. 
\end{example}

We note some properties and examples of positive flows.
Generalizing the relationship between jet and cotangent bundles, we have: 

\begin{lemma} \label{lem: partial cosphere}
Let $M, N$ be manifolds.  
Then $T^*M \times S^*N \subset T^*(M \times N)$ is a contact level,
which is identified (by real projectivization) 
with $S^*(M \times N) \setminus 0_{T^*N}$. 

Given any positive contact isotopy $\phi_t$ on $S^*N$, the product $1 \times \phi_t$ on 
$T^*M \times S^*N$ remains positive.  In particular, the 
Reeb vector field on $T^*M \times S^*N$ is just the pullback of the Reeb vector field 
on $S^*N$.  
\end{lemma}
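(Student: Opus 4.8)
The plan is to realize $T^*M\times S^*N$ concretely as a contact-type hypersurface inside $T^*(M\times N)$; the contactomorphism and Reeb statements then become one-line checks, and positivity follows by pulling a Hamiltonian back along a projection on the symplectization. Using the canonical splitting $T^*(M\times N)=T^*M\times T^*N$, the open subset $T^*M\times T^\circ N=T^*(M\times N)\setminus 0_{T^*N}$ (removing the covectors with vanishing $N$-component, i.e.\ $T^*M\times 0_N$) is invariant under fibrewise $\R_{>0}$-scaling, so its projectivization is the open contact submanifold $S^*(M\times N)\setminus 0_{T^*N}$, and the canonical cooriented contact structure there is what ``$T^*M\times S^*N$'' should carry. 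To make the product description literal I would fix an auxiliary metric $g$ on $N$ and take the slice $H=\{(p_M,p_N):|p_N|_g=1\}$, which meets each scaling ray in $T^*M\times T^\circ N$ exactly once; then $(p_M,p_N)\mapsto(p_M,[p_N])$ identifies $H\cong T^*M\times S^*N$, the construction being metric-independent up to contactomorphism. The Euler vector field $E=E_M\oplus E_N$ satisfies $E\cdot|p_N|_g=|p_N|_g\equiv 1$ along $H$, since only the $T^*N$-summand contributes and $|\cdot|_g$ is homogeneous of degree one; hence $E$ is transverse to $H$, so $H$ is of contact type, $\alpha:=\lambda_{M\times N}|_H=\pr_M^*\lambda_M+\pr_N^*\alpha_N$ is a contact form (with $\alpha_N:=\lambda_N|_{S^*_gN}$), and projectivization carries $\ker\alpha$ and its co-orientation to the canonical structure. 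This is the first assertion.

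For the Reeb field I would simply check that $R:=0\oplus R_{\alpha_N}$ is the Reeb field of $\alpha$: $\alpha(R)=\alpha_N(R_{\alpha_N})=1$, and $\iota_R\,d\alpha=\iota_{0\oplus R_{\alpha_N}}(\pr_M^*d\lambda_M+\pr_N^*d\alpha_N)=\pr_N^*(\iota_{R_{\alpha_N}}d\alpha_N)=0$. So the Reeb field of $\alpha$ is the $\pr_N$-horizontal lift of that of $\alpha_N$ --- the asserted ``pullback'' --- and the identical computation applies for any contact slice of the symplectization.

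For positivity of $\phi_t\times 1$, the only point requiring care --- and the one I expect to be the main obstacle --- is that ``$\phi_t\times 1$'' must be interpreted through the symplectization rather than as the naive product $\mathrm{id}\times\phi_t$ on the metric slice $H$, since on $H$ the naive product need not preserve $\ker\alpha$ unless $\phi_t$ is strict for $\alpha_N$. The symplectization of $T^*M\times S^*N$ is $T^*M\times T^\circ N$ with symplectic form $\omega_M\oplus\omega_N$ and diagonal $\R_{>0}$-scaling, containing the symplectization $T^\circ N$ of $S^*N$ as its second factor. By the discussion of positive contact vector fields and symplectizations above, a (possibly time-dependent) positive contact isotopy $\phi_t$ of $S^*N$ is induced on a level set by the Hamiltonian flow of a positive, $\R_{>0}$-equivariant, degree-one function $f:T^\circ N\to\R_{>0}$; set $\tilde f:=f\circ\pr_N:T^*M\times T^\circ N\to\R_{>0}$, which is again positive, $\R_{>0}$-equivariant and degree one. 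Being independent of the $T^*M$-coordinates, $\tilde f$ has Hamiltonian vector field $0\oplus v_f$, so its flow is $\mathrm{id}_{T^*M}\times(\text{flow of }v_f)$, which on the contact slice $\{\tilde f=1\}=T^*M\times\{f=1\}\cong T^*M\times S^*N$ is exactly $\phi_t\times 1$; since it is induced by the positive degree-one Hamiltonian $\tilde f$ it is a positive contact isotopy, its generator there being a Reeb field (consistently with the previous paragraph). Once the slice adapted to $\phi_t$ is used and the $\R_{>0}$-equivariant degree-one conditions are tracked through the pullback along $\pr_N$, each clause of the lemma is immediate.
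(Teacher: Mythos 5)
Your proof is correct and is essentially the paper's argument: the paper's entire proof is the one line ``pull back the contact Hamiltonian by the projection map,'' which is exactly your central step of replacing $f$ by $\tilde f=f\circ\pr_N$ on the symplectization. The additional material (the metric slice, transversality of the Euler field, and the direct Reeb computation) is a correct and careful spelling-out of details the paper leaves implicit, including the genuine point that ``$\phi_t\times 1$'' must be read through the conic Hamiltonian lift rather than as a naive product of maps on a fixed slice.
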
 
\begin{proof}
Pull back the contact Hamiltonian by the projection map. 
\end{proof}

\begin{remark}
The most typical appearance of this fact is when $N = \R$, in which case it provides
the contactomorphism embedding $J^1 M \hookrightarrow S^*(M \times \R)$.  Note that 
the Reeb direction in $J^1 M$ happens only along the $\R$ factor; from this embedding
we can see that along a certain subset of $S^*(M\times \R)$, just moving in the $\R$ direction
is a positive contact isotopy. 

We will use the lemma later in the opposite case $M = \R$, for the purpose of learning similarly
that using the ``Reeb flow in the $M$ direction'' is positive along a certain subset of $S^*(M \times \R)$. 
\end{remark}

Next, consider 
manifolds $X, Y$ and positive contact  isotopies $\phi_t$ on $S^*X$, $\eta_t$ on $S^*Y$ corresponding 
in the symplectization to $\R_{>0}$-equivariant   
Hamiltonians $x: T^\circ X \to \R_{>0}$, $y: T^\circ Y \to \R$.   Observe we may define 
an $\R_{>0}$-equivariant  $C^1$-Hamiltonian by the formula $$x \# y := (x^2 + y^2)^{1/2}: T^\circ (X \times Y)\to \R_{>0}$$
Although $x, y$ are only defined on $T^\circ X, T^\circ Y$, since they are $\R_{>0}$-equivariant, their squares $x^2, y^2$  $C^1$-extend
to $T^* X, T^* Y$.
Let us write $(\phi \# \eta)_t$ for the flow  on $S^*(X \times Y)$ generated by  $x \# y $, and note it is  a 
 positive contact  $C^0$-isotopy.

\begin{example}
If $x$, $y$ are the length of covectors for some metrics
on $X, Y$, so that $\phi_t$, $\eta_t$ are normalized geodesic flow,  then $x \# y$ is the   length of covectors for the product metric on $X \times Y$, and $(\phi \# \eta)_t$ is again normalized geodesic flow.
\end{example}

\begin{remark} \label{rem:away from corners}
In what follows, we will only make use of this construction in the invariant open locus $T^\circ X \times T^\circ Y\subset T^\circ (X \times Y)$  where $x \# y$  is as smooth as $x$ and $y$. 
\end{remark}

\begin{lemma} \label{lem:three ways of looking at a reeb chord}
Let $V, W \subset T^\circ X$ be conic subsets, and $\phi_t$ the positive contact isotopy on $S^*X$
generated by a time-independent   $\R_{>0}$-equivariant  Hamiltonian $f:T^\circ X \to \R_{>0}$.

Then the following are in length-preserving bijection:
\begin{itemize}
\item Chords for $\phi_t$ from $V$ to $W$
\item Chords for $(\phi \# \phi)_t$ from the conormal of the diagonal to $-V \times W$. 
\end{itemize}

Let us write $\rho$ for the $\phi_t$ distance, and $\rho^\#$ for the $\phi_t \# \phi_t$ distance, and $\Delta$
for the diagonal.  When both are defined, $\rho(x_1, x_2) = \sqrt{2} \rho^{\#}((x_1, x_2), \Delta)$.   If $X$ is compact (or the contact Hamiltonian is bounded below), there is some
$\epsilon \in \R$ such that both distances are always defined in the $\epsilon$ neighborhood of the diagonal, 
hence $B_\epsilon(\Delta)$. 

Chords of length smaller than $\epsilon$ are then additionally in bijection with 
intersections in $T^*(B_\epsilon(\Delta))$ between the graph $d \rho(x_1, x_2)$ 
and $-V \times W$, with the chord length matching the value of $\rho$ at the intersection. 
\end{lemma}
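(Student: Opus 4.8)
The plan is to deduce the claim from the first two assertions of the lemma together with the identification of a positively flowed-out conormal with the positive dilations of the graph of the differential of a neighborhood defining function, which is what Lemma~\ref{lem: positive perturbation of cosphere} and Definition~\ref{def: neighborhood defining function} (in its form for submanifolds) provide.

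First I would record a general observation. Let $\psi_t$ be a positive contact isotopy on $S^*Y$, let $Z\subset Y$ be a closed submanifold with neighborhood defining function $\delta_Z$, let $B\subset T^\circ Y$ be conic, and let $\epsilon$ be below the injectivity radius of $Z$. For $0<t<\epsilon$, Lemma~\ref{lem: positive perturbation of cosphere} and Definition~\ref{def: neighborhood defining function} identify $\psi_t(N^*Z)$ (zero section removed) with the set of positive multiples of $d\delta_Z|_y$ for $y$ in the hypersurface $\{\delta_Z=t\}$. Since these hypersurfaces are pairwise disjoint, a covector of $B$ that lies on $\psi_t(N^*Z)$ for some $t$ does so for exactly one $t$, namely $t=\delta_Z$ evaluated at its base point; hence the $\psi_t$-chords from $N^*Z$ to $B$ of length in $(0,\epsilon)$ are in bijection with $\bigl(\R_{>0}\cdot\mathrm{Graph}(d\delta_Z)\bigr)\cap B$ inside $T^*(B_\epsilon(Z))$, the length of a chord equalling the value of $\delta_Z$ at its base point. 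As $B$ is conic, each $\R_{>0}$-orbit occurring here meets $\mathrm{Graph}(d\delta_Z)$ in a single point over the same base, so this may be rewritten as a bijection with $\mathrm{Graph}(d\delta_Z)\cap B$.

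Now apply this with $Y=X\times X$, $Z=\Delta$, $\psi_t=(\phi\#\phi)_t$ (a positive contact isotopy, as noted just before the lemma), $B=-V\times W$ (conic), the neighborhood defining function of $\Delta$ being $\rho^\#(\cdot,\Delta)$ by definition. It gives a bijection between the $(\phi\#\phi)_t$-chords from the conormal of the diagonal to $-V\times W$ of small length and $\mathrm{Graph}(d\rho^\#(\cdot,\Delta))\cap(-V\times W)$ in $T^*(B_\epsilon(\Delta))$, a chord over $(x_1,x_2)$ having length $\rho^\#((x_1,x_2),\Delta)$. Precomposing with the bijection between the two items of the lemma's itemized list, and invoking the identity $\rho=\sqrt2\,\rho^\#(\cdot,\Delta)$, the corresponding $\phi_t$-chord from $V$ to $W$ over $(x_1,x_2)$ has length $\rho(x_1,x_2)$. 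Finally $\mathrm{Graph}(d\rho)$ and $\mathrm{Graph}(d\rho^\#(\cdot,\Delta))$ differ only by scaling covectors by $\sqrt2$, hence span the same $\R_{>0}$-orbits, so their intersections with the conic set $-V\times W$ coincide as sets, over the same base points. This is the asserted bijection, the chord length over $(x_1,x_2)$ being $\rho(x_1,x_2)$, i.e.\ the value of $\rho$ at the intersection point. (Length-zero chords, i.e.\ points of $V\cap W$, correspond to the intersection of $-V\times W$ with the conormal of the diagonal, which is the fiberwise closure of $\mathrm{Graph}(d\rho)$ along $\Delta$.)

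\textbf{Main obstacle.} I expect the only delicate point to be the bookkeeping of constants and radii. The doubling map underlying the lemma's itemized bijection carries a chord $\gamma:[0,s]\to X$ to the path $\sigma\mapsto(\gamma(s/2-\sigma),\gamma(s/2+\sigma))$, $0\le\sigma\le s/2$, which runs from the diagonal to $(x_1,x_2)$ with $(\phi\#\phi)_t$-arclength $s/\sqrt2$; so that bijection rescales chord length by $\sqrt2$ when passing to the $\phi_t$ side, precisely the factor undone by $\rho=\sqrt2\,\rho^\#$. One must accordingly fix the $\epsilon$ in $B_\epsilon(\Delta)$, the injectivity radius used in the general observation, and the bound on $\phi_t$-chord length compatibly --- most transparently by reading $B_\epsilon(\Delta)$ as $\{\rho<\epsilon\}$ --- so that chords and intersection points match exactly and not merely up to $\sqrt2$. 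Beyond this there is no analytic content: once the neighborhood-defining-function picture of Lemma~\ref{lem: positive perturbation of cosphere} is in hand the argument is formal. (One could instead argue directly on $X$, using that $\rho(x_1,\cdot)$ and $\rho(\cdot,x_2)$ are neighborhood defining functions for $\phi_t$ and for the reversed flow to pin the terminal and initial covectors of a chord to $\pm d\rho$; routing through the diagonal is cleaner because it avoids the negative flow.)
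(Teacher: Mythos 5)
Your treatment of the final assertion is fine, and it is essentially the paper's own argument made explicit: the paper compresses that step into the single remark that the level sets of $\rho$ are the fronts of the flow of the conormal to the diagonal, while you spell it out via Lemma \ref{lem: positive perturbation of cosphere} and Definition \ref{def: neighborhood defining function} (in their submanifold form), conicity of $-V\times W$, and the $\sqrt2$ bookkeeping between $\mathrm{Graph}(d\rho)$ and $\mathrm{Graph}(d\rho^{\#}(\cdot,\Delta))$. That reduction is correct as far as it goes.

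The gap is that you prove only that last paragraph: you explicitly take the first two assertions as inputs ("deduce the claim from the first two assertions of the lemma"), but those are part of the statement and carry the real content, and they are where the paper's proof spends essentially all of its effort. The paper establishes them by computing the doubled flow: $v_{f\#f}=h^{-1}\bigl(f(x_1,\xi_1)\,v_f\oplus f(x_2,\xi_2)\,v_f\bigr)$, with $f v_f$ extending over the zero section as the Hamiltonian field of $\tfrac12 f^2$, so that integral curves of $v_{f\#f}$ are products of $f$-trajectories with time rescaled by $h^{-1}f$; the midpoint trick $(x',\xi')=\gamma(x_1,\xi_1)_{\ell/2}$ then produces the chord from the conormal of the diagonal, the conservation of $f$ along its own flow pins the reparametrization constant to $h^{-1}=\sqrt2/2$, and from this one reads off the bijection, the identity $\rho=\sqrt2\,\rho^{\#}(\cdot,\Delta)$, and (with compactness or boundedness below) the uniform $\epsilon$ that your "general observation" needs as an injectivity radius. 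Your sketch of the doubling map in the obstacle paragraph is not a substitute: the claim that the doubled path has $(\phi\#\phi)_t$-arclength $s/\sqrt2$ is exactly the computation being skipped, and as written the path consists of pairs of base points with the first half merely run backwards --- to start on $N^*\Delta$ and to end in $-V\times W$ the first factor must carry the negated covector, and one must check that this negated, time-reversed half is again a forward trajectory of the same flow, which uses invariance of $f$ under fiberwise negation (true for norm-type Hamiltonians such as geodesic flow, and used silently in the paper's proof as well, e.g.\ when it sets $h=\sqrt2$ at $(x_1,-\xi_1,x_2,\xi_2)$). Until those steps are supplied, the proposal does not prove the lemma, only its final reformulation.
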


\begin{proof}
Let $v_f = \omega^{-1}(df)$ denote the vector field generating $\phi_t$. Let $t\mapsto \gamma(x, \xi)_t$ denote the integral curve of $v_f$ through $(x, \xi) \in T^\circ X$.

Write $(x_1, \xi_1, x_2, \xi_2)$ for a point of $T^\circ(X\times X)$, and 
set $h:=f \# f = (f^2(x_1, \xi_1) + f^2(x_2, \xi_2))^{1/2}$.
By direct calculation,  $(\phi \# \phi)_t$ is generated by the vector field $v_{f\# f} = h^{-1}(f(x_1, \xi_1) v_f \oplus f(x_2, \xi_2) v_f)$.
Although $v_f$ is only defined on $T^\circ X$,  the scaling $f v_f$   $C^0$-extends to $T^*X$, vanishing on the 0-section. Indeed, it is the Hamiltonian vector field for $\frac{1}{2} f^2$ and its integral curve  through $(x, \xi) \in T^* X$ is given by
$t\mapsto \gamma(x, \xi)_{t'}$ where $t' = f(x, \xi) t$.

Thus the integral curve  of $v_{f\# f}$ 
 through $(x_1, \xi_1, x_2, \xi_2),  \in T^\circ(X \times X)$ 
 is given by $t\mapsto \gamma(x_1, \xi_1)_{t_1} \times \gamma(x_1, \xi_2)_{t_2}$
where $t_1 = h^{-1} f(x_1, \xi_1) t$, $t_2 = h^{-1} f(x_2, \xi_2) t$. Note that  $h^{-1} f(x_1, \xi_1)$, $h^{-1} f(x_2, \xi_2)$
are constant along the integral curve, so give a linear reparametrization of time. In particular, if we have
 $f(x_1, \xi_1) = f(x_2, \xi_2) = 1$,  then the reparametrization constant is $h^{-1} = \sqrt{2}/2$.

With the above formulas, all of the assertions are elementary to check. 

For example, suppose $\gamma(x_1, \xi_1)_t =
(x_2, \xi_2)$ at $t= \ell$, i.e.~ provides a chord of $\rho$-length $\ell$ from $(x_1, \xi_1)$ to $(x_2, \xi_2)$. Suppose without loss of generality that $f(x_1, \xi_1) = f(x_2, \xi_2) = 1$ so that $h = \sqrt{2}$. Set $(x', \xi') = \gamma(x_1, \xi_1)_{\ell/2}$. Then the integral curve $t\mapsto \gamma(x', -\xi')_{t_1} \times \gamma(x', \xi')_{t_2}$ lies in the conormal to the diagonal at $t= 0$, and equals $(x_1, -\xi_1, x_2, \xi_2)$ at $t = \frac{\ell}{\sqrt{2}}$,  i.e.~ provides a chord of $\rho^\#$-length $\frac{\ell}{\sqrt{2}}$ from the conormal to the diagonal to $(x_1, -\xi_1, x_2, \xi_2)$. 

Conversely,  given an integral curve $t\mapsto \gamma(x', -\xi')_{t_1} \times \gamma(x', \xi')_{t_2}$, the integral curve 
$t\mapsto  \gamma(x', \xi')_{t - \ell/2}$ provides the inverse construction.

Finally, note that the level-sets of $\rho$ are the fronts of the flow of the conormal to the diagonal. Thus $d\rho$ lies in a subset precisely when the  flow of the conormal  intersects the subset.
\end{proof}

\begin{remark}
In particular, Lemma \ref{lem:three ways of looking at a reeb chord} allows the chord length spectrum
and the gapped condition to be 
reformulated in terms of the graph of the derivative of the distance from the diagonal.
\end{remark}

\begin{remark}
We will want to use the above lemma (and remark) in connection with the ``gapped'' condition of Def.~\ref{def:gapped}, 
and so henceforth we will only consider gappedness with respect to a fixed contact form.  This is solely due 
to the above lemma; if a similar result could be shown for time dependent flows then we could use them throughout.  
\end{remark}

\subsection{Microlocal criterion for comparing mixed restrictions}

Let $X$ be a manifold, and $Y_1, Y_2 \subset X$ transverse submanifolds. Let $Y = Y_1 \cap Y_2$ so we have a Cartesian diagram
\beq\label{eq:key square abstract}
\xymatrix{
\ar[d]_-{i_2} Y \ar[r]^-{i_1}  & Y_1 \ar[d]^-{y_1}\\
Y_2 \ar[r]^-{y_2} & X
}
\eeq
There is a natural map of functors
\beq\label{eq:bc}
\xymatrix{
i_1^* y_1^! \ar[r] &  i_2^! y_2^*
}
\eeq

We write $p:Y \to pt$ for the map to a point, so that after taking global sections, \eqref{eq:bc} becomes: 
 \beq\label{eq:glob bc}
\xymatrix{
p_* i_1^* y_1^! \ar[r] &  p_* i_2^! y_2^*
}
\eeq

We are interested in knowing when \eqref{eq:glob bc} is an isomorphism. We first note that  \eqref{eq:glob bc} can be an isomorphism even when \eqref{eq:bc} fails to be.

\begin{example}
Take $X = \RR^2_{\geq 0} \times \RR$, $Y_1 = \RR_{\geq 0} \times \{0\} \times \RR$, $Y_2 = \{0\}\times \RR_{\geq 0} \times \RR$, $Y =  \{(0,0)\} \times \RR_{\geq 0}$. Let $k_{[0,1)}$ be the constant sheaf on the open-closed interval, so the constant sheaf $k_{(0,1)}$  on the open interval $*$-extended across $0$ and $!$-extended across $1$.  For  any sheaf $\cF$ on $\RR^2_{\geq 0}$, consider the external product $\cF \boxtimes k_{[0,1)}$ on $X$. Then no matter what form \eqref{eq:bc} takes, the terms of \eqref{eq:glob bc} will be zero, hence \eqref{eq:glob bc} will be an isomorphism.  
\end{example}

\begin{remark} 
The functors and  natural transformation in \eqref{eq:bc}, \eqref{eq:glob bc} were classically of interest in the case when $Y$ is the fixed locus
of some `hyperbolic' flow or endomorphism \cite{goresky-macpherson-lefschetz}.  The prototypical case
is that of the gradient flow, where $Y_1$ and $Y_2$ are taken to be the unstable and stable manifolds, respectively.  
Assuming all normal bundles are orientable, we have 
$i_1^* y_1^! \Z = i_2^! y_2^* \Z = \Z[-\mathrm{index}(Y)]$.   
Taking global sections and summing over all fixed loci gives the structure
on which the Morse differential acts (but not the differential).  

A result with many applications in geometric
representation theory is that on a (normal) algebraic
variety with $\mathbb{G}_m$ action, the morphism \eqref{eq:bc} is an isomorphism for equivariant sheaves, and 
moreover the corresponding `Morse differential' vanishes \cite{braden}. 
\end{remark}

\begin{example}
Take $X = \RR^2_{\geq 0}$, $Y_1 = \RR_{\geq 0} \times \{0\}$, $Y_2 = \{0\}\times \RR_{\geq 0}$, $Y = \{(0, 0)\} $. Take $\cF = k_{\Delta_{>0}*}$ to be the standard extension of the constant sheaf on the diagonal $\Delta_{>0} = \{(t, t) | t\in \RR_{> 0}\}$. Then $i_1^* y_1^!\cF \simeq 0$, while  $i_2^! y_2^*\cF \simeq k_Y$.
\end{example}

The following is a natural ``correction" of the issue arising in the prior example.

\begin{lemma}\label{eq: univ lemma}
Take $X = \RR^2_{\geq 0}$, $Y_1 = \RR_{\geq 0} \times \{0\}$, $Y_2 = \{0\}\times \RR_{\geq 0}$, $Y = \{(0, 0)\} $. 

Suppose $\cH\in \sh(X)$ is constant on an open submanifold of  $\mathring X = \RR_{>0}^2$
near to $Y$.
Then the base-change map is an equivalence
\beq
\xymatrix{
i_1^* y_1^!\cH \ar[r]^-\sim &  i_2^! y_2^*\cH
}
\eeq
\end{lemma}

\begin{proof}
It is elementary to check  the  base-change map is an equivalence for $\cH$ supported on $Y_1$ or $Y_2$.
By devissage we are reduced to the case that $\cH$ is the standard extension ($*$-pushforward) of 
$\cH|_{\RR_{>0}^2}$.  By hypothesis this sheaf is constant, and hence so is its extension, making the result obvious. 
 (Compare \cite[Sec. 9]{goresky-macpherson-lefschetz}.) 
\end{proof}

Let us reduce the general situation of  \eqref{eq:glob bc} to the model situation of the lemma.

\begin{prop} \label{prop:hyperbolic basechange} 
Let $X$ be a manifold, and $Y_1, Y_2 \subset X$ transverse submanifolds. Let $Y = Y_1 \cap Y_2$,
and denote maps: 
\beq
\xymatrix{
\ar[d]_-{i_2} Y \ar[r]^-{i_1}  & Y_1 \ar[d]^-{y_1}\\
Y_2 \ar[r]^-{y_2} & X
}
\eeq
Assume that $X$ is compact,
or more generally the support of a sheaf $\cH \in \sh(X)$  is compact. 

Fix a positive contact flow on $S^* X$.  
For $i = 1, 2$, let  $f_i:X \to \RR_{\geq 0}$ be associated neigborhood defining
functions (Def. \ref{def: neighborhood defining function}) for $Y_i$. 
Consider the map $f = f_1 \times f_2: X \to \RR_{\geq 0}^2$.  

Suppose for some $\epsilon$, over $0 < f_1, f_2 < \epsilon$, the intersection 
$$\on{span}(df_1, df_2) \cap \ssupp(\cH) \subset T^*X
$$
 lies in the zero-section.

Then the natural map on global sections is an equivalence 
\beq
\xymatrix{
\Gamma(Y,  i_1^* y_1^!\cH) \ar[r]^-\sim & \Gamma(Y, i_2^! y_2^*\cH)
}
\eeq
\end{prop}\label{prop: bc}

\begin{proof}
We compute the map on global sections after first pushing forward along $f$.  We assumed
$supp(\cH)$ compact; in particular $f$ is proper on it, hence we 
may invoke proper base-change to express \eqref{eq:glob bc} in the form 
 \beq\label{eq:factored bc}
\xymatrix{
 i_1^* y_1^!f_*  \ar[r] & i_2^! y_2^* f_* 
}
\eeq

Properness of $\cH$, the hypothesis on $\on{span}(df_1, df_2)$, and the compatibility of microsupport with pushforwards \cite[Prop. 5.4.4]{kashiwara-schapira} 
ensures that microsupport $\ssupp(f_* \cH) \subset T^* \RR^{2}_{\ge 0}$ in fact lies in the zero-section over $\RR^2_{>0}$.  
This implies local constancy of this pushforward in this region.   
We now apply the prior lemma to $f_* \cH$ and conclude that \eqref{eq:factored bc}, and hence \eqref{eq:glob bc}, evaluated on $\cH$  is an equivalence.
\end{proof}

Note that as we have only been interested in the geometry of $X$ near $Y$,  one is free anywhere to replace $X$ by a small neighborhood of $Y$.

\section{Relative microsupport}

Let $\pi: E \to B$ be a smooth fiber bundle.  We write $T^*\pi$ for the relative cotangent bundle, i.e.
the bundle over $E$ defined by the short exact sequence
$$0 \to \pi^* T^*B \to T^*E \xrightarrow{\Pi} T^* \pi \to 0$$
The fibers of $T^*\pi$ are the relative cotangent spaces: for $e \in E$, 
$$(T^*\pi)_e = T^*_e (E_{\pi(e)})$$
We may also view $T^* \pi$ as
a fiber bundle over $B$, with fibers the cotangent bundles to the fibers of $\pi$: for $b \in B$, 
$$(T^*\pi)_b =  T^*(E_b)$$

In this section, we present some lemmas concerning the microlocal 
theory of sheaves on $E$, on the fibers $E_b$, and on the base $B$. 

Regarding the comparison with the base, we have the following consequence
of the standard estimate on microsupport of a pushforward (\cite[Proposition 5.4.4]{kashiwara-schapira}): 

\begin{definition} \label{def:relatively noncharacteristic} 
We say $\Lambda \subset T^* E$ is $\pi$-noncharacteristic, or synonymously $B$-noncharacteristic, 
if $\Lambda \cap \pi^* T^*B \subset 0_{T^*X}$. 
We say a sheaf $\cF$ on $E$ is
$\pi$-noncharacteristic if its microsupport $\ssupp(\cF)$ is
$\pi$-noncharacteristic, and $\pi$ is proper on the support of $\cF$.  
\end{definition}

\begin{lemma}
If $\cF$ is $\pi$-noncharacteristic, then $\pi_* \cF$ has microsupport
contained in the zero section of $B$ and hence is locally constant. 
\end{lemma}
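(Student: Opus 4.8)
The plan is to read this off directly from the microsupport estimate for proper pushforward, \cite[Prop.~5.4.4]{kashiwara-schapira}, which was cited just before the statement. That result gives $\ssupp(\pi_*\cF) \subset \pi_\pi\bigl(\pi_d^{-1}(\ssupp \cF)\bigr)$, where $\pi_d \colon E \times_B T^*B \to T^*E$ and $\pi_\pi\colon E\times_B T^*B \to T^*B$ are the two canonical maps; here one uses that $\pi$ is proper on $\Supp(\cF)$, so that $\pi_*\cF \cong \pi_!\cF$. The first thing to note is that $E\times_B T^*B$ is precisely the bundle $\pi^*T^*B$ of the defining short exact sequence, that $\pi_d$ is the inclusion $\pi^*T^*B \hookrightarrow T^*E$ from that sequence, and that $\pi_\pi$ is the projection $\pi^*T^*B \to T^*B$.

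Next I would compute $\pi_d^{-1}(\ssupp\cF)$. A point $(e,\beta)\in\pi^*T^*B$ lies in this preimage exactly when the covector $\pi_d(e,\beta)\in T^*_eE$ lies in $\ssupp\cF$; but by construction $\pi_d(e,\beta)$ lies in the subspace $(\pi^*T^*B)_e\subset T^*_eE$, so this can happen only if $\pi_d(e,\beta)\in\ssupp(\cF)\cap\pi^*T^*B$. The hypothesis that $\cF$ is $\pi$-noncharacteristic says exactly that $\ssupp(\cF)\cap\pi^*T^*B\subset 0_{T^*E}$, hence $\pi_d(e,\beta)=0$; since $\pi_d$ is a fiberwise-linear closed embedding this forces $\beta=0$. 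Thus $\pi_d^{-1}(\ssupp\cF)$ is contained in the zero section of $\pi^*T^*B$, and applying $\pi_\pi$ yields $\ssupp(\pi_*\cF)\subset 0_{T^*B}$. Since a sheaf with microsupport in the zero section is locally constant (the characterization recalled in Sect.~\ref{sec: methods}), this proves the lemma.

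The only points that warrant any attention — and I do not expect either to be a genuine obstacle — are, first, carefully matching the abstract maps $f_d$, $f_\pi$ of \cite[\S5.4]{kashiwara-schapira} with the concrete sequence $0\to\pi^*T^*B\to T^*E\xrightarrow{\Pi} T^*\pi\to 0$ introduced above, and second, the standing caveat that we work with possibly unbounded sheaves, so one should invoke the unbounded form of the pushforward estimate (available as discussed in Sect.~\ref{sec: conventions}, using \cite{spaltenstein} where needed). Everything else is formal.
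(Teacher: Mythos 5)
Your proof is correct and is essentially the paper's intended argument: the paper states the lemma as an immediate consequence of \cite[Proposition 5.4.4]{kashiwara-schapira} (it gives no further proof), and your identification of the maps $\pi_d,\pi_\pi$ with the inclusion $\pi^*T^*B\hookrightarrow T^*E$ and the projection to $T^*B$, followed by the observation that the $\pi$-noncharacteristic hypothesis forces $\pi_d^{-1}(\ssupp\cF)$ into the zero section, is exactly the intended deduction. The caveats you flag (matching $f_d,f_\pi$ with the cotangent sequence, and using the unbounded form of the estimate per the paper's conventions) are handled correctly and pose no obstacle.
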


The interaction with the fiber will require more subtle
microsupport estimates, recalled in the next subsection.

\subsection{Microsupport estimates} \label{app:hats}

We recall from \cite[Chap. 6]{kashiwara-schapira} the standard estimates
on how microsupport interacts with various functors. 

These are expressed in terms a certain operations on conical
subsets.  
A special case is the $\hat{+}$ construction.  It is an operation
on conical subsets of $T^*X$, the result of which is larger than the naive sum.   It is defined in \cite[Chap. 6.2]{kashiwara-schapira} in 
terms of a normal cone
construction.  In practice, one uses the equivalent 
pointwise characterization 
\cite[6.2.8.ii]{kashiwara-schapira}, which we will just take as the definition.   

\begin{definition}
Given conical $A, B \subset T^*X$, 
a point $(x, \xi) \in A \hat{+} B$ if, in local coordinates, 
there are sequences 
$(a_n, \zeta_n) \in A$ and $(b_n, \eta_n) \in B$ with 
$a_n, b_n \to x$ and $\eta_n + \zeta_n \to \xi$, satisfying
the estimate $|a_n - b_n| |\eta_n| \to 0$ (or equivalently
$|a_n -  b_n||\zeta_n| \to 0$).
\end{definition}

Evidently $a_n, b_n \to x$ implies
that $|a_n - b_n| \to 0$.  So certainly sequences with
bounded $\zeta_n$ satisfy the estimate.  Of course,
in this case passing
to a subsequence gives $(a_n, \zeta_n)$ convergent
in $A$, hence the element $(x, \xi)$ would be already in 
$A + B$.  The additional points of $A \hat{+} B$ arise when 
$|\zeta_n| \to \infty$, in a manner controlled by 
$|a_n - b_n| |\zeta_n| \to 0$.  

\begin{remark}
Omitting the condition $|a_n - b_n| |\zeta_n| \to 0$ would result in a larger $\hat{+}$, for
which all the below estimates would be true, but weaker --
the term with the $\hat{+}$ is always used as an upper bound.  However, these 
weaker statements would still be  strong enough for our purposes here: we never actually use
the constraint $|a_n -  b_n||\zeta_n| \to 0$.
\end{remark}

\begin{example}
Consider the loci $A$ and $B$ given by the conormals to 
the $x$ and $y$ axes in $\RR^2_{x,y}$.  Then $A \hat{+} B
= A + B$ consists of the union of these conormals and the conormal
to the origin.  
\end{example}

\begin{example}
Consider the loci $A$ and $B$ given by the conormals to 
$\{y = 0\}$ and $\{y = x^2\}$ in 
$\RR^2_{x,y}$.  Then $A + B$ is just the union of these 
conormals, whereas $A \hat{+} B$ is the union of these 
with the conormal to the origin.  
\end{example}

A related operation arises in the context of a map $f: Y \to X$.  
Denote the natural maps 
$T^*Y \xleftarrow{df} f^* T^*X \xrightarrow{f_\pi} T^* X$. 

\begin{definition}
Given conic $A \subset T^*X$ and $B \subset T^*Y$, 
then $(y, \eta) \in f^{\#}(A, B)$ if (in coordinates) there is a sequence 
$(x_n, \xi_n) \times (y_n, \eta_n) \in A \times B$ with
$y_n \to y$, $x_n \to f(y)$, and 
$df_{y_n}(\xi_n) - \eta_n \to \eta$, while respecting
the estimate $|x_n - f(y_n)| |\xi_n| \to 0$.  

One writes $f^\#(A) := f^\#(A, T_Y^*Y)$.  The $\hat{+}$ construction
is the special case $A \hat{+} B = id^\#(A, -B)$. 
\end{definition}

Just as $A + B \subset A \hat{+} B$, we have 
$- B + df(f_{\pi}^{-1}(A)) \subset f^\#(A, B)$ as the locus where the 
$\xi_n$ remain bounded.  

\begin{example}
Of particularly frequent use is the case when $f$ is (locally) a closed
embedding.  We take coordinates 
$(z, y, \zeta, \eta)$ on $T^*X$, with $y$ the coordinates on $Y$, 
$\zeta$ the directions conormal to $Y$, and $\eta$ the directions
cotangent to $Y$.  Then 
a point $(y, \eta)$ is in $f^\#(A) \subset T^*Y$ if there 
is a sequence $(z_n, y_n, \zeta_n, \eta_n) \in A \subset T^*X$ with 
$(y_n, \eta_n) \to (y, \eta)$ while $z_n \to 0$ and 
$|z_n||\zeta_n| \to 0$.  
\end{example}

These constructions are useful to describe how microsupport
is affected by various functors.  

\begin{lemma} \label{standardestimates} Some microsupport estimates: 
\begin{itemize}
\item \cite[Theorem 6.3.1]{kashiwara-schapira} For $j: U \to X$ an open inclusion, and $\cF \in \sh(U)$
$$ss(j_* \cF) \subset ss(\cF) \hat{+} N^+ U \qquad \qquad 
ss(j_! \cF) \subset ss(\cF) \hat{+} N^- U$$
In particular, if $U$ is the complement of a closed submanifold
$Y \subset X$, then \cite[Proposition 6.3.2]{kashiwara-schapira}
$$ss(j_* \cF)|_Y \subset ss(\cF) \hat{+} T_Y^* X \qquad \qquad 
ss(j_! \cF)|_Y \subset ss(\cF) \hat{+} T_Y^* X$$
\item \cite[Cor. 6.4.4]{kashiwara-schapira} For $f: Y \to X$, 
$$ss(f^* \cF) \subset f^\#(ss(\cF)) \qquad \qquad ss(f^! \cF) \subset 
f^\#(ss(\cF))$$
\item \cite[Cor 6.4.5]{kashiwara-schapira} For sheaves $\cF, \cG$
$$ss(\cF \otimes \cG) \subset ss(\cF) \hat{+} ss(\cG) \qquad \qquad 
ss(\inthom(\cF, \cG)) \subset -ss(\cF) \hat{+} ss(\cG)$$
\end{itemize}
\end{lemma}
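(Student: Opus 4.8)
\emph{Approach.} The four containments are, in order, \cite[Thm.~6.3.1]{kashiwara-schapira} (together with its consequence \cite[Prop.~6.3.2]{kashiwara-schapira} for the complement of a closed submanifold), \cite[Cor.~6.4.4]{kashiwara-schapira}, and \cite[Cor.~6.4.5]{kashiwara-schapira}. In the bounded derived category there is nothing to do beyond quoting these. The only genuine content is the passage to the unbounded --- indeed stable $\infty$-categorical --- setting in which $\sh(M)$ is taken here, whereas \cite{kashiwara-schapira} works with bounded complexes throughout. My plan is therefore \emph{not} to reprove the estimates, but to observe that the proofs of \cite[6.3.1, 6.4.4, 6.4.5]{kashiwara-schapira} are formal consequences of a short list of inputs --- the elementary functoriality of $ss$ under the six operations and under exact triangles, proper base change \cite[2.6.7]{kashiwara-schapira}, and the (micro-)local non-characteristic deformation lemmas \cite[2.7.2, 5.4.17--5.4.19]{kashiwara-schapira} --- and to check that each of these survives dropping boundedness, after which the original arguments apply verbatim.

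\emph{Steps.} First I would record that $ss(\cF)$ is a closed conic set, and likewise that $A \hat{+} B$ and $f^{\#}(A,B)$ are closed conic sets (a diagonal argument from the pointwise characterisations recalled above); this is what lets one argue locally on $M$ and microlocally on $T^*M$, and shows that nothing in the estimates is sensitive to the closure-taking built into $\hat{+}$ and $f^{\#}$. Second, I would verify the foundational inputs in the unbounded setting: the elementary behaviour of $ss$ under $\otimes$, $\inthom$, $f^{*}$, $f^{!}$, $j_{!}$, $j_{*}$ and exact triangles is insensitive to boundedness once the sheaf operations are set up $\infty$-categorically (for which see \cite{luriehttpub, lurieha}); proper base change for unbounded complexes is handled by $K$-flat (Spaltenstein) resolutions \cite{spaltenstein}; and the one step in the non-characteristic deformation circle that truly needs a new argument in the unbounded setting is supplied by \cite{robalo-schapira}. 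Third, with these in hand the proofs of \cite[6.3.1, 6.4.4, 6.4.5]{kashiwara-schapira} go through line by line: one reduces $j_{*}$ and $j_{!}$ to stalk and costalk computations via the open--closed triangle, bootstraps to general $f^{*}$, $f^{!}$ through the factorisation of $f$ into a closed embedding followed by a submersion, and finally obtains the $\otimes$ and $\inthom$ estimates by K\"unneth and restriction to the diagonal. (Alternatively, and more economically, one may simply cite \cite{robalo-schapira}, where precisely these operation estimates are re-established in the unbounded, stable $\infty$-categorical generality we need.)

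\emph{Main obstacle.} Essentially all the work is in the second step, and there in exactly the two places flagged in Sect.~\ref{sec: conventions}: the use of proper base change (one cannot resolve unbounded complexes by injectives or flabby sheaves and must instead invoke \cite{spaltenstein}) and the single deformation-lemma input requiring \cite{robalo-schapira}. The rest is bookkeeping --- matching the formal structure of the \cite{kashiwara-schapira} proofs against the $\infty$-categorical six-functor formalism. The one point where one must stay alert is to never route any sub-argument through an iterated Verdier duality, since that --- unlike proper base change and deformation --- is a genuinely bounded phenomenon; the estimates of this lemma do not, and one should keep it that way.
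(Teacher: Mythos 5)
Your proposal is correct and matches the paper's treatment: the paper gives no proof of this lemma beyond citing \cite[Thm.~6.3.1, Prop.~6.3.2, Cor.~6.4.4, Cor.~6.4.5]{kashiwara-schapira}, with the passage to unbounded/stable $\infty$-categorical sheaves handled exactly as you describe (via \cite{spaltenstein} for proper base change and \cite{robalo-schapira} for the noncharacteristic deformation input, as noted in Sect.~\ref{sec: conventions}). Your additional bookkeeping about closedness of $\hat{+}$ and $f^{\#}$ and avoiding double Verdier duality is consistent with, though more detailed than, what the paper says.
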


\subsection{Relative microsupport} \label{sec: relative microsupport}

We return to our 
smooth fiber bundle $\pi: E \to B$.  Recall we write 
$\Pi: T^*E \to T^*\pi$ for the projection 
to the relative cotangent bundle.  

\begin{definition}
For $\cF \in \sh(E)$, we define 
the relative microsupport to be the conical locus in $T^* \pi$ given by 
$$ss_{\pi}(\cF) := \overline{\Pi(ss(\cF))}$$
\end{definition}

The definition is motivated by the following connection with the $\hat{+}$ construction. 

\begin{lemma} \label{lem:addrelcotpushpull}
Let $\pi: E \to B$ be a fiber bundle, and let $\Lambda \subset T^*E$
be conical.  Let $\Pi: T^*E \to T^* \pi$ be the natural projection to
the relative cotangent bundle.  Then 
$$\Lambda \hat{+} \pi^* T^*B = \Pi^{-1}(\overline{\Pi(\Lambda)})$$
\end{lemma}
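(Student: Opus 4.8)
The statement is a pointwise identity of conical subsets of $T^*E$, so I would prove it by checking the two inclusions fiberwise over a point $e \in E$, working in local coordinates $(x,t)$ on $E$ adapted to the bundle $\pi$, where $x$ are coordinates on the base $B$ and $t$ are coordinates on the fiber $E_b$. In these coordinates a covector is written $(x,t;\xi,\tau)$, the subbundle $\pi^*T^*B \subset T^*E$ is $\{\tau = 0\}$, the projection $\Pi$ forgets $\xi$ (i.e.\ $\Pi(x,t;\xi,\tau) = (x,t;\tau)$), and $\Pi^{-1}(\overline{\Pi(\Lambda)})$ is the set of $(x,t;\xi,\tau)$ such that $(x,t;\tau) \in \overline{\Pi(\Lambda)}$, i.e.\ there are $(x_n,t_n;\xi_n,\tau_n) \in \Lambda$ with $(x_n,t_n;\tau_n) \to (x,t;\tau)$, the $\xi_n$ being completely unconstrained.

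\textbf{Inclusion $\subseteq$.} Take a point of $\Lambda \hat{+} \pi^*T^*B$. By the pointwise characterization of $\hat{+}$ (the definition imported from \cite[6.2.8.ii]{kashiwara-schapira} in the excerpt), it arises as a limit $(x,t;\xi,\tau)$ where $(a_n; \zeta_n) \in \Lambda$, $(b_n;\eta_n) \in \pi^*T^*B$ (so the fiber-component of $\eta_n$ vanishes), $a_n,b_n \to (x,t)$, $\zeta_n + \eta_n \to (\xi,\tau)$, and $|a_n - b_n||\zeta_n| \to 0$. Write $\zeta_n = (\xi_n^\Lambda, \tau_n^\Lambda)$ in base/fiber components; since $\eta_n$ has zero fiber component, $\tau_n^\Lambda \to \tau$. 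As the base point $a_n \to (x,t)$ and $\tau_n^\Lambda \to \tau$, the sequence $\Pi(a_n;\zeta_n) = (a_n;\tau_n^\Lambda) \in \Pi(\Lambda)$ converges to $(x,t;\tau)$, so $(x,t;\tau) \in \overline{\Pi(\Lambda)}$, hence $(x,t;\xi,\tau) \in \Pi^{-1}(\overline{\Pi(\Lambda)})$. (Note this direction does not even use the estimate $|a_n-b_n||\zeta_n|\to 0$, consistent with the remark in the excerpt.)

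\textbf{Inclusion $\supseteq$.} This is the direction requiring a small construction and I expect it to be the main (mild) obstacle — the issue is to realize an \emph{arbitrary} $\xi$ as the limit of base-components while keeping the $\hat{+}$ estimate satisfied, which forces the auxiliary sequence $\eta_n \in \pi^*T^*B$ to be large. Given $(x,t;\xi,\tau) \in \Pi^{-1}(\overline{\Pi(\Lambda)})$, pick $(a_n;\zeta_n) = (a_n; \xi_n, \tau_n) \in \Lambda$ with $a_n \to (x,t)$ and $\tau_n \to \tau$. Set $b_n := a_n$ (so $|a_n - b_n| = 0$ and the estimate $|a_n-b_n||\zeta_n|\to 0$ holds trivially, regardless of how $\zeta_n$ behaves), and set $\eta_n := (\xi - \xi_n, 0) \in \pi^*T^*B$ at the base point $b_n$; this is a legitimate element of $\pi^*T^*B$ since its fiber component vanishes. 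Then $\zeta_n + \eta_n = (\xi, \tau_n) \to (\xi,\tau)$, $a_n,b_n \to (x,t)$, and the estimate holds, so $(x,t;\xi,\tau) \in \Lambda \hat{+} \pi^*T^*B$ by the pointwise definition. Finally I would note that both sides are automatically closed (the left by definition of $\hat{+}$ as giving closed sets, the right as a preimage of a closure), so the fiberwise set-equality assembles to the claimed global identity of conical subsets; a sentence confirming coordinate-independence — $\pi^*T^*B$ and $\Pi$ are intrinsically defined by the short exact sequence $0 \to \pi^*T^*B \to T^*E \xrightarrow{\Pi} T^*\pi \to 0$, and $\hat{+}$ is coordinate-independent by \cite[Chap.~6.2]{kashiwara-schapira} — closes the argument.
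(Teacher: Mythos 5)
Your proof is correct and follows essentially the same route as the paper's: work in local bundle coordinates, note that the auxiliary point can be taken equal to the point of $\Lambda$ (so the $\hat{+}$ estimate is vacuous) and that the base-covector component of the auxiliary sequence is unconstrained, leaving only convergence of the fiber-covector components, i.e.\ membership in $\overline{\Pi(\Lambda)}$. The paper merely compresses your two inclusions into a single ``iff'' manipulation, so there is nothing substantive to add.
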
 
\begin{proof}
The assertion is local on $E$; choose local coordinates $(x, e)$ 
with $x$ the coordinates along $B$ and $e$ the bundle coordinates. 
Let $(x,e,\xi, \eta)$ be corresponding
coordinates on $T^*E$.

Then $(x,e,\xi, \eta) \in \Lambda \hat{+} \pi^* T^* B$ iff there is 
a sequence $(x_n, e_n, \xi_n, \eta_n) \in \Lambda$ and some
$(x'_n, e'_n, \xi'_n, 0)$ such that $x_n, x'_n \to x$ and
$e_n, e'_n \to e$ and $\xi_n + \xi'_n \to \xi$ and $\eta_n \to \eta$,
and some estimate holds.  But we may as well take $x_n = x'_n$ and
$e_n = e'_n$, so the estimate is vacuous.  The condition on 
$\xi_n$ is vacuous as well, since $\xi'_n$ can be chosen arbitrarily.  
Thus the condition with content is $\eta_n \to \eta$.  I.e.,
$(x, e, \eta)$ is a limit point of $\Pi(\Lambda)$. 
\end{proof}

Suppose given in addition some submanifold $\alpha: A \subset B$.
We write $\pi_A: E_A \to A$ for the restricted bundle. 
By abuse of notation we write  $\alpha: E_A \subset E_B$.
We denote by $\Pi_A: T^*E_A \to T^* \pi_A$ for the relative cotangent bundle.
Note the natural identification $T^*\pi_A = T^*\pi|_{E_A}$. 
In this setting, we may write $\pi_B := \pi$ and $\Pi_B:= \Pi$ for
clarity.  

We have the following estimate on images in the relative
cotangent: 

\begin{lemma}
For $\Lambda \subset T^*E$ conical, we have
$\Pi_A(\alpha ^\# \Lambda) \subset \overline{\Pi_B(\Lambda)}|_{E_A}$. 
\end{lemma}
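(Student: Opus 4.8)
The plan is to reduce to a direct computation in local coordinates adapted simultaneously to the bundle $\pi$ and the submanifold $A$, since all the operations in play ($\alpha^\#$, $\Pi_A$, and image-plus-closure under $\Pi_B$) are local on $E$. Near a point of $E_A$ I would trivialize $E$ so that, in coordinates $(x', x'', e)$, the map $\pi$ is the projection $(x', x'', e) \mapsto (x', x'')$ onto $B$ while $A = \{x' = 0\} \subset B$; then $E_A = \{x' = 0\}$ has coordinates $(x'', e)$ and $\alpha\colon E_A \hookrightarrow E$ is the model closed embedding. Writing $(x', x'', e;\, \xi', \xi'', \mu)$ for the induced coordinates on $T^*E$, I would record: (i) $\pi^* T^*B = \{\mu = 0\}$ and $\Pi = \Pi_B$ is the projection $(x', x'', e;\, \xi', \xi'', \mu) \mapsto (x', x'', e;\, \mu)$; (ii) the conormal directions to $E_A$ inside $E$ are exactly the $\xi'$-directions, a sub-collection of the base-cotangent directions $(\xi', \xi'')$; (iii) $\Pi_A$ is $(x'', e;\, \xi'', \mu) \mapsto (x'', e;\, \mu)$, and under the identification $T^*\pi_A = T^*\pi|_{E_A}$ this target is the point $(0, x'', e;\, \mu)$.

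Next I would feed these into the pointwise description of $\alpha^\#$ for a closed embedding recorded in the Example following Lemma~\ref{standardestimates}: a point $((x''_0, e_0);\, (\xi''_0, \mu_0)) \in T^*E_A$ lies in $\alpha^\#(\Lambda)$ exactly when there is a sequence $(x'_n, x''_n, e_n;\, \xi'_n, \xi''_n, \mu_n) \in \Lambda$ with $(x''_n, e_n) \to (x''_0, e_0)$, $(\xi''_n, \mu_n) \to (\xi''_0, \mu_0)$, $x'_n \to 0$, and $|x'_n|\,|\xi'_n| \to 0$. Applying $\Pi$ to this sequence produces points $(x'_n, x''_n, e_n;\, \mu_n) \in \Pi(\Lambda) = \Pi_B(\Lambda)$, which by (i)--(iii) converge to $(0, x''_0, e_0;\, \mu_0)$; this limit lies over $E_A$ and is a limit point of $\Pi_B(\Lambda)$, hence lies in $\overline{\Pi_B(\Lambda)}|_{E_A}$, and it is precisely the image under $\Pi_A$ (followed by $T^*\pi_A = T^*\pi|_{E_A}$) of the point we started from. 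As $((x''_0, e_0);\, (\xi''_0, \mu_0))$ was an arbitrary point of $\alpha^\#(\Lambda)$, this yields the desired inclusion $\Pi_A(\alpha^\# \Lambda) \subset \overline{\Pi_B(\Lambda)}|_{E_A}$.

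I expect no genuine obstacle here; the only steps needing care are the bookkeeping that produces coordinates adapted to $\pi$ \emph{and} to $A$ at once --- this is exactly what makes the conormal directions to $E_A$ a sub-collection of the base-cotangent directions, so that $\Pi$ annihilates them and the possibly unbounded $\xi'_n$ simply disappear --- and the natural identification $T^*\pi_A \cong T^*\pi|_{E_A}$. As flagged in the Remark following the definition of $\hat{+}$, the estimate $|x'_n|\,|\xi'_n| \to 0$ is not used. One may also argue invariantly: $\alpha^\#(\Lambda)$ for the closed embedding $\alpha$ is contained in the image of $(\Lambda \mathbin{\hat{+}} T^*_{E_A}E)|_{E_A}$ under $T^*E|_{E_A} \to T^*E_A$, and since $T^*_{E_A}E \subset \pi^* T^*B$ and $\Pi_A$ composes with $T^*E|_{E_A} \to T^*E_A$ to give $\Pi|_{E_A}$, one obtains $\Pi_A(\alpha^\# \Lambda) \subset \Pi(\Lambda \mathbin{\hat{+}} \pi^* T^*B)|_{E_A} = \Pi(\Pi^{-1}(\overline{\Pi(\Lambda)}))|_{E_A} \subset \overline{\Pi_B(\Lambda)}|_{E_A}$, the middle equality being Lemma~\ref{lem:addrelcotpushpull}; but the coordinate computation above is the most transparent route.
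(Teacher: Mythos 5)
Your proof is correct and is essentially the paper's own argument: the same coordinates adapted to $\pi$ and $A$ (your $x'$, $x''$, $e$, $\xi'$, $\xi''$, $\mu$ play the roles of the paper's $z$, $y$, $e$, $\zeta$, $\gamma$, $\eta$), the same use of the pointwise characterization of $\alpha^\#$ for a closed embedding, and the same observation that projecting the witnessing sequence to $T^*\pi_B$ exhibits the desired limit point, with the estimate $|x'_n||\xi'_n|\to 0$ unused. The invariant variant via $\Lambda \mathbin{\hat{+}} T^*_{E_A}E \subset \Lambda \mathbin{\hat{+}} \pi^*T^*B$ and Lemma \ref{lem:addrelcotpushpull} is a fine additional packaging but not needed.
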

\begin{proof}
We take local coordinates  $(y,z,e,\gamma, \zeta, \eta)$ on $T^*E$,
where $y$ are coordinates on $A$, the $z$ are coordinates 
on $B$ in the normal directions to $A$, the $e$ are bundle coordinates,
and the $\gamma, \zeta, \eta$ are corresponding cotangent
coordinates. We use corresponding notations for coordinates on related spaces.

Consider a point 
$(y, e, \gamma, \eta) \in \alpha^\# \Lambda \subset T^*E_A$.  
By definition, there
must be a sequence $(y_n, z_n, e_n, \gamma_n, \zeta_n, \eta_n)
\in \Lambda$
with 
$(y_n, e_n, \gamma_n, \eta_n) \to (y, e, \gamma, \eta)$ and
$z_n \to 0$ (and also $|z_n||\zeta_n| \to 0$, though we won't use it). 

The image of $(y, e, \gamma, \eta)$ in $T^* \pi_A$ is 
$(y, e, \eta)$.  We must show this point is already in 
$\overline{\Pi_B(\Lambda)}$.  Thus consider the image 
of the above sequence in $T^*\pi_B$, i.e.  
$(y_n, z_n, e_n, \eta_n)$.  By the above hypotheses, this 
converges to $(y, 0, e, \eta)$.  
\end{proof} 

This lemma implies that the relative microsupport interacts well with restriction to submanifolds
of the base. 

\begin{lemma} \label{lem:bundlerestrictionestimate}
For $\cF \in \sh(E)$, we have $ss_\pi(\alpha^* \cF) \subset ss_\pi(\cF)|_{E_A}$, 
and $ss_\pi(\alpha^! \cF) \subset ss_\pi(\cF)|_{E_A}$.  
\end{lemma}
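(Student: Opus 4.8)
The plan is to deduce the statement formally from two facts already available: the standard microsupport estimate for pullback along a closed embedding, and the preceding lemma describing how $\alpha^{\#}$ sits inside the relative cotangent bundle. Throughout, $\alpha$ also denotes the closed embedding $E_A \hookrightarrow E$ induced on total spaces, so that $\alpha^{*}\cF$ and $\alpha^{!}\cF$ are the usual pullbacks.

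First I would apply Lemma \ref{standardestimates} (the pullback estimate, \cite[Cor.~6.4.4]{kashiwara-schapira}) to the closed embedding $\alpha\colon E_A \hookrightarrow E$, which gives
\[
ss(\alpha^{*}\cF)\subset \alpha^{\#}(ss(\cF)), \qquad ss(\alpha^{!}\cF)\subset \alpha^{\#}(ss(\cF)).
\]
Pushing these inclusions forward under $\Pi_A\colon T^{*}E_A \to T^{*}\pi_A$ and then invoking the preceding lemma with $\Lambda = ss(\cF)$ yields
\[
\Pi_A\big(ss(\alpha^{*}\cF)\big)\subset \Pi_A\big(\alpha^{\#}ss(\cF)\big)\subset \overline{\Pi_B(ss(\cF))}\big|_{E_A}=ss_{\pi}(\cF)\big|_{E_A},
\]
and identically with $\alpha^{!}$ in place of $\alpha^{*}$.

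It remains to pass to closures. By definition $ss_{\pi}(\alpha^{*}\cF)=\overline{\Pi_A(ss(\alpha^{*}\cF))}$, so it suffices to note that $ss_{\pi}(\cF)|_{E_A}$ is already closed in $T^{*}\pi_A$: under the identification $T^{*}\pi_A = T^{*}\pi|_{E_A}$ recalled above, $ss_{\pi}(\cF)|_{E_A}$ is the intersection of the closed subset $ss_{\pi}(\cF)=\overline{\Pi_B(ss(\cF))}$ of $T^{*}\pi$ with the closed subspace $T^{*}\pi|_{E_A}$, hence closed. Taking closures in the displayed inclusion therefore gives $ss_{\pi}(\alpha^{*}\cF)\subset ss_{\pi}(\cF)|_{E_A}$, and the same for $\alpha^{!}$. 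I do not anticipate any real obstacle; the only point needing a moment's care is this closedness of the restriction, i.e.\ that restricting an already-closed conic subset to $E_A$ requires no further saturation.
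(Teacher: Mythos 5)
Your proof is correct and takes essentially the same route as the paper's: apply the Kashiwara--Schapira pullback estimate $ss(\alpha^{*}\cF)\subset\alpha^{\#}ss(\cF)$ (and likewise for $\alpha^{!}$), then feed $\Lambda=ss(\cF)$ into the preceding lemma $\Pi_A(\alpha^{\#}\Lambda)\subset\overline{\Pi_B(\Lambda)}|_{E_A}$. Your closing remark about the closedness of $ss_\pi(\cF)|_{E_A}$ merely makes explicit a step the paper leaves implicit, so there is nothing further to add.
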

\begin{proof}
What is being asserted is that 
$\Pi_A(ss(\alpha^* \cF)) \subset \overline{\Pi_B(ss(\cF))}|_{E_A}$
and $\Pi_A(ss(\alpha^! \cF)) \subset \overline{\Pi_B(ss(\cF))}|_{E_A}$.  This
follows from the previous lemma and the 
estimate $ss(\alpha^* \cF) \subset \alpha^\# ss(\cF)$. 
\end{proof}

\begin{corollary} \label{pointrestrictionestimate}
In particular, if $P \in B$ is a point, 
 $ss(\cF|_{E_P}) \subset ss_\pi(\cF)|_{E_P}$. 
\end{corollary}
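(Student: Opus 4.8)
The plan is to deduce this directly from Lemma~\ref{lem:bundlerestrictionestimate} by taking the submanifold $\alpha : A \subset B$ to be the single point $P$. First I would spell out what the objects in that lemma become in this case. The restricted bundle is $\pi_P : E_P \to \{P\}$, all of whose fibers equal $E_P$ itself, so the relative cotangent bundle $T^*\pi_P$ is canonically the full cotangent bundle $T^*(E_P)$, and the projection $\Pi_P : T^*E_P \to T^*\pi_P$ is the identity map. Under the identification $T^*\pi_P = T^*\pi|_{E_P} = T^*(E_P)$ recorded just before the lemma, the sheaf $\alpha^*\cF$ is exactly the restriction $\cF|_{E_P}$.

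Next I would observe that, since $\Pi_P = \id$ and microsupport is always a closed subset, the relative microsupport in this situation reduces to the ordinary one:
\[
ss_{\pi_P}(\cF|_{E_P}) = \overline{\Pi_P(ss(\cF|_{E_P}))} = \overline{ss(\cF|_{E_P})} = ss(\cF|_{E_P}).
\]
Finally, specializing Lemma~\ref{lem:bundlerestrictionestimate} to $A = \{P\}$ gives $ss_{\pi_P}(\alpha^*\cF) \subset ss_\pi(\cF)|_{E_P}$; combining this with the previous display yields $ss(\cF|_{E_P}) \subset ss_\pi(\cF)|_{E_P}$, which is the assertion.

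There is essentially no obstacle here: the only thing requiring a moment of care is the bookkeeping of the various identifications of cotangent bundles (absolute versus relative over a point, and restriction along $E_P \hookrightarrow E$ versus pullback along $\pi_P$), all of which are the standard compatibilities already arranged in the paragraph introducing $\pi_A$ and $\Pi_A$. As an alternative, one could argue from scratch using Lemma~\ref{lem:addrelcotpushpull} together with the microsupport estimate $ss(\alpha^*\cF) \subset \alpha^\# ss(\cF)$ of Lemma~\ref{standardestimates}, but routing through Lemma~\ref{lem:bundlerestrictionestimate} makes the corollary immediate.
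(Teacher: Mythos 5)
Your proof is correct and is exactly the intended argument: the paper states the corollary without a separate proof precisely because it is the specialization of Lemma~\ref{lem:bundlerestrictionestimate} to $A=\{P\}$, where $T^*\pi_P = T^*(E_P)$, $\Pi_P$ is the identity, and closedness of microsupport makes the relative microsupport of $\cF|_{E_P}$ coincide with $ss(\cF|_{E_P})$. Nothing further is needed.
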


\begin{corollary} \label{cor:relativehomestimate}
Let $\pi_1, \pi_2: E \times_B E \to E$ denote the projections to the factors.  For 
sheaves $\cF_1, \cF_2$ on $E$, consider the relative external hom
$$\inthom_{E \times_B E}(\pi_1^* \cF_1, \pi_2^! \cF_2) \in \sh(E \times_B E)$$
Let $\tilde{\pi}: E \times_B E \to B$ be the structure map, and 
$\widetilde{\Pi}: T^*(E \times_B E) \to T^*\tilde{\pi}$ the projection
to the relative cotangent.   Then

$$ss_{\tilde{\pi}}(\inthom_{E \times_B E}(\pi_1^* \cF_1, \pi_2^! \cF_2)) \subset 
- ss_\pi(\cF_1) \boxplus ss_\pi(\cF_2)$$
\end{corollary}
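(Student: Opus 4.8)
The plan is to combine the internal-hom microsupport estimate with the pullback estimates of Lemma~\ref{standardestimates}, and then to observe that the only ``mixing'' produced by the $\mathbin{\hat{+}}$ operation occurs in the cotangent directions which the projection $\widetilde{\Pi}$ to the relative cotangent bundle forgets. Here $\pi_1,\pi_2\colon E\times_B E\to E$ denote the two factor projections; being base changes of $\pi$, they are fibre bundles, hence submersions. The last item of Lemma~\ref{standardestimates} gives
\[
ss\bigl(\inthom_{E\times_B E}(\pi_1^*\cF_1,\pi_2^!\cF_2)\bigr)\ \subset\ -\,ss(\pi_1^*\cF_1)\ \mathbin{\hat{+}}\ ss(\pi_2^!\cF_2),
\]
and I would then use the pullback estimates $ss(\pi_1^*\cF_1)\subset\pi_1^\#(ss(\cF_1))$, $ss(\pi_2^!\cF_2)\subset\pi_2^\#(ss(\cF_2))$. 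Because $\pi_1,\pi_2$ are submersions, these $\#$-pullbacks carry no $\mathbin{\hat{+}}$-correction: in local coordinates $(x,u_1,u_2)$ on $E\times_B E$ adapted to $\tilde\pi$ (so $x$ are coordinates on $B$, the $u_i$ are fibre coordinates, and $\pi_i(x,u_1,u_2)=(x,u_i)$), a direct check from the definition of $\#$ gives
\begin{align*}
ss(\pi_1^*\cF_1)&\subset\{(x,u_1,u_2;\,\xi,\upsilon_1,0)\ :\ (x,u_1;\,\xi,\upsilon_1)\in ss(\cF_1)\},\\
ss(\pi_2^!\cF_2)&\subset\{(x,u_1,u_2;\,\xi',0,\upsilon_2)\ :\ (x,u_2;\,\xi',\upsilon_2)\in ss(\cF_2)\}.
\end{align*}

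Next I would unwind the pointwise description of $\mathbin{\hat{+}}$ for these two conic sets. A point $(x,u_1,u_2;\,\Xi,\Upsilon_1,\Upsilon_2)$ of $-ss(\pi_1^*\cF_1)\mathbin{\hat{+}}ss(\pi_2^!\cF_2)$ is, by definition, a limit of sums $(-\xi_n+\xi'_n,\,-\upsilon_{1,n},\,\upsilon_{2,n})$ with $(a^x_n,a^{u_1}_n;\,\xi_n,\upsilon_{1,n})\in ss(\cF_1)$, $(b^x_n,b^{u_2}_n;\,\xi'_n,\upsilon_{2,n})\in ss(\cF_2)$, and all base points converging to $(x,u_1,u_2)$. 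Since the $\upsilon_1$- and $\upsilon_2$-slots of the two summands are disjoint, convergence of the sum forces $\upsilon_{1,n}\to-\Upsilon_1$ and $\upsilon_{2,n}\to\Upsilon_2$ individually; only the $\xi$-slot mixes the two sequences, and that is precisely the slot discarded by $\widetilde{\Pi}\colon T^*(E\times_B E)\to T^*\tilde\pi$. (In particular the estimate $|a_n-b_n|\,|\eta_n|\to0$ built into the definition of $\mathbin{\hat{+}}$ is never used.) Projecting the two sequences into $T^*\pi$ and passing to the limit therefore yields $(x,u_1;\,-\Upsilon_1)\in\overline{\Pi(ss(\cF_1))}=ss_\pi(\cF_1)$ and $(x,u_2;\,\Upsilon_2)\in ss_\pi(\cF_2)$, so that $\widetilde{\Pi}(x,u_1,u_2;\,\Xi,\Upsilon_1,\Upsilon_2)=(x,u_1,u_2;\,\Upsilon_1,\Upsilon_2)$ lies in $-ss_\pi(\cF_1)\boxplus ss_\pi(\cF_2)$ (this makes sense since both constraints sit over the same base point $x\in B$, using the identification $T^*\tilde\pi\cong T^*\pi\times_B T^*\pi$ whose fibre over $(e_1,e_2)$ is $T^*_{e_1}(E_{\pi(e_1)})\oplus T^*_{e_2}(E_{\pi(e_2)})$).

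Combining the two steps gives $\widetilde{\Pi}\bigl(ss(\inthom_{E\times_B E}(\pi_1^*\cF_1,\pi_2^!\cF_2))\bigr)\subset -ss_\pi(\cF_1)\boxplus ss_\pi(\cF_2)$; and since the right-hand side is the fibre product over $B$ of two closed conic subsets of $T^*\pi$, it is closed in $T^*\tilde\pi$, so passing to closures yields the claimed bound $ss_{\tilde\pi}\bigl(\inthom_{E\times_B E}(\pi_1^*\cF_1,\pi_2^!\cF_2)\bigr)\subset -ss_\pi(\cF_1)\boxplus ss_\pi(\cF_2)$. The only genuinely delicate point is the bookkeeping of the various closures --- those built into $ss_\pi$, into the pointwise definition of $\mathbin{\hat{+}}$, and into $\#$; everything collapses because $\pi_1,\pi_2$ are submersions, so their $\#$-pullbacks are honest conormal pullbacks and the $\mathbin{\hat{+}}$-correction, the usual source of unbounded sequences, lives entirely in the fibre-conormal direction that $\widetilde{\Pi}$ kills.
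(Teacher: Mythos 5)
Your proof is correct, but it takes a genuinely different route from the paper. The paper's argument is a short reduction: it views $E\times_B E$ as the restriction of the bundle $E\times E\to B\times B$ along the diagonal $\delta\colon B\to B\times B$, identifies $\inthom_{E\times_B E}(\pi_1^*\cF_1,\pi_2^!\cF_2)$ with the $!$-restriction of the genuinely external hom $\inthom_{E\times E}(p_1^*\cF_1,p_2^!\cF_2)$ via the identity $f^!\inthom(F,G)=\inthom(f^*F,f^!G)$ (which, in the unbounded setting, needs the remark the paper makes about \cite[Prop.\ 3.1.13]{kashiwara-schapira}), bounds the relative microsupport of the external hom over $B\times B$ by the straightforward product estimate, and then invokes Lemma \ref{lem:bundlerestrictionestimate} to restrict to the diagonal of the base. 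You instead work directly on $E\times_B E$: you combine the $\hat{+}$ estimate for $\inthom$ from Lemma \ref{standardestimates} with the pullback estimates, note that for the submersions $\pi_1,\pi_2$ the $\#$-pullback of a closed conic set is the honest conormal pullback (correct: in adapted coordinates $df$ is injective with closed image, so the approximating covectors converge), and then check from the pointwise definition of $\hat{+}$ that the two sequences can only mix in the $\tilde\pi^*T^*B$-directions, which $\widetilde\Pi$ discards --- in effect a direct, coordinate-level re-proof of the content of Lemma \ref{lem:bundlerestrictionestimate} specialized to this situation. Your closure bookkeeping is also fine: the slotwise convergence argument is legitimate because local coordinates identify covectors at nearby points, and the right-hand side is closed in $T^*\tilde\pi\cong T^*\pi\times_B T^*\pi$ as an intersection of preimages of the closed sets $-ss_\pi(\cF_1)$ and $ss_\pi(\cF_2)$, so taking the closure that defines $ss_{\tilde\pi}$ is harmless. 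What your route buys is self-containedness (no adjunction identity, no detour through $E\times E$) and transparency about why the fibre-conormal slots do not interact; what the paper's route buys is brevity and modularity, since the limit/closure analysis is already packaged in the earlier lemmas of that section.
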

\begin{proof}

This follows by applying Lemma \ref{lem:bundlerestrictionestimate} to $\delta: B \to B \times B$. 
(Here we use the general identity 
$f^! \inthom (F, G) = \inthom (f^* F, f^! G)$.  This is Prop \cite[3.1.13]{kashiwara-schapira}, which is there
stated under some boundedness hypotheses on $F, G$, but the proof is a series of adjunctions which hold
in general.)
\end{proof}

\subsection{Microsupport of nearby cycles}  \label{sec: nearby microsupport} 

The nearby cycles functor in sheaf theory is a useful notion for taking limits of families of sheaves.  We recall 
its construction.  Let $X$ be a topological space and $\pi: X \to \R_{\ge 0}$ a continuous map.  We consider the diagram
$$
\xymatrix{
\ar[d]_{\pi} X_{> 0}  \ar@{^(->}[r]^-j & \ar[d]_{\pi} X & \ar@{_(->}[l]_-i X_0 \ar[d]_{\pi} \\
 \R_{> 0}  \ar@{^(->}[r]^-j &  \R_{\ge 0} & \ar@{_(->}[l]_-i  \{0\}
}
$$
where $X_{> 0} := \pi^{-1}(\R_{> 0})$ and $X_0 = \pi^{-1}(0)$.  
The nearby cycles functor is by definition
$$
\xymatrix{
\psi = i^* j_*: \sh(X_{> 0})  \ar[r] & \sh (X_0)
}
$$
By precomposing with $j^*$, we may also consider $\psi = i^* j_* j^* : \sh(X)  \to \sh (X_0)$.  
In this setting we also have the vanishing cycles functor, $\phi := Cone(i^* \to i^* j_* j^*)$.   

Consider the case when $X_0$ is a manifold, 
and $X = X_0 \times \R_{\ge 0}$ 
and $\pi$ is the projection to the second factor. 
$$
\xymatrix{
\ar[d]_{\pi} X_0 \times \R_{> 0}  \ar@{^(->}[r]^-j & \ar[d]_{\pi} X_0 \times \R_{\ge 0} & \ar@{_(->}[l]_-i X_0 \ar[d]_{\pi} \\
 \R_{> 0}  \ar@{^(->}[r]^-j &  \R_{\ge 0} & \ar@{_(->}[l]_-i  \{0\}
}
$$
In this case it makes sense to ask about the microsupport
of the nearby cycles. 

Recall our notation for the cotangent sequence
$$
\xymatrix{
0 \ar[r] & \pi^*T^*\RR_{\geq 0}\ar[r] & T^*(X_0 \times \RR_{\geq 0})\ar[r]^-\Pi & T^*\pi\ar[r] & 0
}
$$
Note the natural identification $T^*\pi|_0 \simeq T^*X_0$.

\begin{lemma} \label{nearbyestimate}
For $\cF \in \sh(X_0 \times \RR_{>0})$, we have
$$
\xymatrix{
\ssupp(\psi(\cF)) \subset \overline{\Pi(\ssupp(\cF))} \cap T^*X_0
}
$$
\end{lemma}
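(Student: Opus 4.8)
The plan is to reduce the statement to the microsupport estimates for open pushforward and restriction recalled in Lemma \ref{standardestimates}, together with the relative-microsupport formalism of Section \ref{sec: relative microsupport}. Recall $\psi(\cF) = i^* j_* \cF$ where $j: X_0 \times \RR_{>0} \hookrightarrow X_0 \times \RR_{\ge 0}$ is the open inclusion and $i: X_0 \hookrightarrow X_0 \times \RR_{\ge 0}$ is the closed inclusion of the fiber over $0$. So I would first bound $ss(j_* \cF)$, then bound $ss(i^* j_* \cF)$, and finally rewrite the bound in terms of $\Pi(ss(\cF))$.

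\textbf{Step 1: pushforward.} Since $X_0 \times \RR_{>0}$ is the complement of the closed submanifold $X_0 = X_0 \times \{0\}$ in $X_0 \times \RR_{\ge 0}$, the estimate \cite[Proposition 6.3.2]{kashiwara-schapira} (the second displayed line of the first bullet of Lemma \ref{standardestimates}) gives
$$ss(j_* \cF)\big|_{X_0} \subset ss(\cF) \mathbin{\hat{+}} T^*_{X_0}(X_0 \times \RR_{\ge 0}).$$
Now $T^*_{X_0}(X_0 \times \RR_{\ge 0})$ is exactly the pullback $\pi^* T^* \RR_{\ge 0}$ restricted over $X_0$, i.e. the conormal directions to the fibers in the $\RR_{\ge 0}$-direction. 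By Lemma \ref{lem:addrelcotpushpull} applied to the bundle $\pi: X_0 \times \RR_{\ge 0} \to \RR_{\ge 0}$, we have $ss(\cF) \mathbin{\hat{+}} \pi^* T^*\RR_{\ge 0} = \Pi^{-1}(\overline{\Pi(ss(\cF))})$. (Here one must be slightly careful that Lemma \ref{lem:addrelcotpushpull} was stated for an honest fiber bundle over a manifold; since we only care about the conclusion over the fiber $X_0$, it suffices to apply it after shrinking $\RR_{\ge 0}$ to a neighborhood of $0$ inside $\RR$, or to note that the pointwise characterization of $\hat{+}$ and the proof of the lemma are entirely local and go through verbatim over the boundary point.) Hence $ss(j_*\cF)|_{X_0} \subset \Pi^{-1}(\overline{\Pi(ss(\cF))})$.

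\textbf{Step 2: restriction to the fiber.} Applying Corollary \ref{pointrestrictionestimate} (restriction to the fiber over $P = 0 \in \RR_{\ge 0}$) to the sheaf $j_* \cF$, we get
$$ss\big(i^*(j_*\cF)\big) \subset ss_\pi(j_*\cF)\big|_{X_0} = \overline{\Pi\big(ss(j_*\cF)\big)}\Big|_{X_0}.$$
Combining with Step 1: $\overline{\Pi(ss(j_*\cF))}|_{X_0} \subset \overline{\Pi(\Pi^{-1}(\overline{\Pi(ss(\cF))}))}|_{X_0}$. Since $\Pi$ is surjective (a vector bundle projection), $\Pi(\Pi^{-1}(S)) = S$ for any $S$, and the outer closure is then harmless as the set is already closed; and restriction to the fiber over $0$ of $T^*\pi$ commutes with all of this. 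Under the identification $T^*\pi|_0 \simeq T^*X_0$ noted before the lemma, we obtain $ss(\psi(\cF)) \subset \overline{\Pi(ss(\cF))} \cap T^*X_0$, which is the claim.

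\textbf{The main obstacle} is bookkeeping the identification $T^*\pi|_0 \simeq T^*X_0$ and making sure the $\hat{+}$ estimate is legitimately applicable at the boundary point $0 \in \RR_{\ge 0}$: the cited results of \cite{kashiwara-schapira} are for manifolds without boundary, so one should either embed $\RR_{\ge 0} \subset \RR$, extend $\cF$ by zero (or note $j_*\cF$ is already defined on $X_0 \times \RR$ via the open inclusion into the manifold $X_0 \times \RR$), and run the argument there, or verify directly that the normal-cone definition of $\hat{+}$ and the proof of Lemma \ref{lem:addrelcotpushpull} are purely local and unaffected by the presence of the boundary. This is routine but is the only place where care is genuinely needed; everything else is a formal chain of the microsupport estimates already assembled in Lemma \ref{standardestimates} and Section \ref{sec: relative microsupport}.
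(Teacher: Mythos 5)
Your proof is correct and follows essentially the same route as the paper, whose proof is a one-line citation of exactly the tools you use: the $\hat{+}$ estimate for $j_*$ from Lemma \ref{standardestimates} combined with Lemma \ref{lem:addrelcotpushpull}, followed by the restriction estimate to the fiber over $0$. Your extra care about the boundary point of $\RR_{\ge 0}$ is a reasonable bookkeeping remark but not a substantive difference.
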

\begin{proof}

Follows from Lemma \ref{standardestimates} and Lemma \ref{lem:addrelcotpushpull}. 
\end{proof}

\begin{definition}
Given a subset $\Lambda \subset  T^*(X_0 \times \RR_{> 0})$, 
we define its {\em nearby subset} as
$$\psi(\Lambda) := \overline{\Pi(\Lambda)} \cap T^*X_0$$
so as to write the statement of the previous lemma as
$$ss(\psi( \cF)) \subset \psi(ss(\cF))$$
\end{definition}

\section{Nearby cycles and relative external Hom} 

For sheaves $\cF \in sh(X)$ and $\cG \in sh(Y)$, we write 
$$\inthom^{\boxtimes}(\cF, \cG) := 
\inthom_{X \times Y}(\pi_1^* \cF, \pi_2^! \cG) \in sh(X \times Y)$$
Recall that when $X = Y$ and $\delta_X: X \to X \times X$ is the diagonal map, then 
$$\delta^!_X \inthom^{\boxtimes}(\cF, \cG) = \inthom_X(\cF, \cG) \in sh(X)$$

Consider now the space $M \times \R$, which we will think of a space over $\R$.  
We introduce a `relative external Hom' taking values in $\sh(M \times M \times \R)$.
If we write $\pi_1, \pi_2: M \times M \times \R \to M \times \R$ for the projections
to the first and second $M$ factors, respectively, then 
$$\inthom^{\boxtimes/\R}(\cF, \cG) := \inthom_{M \times M \times \R}(\pi_1^* \cF, \pi_2^! \cG)$$

In terms of the map 
$\delta_\R: M \times M \times \R \to M \times \R \times M \times \R$
taking the $\R$ coordinate to the diagonal, we have 
$$\inthom^{\boxtimes/\R}(\cF, \cG) = \delta_\R^! \inthom^{\boxtimes}(\cF, \cG)$$

Let $i: M \times M \to M \times M \times \R$ be the inclusion of the 0 fiber.  
In this section we are interested in the 
natural morphism
\begin{equation} \label{eq: rel box hom restricted}i^* : i^* \inthom^{\boxtimes/\R} (\cF, \cG) \to \inthom^{\boxtimes}(i^*\cF, i^*\cG) \end{equation} 

\begin{ex} \label{ex: why only nearbycycles} 
Suppose $M$ is a point.  Then we are studying sheaves on $\R$,
and in \eqref{eq: rel box hom restricted}, 
$\inthom^{\boxtimes/\R}$ is simply $\inthom_\R$ and $\inthom^\boxtimes = \inthom_{\mathrm{point}}$.  
We are asking when the map 
$$i^*: i^* \inthom_\R(\cF, \cG) \to \inthom_{\mathrm{point}}(i^* \cF, i^* \cG)$$ is an 
isomorphism.   Let us check three cases: 
\begin{enumerate}
\item
$\cF= k_0$, $\cG = k_\RR$. 
$$
\xymatrix{
 i^*\inthom_{\R}(k_0, k_\RR) \simeq k_0[-1] &  \inthom_{\mathrm{point}}(i^*  k_0, i^*  k_\R)\simeq k_0
}
$$
\item $\cF = k_{\RR_{\ge 0}}$, $\cG = k_{\RR_{\le 0}}$.  
$$
\xymatrix{
 i^*\inthom_{\R}(k_{\RR_{\ge 0}}, k_{\RR_{\le 0}}) \simeq 0 &  \inthom_{\mathrm{point}}(i^*  k_{\RR_{\ge 0}}, i^*  k_{\RR_{\le 0}})\simeq k_0
}
$$
\item \label{goodcase} $\cF = k_{\RR_{\ge 0}}$, $\cG = k_{\RR_{\ge 0}}$
$$
\xymatrix{
i^*\inthom_{\R}(k_{\RR_{\ge 0}}, k_{\RR_{\ge 0}}) \simeq k_0 &  \inthom_{\mathrm{point}}(i^*  k_{\RR_{\ge 0}}, i^*  k_{\RR_{\ge 0}})\simeq k_0 
}
$$
In fact in this case \eqref{eq: rel box hom restricted} is an isomorphism (and the same holds for any constant sheaves 
$M_{\RR_{\ge 0}}, N_{\RR_{\ge 0}}$). 
\end{enumerate}
\end{ex}

Let us write $j: \R_{> 0} \to \R$ to be the inclusion; we use the same notation for 
any maps base changed from $j$.  Generalizing Example \ref{ex: why only nearbycycles} \eqref{goodcase}, we will find: 

\begin{theorem} \label{thm: relative box hom basechange}
Let $\cF, \cG$ be sheaves on $M \times \RR_{> 0}$ which are $\RR_{> 0}$-noncharacteristic.
Assume also 
$\psi(\ssupp(\cF))$ and $\psi(\ssupp(\cG))$ are spdff.  Then the natural map 
$$i^*: i^*\inthom^{\boxtimes / \R} (j_* \cF, j_* \cG)  \to \inthom^{\boxtimes}(i^*  j_*\cF, i^* j_* \cG)$$
is an isomorphism. 
\end{theorem} 
\begin{proof}
We will study the following diagram: 

\beq\label{eq: lower square}
\xymatrix{
\ar[d]_-{i \times \id } M \times M  \ar[r]^-{i }  & (M \times M) \times \RR \ar[d]^-{\delta_{\RR}}\\
(M \times \RR) \times M \ar[r]^-{\id \times i } & (M \times \RR) \times (M \times \RR)
}
\eeq
We factor our map
into: 
\begin{eqnarray*}
i^*\inthom^{\boxtimes / \R} (j_* \cF, j_* \cG) & = & i^* \delta_\R^!  \inthom^\boxtimes (j_* \cF, j_* \cG)  \\
& \to & (i \times \mathrm{id})^! (\mathrm{id} \times i)^* \inthom^\boxtimes (j_* \cF, j_* \cG) \\
& \to &   (i \times \mathrm{id})^! \inthom^\boxtimes(j_* \cF, i^* j_* \cG) \\
& = & \inthom^\boxtimes(i^* j_* \cF, i^* j_* \cG) 
\end{eqnarray*}
Both arrows are natural base change morphisms which are not, in general, isomorphism.  The microsupport
conditions imposed here will ensure that they are in the cases at hand.  We show this for the first
arrow in Lemma \ref{lem:lower square}, and for the second arrow in Lemma \ref{cor: box hom star pullback reasonable}.  
In both cases the arguments involve computing stalks of nearby cycles in terms of sections over a neighborhood; 
we first develop in Section \ref{sec: nearby cycle stalks} criteria for when this is valid. 
\end{proof}

\subsection{Stalks of nearby cycles}  \label{sec: nearby cycle stalks}
Recall that stalks of constructible sheaves can be computed as sections over a sufficiently small open or closed ball;
per Lemma \ref{lem: reasonable stalks}, constructibility can be weakened to the condition that the microsupport is spdff.  

One may expect that, similarly, stalks of nearby cycles can be computed as sections over an appropriate cylinder. 
Here we show this is true, assuming the microsupport of the limit is spdff and the microsupport 
of the original sheaf is noncharacteristic with respect to the projection to $\R$. 

\vspace{2mm}

 For a given positive flow on $S^*W$, 
for $x\in W$ and $r>0$, let $B_r(x) \subset W$, $\ol B_r(x) \subset W$, $S_r(x) \subset W$ denote the respective
open ball, closed ball and sphere around $x$ of size $r$ in the sense of Lemma \ref{lem: positive perturbation of cosphere}.   

Consider the inclusions of cylinders
$$
\xymatrix{
c:C_{r, \delta}(x) = B_r(x) \times (-\delta, \delta) \ar[r] &  W \times \RR 
&
\ol c:\ol C_{r, \delta}(x) = \ol B_r(x) \times (-\delta, \delta) \ar[r] &  W \times \RR 
}
$$  
Note is $c$ is an open embedding.

 \begin{lemma}\label{lem: reasonable}
 Suppose 
 \begin{itemize}
 \item $\Lambda \subset T^*(W \times \R_{> 0})$ is conic and $\RR_{> 0}$-noncharacteristic
 \item  $\psi(\Lambda)^\infty \subset T^\infty(W \times 0)$ is spdff.  
 \end{itemize}

Fix $x\in W$, and a positive flow displacing $S^*_x W$ from $\psi(\Lambda)$.   Inside $T^\infty (W \times \R)$,
consider the Legendrians $\Lambda^\infty$ and the outward conormal to the cylinder, $N^+ C_{r, \delta}(x)$. 

Then there exists $r(x)>0$, and for each $0<r < r(x)$, there is a $\delta(x, r) >0$, so that for all $0<\delta<\delta(x, r)$, 

$$\Lambda^\infty \cap N^+ C_{r, \delta}(x) = \emptyset$$
\end{lemma}

\begin{proof}
Note the closure $\ol C_{r, \delta}(x)$ is a manifold with corners and its outward conormal Legendrian is a union of several  pieces: there are the two codimension one faces
$$
\xymatrix{
\partial_r = S_r(x) \times (-\delta, \delta) & \partial_\delta = B_r(x) \times \{\pm \delta\}  
}
$$ 
and the corner 
$$
\xymatrix{
\partial_{r, \delta} = S_r(x) \times \{\pm \delta\}  
}
$$ 
The assertion for $ \partial_\delta$ is immediate from the $\RR_{\not = 0}$-noncharacteristic hypothesis.

We write $\Pi: T^*(W \times \R) \to (T^*W) \times \R$ for the projection to the relative cotangent. 
For fixed $r>0$, if $\Lambda^\oo$ intersects the outward conormal along $\partial_r$, any point in the intersection defines a point in $\Pi(\Lambda)^\oo$. Similarly, by the $\RR_{\not = 0}$-noncharacteristic hypothesis,
if $\Lambda^\oo$ intersects the outward conormal along $\partial_{r, \delta}$, any point in the intersection also defines a point in $\Pi(\Lambda)^\oo$.  Taking the limit of such points as $\delta \to 0$ gives an intersection point of  $\psi(\Lambda)^\oo$ with the outward conormal of $B_r(x)$.  

But these conormals were chosen disjoint from $\psi(\Lambda)^\infty$, this having been possible because 
this locus was assumed spdff. 
\end{proof}

Consider the projections 
$$
\xymatrix{
q:C_{r, \delta}(x) = B_r(x) \times (-\delta, \delta) \ar[r] &   (-\delta, \delta)
&
\ol q:\ol C_{r, \delta}(x) = \ol B_r(x) \times (-\delta, \delta) \ar[r] &   (-\delta, \delta)
}
$$  
Note $\ol q$ is proper.

 \begin{cor}\label{cor: reasonable}
 Suppose $\cG \in \sh(W \times \R_{> 0})$ is $\RR_{> 0}$-noncharacteristic
 and $\psi(\ssupp(\cG))$ is spdff. 
 
Then for any $x\in W$, there exists $r(x)>0$ and $\delta(x, r(x))>0$ so that for all $0<r < r(x)$, $0<\delta<\delta(x, r(x)))$,  the 
following restriction maps are isomorphisms. 
$$\ol q_* \ol c^*  j_* \cG \xrightarrow{\sim} q_* c^* j_* \cG $$
$$
\xymatrix{
\Gamma(\ol C_{r, \delta}(x), j_*\cG) \ar[r]^-\sim & \Gamma(C_{r, \delta}(x), j_* \cG) \ar[r]^-\sim & (j_* \cG)_x
}
$$  
\end{cor}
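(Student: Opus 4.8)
The plan is to deduce the statement from Lemma \ref{lem: reasonable} together with the noncharacteristic deformation and propagation lemmas \cite[Lemma~2.7.2, Cor.~5.4.19]{kashiwara-schapira}. First I would assemble the microsupport input. Since $\psi(\ssupp(\cG))$ is pdfl, Definition \ref{def: pdfl} applied to the cosphere fiber $S^*_x W$ furnishes a positive contact isotopy on $S^*W$ displacing $S^*_x W$ from $\psi(\ssupp(\cG))$; fix it, together with its balls $B_r(x)\subset W$ and a neighborhood defining function $g_x$ (Definition \ref{def: neighborhood defining function}). Applying Lemma \ref{lem: reasonable} to $\Lambda=\ssupp(\cG)$ --- whose two hypotheses are exactly our standing assumptions --- produces $r(x)>0$ and $\delta(x,r(x))>0$ such that for all $0<r<r(x)$ and $0<\delta<\delta(x,r(x))$ the Legendrian at infinity of $\ssupp(j_*\cG)$ is disjoint from the full outward conormal Legendrian of $\ol C_{r,\delta}(x)$: its radial face $\partial_r$, its horizontal faces $\partial_\delta$, and its corner $\partial_{r,\delta}$. (The horizontal faces --- and likewise whatever microsupport $j_*$ creates over $W\times 0$ --- involve only $dt$-conormal directions and are controlled directly by the $\RR_{\ne 0}$-noncharacteristic hypothesis, exactly as in the proof of Lemma \ref{lem: reasonable}.) This conormal disjointness is the only property of $\cG$ used below.

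Next I would run the noncharacteristic deformation lemma on two families of opens. For the second displayed chain, deform the cylinders $C_{\rho,\,\rho\delta/r}(x)$, $0<\rho\le r$ (a neighborhood basis of $(x,0)$, with the aspect ratio chosen so that $\rho=r$ gives $C_{r,\delta}(x)$); their outward conormal Legendrians, corner included, are precisely the loci avoided by $\ssupp(j_*\cG)$, so the lemma gives $\Gamma(C_{r,\delta}(x),j_*\cG)\xrightarrow{\ \sim\ }\Gamma(C_{\rho,\rho\delta/r}(x),j_*\cG)$ for all $\rho<r$, hence on passing to $\rho\to 0$ the isomorphism $\Gamma(C_{r,\delta}(x),j_*\cG)\xrightarrow{\ \sim\ }(j_*\cG)_x$. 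The same argument for the closed cylinders $\ol C_{\rho,\rho\delta/r}(x)$, combined with a deformation across the boundary faces comparing closed and open cylinders, yields $\Gamma(\ol C_{r,\delta}(x),j_*\cG)\xrightarrow{\ \sim\ }\Gamma(C_{r,\delta}(x),j_*\cG)$, the first arrow of the chain. For the first displayed isomorphism, an equivalence of sheaves on $(-\delta,\delta)$, it suffices to check it on sections over each open $I\subset(-\delta,\delta)$, i.e.\ that $\Gamma(\ol B_r(x)\times I,j_*\cG)\to\Gamma(B_r(x)\times I,j_*\cG)$ is an equivalence; this is the purely radial deformation of $\ol B_\rho(x)\times I$, $\rho\le r$, across the faces $S_\rho(x)\times I\subset\partial_r$, again avoided by $\ssupp(j_*\cG)$, and its global sections over $(-\delta,\delta)$ recover the displayed map. (Each such deformation can alternatively be realized as noncharacteristic propagation for an explicit function --- $\max\bigl(g_x,(r/\delta)|t|\bigr)$ for the cylinders, $g_x$ for the radial one.)

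The main obstacle is verifying the deformation lemma's hypotheses in the two places our setup departs from the textbook case. First, the cylinders have a non-smooth boundary at the corner $S_r(x)\times\{\pm\delta\}$; here one uses that the lemma only requires $\ssupp(j_*\cG)$ to miss the normal cone of the moving boundary --- the convex cone on the two face conormals, which is exactly the corner conormal $\partial_{r,\delta}$ that Lemma \ref{lem: reasonable} was set up to cover --- or, in the propagation-lemma formulation, one smooths the exhausting function while keeping its differential inside that cone. Second, the $\RR$-direction is non-compact, so the relevant functions are not a priori proper on supports; as in the proof of Lemma \ref{lem: reasonable stalks} one localizes to a small compact cylinder $\ol B_{r_0}(x)\times[-\delta_0,\delta_0]$ about $(x,0)$ with $r_0<r(x)$, $\delta_0<\delta(x,r(x))$, where properness is automatic, and recovers sections over sub-cylinders not contained in its interior by passing to the inverse limit.
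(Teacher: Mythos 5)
Your overall route is the paper's: the printed proof of this corollary is literally the single phrase ``noncharacteristic propagation,'' i.e.\ it combines Lemma \ref{lem: reasonable} with \cite[Cor.~5.4.19]{kashiwara-schapira} exactly as you do, and your fleshing-out (the shrinking cylinders of fixed aspect ratio, the corner conormals, properness via localizing to a compact cylinder) is a reasonable expansion of that one-liner.

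One parenthetical claim in your write-up is wrong as stated, though it does not sink the argument: the microsupport that $j_*$ creates over $W\times 0$ is \emph{not} contained in the $dt$-conormal directions. By the estimate $\ssupp(j_*\cG)|_{W\times 0}\subset \ssupp(\cG)\,\hat{+}\,T^*_{W\times 0}(W\times\R)$, the new covectors have $W$-components whose directions at infinity sweep out exactly $\psi(\ssupp(\cG))^\infty$, with an arbitrary $dt$-component added; in particular purely horizontal covectors do appear over $t=0$. What protects the radial face of the cylinder at $t=0$ is therefore not the $\RR_{\ne 0}$-noncharacteristic hypothesis but the pdfl-based choice of balls: the outward conormal of $B_r(x)$ was chosen disjoint from $\psi(\ssupp(\cG))^\infty$, while covectors with nonzero $dt$-component simply cannot equal the purely horizontal radial conormal, and the horizontal faces and corner sit at $t=\pm\delta\ne 0$ where $\ssupp(j_*\cG)=\ssupp(\cG)$. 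With that correction your disjointness claim for the full outward conormal Legendrian of $\ol C_{r,\delta}(x)$ is correct, and the remainder of your argument goes through.
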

\begin{proof}
Noncharacteristic propagation.
\end{proof}

\begin{remark}\label{rem: reasonable}
Note that Corollary \ref{cor: reasonable} implies the functor $q_*c^*$ commutes with standard operations: it  commutes with  $!$-pullbacks and $*$-pushforwards (since $c$ is smooth) and  $\ol q_*\ol c^*$  commutes with $!$-pushforwards and $*$-pullbacks  (since $\ol q$ is proper). 
\end{remark}

\subsection{The first arrow}
\begin{lemma} \label{lem:lower square}
Let $\cF, \cG$ be sheaves on $M \times \RR_{> 0}$, which are  $\RR_{> 0}$-noncharacteristic.
Assume that 
 $\psi(\ssupp(\cF)) \cup \psi(\ssupp(\cG))$ is pdff. 
Then the natural map 
\beq\label{eq: bc for base}
\xymatrix{
i^* \delta_\RR^!  \inthom^\boxtimes (j_* \cF, j_* \cG)  \ar[r] &  (i \times \mathrm{id})^! (\mathrm{id} \times i)^* \inthom^\boxtimes (j_* \cF, j_* \cG)
}
\eeq
is an isomorphism.  
\end{lemma}
\begin{proof}
 
 Note that the hypothesis (that  $\psi(\ssupp(\cF)) \cup \psi(\ssupp(\cG))$ is pdff)
and  the conclusion (by \cite{guillermou-kashiwara-schapira})
are invariant under contactomorphism of $S^*M$.
    Thus we may assume   $\psi(\ssupp(\cF)) \cup \psi(\ssupp(\cG))$ is spdff, and so in particular that $\psi(\ssupp(\cF))$ and $\psi(\ssupp(\cG))$ are spdff.

 We will check \eqref{eq: bc for base} is an isomorphism on the stalk at a point $(x_1, x_2)\in M \times M$. 
By Lemma~\ref{lem: reasonable stalks of prods}, we can calculate  \eqref{eq: bc for base} on  stalks at $(x_1, x_2)$ by taking sections over 
the polyball $B_r(x_1) \times B_r(x_2)$ which is the special fiber of the polycylinder $C_{r, \delta}(x_1) \times C_{r, \delta}(x_2)$. 
We use the notation from  above Lemma \ref{lem: reasonable}  for inclusion and projection
of cylinders. 

Writing this in terms of standard operations, we seek to show the induced map 
$$
\xymatrix{
(q\times q)_*(c \times c)^*i^* \delta_\RR^!  \inthom^\boxtimes (j_* \cF, j_* \cG)  \ar[r] &  
(q \times q)_*(c \times c)^*(i \times \mathrm{id})^! (\mathrm{id} \times i)^* \inthom^\boxtimes (j_* \cF, j_* \cG)
}$$
is an  isomorphism.

For the right hand side, applying  Lemma~\ref{lem: reasonable} and Corollary~\ref{cor: reasonable}, in view of 
the identities of Remark~\ref{rem: reasonable}, we find 

\begin{eqnarray*}
& \phantom{=} & (q\times q)_*(c \times c)^*(i \times \mathrm{id})^! (\mathrm{id} \times i)^* \inthom^\boxtimes (j_* \cF, j_* \cG) \\
& = & (i \times \mathrm{id})^! (\mathrm{id} \times i)^* (q\times q)_*(c \times c)^* \inthom^\boxtimes (j_* \cF, j_* \cG) \\
& = & (i \times \mathrm{id})^! (\mathrm{id} \times i)^* \inthom_{\R \times \R}^\boxtimes ( q_! c^! j_* \cF, q_* c^* j_* \cG) \\
& = & (i \times \mathrm{id})^! (\mathrm{id} \times i)^* \inthom_{\R \times \R}^\boxtimes( j_*  q_! c^! \cF,  j_* q_* c^*  \cG)
\end{eqnarray*}

For the left hand side, 
note that the analogues of Lemma~\ref{lem: reasonable}, Corollary~\ref{cor: reasonable}, and Remark~\ref{rem: reasonable} hold for
polycylinders (as opposed to just cylinders).  Using these, we find:
$$
(q\times q)_*(c \times c)^*i^* \delta_\RR^! \inthom^\boxtimes (j_* \cF, j_* \cG) =
i^* \delta_\RR^! \inthom_{\R \times \R}^\boxtimes ( j_*  q_! c^! \cF,  j_* q_* c^*  \cG)
$$

Thus we seek to show the natural map 
$$
\xymatrix{
i^* \delta_\RR^! \inthom_{\R \times \R}^\boxtimes ( j_*  q_! c^! \cF,  j_* q_* c^*  \cG) \ar[r] & 
(i \times \mathrm{id})^! (\mathrm{id} \times i)^* \inthom_{\R \times \R}^\boxtimes ( j_*  q_! c^! \cF,  j_* q_* c^*  \cG)
}$$
is an  isomorphism. 

A final application of Corollary~\ref{cor: reasonable}, to replace $q$ with the proper map $\ol q$,  shows by noncharacteristic propagation that 
the sheaves $q_!c^!\cF$, $q_*c^*\cG$ on $\R$ are locally constant near $0$. 
Thus it remains to verify the assertion in the case when $M$ is a point. This is an elementary exercise 
(essentially Example \ref{ex: why only nearbycycles} \eqref{goodcase} above). 
\end{proof}

\subsection{The second arrow}

Consider spaces $V, W$. 
We write $\pi_V$ and $\pi_W$ for the operations of projecting out the $V$ or $W$ factor in various 
products.  

\begin{lemma} \label{lem: box hom star pullback} 
Assume $V$ and $W$ are compact. 
For $\cF \in sh(V)$ and $\cG \in sh(W \times \R)$, suppose that the directed system $\cG(W \times (-\epsilon, \epsilon))$ 
is essentially constant, i.e. there exists some sequence $\epsilon_i \to 0$ such whenever $j > i$ we have
$\cG(W \times (-\epsilon_i, \epsilon_i)) \xrightarrow{\sim} \cG(W \times (-\epsilon_j, \epsilon_j))$.
Then the natural map
$$i^*: i^* \inthom^\boxtimes ( \cF,  \cG ) \to 
\inthom^\boxtimes ( \cF,  i^* \cG)$$
induces an isomorphism on global sections. 
\end{lemma} 
\begin{proof}
Taking global sections means applying $\pi_{V*}$ and $\pi_{W*}$; as $V$ and $W$ are compact, 
we may commute any pullbacks past these operations by proper base change.  After applying the natural adjunctions
$$f_* \inthom ( f^* \cdot, \cdot) = \inthom(\cdot, f_* \cdot) \qquad \qquad f_* \inthom (  \cdot, f^! \cdot) = \inthom(f_! \cdot, \cdot)$$ 
and various base changes, we are reduced to studying  
$$ i^* \inthom_{\R}(\pi_{\R}^* \pi_{V!} \cF, \pi_{W*} \cG) \to \Hom( \pi_{V!} \cF,  i^* \pi_{W*} \cG)$$
Let us write 
$\mathbf{F} = \pi_{V!} \cF$ for the relevant module and $\mathbf{F}_\R$ for the constant sheaf on $\R$ with this
stalk.  Then we are asking when the following map is an isomorphism 
$$\inthom_{\R}(\mathbf{F}_\R, \pi_{W*} \cG)_0 \to \Hom( \mathbf{F},  ( \pi_{W*} \cG)_0)$$
By hypothesis, the directed system $( \pi_{W*} \cG)(-\epsilon, \epsilon)$ is essentially constant, 
which ensures the above map is an isomorphism for any module $\mathbf{F}$. 
\end{proof} 

Recall we write $i: 0 \to \R$ for the inclusion, or anything base changed from it.  

\begin{corollary} \label{cor: box hom star pullback reasonable}
For  $\cG \in \sh(W \times \R_{> 0})$ which is $\RR_{>  0}$-noncharacteristic and such that $\psi(\ssupp(\cG))$ is spdff,  
and any $\cF \in \sh(V)$,  the natural map
$$i^*: i^* \inthom^\boxtimes ( \cF,  \cG ) \to 
\inthom^\boxtimes (\cF, i^* \cG)$$
is an isomorphism. 
\end{corollary}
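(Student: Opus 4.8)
The plan is to reduce this to the local criterion of Corollary~\ref{cor: box hom star pullback local}, using Corollary~\ref{cor: reasonable} to supply the essential constancy that criterion demands. Write $j\colon W\times\R_{\ne 0}\hookrightarrow W\times\R$ and set $\cH:=j_*\cG\in\sh(W\times\R)$; this is the sheaf implicitly appearing in the displayed map (consistently with the conventions of Corollary~\ref{cor: reasonable}), so it suffices to prove the natural map $i^*\inthom_{V\times W\times\R}(\pi_{W\times\R}^*\cF,\pi_V^!\cH)\to\inthom_{V\times W}(\pi_W^*\cF,\pi_V^! i^*\cH)$ is an isomorphism.

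To apply Corollary~\ref{cor: box hom star pullback local} to the pair $(\cF,\cH)$, I would verify its hypothesis: for every $w\in W$ there should be a neighborhood basis $\{W_{w,\alpha}\}$ of $w$ in $W$ along which the projective systems $\{\cH(W_{w,\alpha}\times(-\epsilon,\epsilon))\}_{\epsilon\to 0}$ are essentially constant. Here the hypotheses on $\ssupp(\cG)$ --- that it is $\R_{\ne 0}$-noncharacteristic and that $\psi(\ssupp(\cG))$ is pdfl --- are exactly those of Corollary~\ref{cor: reasonable}. Fixing, for each $w$, a positive flow displacing $S^*_w$ from $\psi(\ssupp(\cG))$, that corollary yields $r(w)>0$ and, for each $0<r<r(w)$, some $\delta(w,r)>0$ for which
$$\Gamma\bigl(B_r(w)\times(-\delta,\delta),\,\cH\bigr)\xrightarrow{\ \sim\ }\cH_w\qquad\text{whenever }0<\delta<\delta(w,r).$$
I would then take any sequence $r_\alpha\downarrow 0$ with $r_\alpha<r(w)$ and set $W_{w,\alpha}:=B_{r_\alpha}(w)$: by Lemma~\ref{lem: positive perturbation of cosphere} these opens are nested and shrink to $\{w\}$, hence form a neighborhood basis of $w$, while for each fixed $\alpha$ the transition maps of $\{\cH(B_{r_\alpha}(w)\times(-\epsilon,\epsilon))\}$ become isomorphisms onto $\cH_w$ once $\epsilon<\delta(w,r_\alpha)$, so the system is essentially constant.

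Granting this, Corollary~\ref{cor: box hom star pullback local} applies to $(\cF,\cH)$ and delivers the isomorphism. The argument is thus essentially bookkeeping, and I expect no genuine obstacle; the two points worth checking carefully are that the balls $B_r(w)$ cut out by the chosen positive flow are cofinal among opens containing $w$ (this is the nesting statement of Lemma~\ref{lem: positive perturbation of cosphere}), and that ``essentially constant projective system'' in the sense required by Corollary~\ref{cor: box hom star pullback local} and Lemma~\ref{lem: box hom star pullback} is exactly the eventual stabilization to the stalk furnished by Corollary~\ref{cor: reasonable}.
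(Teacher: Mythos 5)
Your proposal is correct and is exactly the paper's argument: the paper's proof is literally ``use Corollary~\ref{cor: reasonable} to verify the hypothesis of Corollary~\ref{cor: box hom star pullback local},'' and you have simply spelled out the bookkeeping (the balls $B_{r_\alpha}(w)$ as a neighborhood basis and the stabilization of $\cH(B_{r_\alpha}(w)\times(-\epsilon,\epsilon))$ to the stalk) that this one-line proof leaves implicit. Your identification of the sheaf in the statement with $j_*\cG$ also matches the paper's intended convention.
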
 
\begin{proof}
We check on stalks, by using the closed neighborhoods in Cor. \ref{cor: reasonable} to
verify the hypothesis of Lemma \ref{lem: box hom star pullback}. 
\end{proof} 

\begin{remark}
The conclusion of Cor. \ref{cor: box hom star pullback reasonable} also holds if  $\cF$ is (not only weakly) cohomologically 
constructible.  Indeed, in this case we may use Verdier duality: 
\begin{eqnarray*} 
(1_V \boxtimes i^*) \inthom^\boxtimes (\cF , \cG) 
& =  & (1_V \boxtimes i^*) (\DD \cF \boxtimes \cG) \\ 
& = & \DD \cF \boxtimes i^* \cG \\ 
& = & \inthom^\boxtimes ( \cF ,  i^* \cG)
\end{eqnarray*}
Let us explain how this compares to the present argument.
Looking back through the logic of this section, recall at some moment we were to consider
$\inthom_{\R}(\mathbf{F}_\R, \pi_{W*} \cG)_0 \to \Hom( \mathbf{F},  ( \pi_{W*} \cG)_0)$. 
Previously we argued it was an isomorphism because of some constancy of $\pi_{W*} \cG$ near zero. 
However, we could have also concluded that it was an isomorphism if $\mathbf{F}$ were a compact
object, i.e. if we had the cohomological constructibility needed for the Verdier duality argument above.  
We do {\em not} assume this: indeed, we will encounter infinite rank stalks whenever we
wish to consider the category of `wrapped' microsheaves. 
\end{remark}

\section{On the full faithfulness of nearby cycles}

Let $M$ be a  manifold, and consider the diagram:

$$
\xymatrix{
\ar[d]_{\pi} U = M \times \R_{> 0}  \ar@{^(->}[r]^-j & \ar[d]_{\pi} N = M \times \R & \ar@{_(->}[l]_-i M \ar[d]_{\pi} \\
 \R_{> 0}  \ar@{^(->}[r]^-j &  \R & \ar@{_(->}[l]_-i  \{0\}
}
$$

Consider the nearby cycle functor\footnote{
Strictly speaking, this is nearby cycles if we restrict attention to sheaves supported always over $\RR_{\ge 0}$,
which we will do in all applications.  We have chosen the present setup solely to avoid writing ``$\RR_{\ge 0}$'' in subscripts.  
It is evident that all results of Sect.~\ref{sec: nearby microsupport} 
hold in the present situation.
}
$$
\xymatrix{
\psi = i^* j_* :\sh(U) \ar[r] &  \sh(M)
}$$

Given $\cF, \cG\in \sh(U)$, there is an induced map between the following sheaves on $M$:  
\beq\label{eq: nearby hom}
\xymatrix{
\psi: \psi \inthom (\cF, \cG) \ar[r] & \inthom(\psi\cF, \psi\cG)
}
\eeq
We are interested to know when \eqref{eq: nearby hom} induces an isomorphism on global sections.  
(Note we may replace $\RR$ with any neighborhood of $0$.) 

We factor \eqref{eq: nearby hom} as:
\beq\label{eq: factored nearby hom}
\xymatrix{
i^* j_* \inthom (\cF, \cG) \ar[r]^-{j_*}  &  i^* \cH om(j_*\cF, j_*\cG)  \ar[r]^-{i^*}    & \inthom(i^*j_* \cF, i^*j_* \cG)
}
\eeq
Since $j$ is an open embedding, the counit of adjunction is an equivalence $j^* j_* \stackrel{\sim}{\to}  \id $, and hence the arrow above labelled $j_*$ is always an isomorphism.  Thus it remains to study when 
\beq\label{eq: local hom restricted}
\xymatrix{
i^*:  i^* \inthom(j_*\cF, j_*\cG)  \ar[r]    & \inthom(i^* j_* \cF, i^* j_* \cG)
}
\eeq
induces an isomorphism on global sections. 

\begin{theorem} \label{thm: gapped specialization is fully faithful}
Let $M$ be a compact manifold (without boundary).
 Let $\cF, \cG$ be sheaves on $M \times \RR_{> 0}$. 
 Assume: 
\begin{itemize}
\item  $\ssupp(\cF)$ and $\ssupp(\cG)$ are $\RR_{>0}$-noncharacteristic;
\item   $\psi(\ssupp(\cF)) \cup \psi(\ssupp(\cG))$ is pdff;
\item  The family of pairs in $S^*M$ determined by $(\ssupp_\pi(\cF), \ssupp_\pi(\cG))$ 
is gapped for some fixed contact form on $S^*M$.
\end{itemize}

Then the natural map 
$$i^*:\Gamma(M, i^*\inthom_{M \times \R}(j_* \cF, j_* \cG))  \to \Hom_{M}(i^*  j_*\cF, i^* j_* \cG)$$
is an isomorphism. 
\end{theorem}
\begin{proof}
 Note that the hypothesis (that  $\psi(\ssupp(\cF)) \cup \psi(\ssupp(\cG))$ is pdff)
and  the conclusion (by \cite{guillermou-kashiwara-schapira})
are invariant under contactomorphism of $S^*M$.
    Thus we may assume   $\psi(\ssupp(\cF)) \cup \psi(\ssupp(\cG))$ is spdff, and so in particular that $\psi(\ssupp(\cF))$ and $\psi(\ssupp(\cG))$ are spdff.

Let $\delta_M: M \to M \times M$ be the diagonal
embedding.  We write $i$ for a base-change of $i:\{ 0\} \to \RR$.    Consider the diagram:  
\beq\label{eq: key square split}
\xymatrix{
\ar[d]_-{\delta_M} M \ar[r]^-{i}  & M \times \RR \ar[d]^-{\delta_M}\\
M \times M  \ar[r]^-{i }  & (M \times M) \times \RR 
}
\eeq
and the map $\delta_\RR: (M \times M) \times \RR \to (M \times \R) \times (M \times \R)$.

Consider the sheaf 
\beq \label{eq: h sheaf} \inthom^\boxtimes (j_* \cF,  j_*\cG) 
\in \sh(M \times \R \times M \times \R) \eeq
and recall the identifications (the first by definition) 
 \begin{eqnarray*}
\inthom^{\boxtimes/\RR} (j_* \cF, j_*\cG) & = & \delta_{\R}^!  \inthom^\boxtimes (j_* \cF,  j_*\cG)  \\
\inthom(j_* \cF, j_* \cG) & = & \delta_M^! \delta_{\R}^! \inthom^\boxtimes (j_* \cF,  j_*\cG)
\end{eqnarray*}

A diagram chase shows we may factor \eqref{eq: local hom restricted} as: 

\begin{eqnarray*}
\Gamma(M, i^*\inthom_{M \times \R}(j_* \cF, j_* \cG)) & = &  \Gamma(M, i^* \delta_M^! \delta_\R^! \inthom^\boxtimes (j_* \cF,  j_*\cG)  )  \\
& \rightarrow & \Gamma(M, \delta_M^! i^*  \delta_\R^! \inthom^\boxtimes (j_* \cF,  j_*\cG)  ) \\
& = & \Gamma(M, \delta_M^! i^* \inthom^{\boxtimes/\RR} (j_* \cF,  j_*\cG) ) \\
& \xrightarrow{\sim} & \Gamma (M, \delta_M^! \inthom^\boxtimes(i^* j_* \cF, i^* j_* \cG)) \\
& = & \Hom_M(i^* j_* \cF, i^* j_* \cG)
\end{eqnarray*}
Above, the ``$\xrightarrow{\sim}$'' is from Theorem \ref{thm: relative box hom basechange}, 
whose hypotheses are included amongst those of the present theorem. 

The ``$\rightarrow$'' is the natural map associated to the square \eqref{eq: key square split}, 
and it remains only to show it is an isomorphism.  This is the content of Lemma \ref{lem:upper square} below. 
\end{proof}

For conceptual clarity, we replace $i: 0 \hookrightarrow \R$ with
the inclusion of a point in a manifold of any dimension, $i: 0 \hookrightarrow B$.  Similarly
we replace $j: \R_{>  0} \hookrightarrow \R$ with $j: (B \setminus 0) \hookrightarrow B$, etcetera. 
For the following result, we do not need any pdff conditions.

\begin{lemma} \label{lem:upper square}
Let $\cF, \cG \in sh(M \times (B\setminus 0))$ be $(B \setminus 0)$-noncharacteristic,
and assume the pair $(ss_{\pi}(\cF), ss_{\pi}(\cG))$ is gapped (Def. \ref{def:gapped}) over 
$B \setminus 0$.  Then the natural map 
$$\Gamma(M, i^* \delta_M^! (\delta_B^! \inthom^\boxtimes (j_* \cF,  j_*\cG))) \to \Gamma(M, \delta_M^! i^*  (\delta_B^! \inthom^\boxtimes (j_* \cF,  j_*\cG)))$$
associated to
\beq\label{eq: top key square}
\xymatrix{
\ar[d]_-{\delta_M} M \ar[r]^-{i}  & M \times B \ar[d]^-{\delta_M}\\
 M \times M  \ar[r]^-{i }  & (M \times M) \times B
}
\eeq
is an isomorphism. 
\end{lemma}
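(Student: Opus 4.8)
The plan is to recognize the claim as a special case of the hyperbolic--base--change criterion of Proposition~\ref{prop:hyperbolic basechange}. Set $\cK := \delta_B^!\cH \in \sh(M\times M\times B)$, $X := M\times M\times B$, and take the transverse pair of submanifolds $Y_1 := \Delta_M\times B$ (the diagonal in the first two factors, times $B$) and $Y_2 := M\times M\times\{b\}$. Then $Y := Y_1\cap Y_2 = \Delta_M\times\{b\}\cong M$; writing $y_1,y_2$ for the inclusions into $X$ and $i_1 : Y\hookrightarrow Y_1$, $i_2 : Y\hookrightarrow Y_2$ for the others, one has exactly $i_1^* y_1^!\cK = i^*\delta_M^!\cK$ and $i_2^! y_2^*\cK = \delta_M^! i^*\cK$, so the arrow associated to~\eqref{eq: top key square} is the global--sections version of the base--change map~\eqref{eq:bc}. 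Since only the germ of $X$ along $Y$ matters one may shrink $X$, and the properness used in the proof of Proposition~\ref{prop:hyperbolic basechange} is available here for the same reason it was in Lemma~\ref{lem:lower square}: after the reduction below one computes with sections over products of small balls and, ultimately, over a point.

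First I would fix the positive contact flow on $S^*X$ to be of product type, built by the $\#$--construction of Lemma~\ref{lem: partial cosphere} from the flow $\phi$ on $S^*M$ used to formulate the gapped hypothesis together with (say) normalized geodesic flow on $S^*B$. With this choice the neighborhood--defining functions may be taken to be $f_1 = $ the $(\phi\#\phi)$--distance to the diagonal $\Delta_M\subset M\times M$, pulled back along $X\to M\times M$ --- so $df_1$ has vanishing $B$--component --- and $f_2 = $ the distance to $b$ in the $B$--factor --- so $df_2$ lies in the conormal direction $\pi^*T^*B$. Hence, over $0<f_1,f_2<\epsilon$, a covector in $\on{span}(df_1,df_2)$ has $M\times M$--component a real multiple of $(df_1)_{M\times M}$, the inward or outward conormal to the radius--$f_1$ neighborhood of $\Delta_M$, and $B$--component a multiple of $(df_2)_B$.

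Next I would estimate $\ssupp(\cK)$ using the standard bounds of Lemma~\ref{standardestimates}: $\ssupp(\cH)\subset -p_1^*\ssupp(\cF) + p_2^*\ssupp(\cG)$ (the $\hat{+}$ collapses to an ordinary sum, the two summands lying in complementary factors), and then $\ssupp(\cK)\subset \delta_B^\#\ssupp(\cH)$. Unwinding this, over $t\ne b$ the image of $\ssupp(\cK)$ in the $M\times M$--cotangent directions lies in $(-ss_\pi(\cF)_t)\times ss_\pi(\cG)_t$, and the $B$--noncharacteristic hypothesis (preserved by $j_*$ and by the operations above) forces the $B$--component of such a point to vanish once both its $M$--cotangent components do. The latter observation handles the $a=0$ part of the span condition: a nonzero multiple of $df_2$ alone cannot lie in $\ssupp(\cK)$. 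For $a\ne 0$, I would invoke the final assertion of Lemma~\ref{lem:three ways of looking at a reeb chord}: the graph of $d\rho$ (for $\rho$ the $(\phi\#\phi)$--distance to $\Delta_M$) at radius $r$ meets $-V\times W$ precisely at Reeb chords of $\phi$ of length $r$ from $V$ to $W$. Applying this with $V = ss_\pi(\cF)_t$, $W = ss_\pi(\cG)_t$ --- and, for the inward conormal, with $V$ and $W$ interchanged, which records negative--length chords --- a nonzero point of $\on{span}(df_1,df_2)\cap\ssupp(\cK)$ over $0<f_1,f_2<\epsilon$ would yield, for arbitrarily small $r$ and some $t\in B\setminus b$, a short Reeb chord between $ss_\pi(\cF)_t$ and $ss_\pi(\cG)_t$, which the gapped hypothesis (controlling short chords in both directions) forbids. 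Thus the span hypothesis of Proposition~\ref{prop:hyperbolic basechange} holds and the Proposition gives the isomorphism.

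The main obstacle is this translation step: carrying out the microsupport bookkeeping honestly --- tracking the signs coming from $\inthom$ and the $\delta_B^\#$/$\hat{+}$ corrections --- and, in particular, confirming that the $B$--direction is genuinely pinned down by $B$--noncharacteristicity, so that the only way the $2$--plane $\on{span}(df_1,df_2)$ can meet the conic set $\ssupp(\cK)$ is through the chord mechanism just described. A subsidiary point, dealt with exactly as in the proof of Lemma~\ref{lem:lower square}, is reducing the global statement to sections over polyballs times intervals and then to the elementary case $M = \mathrm{pt}$, which also sidesteps verifying the properness hypothesis of Proposition~\ref{prop:hyperbolic basechange} in the literal form stated.
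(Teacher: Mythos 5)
Your core argument is the paper's own proof: the same identification of the map with the hyperbolic base-change of Proposition~\ref{prop:hyperbolic basechange} for $X=M\times M\times B$, $Y_1=\Delta_M\times B$, $Y_2=M\times M\times\{b\}$; the same product-type flow built from the gappedness flow on $M$ and the norm flow on $B$; and the same verification of the span hypothesis, with $df_2$ killed by $B$-noncharacteristicity and $df_1$ handled after projecting modulo $T^*B$ via the relative-microsupport estimate (your unwinding of the standard estimates is exactly Corollary~\ref{cor:relativehomestimate}) combined with Lemma~\ref{lem:three ways of looking at a reeb chord} and the gap. Your observation that negative multiples of $df_1$ record chords in the opposite direction is a genuine point of care which the paper's text passes over; note that Definition~\ref{def:gapped} is literally one-sided, but in the intended application the two relative microsupports coincide, so the chord spectrum is symmetric and nothing is lost.

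The one step you should not lean on is the closing ``subsidiary point.'' The reduction to sections over small polyballs in Lemma~\ref{lem:lower square} is powered by the pdfl hypothesis, which is precisely what the upper square does not assume; moreover the statement here is only about global sections, and the gapped hypothesis is global over $M$, so there is no stalkwise reformulation to localize to, and in particular no reduction to $M=\mathrm{pt}$ (in the lower square the $M$-directions could be integrated out because the pushforwards $p_!c^!\cF_U$, $p_*c^*\cG_U$ became locally constant; here the Hom pairing along the diagonal of $M$ is the whole content). The paper simply applies Proposition~\ref{prop:hyperbolic basechange} as stated, pushing forward along $f_1\times f_2$, and the properness needed for that pushforward is the proposition's compactness hypothesis (compact $Y$, or compact support of the sheaf), not something recovered by localizing; if you want to address noncompact $M$ honestly you must argue that hypothesis separately rather than via the polyball detour.
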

\begin{proof}
We will apply Proposition \ref{prop:hyperbolic basechange} to the sheaf $\delta_B^! \inthom^\boxtimes (j_* \cF,  j_*\cG)$.  
In the notation there we should consider 
$$X = (M \times M) \times B \qquad \qquad Y_1 = M \times B \qquad \qquad Y_2 = M \times M \qquad \qquad Y = M$$

Let $m: T^\circ M  \to \R$ be the  
$\R_{>0}$-equivariant Hamiltonian for the positive flow exhibiting gappedness (recall this Hamiltonian is assumed time independent).  
Fix a metric on $B$ and let $b$ be the $\R_{>0}$-equivariant Hamiltonian for Reeb flow on $T^*B \setminus B$
(i.e. the norm of the cotangent coordinate).  On $T^\circ(M \times M \times B)$ we consider the Hamiltonian
$h := (m^2 \oplus m^2 \oplus b^2)^{1/2}$;  it is $\R_{>0}$-equivariant so determines a contact flow on $S^*(M \times M \times B)$. 
With respect to this flow,  let $f_1$ and $f_2$ be neighborhood defining functions for $M \times B$ and $M \times M$ respectively. 
 Note in what follows our concerns will fall within Remark~\ref{rem:away from corners} so the constructions are suitably smooth.

To verify the hypotheses of Proposition \ref{prop:hyperbolic basechange}, we must show  that for some 
$\epsilon$,  above $\{f_1, f_2 \in (0, \epsilon)\}$, the locus $ss( \delta_B^! \inthom^\boxtimes (j_* \cF,  j_*\cG) )$ 
is disjoint from the span of $df_1, df_2$.  Note the condition $f_1, f_2 \ne 0$ implies that 
we are working in the complement of the preimage of $0 \in B$. 

Evidently $df_2$ is pulled back from the cotangent to $B$.  Thus the disjointness from $df_2$ follows because 
$\cF, \cG$ are $(B \setminus 0)$-noncharacteristic.

Because $df_2$ is contained in $T^*B$, 
it now suffices to check that the images in  $T^*X / T^*B$
of $ss(\delta_B^! \inthom^\boxtimes (j_* \cF,  j_*\cG))$ and $df_1$ are disjoint in some neighborhood $f_1^{-1}(0, \epsilon)$. 
That is, we should study the relative microsupport $ss_{\tilde{\pi}}(\delta_B^! \inthom^\boxtimes (j_* \cF,  j_*\cG))$.  

The relevant 
estimate is Cor. \ref{cor:relativehomestimate}, which tells us that in the fiber over $0 \in B$, i.e. inside $(T^*X / T^*B)_0 = T^*(M \times M)$, we have 

$$ss_{\tilde{\pi}}(\delta_B^! \inthom^\boxtimes (j_* \cF,  j_*\cG))_0 \subset - ss_{\pi}(\cF)_0  \boxplus  ss_{\pi}(\cG)_0$$

We therefore study intersections of $df_1$ with the RHS above. 
Per Lemma \ref{lem:three ways of looking at a reeb chord}, these correspond to chords 
whose length is the value of $f_1$ at the intersection point. 
From the gapped hypothesis, we may once and for all choose $\epsilon$ small enough that there are no such in $f_1^{-1}(0, \epsilon)$. 
\end{proof}

\section{Microsheaves} \label{sec: mush} 

Let $M$ be a manifold, and $\cC$ a symmetric  monoidal $\infty$-category.
 (The reader will not learn less from this article by taking $\mathcal{C}$ to be the dg derived category of modules 
over a commutative ring, say $\mathbb{Z}$.) When we wish to make assertions regarding presentability
or compact generation, we assume that $\mathcal{C}$ has these properties. 
   
  The category of sheaves on $M$ valued in $\cC$ {\em microlocalizes} over the cotangent
bundle $T^*M$ in the sense that it is the global sections of a sheaf of categories on $T^*M$.  
This sheaf of categories is defined as follows.  Recall that for $V \subset T^*M$, we write 
$sh_V(M)$ for the category of sheaves on $M$ microsupported in $V$.  For an open subset 
$\mathcal{U} \subset T^*M$, we set
$$
\mush^{pre}(\mathcal{U}) := \sh(M) / \sh_{M \setminus \mathcal{U}}(M)
$$

For  $\mathcal{U} \subset V$, we evidently have  
$\sh_{M \setminus \mathcal{U}}(M) \supset \sh_{M \setminus V}(M)$, and 
thus there are restriction maps $\mush^{pre}(\mathcal{U}) \to \mush^{pre}(V)$; 
it is easy to see that these make $\mush^{pre}$ into a presheaf of  stable  $\infty$-categories.  
We write $\mush$ for its sheafification (see Rem.~\ref{rem:sheafification for cats} for a discussion of this construction
in the present context).

While $\mush$ is sensible in the usual topology on $T^*M$, it is in fact pulled back from the conic
topology, in which the open sets are all invariant under the $\R_{>0}$-action.  We will often be interested
in its restriction to the the complement of the zero section $T^*M \setminus M$, where it is 
pulled back from a sheaf on the cosphere bundle $(T^*M \setminus M)/\R_{> 0} = S^*M$, which we 
also denote by $\mush$. 

For $F \in \mush(\mathcal{U})$, there is a well defined microsupport, $ss(F) \subset \mathcal{U}$.   For $\Lambda \subset M$ 
we write $\mush_\Lambda(\mathcal{U})$ for the full subcategory of objects microsupported in $\Lambda \cap \mathcal{U}$.
Note that 
$\mush_\Lambda$ is a subsheaf of $\mush$, and the pushforward of a sheaf on $\Lambda$. 

\begin{remark}\label{rem:sheafification for cats}
Let us give some technical remarks regarding sheaves of $\infty$-categories, in particular sheafification; 
throughout, we rely  upon the foundations provided by~\cite{luriehttpub, lurieha}.  To ease the exposition, 
when possible, we will say category in place of $\infty$-category.

When discussing categories, it is necessary
to specify in which category of categories we are working; specific natural choices include the category $Cat$ of all
categories, and also the categories $Pr^L$ and $Pr^R$ of presentable categories with continuous or cocontinuous morphisms. 
To determine whether a presheaf is a sheaf, this is immaterial: limits in $Pr^L$ or $Pr^R$ exist
and are computed by the corresponding limits in the category of categories \cite[Chap. 5.5]{luriehttpub}.  
However, 
given a presheaf of presentable categories for which all restriction maps are continuous and cocontinuous,
we do not know whether (or when) its sheafifications
in $Pr^L$ and $Pr^R$ and $Cat$ agree.  

In fact this is the situation in which we find ourselves.  
As the microsupport of a limit or colimit is contained in the union of the microsupports of the 
terms, the full subcategory $\sh_{M \setminus \mathcal{U}}(M) \subset \sh(M)$ is closed under limits and colimits.  
It follows (see e.g. \cite[Prop A.8.20, Rem A.8.19]{lurieha}) that the quotient map 
$\sh(M) \to \sh(M) / \sh_{M \setminus \mathcal{U}}(M)$ is continuous and cocontinuous.  Similarly, all restriction
maps in $\mush^{pre}$ are continuous and cocontinuous.  Thus it would be natural to try and work in
any of the above categories. 

In fact we always sheafify in $Cat$.  The reason is that, because $Cat$ embeds in simplicial spaces, we may
import from that latter category the fact that a morphism of sheaves is an isomorphism iff it is an isomorphism
on stalks.  This is {\em false} for sheaves in $Pr^L$ or $Pr^R$.  In particular,  colimits
in these categories can be much smaller than in $Cat$.
For example, let us write $sh_M$ for the sheaf of categories $U \mapsto sh(U)$ on some manifold $M$.  The restriction maps are continuous and co-continuous; this is a sheaf of categories in $Cat$, $Pr^L$, and $Pr^R$. To compute the stalk in $Pr^L$ or $Pr^R$ we may take $\colim_{U \ni x} sh(U) = \lim_{U \ni x} sh(U)$, where the morphisms in the limit diagram are the right or left adjoints or the restriction, i.e. the shriek or star pushforward.  In either case, the limit consists only of the skyscraper sheaves at $x$.  (By contrast, the stalk in the category of categories cannot similarly be computed by trading colimits for limits, and is much larger.)  Moreover, the same calculation is valid for any separated topological space.  So if $M^\delta$ is $M$ with the discrete topology, then the map $sh_{M^\delta} \to \sh_M$ (induced by pullback along $M^\delta \to M$) is a morphism of sheaves of categories which is an isomorphism on $Pr^L$ or $Pr^R$ stalks.\footnote{This paragraph is extracted from discussions
with Chris Kuo, German Stefenich, and  Nick Rozenblyum, who corrected some of our previous misconceptions.} 

We do not know whether, for $\mush^{pre}$, the sheafification in $Cat$ agrees with the sheafification in
$Pr^L$ or $Pr^R$; we also do not know whether the sections of the (sheafified in Cat) $\mush$ are in fact
presentable categories.  One reason to care about this is that the existence of adjoints to the restriction
maps (which are continuous and cocontinuous) would follow from presentability.  

That being said, there is a trivially sufficient condition which will guarantee that the sheafification of a presheaf 
of stable categories agrees in all three categories: if, at each point, there is a cofinal sequence in the 
neighborhood basis on which the restriction maps are isomorphisms.  It is not difficult to see that this holds for
subcategories $\mush_\Lambda^{pre}$ of microsheaves microsupported on some fixed sufficiently tame
Lagrangian $\Lambda$. 

Finally, one uncomplicated aspect of sheafification is that since all categories in sight are stable and all functors exact, the sheafification is automatically a sheaf of stable categories. 
\end{remark}

Next, let us recall some properties of $\mush$ available in the literature.
While $\mush$ is not explicitly considered in \cite{kashiwara-schapira},  $\mush^{pre}$ appears in \cite[Sec. 6.1]{kashiwara-schapira},
where $\mush^{pre}(\mathcal{U})$ (with boundedness assumptions) is called $D^b(M; \mathcal{U})$. The stalks
of $\mush^{pre}$, which are also the stalks of $\mush$, also appear  (again with boundedness assumptions) in \cite{kashiwara-schapira} under the name $D^b(M, p)$.  
In constructing a morphism of sheaves,
it is enough to do so for their corresponding presheaves; to check properties of a morphism (in particular, when
it is fully faithful or an isomorphism) it is enough to check on stalks.  Thus the results
of \cite{kashiwara-schapira} serve well for these purposes.  
Most fundamentally, the $\muhom$ functor of \cite{kashiwara-schapira}
gives the sheaf of Homs of objects in $\mush$.  (Indeed, in \cite[Theorem 6.1.2]{kashiwara-schapira}, it is shown that there is an
isomorphism on stalks, and the proof first constructs a natural morphism on presheaves.) 
The map $\mush^{pre}(\mathcal{U}) \to \mush(\mathcal{U})$ is not generally an isomorphism, but in case $\mathcal{U} = T^*U$ one has
$sh(U) = \mu  sh^{pre}(T^*U) = \mu sh(T^*U)$.\footnote{Indeed, let $\pi: T^* U \to U$ be the projection, and $s: U \to T^* U$.  As for any conic sheaf, one has an equivalence $s^* \mu sh \cong \pi_* \mu sh$.  It thus suffices to check that $sh \to s^*\mu sh$ is an equivalence, which we can moreover check on stalks.  But the stalks of $s^* \mu sh$, which are the stalks of $\mu sh$ along the zero section, which are the stalks of $\mu sh^{pre}$ along the zero section, which are the stalks of $\sh$.}

A basic tool to study $\mush$ is the microlocal theory of quantized contact transformations developed 
in \cite[Chap. 7]{kashiwara-schapira}.  A key result is that a contactomorphism induces local isomorphisms on $\mush$. 
More precisely, a contactomorphism $\phi$ between a germ of $x \in S^*M$ and of $y \in S^*N$ induces 
an isomorphism,\footnote{The isomorphism is unique up to a choice of invertible object in the coefficient category.}
respecting microsupports, of $\mush_x$ and $\phi_* \mush_y$ \cite[Cor 7.2.2]{kashiwara-schapira}.  

In order to globalize this, we note the evident: 

\begin{lemma} \label{lem: obvious constancy}
Suppose $\widetilde{\Lambda} \subset T^*(M \times \R^n)$ is the product of $\Lambda \subset T^*M$ and 
$T_0^*\RR^n$.  Then pullback along $i: M \times 0 \to M \times \RR^n$ induces an isomorphism
$i^*: i^*\mush_{\widetilde{\Lambda}} \to \mush_{\Lambda}$, and pullback along $\pi: M \times \RR^n \to M$ 
induces an isomorphism $\pi^*: \mush_{\widetilde{\Lambda}} \to \pi^*\mush_{\Lambda}$.  $\square$
\end{lemma}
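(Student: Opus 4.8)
The plan is to reduce both assertions to statements checkable on stalks, and then to invoke the microsupport estimates of \cite{kashiwara-schapira} together with noncharacteristic propagation. By construction $\mush$ is obtained by sheafification in $Cat$, so a morphism of the resulting sheaves of categories is an isomorphism exactly when it is one on all stalks (Remark \ref{rem:sheafification for cats}); moreover the stalks of $\mush$ agree with those of $\mush^{pre}$, so each stalk is one of the categories $D^b(-;p)$ of \cite{kashiwara-schapira} and the cited results apply there verbatim. It thus suffices to fix a point $p=((x,n);(\xi,0))$ of $\widetilde\Lambda$ over $(x,n)\in M\times N$, so that $q:=(x;\xi)$ lies in $\Lambda$, and to show that $i^*$ induces an equivalence from the stalk of $\mush_{\widetilde\Lambda}$ at $p$ onto the stalk of $\mush_\Lambda$ at $q$, with $\pi^*$ an inverse; the second assertion then follows from the same computation, since the stalk of $\pi^*\mush_\Lambda$ at $p$ is by definition the stalk of $\mush_\Lambda$ at $q$. (When $\Lambda$ is tame enough there is even a cofinal family of conic neighborhoods on which the restriction maps of $\mush^{pre}_{\widetilde\Lambda}$ are already isomorphisms, so sheafification changes nothing; we will not need this.)

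To see the stalk statement, I would check that $i^*$ and $\pi^*$ are well defined there and mutually inverse. If $F\in\sh(M)$ has $ss(F)\subset\Lambda$, then the pullback estimate of Lemma \ref{standardestimates} for the submersion $\pi$ gives $ss(\pi^*F)\subset\pi^\#(ss(F))=ss(F)\times 0_{T^*N}\subset\widetilde\Lambda$, while $i^*\pi^*F\simeq F$ canonically since $\pi\circ i=\id_M$. Conversely, an object of the stalk of $\mush_{\widetilde\Lambda}$ at $p$ is represented by some $G\in\sh(M\times N)$ which, as only a germ matters, may be taken with $ss(G)\subset\widetilde\Lambda=\Lambda\times 0_{T^*N}\subset\pi^*T^*M$; being microsupported only in pulled-back codirections, $G$ is locally constant along the fibers of $\pi$ by noncharacteristic propagation (\cite[Prop.~2.7.2, Cor.~5.4.19]{kashiwara-schapira}, in the unbounded setting using \cite{robalo-schapira} as in Sect.~\ref{sec: conventions}), and the fibers near $M\times\{n\}$ being contractible we obtain $G\simeq\pi^*i^*G$ there, with $ss(i^*G)\subset\Lambda$ by the inclusion estimate of Lemma \ref{standardestimates}. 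Hence $i^*$ and $\pi^*$ exchange the two stalks inversely, and by the previous paragraph the corresponding morphisms of sheaves of categories are isomorphisms.

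This argument is essentially routine --- which is why the statement is called ``evident'' --- and I do not anticipate a genuine obstacle. The one point meriting care is the reduction to stalks: one must be sure that the stalkwise equivalences assemble into isomorphisms of sheaves of categories and not merely objectwise isomorphisms, which is precisely the content of Remark \ref{rem:sheafification for cats}; the only other mild subtlety is the invocation of the deformation/propagation lemma in the possibly unbounded setting.
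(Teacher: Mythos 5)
Your overall plan --- reduce to stalks, show $\pi^*$ and $i^*$ are mutually inverse there, using the pullback estimate and propagation along the $N$-fibres --- is the natural one (the paper records the lemma without proof, and this is surely the intended argument). The genuine gap is the sentence ``as only a germ matters, [$G$] may be taken with $\ssupp(G)\subset\widetilde\Lambda$'': that is exactly where all the content of the lemma sits, and it does not follow from ``only a germ matters''. An object of the stalk of $\mush_{\widetilde\Lambda}$ at $p$ is represented by a sheaf $G$ whose microsupport is constrained to lie in $\widetilde\Lambda$ only inside a small \emph{conic} neighborhood of $p$; over base points arbitrarily close to the base point of $p$ it may carry arbitrary codirections outside that cone, and such junk is invisible in the germ at $p$ but not after applying $i^*$ (the estimate $\ssupp(i^*G)\subset i^\#\ssupp(G)$ lets covectors with large $dN$-component contribute). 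Concretely, take $M=N=\R$ with coordinates $(x,t)$, $F=k_{\{x\ge 0\}\times\R}$, and $p$ the point of $\widetilde\Lambda$ over the origin. Then $G'=F\oplus k_{\{x=t\}}$ defines the same germ as $F$ at $p$ (the conormal of the line $\{x=t\}$ has codirections along $\pm(1,-1)$, hence misses a small conic neighborhood of $p$), yet $i^*G'\simeq i^*F\oplus k_{\{0\}}$, and the extra skyscraper is nonzero microlocally at $q$ (the microstalks of $i^*F$ and $i^*G'$ at $q$ have different ranks). So neither the well-definedness of your stalkwise $i^*$ on arbitrary representatives, nor the isomorphism $G\simeq \pi^* i^* G$ microlocally at $p$, is established by restricting a representative as you do.

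The missing ingredient is the refined microlocal cutoff, which the paper has on hand (\cite[Proposition 6.1.4]{kashiwara-schapira}, recalled as Proposition~\ref{prop: functorial cutoff}) and uses for exactly this kind of step in Lemma~\ref{lem: local antimicrolocalization}. Choosing cones $W\subset U$ inside a conic neighborhood $B_0\times U_0$ of $p$ on which $\ssupp(G)\subset\widetilde\Lambda$, the cutoff produces $u\colon\phi(G)\to G$, an isomorphism microlocally near $p$, with $\ssupp(\phi(G))|_B\subset B\times(W\cup 0)$ for small $B$; combining this containment with the hypothesis on $B\times U$ gives $\ssupp(\phi(G))|_B\subset\widetilde\Lambda\cup 0$ over an honest neighborhood $B$ of the base point, in all codirections. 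Only now does your propagation argument apply, yielding $\phi(G)|_B\simeq\pi^* i^*\phi(G)|_B$ with $\ssupp(i^*\phi(G))\subset\Lambda\cup 0$, hence essential surjectivity of $\pi^*$ on stalks; it also fixes the definition of the stalkwise $i^*$ (cut off first, then restrict --- equivalently take $i^*$ to be the inverse of $\pi^*$, which is unproblematic since $\ssupp(\pi^*F)=\ssupp(F)\times 0_{T^*N}$ and $i^*\pi^*=\id$). With this ingredient inserted, the remainder of your argument, including the reduction to stalks via Remark~\ref{rem:sheafification for cats} and the deduction of the second assertion from the first, is fine.
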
 

By contact transformation we have: 

\begin{lemma} \label{lem: relative constancy} 
Suppose the germ of $\widetilde{\Lambda} \subset S^*M$ is contactomorphic
to the germ of 
$\Lambda \times N \subset \mathcal{U} \times T^*N$ for some contact $\mathcal{U}$, by 
a map restricting to $f:  \widetilde{\Lambda}  \cong \Lambda \times N$.  

Let $\pi: \Lambda \times N \to \Lambda$ be the projection. 
Then for $\lambda \in \widetilde{\Lambda}$, there is an isomorphism 
$$\mush_{\widetilde{\Lambda}}|_{\lambda} \cong \mush_\Lambda|_{\pi \circ f(\lambda)}$$
\end{lemma}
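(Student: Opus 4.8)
The plan is to deduce this from two facts already in hand: the invariance of $\mush$ under quantized contact transformations \cite[Cor.~7.2.2]{kashiwara-schapira}, and Lemma~\ref{lem: obvious constancy}. Throughout I read $\lambda$ as a point of $\widetilde\Lambda$, so that $\pi\circ f(\lambda)\in\Lambda$ is defined; write $f(\lambda)=(\lambda',\nu)$ with $\lambda'=\pi\circ f(\lambda)$ and $\nu\in N$, regarding $N\hookrightarrow T^*N$ as the zero section.

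First I would transport the assertion across the given germ contactomorphism. By hypothesis there is a contactomorphism $\phi$ between a neighborhood of $\lambda$ in $S^*M$ and a neighborhood of $f(\lambda)$ in $\mathcal U\times T^*N$ carrying $\widetilde\Lambda$ to $\Lambda\times N$. Since $\mathcal U$ is a contact manifold I would fix a germ of an embedding $\mathcal U\hookrightarrow S^*P$ into a cosphere bundle — this is how $\mush_\Lambda$ on $\Lambda\subset\mathcal U$ is defined in the first place — and note that by Lemma~\ref{lem: partial cosphere} the product $\mathcal U\times T^*N$ is then an open contact submanifold of a cosphere bundle, so $\mush$ makes sense there as well. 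A quantized contact transformation induces an isomorphism of stalks $\mush_x\cong\phi_*\mush_y$ respecting microsupports \cite[Cor.~7.2.2]{kashiwara-schapira}, and an isomorphism respecting microsupports restricts to an isomorphism between the full subcategories cut out by any microsupport condition. Applied to $\phi$ this gives $\mush_{\widetilde\Lambda}|_\lambda\cong\mush_{\Lambda\times N}|_{f(\lambda)}$, where $\mush_{\Lambda\times N}$ denotes the microsheaves supported on $\Lambda\times N\subset\mathcal U\times T^*N$.

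It then remains to identify $\mush_{\Lambda\times N}|_{(\lambda',\nu)}$ with $\mush_\Lambda|_{\lambda'}$, and here I would argue locally. The claim is local near $\lambda'$ and near $\nu$, so I may replace $\mathcal U$ by an open subset of a cotangent bundle $T^*P$ containing $\lambda'$ and replace $T^*N$ by $T^*\RR^n$ near the origin of the zero section, turning $\Lambda\times N$ into $\Lambda\times 0_{T^*\RR^n}\subset T^*(P\times\RR^n)$; to cite Lemma~\ref{lem: obvious constancy} in exactly its stated form one first inserts the standard contact transformation in the $\RR^n$-directions (as in the proof of Lemma~\ref{lem: positive perturbation of cosphere}) exchanging the zero section with a cotangent fiber. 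Lemma~\ref{lem: obvious constancy} then identifies microsheaves supported on the product of $\Lambda$ with the zero section of $T^*\RR^n$ (equivalently, after that transformation, with a cotangent fiber) with microsheaves supported on $\Lambda$, compatibly with restriction to the slice $P\times\{0\}$, and in particular identifies the stalk at $(\lambda',0)$ with $\mush_\Lambda|_{\lambda'}$. Composing with the isomorphism of the previous step produces the asserted $\mush_{\widetilde\Lambda}|_\lambda\cong\mush_\Lambda|_{\pi\circ f(\lambda)}$.

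The step I expect to be the main obstacle is the first one: passing from a contactomorphism of germs of the \emph{ambient} contact manifolds to an honest isomorphism of the sheaves $\mush_{\widetilde\Lambda}$ and $\mush_{\Lambda\times N}$. This requires knowing that $\mush$, and hence each of its microsupport-supported subsheaves, behaves functorially under contact transformations of germs even when the ambient contact manifold is not literally a cosphere bundle — precisely the mechanism by which one transports $\mush$ to general contact manifolds (cf.\ Sect.~\ref{sec: embedding}), ultimately resting on \cite[Cor.~7.2.2]{kashiwara-schapira}. Everything after that is a direct appeal to Lemma~\ref{lem: obvious constancy}.
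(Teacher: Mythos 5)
Your argument is exactly the paper's: the proof there is the one-line observation that the statement is local on $\widetilde{\Lambda}$, so quantized contact transformations (\cite[Cor.~7.2.2]{kashiwara-schapira}) reduce it to Lemma~\ref{lem: obvious constancy}, which is precisely the route you take, just with the transport step and the zero-section/cotangent-fiber swap spelled out explicitly. The "main obstacle" you flag is handled by that same KS result at the level of germs, so there is no gap (only the minor slip of calling the local model for the contact manifold $\mathcal{U}$ an open subset of a cotangent rather than a cosphere bundle).
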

\begin{proof}
The statement is local on $\widetilde{\Lambda}$, so by contact transformation we are reduced
to Lemma~\ref{lem: obvious constancy}. 
\end{proof}

That is, $\mush_{\widetilde{\Lambda}}$ is locally constant in the $N$ direction.  In particular,
by taking $\Lambda$ a point, one has: 

\begin{corollary} \label{cor: locally local systems} 
Let $X \subset S^*M$ be a smooth Legendrian.  Then $\mush_X$ is locally isomorphic to the category of 
local systems on $X$. 
\end{corollary}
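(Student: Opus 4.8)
The statement is local on $X$, and $\mush_X$ is (the pushforward to $S^*M$ of) a sheaf of stable $\infty$-categories on $X$; likewise the sheaf of local systems $\mathfrak{Loc}_X\colon U\mapsto \Loc(U)$ on $X$ is such a sheaf. Both are \emph{locally constant with stalk $\cC$}, in the sense that over any sufficiently small contractible open $U\subset X$ they are equivalent to the constant sheaf $\underline{\cC}_U$: for $\mathfrak{Loc}_X$ this is immediate, since $\Loc(U)=\Fun(\mathrm{Sing}(U),\cC)\simeq\cC$ for $U$ contractible and the restriction maps between nested contractible opens are equivalences. So the plan is to prove the corresponding statement for $\mush_X$: for each $p\in X$ there is a contractible open $U\ni p$ in $X$ with $\mush_X|_U\simeq\underline{\cC}_U$; since the two sheaves are then each equivalent to $\underline{\cC}_U$ on $U$, they agree there.

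To produce such a $U$, I would first invoke the standard Legendrian neighborhood theorem: the germ of $X$ at $p$ inside $S^*M$ is contactomorphic to the germ, along its zero section, of $X$ embedded as the zero section $0_{J^1X}$ of $J^1X=T^*X\times\RR$; and by Lemma~\ref{lem: partial cosphere} (with $N=\RR$) this $J^1X$ sits as an open contact submanifold of $S^*(X\times\RR)$, carrying $0_{J^1X}$ to the copy of $X$ given by $0_{T^*X}\times\{\tau>0\}$. This exhibits the germ of $X$ near $p$ as a germ of the form $\Lambda\times N$ with $\Lambda$ a point (of the contactization direction) and $N=X$, as in the hypothesis of Lemma~\ref{lem: relative constancy}. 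Then the quantized contact transformation machinery of \cite[Ch.~7]{kashiwara-schapira}, which already underlies Lemma~\ref{lem: relative constancy}, transports $\mush_X$ near $p$ onto $\mush$ restricted to this model; and there Lemma~\ref{lem: obvious constancy} applies directly, with $\Lambda$ a point so that the projection $\pi$ collapses the base entirely: it shows that $\mush$ on the model is pulled back along $X\to\mathrm{pt}$ from its stalk, and that this stalk is $\cC$. Transporting back through the contactomorphism yields $\mush_X|_U\simeq\underline{\cC}_U$ for a suitable contractible $U\ni p$.

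Two routine points need attention rather than a genuine idea. First, Lemma~\ref{lem: obvious constancy} is phrased for cotangent bundles, so one must descend the isomorphisms $i^*$ and $\pi^*$ to the non‑zero‑section locus and hence to $S^*(X\times\RR)$; this is harmless because the model Legendrian avoids the zero section in the $\RR$‑factor, and because, by the construction of $\mush$ as a sheafification in $\mathrm{Cat}$ (Remark~\ref{rem:sheafification for cats}), an equivalence of sheaves may be checked on stalks, where it reduces to the cited results of Kashiwara--Schapira. Second, one uses that a locally constant sheaf of stable $\infty$-categories with stalk $\cC$ on a locally contractible space is, over any contractible open, equivalent to the constant sheaf $\underline{\cC}$ — immediate from the sheaf condition and the universal property of the constant sheaf. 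I do not expect a substantive obstacle here: the content of the corollary is purely local and is extracted, via the Legendrian normal form, from the elementary Lemma~\ref{lem: obvious constancy}; the only care required is the bookkeeping of the passage between $T^*$ and $S^*$ and between the stalkwise and sheaf‑level statements.
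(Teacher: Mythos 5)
Your proposal is correct and is essentially the paper's own argument: the paper obtains the corollary by applying Lemma~\ref{lem: relative constancy} with $\Lambda$ a point (i.e.\ the Legendrian neighborhood theorem puts the germ of $X$ in the form $\{pt\}\times X\subset \mathcal{U}\times T^*X$), whose proof is exactly the contact-transformation reduction to Lemma~\ref{lem: obvious constancy} that you describe, together with the standard identification of a locally constant sheaf of categories with stalk $\cC$ with the sheaf of local systems. The only cosmetic difference is bookkeeping: rather than saying the model is "pulled back along $X\to\mathrm{pt}$," it is slightly cleaner to apply Lemma~\ref{lem: obvious constancy} with $M=X$ and $\Lambda=0_X$ the zero section (so $\mush$ on the model is $\pi^*\mush_{0_X}\simeq\pi^*\Loc_X$ directly), or, as you do, to get constancy in the $X$-direction from Lemma~\ref{lem: relative constancy} and compute the stalk separately via the $M=\mathrm{pt}$ case.
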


That is, $\mush_X$ is a sheaf of categories of twisted local systems.  The twistings are related to the Maslov
obstruction and similar homotopical considerations, and are studied in \cite{guillermou, jin-treumann, jin-BO, jin-J}.  We will also consider them below. 

We now consider contact isotopies.  Recall that a hamilonian isotopy $\phi_t$ on $S^*M$ determines 
a Lagrangian $\Phi \subset T^*M \times T^*M \times T^*\R$.  Using $\Phi$ as a correspondence gives
a map (we also denote it $\Phi$) 
from subset of $T^*M$ to subsets of $T^*M \times T^* \R$, and $\Phi$ is characterized by the property
$\phi_t(X)$ is the symplectic reduction of $\Phi(X)$ over $t \in \R$.  

\begin{definition}
For a contact isotopy on $S^*M$, we obtain similarly a map $\Phi$
from subsets of $S^*M$ to subsets of $S^*M \times T^* \R$ (e.g. by viewing it as a conic Hamiltonian isotopy).   
For $X \subset S^*M$ we term $\Phi(X) \subset S^*M \times T^* \R \subset S^*(M \times \R)$ 
as the {\em contact movie} of $X$.   
\end{definition}

\begin{lemma} \label{lem: isotopy constancy}
 Let $\phi_t$ be a contact isotopy on $S^*M$, and $\Lambda \subset S^*M$ any subset.  
Then there is a canonical isomorphism $\phi_{t*} \mush_{\Lambda} \cong \mush_{\phi_{t}(\Lambda)}$.  
\end{lemma}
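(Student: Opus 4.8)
The plan is to reduce the global statement to the local isomorphisms already recorded in the excerpt, using the ``contact movie'' $\Phi(\Lambda) \subset S^*(M \times \R)$ to interpolate. First I would recall the standard Guillermou--Kashiwara--Schapira picture: a contact isotopy $\phi_t$ on $S^*M$ is generated by a conic Hamiltonian on $T^*M \times T^*\R$, and its graph sweeps out a smooth conic Lagrangian movie $\Phi(\Lambda)$ whose symplectic reduction at time $t$ is $\phi_t(\Lambda)$. The key geometric point is that the movie $\Phi(\Lambda)$, viewed near the slice $\{t\} \times S^*M$, is (locally on $S^*M \times T^*\R$) contactomorphic to a product $\phi_t(\Lambda) \times N$ in the sense of Lemma \ref{lem: relative constancy} — the extra $\R$-direction (together with its conormal) plays the role of the factor $N$ along which $\mush$ is locally constant. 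Concretely, the Hamiltonian flow itself furnishes the contactomorphism trivializing the movie in a neighborhood of each time slice.

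Next I would argue as follows. Consider the sheaf $\mush_{\Phi(\Lambda)}$ on (an open conic neighborhood in) $S^*(M \times \R)$. By Lemma \ref{lem: relative constancy}, applied with $\widetilde\Lambda = \Phi(\Lambda)$ restricted to a neighborhood of a time slice and with the projection forgetting the $\R$-factor, the restriction of $\mush_{\Phi(\Lambda)}$ to the slice over $t$ is canonically (up to the usual choice of invertible object) identified with $\mush_{\phi_t(\Lambda)}$ on $S^*M$. Since this holds for every $t$ in the parameter interval, and since the identifications are compatible as $t$ varies (they are all induced by restrictions from the single sheaf $\mush_{\Phi(\Lambda)}$ together with the flow-induced contactomorphisms), one obtains a coherent isomorphism between $\mush_\Lambda = \mush_{\phi_0(\Lambda)}$ and $\mush_{\phi_t(\Lambda)}$. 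Transporting this isomorphism along the contactomorphism $\phi_t: S^*M \to S^*M$ yields precisely $\phi_{t*}\mush_\Lambda \cong \mush_{\phi_t(\Lambda)}$. When $\Lambda$ is not smooth, $\mush_\Lambda$ is defined as the subsheaf of $\mush$ microsupported on $\Lambda$, and since $\phi_t$ respects microsupports (Kashiwara--Schapira, Chap. 7) the isomorphism $\phi_{t*}\mush \cong \mush$ on the relevant germs automatically restricts to the subsheaves cut out by the microsupport condition; so no smoothness is needed.

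I expect the main obstacle to be organizing the local contactomorphisms into a single \emph{global} comparison rather than a pointwise one — i.e.\ checking that the local identifications coming from Lemma \ref{lem: relative constancy} glue to a morphism of sheaves on all of $\Lambda$, uniformly in $t$. The clean way around this is to never glue by hand: one works entirely with the single movie sheaf $\mush_{\Phi(\Lambda)}$ on $S^*(M\times\R)$ and its two restriction morphisms to the time-$0$ and time-$t$ slices, each of which is an isomorphism by the local statement checked on stalks (Corollary \ref{cor: locally local systems} and Lemma \ref{lem: relative constancy} are stalkwise statements, and a morphism of sheaves of categories is an isomorphism iff it is so on stalks, per Remark \ref{rem:sheafification for cats}). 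Composing one restriction with the inverse of the other, and then conjugating by $\phi_{t*}$, gives the asserted isomorphism with no gluing argument required. The only mild technical care is in choosing the conic neighborhood of $\Phi(\Lambda)$ on which everything is defined and in tracking the invertible-object ambiguity, both of which are routine.
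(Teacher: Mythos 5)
Your proposal is correct and follows essentially the same route as the paper: identify (a neighborhood of) the contact movie $\Phi(\Lambda)$ with a product $\Lambda \times \R$ via the flow, apply Lemma \ref{lem: relative constancy} to get constancy of $\mush_{\Phi(\Lambda)}$ in the $\R$-direction, and compare the noncharacteristic restrictions to the time-$0$ and time-$t$ slices to obtain $\phi_{t*}\mush_\Lambda \cong \mush_{\phi_t(\Lambda)}$. The paper's proof is just a compressed version of exactly this argument.
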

\begin{proof}
Note there is a contactomorphism 
$(\Phi(X), Nbd(\Phi(X))) \cong (X \times \R, Nbd(X) \times T^*\R)$, so we may apply Lemma \ref{lem: relative constancy}
to conclude that $\mush_{\Phi(\Lambda)}$ is constant
in the $\R$ direction.  As $\Phi(\Lambda)$ 
is noncharacteristic for the inclusion of any $M \times t$, pullback along such an inclusion 
induces an isomorphism $\mush_{\Phi(\Lambda)}|_{M \times t} \to \mush_{\phi_t(\Lambda)}$. 
\end{proof}

We recall the stronger result:

\begin{theorem} \cite{guillermou-kashiwara-schapira} For any contact isotopy $\phi_t$ of $S^*M$ 
there is a unique sheaf $K_{\Phi} \in sh_{\Phi}(M \times M \times \R)$ such that $K_{\Phi}|_{M \times M \times 0}$ 
is the constant sheaf on the diagonal.  
\end{theorem}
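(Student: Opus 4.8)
The plan is to import Kashiwara--Schapira's local theory of quantized contact transformations and then globalize in the time variable $t$. First I would pass to the homogeneous symplectic picture: the contact isotopy $\phi_t$ of $S^*M$ lifts to a homogeneous Hamiltonian isotopy $\widehat\phi_t$ of $T^*M\setminus M$ generated by a (time-dependent) Hamiltonian $h_t$ homogeneous of degree one, and the ``contact movie'' $\Phi$ of the excerpt is, in its conic lift, the conic Lagrangian
$$\widehat\Lambda_\Phi=\{(x,\xi,\;y,-\eta,\;t,-h_t(x,\xi))\;:\;(y,\eta)=\widehat\phi_t(x,\xi)\}\subset T^*(M\times M\times\R),$$
and any sheaf $K$ as in the statement must have microsupport (away from the zero section) in $\widehat\Lambda_\Phi$. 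I would record two structural facts immediately. (i) $\widehat\Lambda_\Phi$ is disjoint from $0_{M\times M}\times(T^*\R\setminus\R)$, since a contactomorphism never produces a zero covector; hence restriction of $K$ to each slice $M\times M\times\{t\}$ is noncharacteristic and has microsupport in the graph of $\widehat\phi_t$. (ii) When the generating Hamiltonian vanishes identically (as for the trivial isotopy) one has $\widehat\Lambda_\Phi\subset\{\tau=0\}$, and a sheaf with microsupport in $T^*_{\Delta_M}(M\times M)\times 0_\R$ is pulled back from $M\times M$ --- this last being a special case of the noncharacteristic propagation lemma recalled at the start of the paper.

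For \emph{existence} I would cover $[0,1]$ by finitely many short overlapping open intervals $J_1,\dots,J_N$. Over a short $J_k$ the restricted isotopy is quantizable by the local theory of \cite[Chap.~7]{kashiwara-schapira} (one quantizes the contactomorphism at an endpoint and carries the parameter $t$ along, legitimate for a short interval by (i)), yielding $K_k\in \sh_{\widehat\Lambda_\Phi}(M\times M\times J_k)$, unique up to a shift and a twist by an invertible object of $\mathcal C$ and invertible under composition. To glue, on an overlap $J_k\cap J_{k+1}$ consider the fiberwise-over-$J$ composition $K_k\circ_M K_{k+1}^{-1}$: by the composition-of-kernels microsupport estimate of \cite[Chap.~6]{kashiwara-schapira} together with the cancellation $h_t+(-h_t)=0$ of the two generating Hamiltonians, its microsupport lies in $T^*_{\Delta_M}(M\times M)\times 0$, so by (ii) it is pulled back from $M\times M$, hence is the identity kernel as soon as it is so on one slice. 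After replacing $K_{k+1}$ by its tensor with the pullback of the invertible object matching the two kernels at one point of the overlap, we obtain isomorphisms $K_k|\cong K_{k+1}|$ on overlaps; the residual ambiguity being only by units (rigidified once more using (ii)), these glue, and \v{C}ech descent produces a global kernel on a neighborhood of $[0,1]$. Normalizing by $K|_{M\times M\times 0}\cong k_{\Delta_M}$ singles out the desired $K_\Phi$.

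For \emph{uniqueness}, given two kernels $K,K'$ as in the statement, invertibility of $K'$ (local, from \cite[Chap.~7]{kashiwara-schapira}, and glued as above) lets me form the fiberwise composition $K\circ_M (K')^{-1}$ over $\R$. This is a kernel for the trivial isotopy $\phi_t\circ\phi_t^{-1}=\id$; exactly as in the gluing step its microsupport is forced into $T^*_{\Delta_M}(M\times M)\times 0_\R$, so it is pulled back from $M\times M$, hence equals its restriction at $t=0$, namely $k_{\Delta_M}\circ k_{\Delta_M}=k_{\Delta_M}$. Therefore $K\cong k_{\Delta_M\times\R}\circ K'\cong K'$. (When $M$ is noncompact one inserts a standard cutoff to reduce to $M$ compact, or works directly with the invertible kernels, whose defining compositions involve only proper projections.)

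The hard part is not a single deep input --- local quantization, the composition estimate, and the noncharacteristic propagation lemma are all available --- but the bookkeeping in the gluing: making the local kernels \emph{and} the isomorphisms among them coherent in spite of the irreducible $\Pic(\mathcal C)$-torsor of ambiguities attached to a quantized contact transformation, so that the resulting \v{C}ech $1$-cocycle is trivializable. Conceptually, the crux of both halves is the Hamiltonian cancellation $h_t+(-h_t)=0$ that lands the relevant compositions in $\{\tau=0\}$ and thereby reduces everything to the trivial isotopy, where sheaves are pulled back from $M\times M$; this is where I would be most careful. One can also phrase the skeleton of this argument in the language of the excerpt: the constancy in $t$ of $\mush_{\widehat\Lambda_\Phi}$ is an instance of Lemma~\ref{lem: isotopy constancy}, and $K_\Phi$ is the unique object restricting to $k_{\Delta_M}$ at $t=0$ --- but lifting from microsheaves to an honest kernel sheaf on $M\times M\times\R$ is precisely what the local quantization input supplies and what the gluing globalizes.
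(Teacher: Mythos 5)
There is a genuine gap in your existence step, and it is exactly at the point you flag as ``bookkeeping.'' The quantized contact transformations of \cite[Chap.~7]{kashiwara-schapira} are \emph{microlocal}: given a contactomorphism near a point $p\in \dot T^*M$, they produce a kernel defined only on a neighborhood of the corresponding point of the graph, inducing an equivalence of the localized categories $D^b(M;p)\simeq D^b(M;q)$, with a non-canonical $\Pic(\mathcal C)$-worth of choices \emph{at each such point}. They do not produce a sheaf on $M\times M$ microsupported on the whole graph of a global contactomorphism, not even one $C^1$-close to the identity; producing such a global kernel is precisely the content of the theorem being proved (and is why the isotopy, rather than a single contactomorphism, enters). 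So your local inputs $K_k\in \sh_{\widehat\Lambda_\Phi}(M\times M\times J_k)$ do not exist by the cited theory: before you can glue in the $t$-direction you would have to glue microlocal kernels over a cover of the graph Lagrangian inside $T^*(M\times M)$ into an honest sheaf, and this is obstructed in general --- the presheaf of microlocal categories is not a sheaf and the map $\sh\to\mush$ is far from essentially surjective (this is the very issue the antimicrolocalization results of Section \ref{sec:anti} are designed to address). The same unproved invertibility of a glued kernel is then used in your uniqueness argument, so the gap propagates there too.

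The paper's route (following \cite{guillermou-kashiwara-schapira}) avoids this by a different decomposition: not short time intervals, but a factorization of the isotopy into small \emph{positive} and \emph{negative} isotopies. For those, no quantization machinery is needed: arguing as in Lemma \ref{lem: positive perturbation of cosphere}, the time-$t$ reduction of the movie $\Phi$ is the conormal to the boundary of a neighborhood of the diagonal, and the kernel is written down explicitly as the constant sheaf on that open (positive case) or closed (negative case) neighborhood; the general kernel is then a convolution of these. Note that positivity/negativity is essential here --- for a mixed-sign small Hamiltonian the front of the graph need not be embedded, so no constant-sheaf kernel is available, which is why ``short time'' alone does not suffice. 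Uniqueness is then handled without inverting an arbitrary candidate: along $\Phi$ the microsheaf category is (twisted) local systems and is constant in $t$, so two kernels agreeing at $t=0$ are microlocally isomorphic everywhere, the relevant cone has microsupport in the zero section, i.e.\ is a local system on $M\times M\times\R$, and vanishing at $t=0$ forces it to vanish. If you want to salvage your outline, the fix is to replace the interval-by-interval use of \cite[Chap.~7]{kashiwara-schapira} with this positive/negative factorization and explicit kernels (and, for noncompact $M$, to impose compact support of the isotopy so the convolutions and their microsupport estimates are legitimate).
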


The relation of this theorem to the above discussion is that $K_{\Phi}|_{M \times M \times t}$ gives an 
integral kernel which on microsupports away from the zero section applies the contact transformation
$\phi_t$.  Its real strength has to do with the fact
that one obtains equivalences of categories of sheaves, not just (as in Lemma \ref{lem: isotopy constancy}) 
microsheaves away from the zero section.  

As this result will be important to us, let us sketch the proof.  Uniqueness can be seen as follows: consider the functor
$ sh_{\Phi}(M \times M \times \R) \to \mush_{\Phi}(\Phi)$.  Note the latter is a category of local systems. 
Thus if two candidate $K_{\Phi}$ are isomorphic at $M \times 0$, hence microlocally isomorphic 
along $\Phi|_0$, they must be microlocally isomorphic everywhere away from the zero section.  
Thus the cone between them is a local system; as it vanishes at $M \times 0$ it must be trivial.   To show
existence it suffices to show existence for small positive and negative isotopies (and then convolve the corresponding kernels). 
For a small positive or negative isotopy, arguing as Lemma \ref{lem: positive perturbation of cosphere} shows that
the symplectic reduction of $\Phi$ at $t$ (for $t$ small) is a conormal to the boundary of a neighborhood of the diagonal.  
The corresponding $K_{\Phi}$ can be taken as the constant sheaf on this 
(open or closed according as the isotopy is positive or negative) neighborhood. 

We will mainly use this result through its following consequence:

\begin{corollary}  \label{cor: gks constancy}  
Let $\eta: M \times \R \to \R$ be the projection.  
Fix a contact isotopy $\phi_t: S^*M \to S^*M$ and any $X \subset S^*M$.   
Then the sheaf of categories
(on $\R$)  given by $\eta_* \sh_{\Phi(X)}$ is locally constant.  Pullback to $t \in \R$ induces an equivalence 
$(\eta_* \sh_{\Phi(X)})_t \cong \sh_{\phi_t(X)}(M)$. 
\end{corollary}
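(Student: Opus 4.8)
The plan is to deduce this directly from the Guillermou--Kashiwara--Schapira theorem just quoted, by using the kernel $K_\Phi$ to build an explicit local trivialization of the sheaf of categories $\eta_*\sh_{\Phi(X)}$ over $\R$. First I would recall that $K_\Phi \in \sh_\Phi(M \times M \times \R)$ has the property that $K_{\Phi,t} := K_\Phi|_{M \times M \times t}$ is, for each $t$, an invertible integral kernel intertwining $\sh_{\phi_s(X)}(M)$ with $\sh_{\phi_{s+t}(X)}(M)$ (equivalently, convolution with $K_{\Phi,t}$ is an equivalence of $\sh(M)$ carrying microsupport conditions along $\phi_{s}(X)$ to microsupport conditions along $\phi_{s+t}(X)$); this is precisely the "real strength" of the theorem noted in the text, that one gets equivalences of the full sheaf categories, not merely of microsheaves away from the zero section. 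Using the group-like structure $K_\Phi * K_{\Phi'} \cong K_{\Phi \cdot \Phi'}$ one gets, for any $t_0 \in \R$ and any sufficiently small interval $I \ni t_0$, a family of equivalences $\sh_{\phi_t(X)}(M) \xrightarrow{\sim} \sh_{\phi_{t_0}(X)}(M)$ depending on $t \in I$, assembled from convolution with $K_{\Phi, t_0 - t}$.

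Next I would package this family as an equivalence of sheaves of categories over $I$. Concretely, over $M \times \R$ consider the restriction of $\Phi(X)$; Lemma \ref{lem: isotopy constancy} (and the contactomorphism $(\Phi(X), Nbd(\Phi(X))) \cong (X \times \R, Nbd(X) \times T^*\R)$ used in its proof) shows $\mush_{\Phi(X)}$ is constant in the $\R$-direction, but here I want the analogous statement for $\sh_{\Phi(X)}$, i.e. for the actual sheaf category on $M \times \R$ with microsupport in $\Phi(X)$ rather than for microsheaves. The point is that convolving with the single kernel $K_\Phi$ over the whole $\R$-family at once (i.e. viewing $K_\Phi$ as a kernel on $(M \times \R) \times (M \times \R)$ supported appropriately, or more simply as a family parametrized by $\R$) gives a morphism of sheaves of categories $\eta_* \sh_{\Phi(X)} \to \underline{\sh_X(M)}$ over a neighborhood of $0$; by the translation-invariance of the construction of $K_\Phi$ one gets such a trivialization near every $t_0$. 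Checking that this morphism is an equivalence can be done on stalks at each $t$, where it is the GKS statement that $K_{\Phi,t}$-convolution is an equivalence $\sh_{\phi_t(X)}(M) \cong \sh_X(M)$, together with the identification of the stalk $(\eta_*\sh_{\Phi(X)})_t \cong \sh_{\phi_t(X)}(M)$. The last identification follows because $\Phi(X)$ is noncharacteristic for the inclusion of each slice $M \times t$ (it lies in $S^*M \times T^*\R$, hence meets $T^*M \times (T^*\R$ zero-section$)$ only in the zero section away from which everything is conic), so $i_t^*$ induces an equivalence on the relevant subcategories by noncharacteristic propagation; compactness of $M$ guarantees the needed properness for this and for commuting $\eta_*$ with the slice restrictions.

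The main obstacle I anticipate is the bookkeeping needed to upgrade the pointwise/microlocal statements into a statement about the pushforward sheaf of categories $\eta_*\sh_{\Phi(X)}$ being locally constant: one must be careful that "local constancy of a sheaf of categories on $\R$" is being checked with the correct notion (an equivalence to a constant sheaf on a neighborhood of each point, checkable on stalks as discussed in Remark \ref{rem:sheafification for cats}), and that the equivalences furnished by convolution with $K_{\Phi,t}$ genuinely vary continuously in $t$ so as to assemble into a morphism of sheaves rather than a mere pointwise collection of equivalences. This is handled by working with the single global kernel $K_\Phi$ and exploiting that its restriction to $M \times M \times t$ realizes the whole family simultaneously; the translation-invariance of the defining property of $K_\Phi$ (uniqueness forces $K_\Phi$ restricted near any $t_0$ to be the GKS kernel for the isotopy reparametrized to start at $t_0$) then gives the trivialization near each $t_0$. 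The compactness hypothesis on $M$ is used precisely to make the pushforward along $\eta$ and the slice-restrictions well behaved (proper base change, and properness of $\eta$ on the relevant supports).
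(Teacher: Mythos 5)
Your proposal is correct and follows essentially the route the paper intends: the corollary is stated there as an immediate consequence of the GKS kernel theorem, with no separate argument given, and your write-up simply fleshes out that derivation (convolution with $K_\Phi$ furnishes the local trivialization of $\eta_*\sh_{\Phi(X)}$ near each $t_0$, while noncharacteristicity of $\Phi(X)$ for the slices $M\times t$ and properness from compactness of $M$ identify the stalks with $\sh_{\phi_t(X)}(M)$). The only caveat, which does not change the verdict, is that the stalk identification is not a consequence of noncharacteristic propagation alone—essential surjectivity of restriction-at-$t$ again uses the kernel to extend a sheaf on the slice to the family—but since you invoke $K_\Phi$ for exactly this purpose elsewhere in the argument, the ingredients are all present.
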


%%%%%%%%%%%%%%%%%%%%%%%%%%%%%%%%%%%%%%%%%%%%%%%%%%%%

\section{Antimicrolocalization} \label{sec:anti}

Because $\mush$ is a quotient of sheaves of categories, hence suffers in its definition a {\em sheafification}, 
it is nontrivial to compute in $\mush$ directly.  In particular, for $X \subset S^*M$, 
it is not generally true that the map $\sh_X(M) \to \mush_X(X)$ is a quotient.  
One can often nevertheless reduce problems of microsheaf theory to problems of sheaf theory 
by finding some larger 
$X' \subset S^*M$ for which the natural map $\sh_{X'}(M) \to \mush_X(X)$ has a 
right inverse.  We term such an inverse an {\em antimicrolocalization}.  

When $X$ projects finitely to $M$, a local version of this problem can be solved directly using the  ``refined microlocal cutoff" 
of \cite{kashiwara-schapira}; we give an account in Section \ref{sec:cutoff}; similar results
can be found in \cite{waschkies-microperverse, guillermou}. 

When $\Lambda$ is a smooth Legendrian in a jet bundle, $\Lambda \subset J^1 M \subset T^*(M \times \R)$, 
Viterbo observed that $\Lambda \cup \Lambda^{\epsilon}$ (the latter being a small Reeb pushoff) 
should provide an antimicrolocalization \cite{viterbo-notes, viterbo-sheaf}.  
His argument was Floer-theoretic: there is an (exact) Lagrangian 
$L \cong \Lambda \times \R$ with $\partial L = \Lambda \cup \Lambda^{\epsilon}$; now $\mush_L(L)$ is 
local systems on $L$, and the map $\mush_{L} \to sh_{\Lambda \cup \Lambda^{\epsilon}}(M \times \R)$ is
obtained by sending a given local system $\mathcal{L}$ to the sheaf organizing
the Floer theory of $(L, \mathcal{L})$ with cotangent fibers.  

A direct sheaf-theoretical construction was later given by Guillermou \cite{guillermou}.  He proceeds by
making a local construction which he then
proves glues.  A similar construction will work for singular $\Lambda$ in a jet bundle, 
but a difficulty of adapting this to the case of an arbitrary positive flow is that aside from the Reeb flow in 
the jet bundle, it is not clear how to construct a cover on the base $M$ compatible with the effect of the contact flow on $S^*M$.
Instead we formulate Guillermou's construction in a global manner, where the needed local properties are 
checked by the (naturally contact invariant) noncharacteristic propagation principle.  
In this way we prove an analogous result for a singular Legendrian $\Lambda$ in an arbitrary cosphere bundle displaced 
by an arbitrary positive flow, and a generalization to the case when $\Lambda$ is  only locally closed.   

\begin{remark}
The fact that antimicrolocalization should exist in this generality is partially motivated by the results on ``stop doubling"
of \cite[Ex. 8.6]{gpsdescent}.  Indeed, these results show that given a Weinstein manifold $W$ with skeleton $\Lambda$, 
and an embedding as an exact hypersurface $W \to S^*M$, then there is a fully faithful functor 
$Fuk(W) \hookrightarrow Fuk(T^*M; \Lambda \cup \Lambda^{\epsilon})$.  In \cite{gpsconstructible} it is shown
that $Fuk(T^*M; \Lambda \cup \Lambda^{\epsilon}) \cong sh_{\Lambda \cup \Lambda^{\epsilon}}(M)$, and one 
could imagine running an analogue of Viterbo's construction above -- if one knew that 
$\mush(\Lambda) \cong Fuk(W)$.   In fact \cite{gpsconstructible} uses this idea in the reverse direction, applying
the antimicrolocalization in order to reduce the general problem of showing $\mush(\Lambda) \cong Fuk(W)$ to the case
of cotangent bundles.  This result was originally conjectured in 
\cite{nadler-wrapped}.
\end{remark}

\begin{remark}
Note that antimicrolocalization is the special case of gapped specialization corresponding to taking the natural 
$(\Lambda \times \R) \subset T^*M$ whose boundary is $\Lambda \cup \Lambda^{\epsilon}$, and flowing down
by the Liouville flow. 
\end{remark}

\subsection{Local antimicrolocalization} \label{sec:cutoff}

\newcommand\image{\operatorname{im}}

Let us recall from \cite{kashiwara-schapira} the refined microlocal cutoff. 
The assertions are local so from the start we will work with $X= \R^n$ a vector space and focus on  the origin $x_0 = 0$.  Let $X^* \simeq \R^n$ denote the dual vector space so that $T^*X  \simeq X \times X^*$.
 
\begin{proposition} (\cite[Proposition 6.1.4]{kashiwara-schapira})
 Let $K \subset X^*$  be a proper closed convex cone, and $U \subset K$ an open cone. Fix  $\cF \in sh(X)$ and $W\subset X^*$ a conic neighborhood 
of $K \cap \ssupp(\cF)|_0 \setminus 0$. Then there exists $\cF'  \in sh(X)$ and a map $u:\cF'\to \cF$ along with
a neighborhood $B\subset X$  of $0 \in X$
such that
\begin{enumerate}
\item $\ssupp(\cF')|_B \subset B \times \overline{U}$ and $\ssupp(\cF')|_0 \subset W \cup 0$.
\item $u$ induces an isomorphism in $\mush^{pre}(B \times U)$.\footnote{In \cite{kashiwara-schapira} it is stated 
``induces an isomorphism on $U$''.  However from the proof, it is clear that the present assertion is what is meant.}  
\end{enumerate}
\end{proposition}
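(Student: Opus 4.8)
The plan is to realise $\cF'$ as a \emph{microlocal cut-off} of $\cF$ onto the codirections in $\overline U$, built by convolution with the constant sheaf on a cone, applied to a version of $\cF$ localized near the origin; since the whole statement concerns only the germ of $\cF$ at $0$, this localization costs nothing. Because $U$ is an open cone inside the proper closed convex cone $K$, choose a closed convex proper cone $\gamma\subset X$ with vertex $0$ whose antipodal polar
$$\gamma^{\circ a}:=\{\,\xi\in X^*\ :\ \langle\xi,v\rangle\le 0\text{ for all }v\in\gamma\,\}$$
equals $\overline U$ --- possible once $U$ is convex, which I assume (the only case used in this paper; the general case reduces to it by patching cut-offs over a cover of $U$ by convex open subcones). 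Note $\operatorname{Int}(\gamma^{\circ a})\supset U$ and $\gamma^{\circ a}=\overline U\subset K$.

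Fix a small open ball $B_0\ni 0$ and set $\cF':=(\cF\otimes k_{B_0})\star k_\gamma$, where $\star$ is convolution along addition $s\colon X\times X\to X$; let $u\colon\cF'\to\cF$ be the composite of the cut-off map $(\cF\otimes k_{B_0})\star k_\gamma\to(\cF\otimes k_{B_0})\star k_{\{0\}}=\cF\otimes k_{B_0}$, induced by the restriction $k_\gamma\to k_{\{0\}}$, with the map $\cF\otimes k_{B_0}\to\cF$. Two of the three required properties are then formal. First, convolution with a closed convex cone kernel always has microsupport confined to the polar directions (\cite[Ch.~3]{kashiwara-schapira}), so $\ssupp(\cF')\subset X\times\gamma^{\circ a}=X\times\overline U$, which gives the first half of (1) on any $B$. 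Second, the microlocal cut-off lemma of \cite[Ch.~5]{kashiwara-schapira}, applied to $\cF\otimes k_{B_0}$, says the cut-off map is an isomorphism in $\mush^{pre}$ over $X\times\operatorname{Int}(\gamma^{\circ a})$, while $\cF\otimes k_{B_0}\to\cF$ is an isomorphism of sheaves over $B_0$; composing, $u$ is an isomorphism in $\mush^{pre}(B\times U)$ for every $B\subset B_0$, which is (2).

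It remains to prove $\ssupp(\cF')|_0\subset W\cup 0$; this is the content of the \emph{refined} cut-off \cite[Prop.~6.1.4]{kashiwara-schapira} beyond the plain one. Here I would use the pointwise microsupport estimate for the proper pushforward $\cF'=Rs_!(\,\cdot\,)$ --- proper base change applies since $s$ is proper on $\overline{B_0}\times\gamma$ --- which forces any $(0;\xi)\in\ssupp(\cF')$ to satisfy $\xi\in\gamma^{\circ a}=\overline U$ together with the existence of $b\in\gamma$ with $(-b;\xi)\in\ssupp(\cF\otimes k_{B_0})$; since $\ssupp(\cF\otimes k_{B_0})$ lives over $\overline{B_0}$, the vector $b$ is bounded by the radius of $B_0$. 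For $-b$ strictly inside $B_0$ this means $(-b;\xi)\in\ssupp(\cF)$, so letting $B_0$ shrink and using closedness of $\ssupp(\cF)$ one gets $\xi\in\ssupp(\cF)|_0$, whence $\xi\in K\cap\ssupp(\cF)|_0$ and so (for $\xi\neq0$) $\xi\in W$.

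The one place needing genuine care, and which I expect to be the main obstacle, is the boundary contribution: $\cF\otimes k_{B_0}$ carries extra microsupport along the conormals of $\partial B_0$, so the case $-b\in\partial B_0$ must be excluded or controlled, and one must check these boundary directions do not produce spurious covectors in $\overline U$ over points arbitrarily near $0$. This is handled by choosing $B_0$ round (so its conormals are radial) and exploiting the properness of $\gamma$ together with the relation $\langle b,\xi\rangle=0$ forced by $(b;\xi)\in\ssupp(k_\gamma)$; making this clean --- rather than the formal parts above --- is where the actual work of \cite[Prop.~6.1.4]{kashiwara-schapira} lies.
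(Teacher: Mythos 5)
The paper does not reprove this statement; it is imported verbatim from \cite[Prop.~6.1.4]{kashiwara-schapira}, so the comparison is with Kashiwara--Schapira's proof, and the substance of that proof is exactly the part you defer. What you do establish is the \emph{unrefined} cut-off: with $\cF'=(\cF\otimes k_{B_0})\star k_\gamma$, the convolution estimate gives $\ssupp(\cF')\subset X\times\overline U$ away from the zero-section, and the plain cut-off lemma gives property (2) -- granted $U$ convex, which the statement does not assume, and your one-line ``patching over convex subcones'' for the general case is itself a nontrivial gluing you would have to carry out. Your treatment of the interior contributions to $\ssupp(\cF')|_0$ also has a slip: you cannot ``let $B_0$ shrink'' after $\cF'$ has been built, since $\cF'$ depends on $B_0$; the correct (and easy) fix is to choose $B_0$ small at the outset, using closedness and conicity of $\ssupp(\cF)$ and the defining property of $W$ to ensure $\ssupp(\cF)\cap(\overline{B_0}\times(K\setminus W))$ lies in the zero-section -- the same argument as the Corollary immediately following the Proposition in the paper.

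The genuine gap is the boundary contribution you name and then leave aside. Over a point $-b\in\partial B_0$ with $b\in\gamma\setminus 0$, the estimate $\ssupp(\cF\otimes k_{B_0})\subset\ssupp(\cF)\,\hat{+}\,\ssupp(k_{B_0})$ permits covectors $\xi=\lim_n(\zeta_n+\lambda_n b^\flat)$ with $\zeta_n\in\ssupp(\cF)$ of unbounded norm and of direction far from $K$; such a $\xi$ can lie in $\partial\overline{U}$, satisfy $\langle b,\xi\rangle=0$, and therefore survive into the estimate for $\ssupp(\cF')|_0$, and nothing places it in $W$. Roundness of $B_0$, properness of $\gamma$, and the orthogonality relation only confine these covectors to $\partial\overline{U}$, not to $W$; and closedness of $\ssupp(\cF)$ does not help, because the offending points sit on a sphere of \emph{fixed} radius, so no limit point over the origin is produced. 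Structurally, your kernel $k_{B_0}\boxtimes k_\gamma$ makes no reference to $W$ at all, so there is no mechanism forcing the covectors it creates at $0$ into $W$; this is precisely why the refined cut-off in \cite{kashiwara-schapira} (see also \cite{dagnolo-cutoff}) is built from a kernel adapted to $W$, with non-conic, carefully shaped boundary and auxiliary small parameters, rather than from a round ball and a linear cone. As written, your argument proves the plain cut-off lemma; the control of $\ssupp(\cF')|_0$ by $W$, which is the entire content of the cited proposition and the property the paper actually exploits (via Prop.~\ref{prop: functorial cutoff} and Lemma~\ref{lem: local antimicrolocalization}), is asserted but not proved.
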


For further developments of the microlocal cutoff, in particular to treat the complex setting, see also~\cite{dagnolo-cutoff, waschkies-microperverse}.

\begin{corollary}
In the situation above, after possibly shrinking $B$, we have 
\begin{enumerate}
\item[($1'$)] $\ssupp(\cF')|_B \subset B \times (W \cup 0)$.
\end{enumerate}
\end{corollary}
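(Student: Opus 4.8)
The plan is to deduce $(1')$ from a single consequence of property (1), namely $\ssupp(\cF')|_0 \subset W \cup 0$, together with the two general facts that microsupport is a \emph{closed} subset of $T^*X$ and is $\RR_{>0}$-conic. In essence $(1')$ is a semicontinuity statement: the fiberwise bound on $\ssupp(\cF')$ at the origin automatically persists on a small neighborhood of the origin. The rest of (1) and all of (2) play no role in this step, and since shrinking $B$ only strengthens these assertions (a smaller open retains the bound $\ssupp(\cF')|_{B}\subset B\times\overline{U}$, the bound at $0$, and the microlocal isomorphism on $B\times U$), it is harmless to replace $B$ by any smaller neighborhood of $0$.

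First I would set up the point-set picture. Since $W$ is an open conic subset of $X^*$ with $0\notin W$, fixing the standard norm on $X^*\simeq\RR^n$, the set $Z:=\bigl(X^*\setminus(W\cup 0)\bigr)\cap S(X^*)$ of ``unit codirections not in $W$'' is a closed, hence compact, subset of the unit sphere $S(X^*)$, and by conicity of $W$ one has $\RR_{>0}\cdot Z = X^*\setminus(W\cup 0)$. The hypothesis $\ssupp(\cF')|_0\subset W\cup 0$ then says exactly that $\ssupp(\cF')\cap(\{0\}\times Z)=\emptyset$.

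Next I would run a tube-lemma argument. The complement $\Omega:=T^*X\setminus\ssupp(\cF')$ is open and contains the compact set $\{0\}\times Z$; by compactness of $Z$ (the tube lemma) there is an open neighborhood $B'\subset B$ of $0\in X$ with $B'\times Z\subset\Omega$, i.e.\ $\ssupp(\cF')\cap(B'\times Z)=\emptyset$. Now invoke that $\ssupp(\cF')$ is $\RR_{>0}$-conic: this upgrades the last emptiness to $\ssupp(\cF')\cap(B'\times\RR_{>0}Z)=\emptyset$, and since $\RR_{>0}Z=X^*\setminus(W\cup 0)$ this is precisely $\ssupp(\cF')|_{B'}\subset B'\times(W\cup 0)$. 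Replacing $B$ by $B'$ yields $(1')$ while preserving (1) and (2).

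There is no genuine obstacle here; the only point requiring care is the compactness input — one uses that the unit cosphere is compact, so that ``outside $W$'' is a cone over the compact set $Z$, which is what makes the tube lemma apply and lets the conic upgrade go through. If one prefers not to single out a norm, the identical argument runs with the closed cone $\overline{U}\setminus(W\cup 0)$ in place of $\RR_{>0}Z$, now using the remaining part of property (1), namely $\ssupp(\cF')|_B\subset B\times\overline{U}$: this cone is a cone over the compact set $(\overline{U}\setminus W)\cap S(X^*)$, and displacing it from $\ssupp(\cF')$ on a neighborhood of $0$ is all that is needed.
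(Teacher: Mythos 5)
Your proof is correct and takes essentially the same route as the paper: both arguments use only the fiber bound $\ssupp(\cF')|_0 \subset W \cup 0$ together with closedness and conicity of $\ssupp(\cF')$, conicity and openness of $W$, and compactness of the unit cosphere. The paper simply packages the compactness step as a proof by contradiction with a convergent sequence of unit covectors, whereas you phrase it via the tube lemma plus the conic upgrade; the content is identical.
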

\begin{proof}
 If not, there exists a sequence $(x_i, \xi_i) \in \ssupp(\cF')$, with $x_i \to 0$, but $\xi_i \not \in W \cup 0$. 
Since $W$ and $\ssupp(\cF')$ are conic, we may assume $|\xi_i| = 1$ and the sequence converges $\xi_i \to \xi_\infty$.
Since $\ssupp(\cF')$ is closed, we have $(0, \xi_\infty) \in \ssupp(\cF')|_0 \subset W \cup 0$ with $ |\xi_\infty| = 1$,
hence $\xi_\infty \in W$. 
But $W$ is open so ultimately $\xi_i \in W$.
\end{proof}

The proof given in \cite[Proposition 6.1.4]{kashiwara-schapira} involves $\cF \in \sh(X)$ only through its microsupport $\ssupp(\cF) \subset T^*X$, and the sheaf $\cF'$ is constructed functorially.  In addition the cutoff functor makes sense for any $\cF$; only its properties depend
on the asserted condition on the microsupport of $\cF$.  In other words what their argument actually shows is:

\begin{proposition} \label{prop: functorial cutoff}
Suppose given
\begin{itemize}
 \item $K \subset X^*$  a proper closed convex cone
 \item $U \subset K$ an open cone
 \item $\Lambda \subset T^*X$ a closed conic subset 
 \item $W\subset X^*$ a conic neighborhood of $K \cap \Lambda|_0 \setminus 0$. 
\end{itemize}
Then there is a map $\phi: sh(X) \to sh(X)$ with a natural transformation $u: \phi \to id$, 
such that for a small enough neighborhood $B$ of $0 \in X$
\begin{enumerate}
\item For any $\cF$, $ss(\phi(\cF))|_B \subset  B \times (W \cup 0)$.
\item If $ss(\cF) \cap (B \times U) = \Lambda \cap (B \times U)$, then $u: \phi(\cF) \to \cF$ induces an isomorphism in $\mush^{pre}(B \times U)$. 
\end{enumerate}
\end{proposition}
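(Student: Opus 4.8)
The plan is to take the proof of \cite[Proposition 6.1.4]{kashiwara-schapira} and observe that it is functorial. Recall that the refined microlocal cutoff produces $\cF'$ from $\cF$ by an explicit recipe: one convolves $\cF$ with the constant sheaf on a suitable closed convex cone in $X$ (roughly, the polar of $\overline U$), composes with a section functor $R\Gamma_Z(-)$ for an explicit closed set $Z$, and records the canonical comparison map back to $\cF$. Each of these operations is a functor $sh(X)\to sh(X)$, their composite is a functor $\phi$, and the comparison map assembles into a natural transformation $u\colon\phi\to\id$. Crucially, $\cF$ enters the construction only through its microsupport: the recipe itself does not depend on $\cF$, and the microsupport bounds asserted in the conclusion are instances of the operational estimates of \cite[Chap.~5--6]{kashiwara-schapira} (convolution with a cone localizes the microsupport to the polar cone, $R\Gamma_Z$ enlarges it by $N^*Z$, and so on) applied to $ss(\cF)$. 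We therefore \emph{define} $\phi$ and $u$ to be precisely this functor and natural transformation, built from the data $(K,U,W)$ alone; if necessary we postcompose with a second refined cutoff, whose associated cone is the polar of a closed conic neighborhood squeezed between $K\cap\overline U$ and $W$, so that the microsupport of the output is forced into $W$ near the origin for \emph{every} input.

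Property (1) is then immediate at the origin from the microsupport estimate for the (iterated) cutoff: $ss(\phi(\cF))|_0\subset W\cup 0$ for any $\cF$. To upgrade this pointwise bound to a bound over a neighborhood $B$, we run verbatim the argument of the Corollary proved above ($1'$): were it to fail, we would get a sequence $(x_i,\xi_i)\in ss(\phi\cF)$ with $x_i\to 0$ and $\xi_i\notin W\cup 0$; normalizing $|\xi_i|=1$ and passing to a convergent subsequence, closedness of $ss(\phi\cF)$ forces the limit $(0,\xi_\infty)\in ss(\phi\cF)|_0\subset W\cup 0$, whence $\xi_\infty\in W$, and openness of $W$ contradicts $\xi_i\notin W$. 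This is purely formal and uses nothing about $\cF$.

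Property (2) is where the hypothesis $ss(\cF)\cap(B\times U)=\Lambda\cap(B\times U)$ enters. Because $U\subset K$ is an open cone and $W$ is an open conic neighborhood of $K\cap\Lambda|_0\setminus 0$, closedness and conicity of $\Lambda$ give, after shrinking $B$, the inclusion $\Lambda\cap(B\times U)\subset B\times W$ — i.e.\ over a small enough ball the relevant part of $\cF$'s microsupport already sits inside the cone that the cutoff retains. Consequently the cutoff operations defining $\phi$ are microlocally invisible along $B\times U$: they do not alter the germ of $ss(\cF)$ there, and, since the whole of the cutoff construction and the map $u$ are local in $T^*X$ over $B\times U$ and see only this germ, the argument of \cite[Proposition 6.1.4]{kashiwara-schapira}(2) applies and shows $u\colon\phi(\cF)\to\cF$ is an isomorphism in $\mush^{pre}(B\times U)$. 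Equivalently: with the hypothesis in force one may take $W$ in the quoted proposition to be any small conic neighborhood of $K\cap ss(\cF)|_0\cap\overline U$ contained in our $W$, and apply that statement directly.

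The main obstacle I anticipate is not conceptual but bookkeeping: one must go through the construction in \cite[\S5.2, \S6.1]{kashiwara-schapira}, which is written for a fixed sheaf $\cF$, and check that every step (the convolution kernels, the set $Z$, the section functor, the comparison maps) is chosen independently of $\cF$, so that the construction is genuinely a functor equipped with a natural transformation to the identity, and that the iterated cutoff really does force the output microsupport into $W$ near the origin for an arbitrary input while remaining microlocally trivial over $B\times U$ under the hypothesis of (2). Once this is verified, the two asserted properties are exactly \cite[Proposition 6.1.4]{kashiwara-schapira} together with the Corollary above.
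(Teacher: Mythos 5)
Your overall strategy is the same as the paper's: the proof there consists precisely of the observation that the construction of \cite[Proposition 6.1.4]{kashiwara-schapira} is functorial, uses $\cF$ only through $\ssupp(\cF)$, and makes sense for arbitrary input, with the bookkeeping deferred (as you defer it) to an inspection of the construction in loc.\ cit. The genuine gap is in how you argue property (1). You obtain the bound $\ssupp(\phi\cF)|_0\subset W\cup 0$ at the origin and then claim the passage to a neighborhood is ``purely formal and uses nothing about $\cF$'' by rerunning the compactness argument of the Corollary. But that argument produces a neighborhood depending on the closed conic set $\ssupp(\phi\cF)$, hence on $\cF$, whereas the proposition fixes $B$ before quantifying over $\cF$, and this uniformity is exactly what is used downstream: Corollary \ref{cor: cutoff when W is in U} invokes property (1) for every object of $\mush_\Lambda^{pre}(A\times U)$ over one fixed $B$ in order to define a functor. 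No abstract limit argument can supply a uniform $B$: closed conic sets $S_n$ consisting of a single ray over a point $x_n\to 0$ pointing outside $W$ all satisfy $S_n|_0\subset W\cup 0$ vacuously, yet are contained in $W\cup 0$ over no common neighborhood. The uniform bound must be read off from the construction itself --- the cones, kernel and ball are chosen before $\cF$, and the microsupport estimate holds over that fixed ball --- which is the inspection your ``bookkeeping'' paragraph postpones; the compactness shortcut does not replace it.

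Two smaller inaccuracies in the same direction. First, the universal-in-$\cF$ bound at the origin is itself not the quoted statement of \cite[Proposition 6.1.4]{kashiwara-schapira}, whose refined bound at $0$ is proved under the hypothesis $W\supset K\cap\ssupp(\cF)|_0\setminus 0$; and your fallback device --- postcomposing with a cutoff ``whose cone is the polar of a closed conic neighborhood squeezed between $K\cap\overline U$ and $W$'' --- does not work as stated: $K\cap\overline U$ need not be contained in $W$, and convolution against a closed convex cone confines microsupport to its convex polar, while $W$ is only assumed to be a conic neighborhood of $K\cap\Lambda|_0\setminus 0$ and is in general non-convex (it happens to be convex in the paper's application in Lemma \ref{lem: local antimicrolocalization}, where $W=U$, but not in the proposition as stated). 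Second, the ``equivalently, apply \cite[Proposition 6.1.4]{kashiwara-schapira} directly with a smaller $W$'' remark for (2) is not available: the hypothesis of (2) constrains $\ssupp(\cF)$ only over $U$, whereas that proposition requires $W$ to contain all of $K\cap\ssupp(\cF)|_0\setminus 0$, which such an $\cF$ may violate in the directions of $K\setminus\overline U$. So for both (1) and (2) the proof really is the inspection of the construction, as the paper asserts, and not a formal reduction to the quoted statement.
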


\begin{corollary} \label{cor: cutoff when W is in U} 
In the situation of Proposition \ref{prop: functorial cutoff}, suppose in addition $W \subset U$.  Then 
for any $0 \in A \subset X$, and for any small enough $B \ni 0$, the microlocal cutoff induces a functor 
$$
 \xymatrix{
 \phi:\mush_\Lambda^{pre}(A \times U) \ar[r] &  sh_{\Lambda}(B)/sh_{T_B^*B}(B) 
 }
 $$
 commuting with the natural projections on both sides to $\mush_\Lambda^{pre}(B \times U)$. 
\end{corollary}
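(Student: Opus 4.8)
The plan is to show that the functorial cutoff $\phi$ of Proposition \ref{prop: functorial cutoff}, which was constructed as a functor $\sh(X) \to \sh(X)$, descends to the quotient categories in question, and then verify the compatibility with the projections. First I would observe that by Proposition \ref{prop: functorial cutoff}(1), for any $\cF \in \sh(X)$ we have $\ssupp(\phi(\cF))|_B \subset B \times (W \cup 0)$; since by hypothesis $W \subset U \subset K$, this in particular forces $\ssupp(\phi(\cF))|_B \subset \Lambda \cap (B \times U)$ whenever $\ssupp(\cF) \cap (A \times U) = \Lambda \cap (A \times U)$ — but more to the point it shows $\phi(\cF)|_B$ lands in $\sh_{\Lambda}(B)$ once we know $ss(\phi(\cF))|_B$ avoids codirections outside $W \cup 0$, provided we also cut off along the complement to land inside $\Lambda$. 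Actually, the cleanest route: the source category is $\mush_\Lambda^{pre}(A \times U) = \sh_\Lambda(A\times U)/(\text{null objects})$ where by $\sh_\Lambda(A\times U)$ I mean objects of $\sh(X)$ microsupported, over $A$ and in codirections $U$, inside $\Lambda$. For such $\cF$ the hypothesis of Proposition \ref{prop: functorial cutoff}(2) holds, so $u:\phi(\cF)\to\cF$ is an isomorphism in $\mush^{pre}(B\times U)$, hence $\ssupp(\phi(\cF))\cap(B\times U)=\Lambda\cap(B\times U)$; combined with (1) and $W\subset U$ this gives $\ssupp(\phi(\cF))|_B\subset\Lambda$, so $\phi(\cF)|_B \in \sh_\Lambda(B)$.

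Next I would check that $\phi$ sends null objects to null objects, so that it descends to the quotient $\sh_\Lambda(B)/\sh_{T^*_BB}(B)$. An object $\cF$ is null in $\mush_\Lambda^{pre}(A\times U)$ iff $\ssupp(\cF)\cap(A\times U)=\emptyset$; then the hypothesis of (2) holds with $\Lambda$ replaced by $\emptyset$ (or one restricts to a piece of $\Lambda$ disjoint from $B\times U$), so $u:\phi(\cF)\to\cF$ is an isomorphism in $\mush^{pre}(B\times U)$, whence $\ssupp(\phi(\cF))\cap(B\times U)=\emptyset$; together with (1), after shrinking $B$ this localizes $\phi(\cF)|_B$ into $\sh_{T^*_BB}(B)$, i.e. it becomes zero in the quotient. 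Functoriality of $\phi$ on $\sh(X)$ is already given, so passing to quotients is formal: the composite $\sh_\Lambda(A\times U)\xrightarrow{\phi}\sh_\Lambda(B)\to\sh_\Lambda(B)/\sh_{T^*_BB}(B)$ kills the null subcategory and hence factors through $\mush^{pre}_\Lambda(A\times U)$.

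For the last clause — commuting with the projections to $\mush^{pre}_\Lambda(B\times U)$ — I would use the natural transformation $u:\phi\to\id$ directly. The two composites $\mush^{pre}_\Lambda(A\times U)\to\mush^{pre}_\Lambda(B\times U)$, one via $\phi$ and the restriction $\sh_\Lambda(B)/\sh_{T^*_BB}(B)\to\mush^{pre}_\Lambda(B\times U)$, the other the plain restriction $\mush^{pre}_\Lambda(A\times U)\to\mush^{pre}_\Lambda(B\times U)$, are related by $u$, and Proposition \ref{prop: functorial cutoff}(2) says precisely that $u$ is an isomorphism in $\mush^{pre}(B\times U)$ for every object in the image (since all such objects satisfy its hypothesis). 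So the triangle of functors commutes up to the natural isomorphism $u$. I expect the main obstacle to be purely bookkeeping: keeping straight the various ambient opens ($A$, $B$, $B\times U$, $A\times U$) through the several shrinkings of $B$ permitted at each step, and making sure a single $B$ can be chosen that works simultaneously for conclusion (1), for conclusion (2) applied to all relevant objects, and for the corollary to Proposition \ref{prop: functorial cutoff} that upgrades $\overline U$ to $W\cup 0$; but since all these are ``for small enough $B$'' statements and there are finitely many of them this is harmless. No genuinely new idea beyond Proposition \ref{prop: functorial cutoff} is needed.
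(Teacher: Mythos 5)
Your proposal is correct and follows essentially the same route as the paper's proof: property (1) together with $W \subset U$ bounds $\ssupp(\phi(\cF))|_B$, property (2) identifies microsupports on $B \times U$ (giving the $\Lambda$-condition on the image and sending null objects to locally constant sheaves on $B$, so the functor descends to the quotient), and the natural transformation $u:\phi\to\id$, being an equivalence on $B\times U$, gives the compatibility with the projections. Your bookkeeping worry about a single small $B$ is resolved exactly as in the paper, since the functorial cutoff and its neighborhood $B$ do not depend on the object $\cF$.
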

\begin{proof}
We apply Proposition \ref{prop: functorial cutoff} with $X$ replaced by its subset $A$.   
As promised in (1) of said proposition, for any $\cF$,  
we have 
$$ss^\infty(\phi(\cF))|_B \subset B \times W^\infty \subset B \times U^\infty$$
If in addition $ss^\infty(\cF) \cap (B \times U^\infty) = \Lambda \cap (B \times U^\infty),$ then per property (2),
$$ss^\infty(\phi(\cF)) \cap (B \times U^\infty) = ss^\infty(\cF) \cap (B \times U^\infty).$$

In particular if $ss^\infty(\cF) \cap (A \times U^\infty) = \emptyset$, then $ss^\infty (\phi(\cF))|_B = \emptyset$, so $\phi$ factors through
$\mush^{pre}(A \times U)$ as stated.  Additionally if 
$ss^\infty(\cF) \cap (A \times W) \subset \Lambda$ then  $ss^\infty(\phi(\cF))|_B \subset B \times \Lambda$, giving the 
microsupport condition on the image. 

Finally, the assertion that $\phi$ commutes with the projections follows from the fact that $u: \phi \to id$ induces an equivalence
on $B \times U$. 
\end{proof}
 
Now let us use this to prove the following. (See \cite[Sect.~5.1]{waschkies-microperverse} for similar arguments in the complex setting.)

\begin{lemma} \label{lem: local antimicrolocalization}
Fix a closed subset $\Lambda \subset S^*X$. Suppose for a point $x \in X$,   the fiber $\Lambda|_x =  \Lambda \cap S^*_x X$ is a single point $\lambda$. Then 
the natural map of sheaves of categories
$$
\xymatrix{
q:sh_{\Lambda}/sh_{T_X^*X}\ar[r] &  \pi_* \mush_\Lambda 
}$$ 
is an isomorphism at $x$. 
\end{lemma}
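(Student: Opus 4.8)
The plan is to deduce the lemma directly from the refined microlocal cutoff packaged in Corollary~\ref{cor: cutoff when W is in U}. First I would reduce to a purely local statement: the claim concerns only the stalk at $x$, so I work in a chart with $X=\RR^n$, $x=0$, and (after a linear change of coordinates) with $\lambda$ the ray through $dx_n\in X^*$. Using that $\Lambda$ is closed and $\Lambda|_0=\{\lambda\}$, I choose a proper closed convex cone $K\subset X^*$, an open cone $U\subset K$, and a smaller conic neighborhood $W$ of the ray $\lambda$ with $\overline W\subset U$, together with a small ball $A\ni 0$ so that $\Lambda\cap S^*A\subset A\times W$. The next step is to identify the two stalks concretely. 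Since $\mush_\Lambda$ is supported on $\Lambda$ and $\Lambda\cap S^*A'\subset A'\times U$ for $A'\subset A$, one has $(\pi_*\mush_\Lambda)_x\simeq\varinjlim_{A'}\mush_\Lambda(A'\times U)\simeq\varinjlim_{A'}\mush^{pre}_\Lambda(A'\times U)$, using that $\mush$ and $\mush^{pre}$ have the same stalks (Rem.~\ref{rem:sheafification for cats}); and $(\sh_\Lambda/\sh_{T^*_XX})_x\simeq\varinjlim_{A'}\sh_\Lambda(A')/\Loc(A')$. Under these identifications the stalk of $q$ at $x$ is the colimit of the evident projections $\sh_\Lambda(A')/\Loc(A')\to\mush^{pre}_\Lambda(A'\times U)$.

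Then I would construct the inverse. Applying Corollary~\ref{cor: cutoff when W is in U} (its hypothesis $W\subset U$ is arranged above) produces, for each $A'$ and each sufficiently small $B=B(A')\ni 0$, a functor $\phi\colon\mush^{pre}_\Lambda(A'\times U)\to\sh_\Lambda(B)/\Loc(B)$ commuting with the two projections of both sides to $\mush^{pre}_\Lambda(B\times U)$. Since the underlying cutoff is a single functor on $\sh(\RR^n)$ with a natural transformation $u\colon\phi\to\id$ (Proposition~\ref{prop: functorial cutoff}), these functors for shrinking $A'$ (with nested $B(A')$) are compatible and assemble into a functor $r\colon(\pi_*\mush_\Lambda)_x\to(\sh_\Lambda/\sh_{T^*_XX})_x$. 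The commutation-with-projections statement is precisely that $q_x\circ r$ equals the restriction map on $\varinjlim_{A'}\mush^{pre}_\Lambda(A'\times U)$, hence $q_x\circ r\simeq\id$; this direction is essentially free.

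The substance is the other composite $r\circ q_x\simeq\id$. For $G\in\sh_\Lambda(A')$ we have $r(q_x(G))=\phi(G)$ together with $u\colon\phi(G)\to G|_B$, and the goal is to see $\Cone(u)\in\Loc(B)$. Part~(1) of Proposition~\ref{prop: functorial cutoff} bounds $\ssupp(\phi(G))|_B\subset B\times(W\cup 0)$, while $G\in\sh_\Lambda(A')$ and $\Lambda\cap S^*A'\subset A'\times W$ give $\ssupp(G)|_B\subset B\times(W\cup 0)$; hence $\ssupp(\Cone(u))\subset B\times(W\cup 0)$. On the other hand, $\phi$ commuting with the projections says that $u$ becomes an isomorphism in $\mush^{pre}(B\times U)$, so $\ssupp(\Cone(u))$ is disjoint from $B\times U$. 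Because $W\subset U$, the only way both can hold is $\ssupp(\Cone(u))\subset 0_B$, i.e.\ $\Cone(u)\in\Loc(B)$; thus $u$ is an isomorphism in $\sh_\Lambda(B)/\Loc(B)$, naturally in $G$ since $u$ is a natural transformation. Passing to the colimit yields $r\circ q_x\simeq\id$, so $q_x$ is an equivalence.

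I expect the main obstacle to be the bookkeeping around the stalk identifications and the compatibility of the cutoff functors $\phi$ across the filtered system of neighborhoods: in particular checking that $(\pi_*\mush_\Lambda)_x$ really is modelled by $\varinjlim_{A'}\mush^{pre}_\Lambda(A'\times U)$ for a single fixed cone $U$ (this is exactly where $\Lambda|_x$ being a single point is used) and that the stalk of $q$ is correctly identified with the chartwise projection. Granting Corollary~\ref{cor: cutoff when W is in U}, the only genuinely geometric input is the short microsupport argument of the previous paragraph.
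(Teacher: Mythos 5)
Your proposal is correct and takes essentially the same route as the paper's proof: reduce to the local model at $x$, use the functorial refined cutoff via Corollary~\ref{cor: cutoff when W is in U} to assemble an inverse functor on stalks, and verify the two composites by property (2) of Proposition~\ref{prop: functorial cutoff} and the microsupport-in-the-zero-section argument for the cone of $u\colon\phi(G)\to G$. The one bookkeeping point the paper makes explicit that you gloss over is that the cutoff is \emph{not} literally a single functor independent of the shrinking neighborhood $A'$ (it is applied with $X$ replaced by $A'$), so the compatibility needed to assemble your $r$ is obtained, as in the paper, from the natural maps $\phi_{U',A'}\to\phi_{U,A}$ whose cones have microsupport in the zero-section.
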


\begin{proof}
The assertion is local so we may assume  $X= \R^n$, $x = 0$. Thus $T^*X \simeq X \times X^*$ and $S^* X = X \times X^\oo$
where $X^*$ is the dual of $X$, and $X^\oo = (X^* \setminus 0)/\R_{>0}$ is its projectivization.

We will apply Proposition \ref{prop: functorial cutoff} with $U = W \subset X^*$ an open  convex conic neighborhood of 
 the coray $\R_{>0} \cdot \lambda \subset X^*$, and $K = \ol U  = \ol W \subset X^*$ the closure.
 Note we are able to make these choices precisely due to the assumption that  
  $\Lambda|_x = \lambda$.  
  
For any open neighborhood $A \subset X$ of $0\in X$, and small enough   
 open neighborhood $B \subset X$ of $0\in X$, we have from Cor. \ref{cor: cutoff when W is in U}
 a functor 
 $$
 \xymatrix{
 \phi:\mush_\Lambda^{pre}(A \times U) \ar[r] &  sh_{\Lambda}(B)/sh_{T_X^*X}(B) 
 }
 $$
 
The rest of the argument follows the complex setting detailed in~\cite[Sect.~5.1]{waschkies-microperverse}. 
Next, 
 as in \cite[Lemma 5.1.4]{waschkies-microperverse}, 
 one observes the composed functor
$$
 \xymatrix{
 \mush_\Lambda^{pre}(A \times U) \ar[r]^-\phi &  sh_{\Lambda}(B)/sh_{T_X^*X}(B) \ar[r] &  (sh_{\Lambda}/sh_{T_X^*X})|_x
 }
 $$
factors through the natural map
$$
 \xymatrix{
 \mush_\Lambda^{pre}(A \times U) \ar[r] &  
 \pi_*\mush_\Lambda^{pre}|_x
 }
 $$
 For this, let us write $\phi_{U, A}$ to convey its dependence on the choices.  Given smaller $U' \subset U$, $A' \subset A$, by construction there is a natural map 
 $\phi_{U', A'} \to \phi_{U, A}$ which induces an isomorphism in $(sh_{\Lambda}/sh_{T_X^*X})|_x$  since its cone  must have microsupport in the zero-section.
 
 Finally, one checks the resulting map
 $$
 \xymatrix{
\phi_x:  \pi_*\mush_\Lambda^{pre}|_x \ar[r] &  (sh_{\Lambda}/sh_{T_X^*X})|_x
 }
 $$
 is an inverse to $q_x$. First, we have $q_x\circ \phi_x \simeq \id$ thanks to property (2). Second, we have $ \phi_x \circ q_x \simeq \id$ since the cone of the natural morphism relating them must have microsupport in the zero-section.  
\end{proof}

\begin{remark}
The $\Lambda$ to which this is applied in practice will typically have enough tameness that 
$\mush_\Lambda^{pre}|_x$ is computed by some particular $\mush_\Lambda^{pre}(B \times U)$, i.e. one need not pass
to the limit. 
\end{remark}

Now let us use the lemma to prove the following generalization.

\begin{prop} \label{prop: local antimicrolocalization}
Fix a closed subset $\Lambda \subset S^*X$. Suppose for a point $x \in X$,   the fiber $\Lambda|_x =  \Lambda \cap S^*_x X$ is a finite set of points $\lambda_1, \ldots, \lambda_k$. Then 
the natural map of sheaves of categories
$$
\xymatrix{
q:sh_{\Lambda}/sh_{T_X^*X}\ar[r] &  \pi_* \mush_\Lambda 
}$$ 
is an isomorphism at $x$. 
\end{prop}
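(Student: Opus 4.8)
The plan is to separate $\lambda_1,\dots,\lambda_k$ microlocally and reduce to the single-point statement of Lemma~\ref{lem: local antimicrolocalization}. Since the assertion is local, take $X=\RR^n$, $x=0$. As $\pi\colon S^*X\to X$ has compact fibres, $\Lambda$ is closed, and $\Lambda|_0=\{\lambda_1,\dots,\lambda_k\}$ is finite, after shrinking $X$ around $0$ we may pick pairwise disjoint conic open $U_1,\dots,U_k\subset X^*\setminus 0$ with $U_i$ a conic neighbourhood of $\RR_{>0}\lambda_i$ such that $\Lambda\subset\bigl(X\times\bigcup_iU_i\bigr)\cup 0_{T^*X}$. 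Setting $\Lambda_i:=\Lambda\cap(X\times\PP(U_i))$ --- which is $\Lambda$ minus the open set $\bigcup_{j\neq i}X\times\PP(U_j)$, hence closed in $S^*X$, with $\Lambda_i|_0=\{\lambda_i\}$ a single point --- we obtain a clopen partition $\Lambda=\bigsqcup_i\Lambda_i$ over $X$, and Lemma~\ref{lem: local antimicrolocalization} applies to each $\Lambda_i$: the map $q_i\colon sh_{\Lambda_i}/sh_{T_X^*X}\to\pi_*\mush_{\Lambda_i}$ is an isomorphism at $0$.

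I would then check that both sides of $q$ decompose compatibly along this partition. For $\mush$ this is immediate: $\mush_\Lambda$ is a sheaf on $S^*X$ supported on $\Lambda$, and for each neighbourhood $A\ni 0$ we have $\Lambda\cap S^*A=\bigsqcup_i(\Lambda_i\cap S^*A)$ as a disjoint union of opens of $\Lambda$, so $\mush_\Lambda(S^*A)=\prod_i\mush_{\Lambda_i}(S^*A)$; taking the colimit over $A$ gives $(\pi_*\mush_\Lambda)_0\simeq\prod_i(\pi_*\mush_{\Lambda_i})_0$. On the sheaf side, the functors $(sh_{\Lambda_i}/sh_{T_X^*X})_0\to(sh_\Lambda/sh_{T_X^*X})_0$ are fully faithful (a cofinal system computing a Verdier-quotient Hom out of an object of $sh_{\Lambda_i}$ may be taken within $sh_{\Lambda_i}$), with mutually orthogonal images (a Hom between a sheaf microsupported near $0$ in $\Lambda_i$ and one in $\Lambda_j$, $i\neq j$, is computed by $\muhom$, supported on the empty intersection of microsupports away from the zero section). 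That these images jointly generate follows from the functorial microlocal cut-off (Proposition~\ref{prop: functorial cutoff}, Corollary~\ref{cor: cutoff when W is in U}), applied at the $k$ separated corays exactly as in the proof of Lemma~\ref{lem: local antimicrolocalization}: any $\cF\in(sh_\Lambda/sh_{T_X^*X})_0$ decomposes as $\bigoplus_i\phi_i(\cF)$ with $\phi_i(\cF)$ microsupported near $0$ in $\Lambda_i$, the sum of cut-off maps $\bigoplus_i\phi_i(\cF)\to\cF$ being an isomorphism in $\mush^{pre}(B\times\bigcup_jU_j)$ for $B$ small (over $B\times U_j$ only the $j$-th summand survives) and therefore in $(sh_\Lambda/sh_{T_X^*X})_0$. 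Hence $(sh_\Lambda/sh_{T_X^*X})_0\simeq\prod_i(sh_{\Lambda_i}/sh_{T_X^*X})_0$.

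To conclude: $q$ is exact and sends a sheaf microsupported in $\Lambda_i$ to a microsheaf supported in $\Lambda_i$, so under the two product decompositions above it is identified with $\prod_iq_i$; each $q_i$ is an isomorphism at $0$, whence so is $q$. Equivalently, the inverse sends $\mathcal G=\bigoplus_i\mathcal G_i$ (with $\mathcal G_i$ the $\Lambda_i$-component) to $\bigoplus_i\phi^{(i)}_0(\mathcal G_i)$, $\phi^{(i)}_0$ being the local inverse of the single-point lemma for $\Lambda_i$.

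I expect the main obstacle to be the sheaf-side decomposition: one has to be sure the microlocal cut-offs at the separated corays become genuinely orthogonal projectors after passing to the quotient $sh_\Lambda/sh_{T_X^*X}$ near $0$, so that the resulting splitting is canonical and matches the evident one for $\mush$. Given that, everything else is formal or a direct appeal to Lemma~\ref{lem: local antimicrolocalization}.
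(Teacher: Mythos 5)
Your proof is correct and is essentially the paper's own argument: both reduce to the single-point Lemma~\ref{lem: local antimicrolocalization} by splitting $\Lambda$ near $x$ into closed pieces with one-point fiber, and both establish the matching splitting of $(sh_\Lambda/sh_{T_X^*X})_x$ via the microlocal cut-off functors (full faithfulness and orthogonality of the inclusions, plus generation because the cone of $\bigoplus_i\phi_i(\cF)\to\cF$ has microsupport in the zero section). The only cosmetic differences are that the paper indexes the pieces by connected components through the $\lambda_i$ and argues by induction on two-term decompositions, whereas you separate the $\lambda_i$ by disjoint conic neighborhoods and treat all $k$ summands at once.
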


\begin{proof}
Let $\Lambda_1, \ldots, \Lambda_k \subset \Lambda$ be the connected components through  
the respective points  $\lambda_1, \ldots, \lambda_k \in \Lambda|_x$. Note the evident direct sum decomposition
$$
\xymatrix{
\pi_* \mush_\Lambda|_x \simeq \oplus_{i = 1}^k \pi_* \mush_{\Lambda_i}|_x
}
$$
Thus by the prior lemma, it suffices to show the natural map
$$
\xymatrix{
\oplus_{i = 1}^k (sh_{\Lambda_i}/sh_{T_X^*X}|_x)   \ar[r] & sh_{\Lambda}/sh_{T_X^*X} |_x
}
$$
is an equivalence.

Set $\Lambda' = \cup_{i = 1}^{k-1} \Lambda_i$, $\Lambda''= \Lambda_k$. By induction, it suffices to show
the natural map
$$
\xymatrix{
i' \oplus i'':sh_{\Lambda'}/sh_{T_X^*X}|_x \oplus sh_{\Lambda''}/sh_{T_X^*X}|_x   \ar[r] & sh_{\Lambda}/sh_{T_X^*X} |_x
}
$$
is an equivalence. 

Note that $i'$ (resp. $i''$) admits a right adjoint $\phi'$ (resp. $\phi''$) given by the microlocal cutoff construction  such that the unit $\id \to \phi' i'$ (resp. $\id \to \phi'' i''$ is an equivalence, so $i'$ (resp. $i''$) is fully faithful, and    
the counit $i'\phi' \to \id $ (resp. $i'' \phi'' \to \id$) is the canonical map of the microlocal cutoff construction.
Note also $\image(i') = \ker(\phi''), \image(i'') = \ker(\phi')$.

Now
we can see the images of $i', i''$ are orthogonal to each other, for example
$$
\xymatrix{
\Hom(i' \cF' , i''\cF'') \simeq \Hom(\cF' , \phi'i''\cF'') \simeq 0
}
$$
since $ \phi'i''\cF'' \simeq 0$.

Finally, it remains to show $i' \oplus i''$ is essentially surjective. For $\cF\in sh_{\Lambda}/sh_{T_X^*X} |_x$, it suffices to show the canonical map
$$
\xymatrix{
 i'\phi'   \cF \oplus  i''\phi''  \cF\ar[r] &\cF
}
$$
is an equivalence. Let $\cK$ be the cone. Then $\phi' \cK \simeq  0 \simeq \phi'' \cK$ so the microsupport of $\cK$ lies in $T^*_X X$, hence $\cK$ is trivial in $ sh_{\Lambda}/sh_{T_X^*X} |_x$.
\end{proof}

\begin{definition} \label{def: finite position}
We say a closed subset $\Lambda \subset S^*X$ is in {\em finite position} over $x \in X$ if it satisfies the hypothesis of Proposition \ref{prop: local antimicrolocalization} over $x$.  When this is true at all $x$, we say $\Lambda$ is in {\em finite position}.  
If $\Lambda$ may be perturbed to have finite projection by a contact isotopy, we say $\Lambda$ is {\em perturbable to finite
position.}  

Finally, for $\Lambda$ a closed subset equipped with the germ of an embedding into a contact manifold, 
we say $\Lambda$ is {\em perturbable to finite position}
if every codimension zero contact embedding of $\Lambda$ into a cosphere bundle is perturbable to finite position. 
\end{definition}

\subsection{Cusps and doubling} 
Let $M$ be a manifold, and consider $\Lambda \subset S^*M$.  
Fix a contact isotopy $\phi_t$ on $S^* M$, positive in a neighborhood of $\Lambda$.  For $s \ge 0$, let $\widetilde{\phi}_s^{\pm} = \phi_{\pm s^{3/2}}$, and 

$$\Lambda_{\pm s} := \tilde{\phi}^{\pm}_s (\Lambda) \subset S^*M \qquad \qquad \Lambda^{\prec} = \widetilde{\Phi}^+(\Lambda)
\cup \widetilde{\Phi}^-(\Lambda) \subset S^*(M \times \RR)$$ 
Note the contact reduction of $\Lambda^{\prec}$ over $s \ge 0$ is 
 $\Lambda_{s} \cup \Lambda_{-s}$, and is empty for $s < 0$.  
Also observe the germ of $S^*(M \times \R)$ around 
$\Lambda^\prec$ is locally 
contactomorphic to $\Lambda \times \R \subset Nbd(\Lambda) \times T^*\R$.  
In particular, $\mush_{\Lambda^\prec}$ is the pullback of $\mush_{\Lambda}$. 

If $\Lambda$ was a point in $S^* \R$ and we take the Reeb flow, 
then the front projection of  $\Lambda^\prec$ is the standard
cusp $y^2 = x^3$.  This example is studied in some detail by
elementary arguments in \cite[Sec. 3.3]{stz}
and \cite[Sec. 7.3.3]{nrssz}.  

We note the following key fact: 

\begin{lemma} \label{lem: stalk at cusp}
Objects in  $sh_{\Lambda^\prec}(M \times \R)$ are locally constant over $M \times \R_{\le 0}$.
\end{lemma}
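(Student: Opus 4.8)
Unwinding the definitions, the claim is that for $\cF \in \sh_{\Lambda^{\prec}}(M \times \R)$ — i.e.\ with $\ssupp(\cF) \subset \Lambda^{\prec} \cup 0_{M\times\R}$ — the restriction of $\cF$ to $M \times \R_{\le 0}$ is the pullback, along the projection $\bar p\colon M \times \R_{\le 0} \to M$, of $\cF|_{M\times\{0\}}$. The plan is the usual one: first locate $\Lambda^{\prec}$ over $M \times \R_{\le 0}$ precisely enough to see that $\ssupp(\cF)$ carries there no covector with nonzero $\R$-component, and then apply noncharacteristic propagation in the $\R$-direction. Write $s$ for the coordinate on the $\R$-factor of $M\times\R$, and $ds$ for the corresponding conormal ray.

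\textbf{The geometry of $\Lambda^{\prec}$ over $M\times\R_{\le 0}$.} Since $\widetilde{\phi}^{\pm}_s = \phi_{\pm s^{2/3}}$ is only defined for $s \ge 0$, the movies $\widetilde{\Phi}^{\pm}(\Lambda)$ lie over $M \times \R_{\ge 0}$; hence $\Lambda^{\prec}$ is empty over $M \times \R_{<0}$, so $\ssupp(\cF)$ is the zero section there and $\cF|_{M\times\R_{<0}}$ is automatically locally constant. The content is the slice $s = 0$. Over $s \ge 0$, the contact reduction of $\Lambda^{\prec}$ at height $s$ is $\Lambda_s^{+}\cup\Lambda_s^{-}\subset S^*M$, whose covectors are nonzero by definition of $S^*M$; since the $M$-component of a movie covector is exactly the reduced covector, every covector of $\Lambda^{\prec}$ lying over $M\times\R_{\ge 0}$ has nonzero $M$-component, and taking $s\to 0^{+}$ the same holds over $M\times\{0\}$. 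Thus over $M\times\{0\}$ one moreover has the $\R$-component equal to $0$: indeed, in the model case ($\Lambda$ a point, $\phi_t$ the Reeb flow) the front of $\Lambda^{\prec}$ is the cusp $y^2 = x^3$ with $x$ the $\R$-coordinate and $y$ the $M$-coordinate, whose Legendrian lift over the cusp point is the single covector $dy$ — the common $s\to 0^{+}$ limit of the two branches — which has no $x$-component; the general case reduces to this by the local normal form for $(S^*(M\times\R),\Lambda^{\prec})$ near a point of $\Lambda^{\prec}|_{M\times\{0\}}$. In particular $\Lambda^{\prec}$ is disjoint from the rays $\pm ds$ everywhere, and over $M\times\{0\}$ it equals $\Lambda$ sitting inside $S^*M \hookrightarrow S^*(M\times\R)$.

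\textbf{Conclusion and the one delicate point.} Consequently $ds \notin \ssupp(\cF)$ over $\bar U \times [a,b)$ for any point $x\in M$, any small relatively compact open $U\ni x$, any $a\le 0$ and any small $b>0$: over $[a,0)$ the microsupport is the zero section, and over $[0,b)$ the microsupport lies in $\Lambda^{\prec}$, all of whose covectors there have nonzero $M$-component. Applying the noncharacteristic propagation lemma (Corollary 5.4.19 of \cite{kashiwara-schapira}, recalled at the start of Section~\ref{sec: mush}'s predecessor) to $\phi = s$, the sections $\cF(U\times(-\infty,t))$ are independent of $t\in[a,b)$ and agree with $\cF(U\times(-\infty,t])$ for $t\in[a,0]$; equivalently, applying the noncharacteristic deformation lemma \cite[Prop.~2.7.2]{kashiwara-schapira} to the retraction $(u,s)\mapsto(u,ts)$ of $M\times\R_{\le 0}$ onto $M\times\{0\}$, the natural map $\bar p^{*}\bigl(\cF|_{M\times\{0\}}\bigr)\to \cF|_{M\times\R_{\le 0}}$ is an isomorphism. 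Hence $\cF$ is locally constant over $M\times\R_{\le 0}$. The only subtlety, and the step I expect to need the most care, is precisely the boundary slice $s=0$: where the two cusp branches collide one must rule out an inward conormal $-ds$ in $\ssupp(\cF)$, which would obstruct propagation across $\{s=0\}$; this is exactly what the cusp normal form (equivalently, the fact that $\Lambda^{\prec}$ reduces at $s=0$ to the Legendrian $\Lambda\subset S^*M$, with no $\R$-codirection, rather than to something carrying a Reeb-chord direction) provides. Everything else is a routine application of noncharacteristic propagation.
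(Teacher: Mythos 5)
Your proof is correct and follows essentially the same route as the paper: local constancy is immediate over $M \times \R_{<0}$ where $\Lambda^{\prec}$ is absent, and at $s=0$ the absence of $\pm ds$ codirections in $\Lambda^{\prec}$ (your cusp normal-form computation makes this explicit) lets noncharacteristic propagation in the $\R$-direction identify the stalks with nearby ones over $M \times \R_{<0}$. The extra detail you supply about the cusp geometry and the retraction onto $M\times\{0\}$ is a fleshed-out version of the paper's terse argument, not a different one.
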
 
\begin{proof}
This is obvious over $M \times \R_{<  0}$.  
Over $M \times 0$, note that  $\Lambda^\prec$ has no conormals in the $ds$ directions,  so
noncharacteristic propagation shows that stalks agree with those over $M \times \R_{< 0}$.   
\end{proof}

We write $sh_{\Lambda^\prec; 0} \subset sh_{\Lambda^\prec}$ for the sheaf of full subcategories
of sheaves  (on $M \times \RR$, microsupported at infinity in $\Lambda^\prec$) with vanishing stalks near $M \times - \infty$
(and hence over $M \times \R_{\le 0}$).  By Lemma \ref{lem: stalk at cusp}, objects of $sh_{\Lambda^\prec; 0}$
are the $!$-extension of their restrictions to $M \times \R_{> 0}$. 

\begin{lemma} \label{lem: zero at cusp}
Let $j: M \times \R_{> 0} \to M \times \R$ be the inclusion.  Restriction on sheaves
induces a natural map of sheaves of categories on $M \times \R$: 
$$j^* :  sh_{\Lambda^\prec; 0}  \to j_* sh_{(M \times \R_{> 0}) \cap \Lambda^\prec  } $$
This map is fully faithful.   
\end{lemma}
\begin{proof}
There is nothing to prove except along $M \times 0$.  Along this locus, the point is just that
objects of $sh_{\Lambda^\prec; 0}$ are themselves extensions by zero from objects in 
$M \times \R_{> 0}$, as shown in Lemma \ref{lem: stalk at cusp}. 
\end{proof}

\begin{example}
Here is an example of how the map of Lemma \ref{lem: zero at cusp} is not surjective.  
Consider $M = \R$, and $\Lambda$ the union of positive conormals to $-1$ and $1$.  Then
$\Lambda^{\prec}$ is a union of conormals to cusps in $\R^2$ namely those given
by $(y +1)^2 = x^3$ and $(y-1)^2 = x^3$.  Consider in the half-plane $x > 0$ the appropriate
extension of the constant sheaf in the region $-1 -x^{3/2} < y < 1 + x^{3/2}$.  This is an element
of $sh_{\Lambda^{\prec}}(\R_y \times \R_{x > 0})$ which is not the image of any element
of $sh_{\Lambda^\prec; 0}(\R^2)$. 
\end{example}

\begin{proposition} \label{prop: constancy near cusp}
Let $c_1$ be (a certain universal positive function of) the length of the shortest chord for 
the flow of $\Lambda$ under $\phi_t$, and let $0 < s < s' < c_1$.  

Consider the restriction maps 
$$sh_{\Lambda^\prec; 0}(M \times \R_{\le s}) \xrightarrow{(1)} sh_{\Lambda^\prec; 0}(M \times \R_{< s} ) 
\xrightarrow{(2)} 
sh_{\Lambda^\prec; 0}(M \times \R_{< s'} ) \xrightarrow{(3)} 
(sh_{\Lambda^\prec; 0}|_{M \times 0})(M)$$
Then (1) and (2) are isomorphisms.  If $\Lambda$ is relatively compact, then also $(3)$ is an isomorphism.

In addition, restriction at s gives a fully faithful map: 
$$sh_{\Lambda^\prec; 0}(M \times \R_{\le s}) \hookrightarrow sh_{\Lambda_{s} \cup \Lambda_{-s}}(M)$$
\end{proposition}
\begin{proof}
Over $(0, c_1)$ we may 
apply a contact isotopy to move $\Lambda_{\pm s}$ in order to trace out $\Lambda$; 
note this becomes impossible exactly at $c_1$.  Hence (1) and (2) being isomorphisms 
follows from Cor \ref{cor: gks constancy}.   Assuming compactness, 
the fact that (3) is an isomorphism follows from proper base change (of sheaves of categories), 
applied to a neighborhood of the projection of $\Lambda$.  
The final assertion is Lemma \ref{lem: zero at cusp}. 
\end{proof}

\begin{remark}
Other tameness hypotheses can substitute for the assumption that $\Lambda$ is relatively compact.
\end{remark} 

Let $\pi: S^*M \to M$ be the projection, and likewise $\tilde{\pi}: S^*(M \times \R) \to M \times \R$. 

\begin{lemma} \label{lem: cusp antimicrolocalization} 
Assume $\Lambda \subset S^*M$ is closed and in finite position. 
Then the
natural map of sheaves of categories on $M \times \R$
$$sh_{\Lambda^\prec; 0} \to \tilde{\pi}_* \mush_{\Lambda^\prec}$$ 
is an equivalence along $M \times 0$. 
\end{lemma}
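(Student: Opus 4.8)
The plan is to reduce the statement, which concerns the singular Legendrian $\Lambda^\prec \subset S^*(M \times \R)$ along the hyperplane $M \times 0$, to the local antimicrolocalization of Proposition~\ref{prop: local antimicrolocalization}, which applies to any closed subset of a cosphere bundle that is in finite position. First I would verify that $\Lambda^\prec$ is itself in finite position along $M \times 0$: since the germ of $S^*(M \times \R)$ near $\Lambda^\prec$ is locally contactomorphic to $\Lambda \times \R \subset Nbd(\Lambda) \times T^*\R$, and the Reeb direction of this local model runs only along the $\R$ factor, the fiber $\Lambda^\prec|_{(m,0)}$ over a point $(m,0)$ is (up to contactomorphism) a product of $\Lambda|_{m}$ with a single point in $S^*_0\R$; in particular it is finite whenever $\Lambda$ is in finite position. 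Hence Proposition~\ref{prop: local antimicrolocalization} applies and tells us the natural map $q : sh_{\Lambda^\prec}/sh_{T^*_{M \times \R}(M\times\R)} \to \tilde\pi_* \mush_{\Lambda^\prec}$ is an isomorphism at every point of $M \times 0$ (indeed everywhere).

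The remaining work is to identify the source. What we want to compare to $\tilde\pi_* \mush_{\Lambda^\prec}$ along $M \times 0$ is $sh_{\Lambda^\prec; 0}$, the subsheaf of sheaves with vanishing stalks near $M \times -\infty$, rather than the full quotient $sh_{\Lambda^\prec}/sh_{T^*(M\times\R)}$. The key point is Lemma~\ref{lem: stalk at cusp}: every object of $sh_{\Lambda^\prec}(M\times\R)$ is locally constant over $M \times \R_{\le 0}$, so along $M \times 0$ the distinction between $sh_{\Lambda^\prec}$ and $sh_{\Lambda^\prec; 0}$ is exactly the distinction between a locally constant sheaf and the zero sheaf. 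More precisely, along $M \times 0$ the inclusion $sh_{\Lambda^\prec; 0} \hookrightarrow sh_{\Lambda^\prec}$ realizes $sh_{\Lambda^\prec;0}|_{M\times 0}$ as the kernel of the (co)restriction to the locally constant sheaves supported near $M \times -\infty$; since both quotients by $sh_{T^*(M\times\R)}(M\times\R)$ kill exactly the sheaves that are microsupported in the zero section — and a sheaf locally constant on all of $M\times\R_{\le 0}$ has no microsupport contributions at infinity near $M\times 0$ except those coming from $\Lambda^\prec$ — passing to the quotient $sh_{\Lambda^\prec}/sh_{T^*(M\times\R)}$ does not see the difference between the two sheaves at points of $M \times 0$. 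Thus the natural map $sh_{\Lambda^\prec; 0}|_{M\times 0} \to (sh_{\Lambda^\prec}/sh_{T^*(M\times\R)})|_{M\times 0}$ is an equivalence, and composing with $q$ gives the desired equivalence $sh_{\Lambda^\prec; 0} \xrightarrow{\sim} \tilde\pi_* \mush_{\Lambda^\prec}$ along $M \times 0$.

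I expect the main obstacle to be making the second step genuinely precise: the quotient $sh_{\Lambda^\prec}/sh_{T^*(M\times\R)}$ is a quotient of sheaves of categories and one must track carefully what ``along $M\times 0$'' means stalkwise, and check that the local-constancy of Lemma~\ref{lem: stalk at cusp} (combined with noncharacteristic propagation in the $ds$ directions) really does identify the stalk at $(m,0)$ of $sh_{\Lambda^\prec;0}$ with that of the ambient quotient rather than merely surjecting onto it. The cleanest route is probably to argue that on a small enough neighborhood $B \times (-\delta,\delta)$ of $(m,0)$, every object of $sh_{\Lambda^\prec}$ restricts to a sheaf whose $!$-restriction to $B \times \R_{\le 0}$ is pulled back from $B$ (by Lemma~\ref{lem: stalk at cusp}), so that modding out by the locally constant sheaves — equivalently, by those microsupported in the zero section — is insensitive to replacing a sheaf by the subsheaf in $sh_{\Lambda^\prec;0}$; this is essentially the content of Lemma~\ref{lem: zero at cusp} read microlocally. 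Once that identification is in hand, the equivalence follows from Proposition~\ref{prop: local antimicrolocalization} with no further computation.
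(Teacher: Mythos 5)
Your proposal is correct and follows essentially the same route as the paper: observe that $\Lambda^\prec$ is in finite position over $M \times 0$, apply Proposition~\ref{prop: local antimicrolocalization} at stalks along $M\times 0$, and use Lemmas~\ref{lem: stalk at cusp} and \ref{lem: zero at cusp} to identify the quotient by locally constant sheaves (equivalently, their orthogonal complement) with $sh_{\Lambda^\prec;0}$ there. Two small caveats that do not affect the argument: your parenthetical claim that $q$ is an isomorphism everywhere is unjustified (finite position of $\Lambda^\prec$ is only known over $M\times 0$), and finite position, being a statement about fibers of the projection to the base, is not transported by the local contactomorphism you invoke—rather one sees directly that the two pushoffs merge over $s=0$ with no $ds$-component, so the fiber of $\Lambda^\prec$ over $(m,0)$ is identified with $\Lambda|_m$.
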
 
\begin{proof}
It is enough to check at stalks.  
Evidently if $\Lambda$ is in finite position, then $\Lambda^{\prec}$ is in finite position over $0$. 
By Prop \ref{prop: local antimicrolocalization}, 
the stalk of $\mush$ is the orthogonal complement to constant
sheaves in a neighborhood, i.e. those sheaves with no local sections, i.e. with vanishing stalk.  Per Lemma \ref{lem: stalk at cusp} (in turn just an application of noncharacteristic propagation), this is   $sh_{\Lambda^\prec; 0}$.
\end{proof}

\begin{theorem}\label{thm:antimicrolocalization} 
Assume that  $\Lambda$ is closed and perturbable to finite position. 
For $s < c_1$, the composition 
$$sh_{\Lambda^\prec; 0}(M \times \R_{\le s} ) \to  sh_{\Lambda_{s} \cup \Lambda_{-s}}(M) \to   \mush_{\Lambda_{-s}}(\Lambda_{-s})$$
is an equivalence.  In particular, the second map has a right inverse.  
\end{theorem}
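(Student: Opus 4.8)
The plan is to assemble the claimed equivalence from the pieces already established, tracking carefully which map factors through which. The composition in question is
\[
sh_{\Lambda^\prec; 0}(M \times \R_{\le s}) \xrightarrow{r_s} sh_{\Lambda_s \cup \Lambda_{-s}}(M) \xrightarrow{\mu} \mush_{\Lambda_{-s}}(\Lambda_{-s}),
\]
where $r_s$ is restriction at the slice $M\times\{s\}$ and $\mu$ is microlocalization followed by the identification of $\mush$ near the slice (recall the germ of $S^*(M\times\R)$ around $\Lambda^\prec$ is locally contactomorphic to $\Lambda\times\R\subset Nbd(\Lambda)\times T^*\R$, so $\mush_{\Lambda^\prec}$ is pulled back from $\mush_\Lambda$, and restricting the pullback to a slice recovers $\mush_{\Lambda_{\pm s}}$). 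First I would invoke Corollary \ref{cor: global constancy near cusp}: for $0<s<c_1$ the restriction $sh_{\Lambda^\prec;0}(M\times\R_{\le s})\to sh_{\Lambda^\prec;0}|_{M\times 0}(M)$ is an isomorphism, so it suffices to prove the corresponding statement along the slice $M\times\{0\}$, i.e. that the composite of sheaves of categories
\[
sh_{\Lambda^\prec;0}\big|_{M\times 0} \to \tilde\pi_*\mush_{\Lambda^\prec}\big|_{M\times 0} \cong \pi_*\mush_\Lambda
\]
is an equivalence, where the first arrow is microlocalization and the second is the contactomorphism identification above. But this is precisely Lemma \ref{lem: cusp antimicrolocalization}, which gives the equivalence $sh_{\Lambda^\prec;0}\to\tilde\pi_*\mush_{\Lambda^\prec}$ along $M\times 0$ under the finite-position hypothesis.

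The remaining point is to reconcile the two descriptions of the composition: one route goes ``restrict to $M\times\{s\}$, then microlocalize'', the other goes ``restrict to $M\times\{0\}$, then microlocalize''. Since $\mush_{\Lambda^\prec}$ is constant in the $\R$ direction near $\Lambda^\prec$ (being a pullback along $M\times\R\to M$ locally), and by Corollary \ref{cor: gks constancy} / Proposition \ref{prop: constancy near cusp} the relevant microsheaf categories over $[0,c_1)$ are identified via the contact isotopy tracing out $\Lambda$, these two composites agree up to the canonical identifications. Concretely: the restriction maps $sh_{\Lambda^\prec;0}(M\times\R_{\le s})\to sh_{\Lambda^\prec;0}|_{M\times t}(M)$ for $0\le t\le s$ are all isomorphisms (Cor. \ref{cor: global constancy near cusp}), microlocalization is a morphism of sheaves of categories hence compatible with these restrictions, and the target identifications $\mush_{\Lambda^\prec}|_{M\times t}\cong\mush_{\Lambda_{\pm t}}$ fit into the same compatible family. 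So I would argue that the composition in the theorem equals the composition along the $M\times 0$ slice, conjugated by these isomorphisms, and the latter is an equivalence by Lemma \ref{lem: cusp antimicrolocalization}. The ``in particular'' then follows formally: if $\mu\circ r_s$ is an equivalence and $r_s$ is (in fact) an isomorphism, then $\mu$ on the relevant subcategory is an equivalence, so $\mu$ has a right inverse, namely $r_s\circ(\mu\circ r_s)^{-1}$ — or more precisely, $r_s$ composed with the inverse equivalence exhibits a section of $\mu$.

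The step I expect to be the main obstacle — or at least the one requiring the most care rather than invocation — is verifying that microlocalization genuinely intertwines the slicing at $s$ with the slicing at $0$ in the way claimed; that is, checking that the square relating $sh_{\Lambda^\prec;0}(M\times\R_{\le s})\to sh_{\Lambda_s\cup\Lambda_{-s}}(M)\to\mush_{\Lambda_{-s}}(\Lambda_{-s})$ to the $M\times 0$ picture commutes up to the stated canonical identifications. This is where one uses that $\mush_{\Lambda^\prec}$ is pulled back (so restriction to any slice, being noncharacteristic, induces an equivalence — Lemma \ref{lem: obvious constancy}) together with Proposition \ref{prop: constancy near cusp}'s assertion that $\eta_*sh_{\Lambda^\prec;0}$ is constant on $[0,c_1)$, so that the restriction $sh_{\Lambda^\prec;0}|_{M\times s}\to sh_{\Lambda_s\cup\Lambda_{-s}}(M)$ does not lose information and the composite through $\mush_{\Lambda_{-s}}$ is the same as through $\mush_\Lambda$. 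Once this bookkeeping is in place, the proof is a short assembly of Corollary \ref{cor: global constancy near cusp} and Lemma \ref{lem: cusp antimicrolocalization}, and I would write it as such.
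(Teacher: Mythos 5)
Your outline follows the paper's own proof almost exactly: the paper's argument is precisely a commutative diagram assembling Corollary \ref{cor: global constancy near cusp} (reduction of $sh_{\Lambda^\prec;0}(M\times\R_{\le s})$ to the slice $M\times 0$, plus full faithfulness of restriction at $s$), Lemma \ref{lem: cusp antimicrolocalization} (identification with $\tilde\pi_*\mush_{\Lambda^\prec}$ along $M\times 0$), and constancy of $\mush_{\Lambda^\prec}$ along the doubling direction via Lemma \ref{lem: relative constancy} (identifying $\mush_{\Lambda^\prec}(\Lambda^\prec)$ with $\mush_{\Lambda_{-s}}(\Lambda_{-s})$); the ``bookkeeping'' you single out is exactly the commutativity of that diagram, which the paper asserts on the same grounds you give.

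Two points need repair. First, the theorem only assumes $\Lambda$ is \emph{perturbable} to finite position, whereas Lemma \ref{lem: cusp antimicrolocalization} requires $\Lambda$ to actually be \emph{in} finite position; the paper's first step is to reduce to the latter by noting that the whole statement is invariant under contact perturbation of $\Lambda$ (performed before forming $\Lambda^\prec$), via \cite{guillermou-kashiwara-schapira}. Your proposal silently conflates the two hypotheses, so this reduction must be added. Second, your parenthetical claim that the restriction $r_s\colon sh_{\Lambda^\prec;0}(M\times\R_{\le s})\to sh_{\Lambda_s\cup\Lambda_{-s}}(M)$ is an isomorphism is false: Corollary \ref{cor: global constancy near cusp} only gives full faithfulness, and the example discussed after the theorem (both conormal directions at a point of $\R$) shows essential surjectivity genuinely fails. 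Fortunately this is not load-bearing: as your final sentence correctly states, once the composite is an equivalence, $r_s$ followed by the inverse of that equivalence already exhibits the required right inverse of the second map.
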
 
\begin{proof}
By \cite{guillermou-kashiwara-schapira} the statement is invariant under contact perturbation (of $\Lambda$, before
forming $\Lambda^{\prec}$), so we may assume 
$\Lambda$ is in finite position. 

We have the commutative diagram 

\beq\label{eq: antimicrolocalization}
\begin{tikzcd} 
sh_{\Lambda^\prec; 0}(M \times \R_{\le s} ) 
\arrow[hook]{r}[swap]{\text{Prop. \ref{prop: constancy near cusp}}} 
\arrow{d}{\sim}[swap]{\text{Prop. \ref{prop: constancy near cusp}}} & 
sh_{\Lambda_{s} \cup \Lambda_{-s}}(M) 
\arrow{r} & 
\mush_{\Lambda_{-s}}(\Lambda_{-s}) \\
sh_{\Lambda^\prec; 0}|_{M \times 0}(M) 
\arrow{r}{\sim}[swap]{\text{Lemma \ref{lem: cusp antimicrolocalization}}} & 
\tilde{\pi}_* \mush_{\Lambda^\prec}|_{M \times 0}(M)  
\arrow{r}{\sim} & 
\mush_{\Lambda^{\prec}}(\Lambda^{\prec}) 
\arrow{u}{\sim}
\end{tikzcd} 
\eeq

The labelled arrows are fully faithful or equivalences by the noted previous results, or in the case of comparisons
between $\mush$ categories, by Lemma \ref{lem: relative constancy}.  The result follows. 
\end{proof}

It follows from Thm \ref{thm:antimicrolocalization} that the image of the map 
$sh_{\Lambda^\prec; 0}(M \times \R_{\le s}) \to sh_{\Lambda_{s} \cup \Lambda_{-s}}(M)$ is the subcategory generated 
by microstalks on $\Lambda_{-s}$.   We now discuss how to give a more explicit characterization (which however we will
not require in the sequel). 
By inspection of the \cite{guillermou-kashiwara-schapira} construction, the image is 
always contained in the locus of sheaves whose microsupport 
is contained in the locus
$$\U_s := \R_{> 0} \Lambda_{s} \,\, \cup \,\, \R_{> 0} \Lambda_{-s} \,\, \cup \bigcup_{-s \le r \le s} \pi(\Lambda_r) \subset T^*M$$

\begin{example}
It is not always the case that the map to $sh_{\U_s}$ is surjective.  Indeed, consider $M = \R$ and $\Lambda$ {\em both} conormals
to $0$.  Now $\mush_\Lambda$ is two copies of the coefficient category, while $sh_{\U_s}$ is representations of the $A_3$ quiver. 
An object not in the image is the skyscraper at 0.  
\end{example}

The above is essentially the only difficulty, and in its absence we can characterize the image. 

\begin{proposition} \label{prop: image of antimicrolocalization}
Assume that $\Lambda$ is closed and in finite position, and $c_1 > 0$. 

Assume in addition that $sh_{\RR_{\geq 0}\Lambda}(M) = 0$ (which holds, for example, if the front projection of $\Lambda$ is injective on a dense locus). 
Then the map $$sh_{\Lambda^\prec; 0}(M \times \R_{\le s}) \to sh_{\U_s}(M)$$
is an isomorphism for small $s$. 
\end{proposition}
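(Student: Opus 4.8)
The plan is to upgrade the equivalence of Theorem~\ref{thm:antimicrolocalization} from the subcategory generated by microstalks on $\Lambda_{-s}$ to the full category $sh_{\U_s}(M)$, by showing that under the stated hypotheses every object of $sh_{\U_s}(M)$ lies in that subcategory. First I would recall that, by Theorem~\ref{thm:antimicrolocalization}, the image of $sh_{\Lambda^\prec; 0}(M \times \R_{\le s}) \to sh_{\Lambda_{s} \cup \Lambda_{-s}}(M)$ is precisely the subcategory generated by the microstalk functors $\mu_p$ along $\Lambda_{-s}$; equivalently, it is the subcategory of $sh_{\U_s}(M)$ on which the ``opposite'' microstalks—those along $\R_{>0}\Lambda_{s}$ and along the interior fronts $\pi(\Lambda_r)$ for $-s < r \le s$—vanish. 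So the content of the proposition is that $sh_{\U_s}(M)$ has no objects supported microlocally on $\U_s \setminus \R_{>0}\Lambda_{-s}$, i.e.\ no object can be microsupported only on the ``other half'' of the doubled skeleton together with the sweep $\bigcup_{-s\le r\le s}\pi(\Lambda_r)$.

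The key geometric input is the two hypotheses. In the first case ($\Lambda$ is the closure of the locus where the front projection is injective), one argues that near a generic point of the front, the locus $\U_s$ looks like the cotangent directions to a single smooth hypersurface front sweeping through a family, so that $sh_{\U_s}$ is locally the category of sheaves on $M$ constructible for a single moving wall; a local computation (of the same flavor as Lemma~\ref{lem: stalk at cusp} and the cusp analysis) shows that such a sheaf is determined by its behaviour near $\Lambda_{-s}$, because crossing the $\R$-direction (the Reeb direction in the doubling) is noncharacteristic. Concretely, I would use Lemma~\ref{lem: positive perturbation of cosphere} and Definition~\ref{def: neighborhood defining function} to arrange coordinates in which the front of $\Lambda_r$ is a graph, then run noncharacteristic propagation (the Noncharacteristic Propagation Lemma) to propagate any object of $sh_{\U_s}(M)$ across the region swept by $\bigcup_{-s\le r\le s}\pi(\Lambda_r)$, showing its restriction there is controlled by its microstalks on $\Lambda_{-s}$. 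In the second case ($M = N\times\R$ and $\Lambda\subset J^1 N$), the same conclusion is cleaner: in the jet bundle the Reeb flow moves only in the $\R$ direction, the doubled skeleton $\Lambda^\prec$ sits in $J^1(N\times\R_{\ge 0})$ with the extra cusp direction, and one can directly check via the front-projection picture (as in \cite[Sec. 3.3]{stz}, \cite[Sec. 7.3.3]{nrssz}) that $sh_{\U_s}(M)$ is the $!$-extension category from $M\times\R_{>0}$, matching $sh_{\Lambda^\prec;0}$.

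Assembling: I would (i) reduce to the stalkwise statement as in the proofs of Lemma~\ref{lem: cusp antimicrolocalization} and Proposition~\ref{prop: local antimicrolocalization}, since both $sh_{\Lambda^\prec;0}$ and $sh_{\U_s}$ are defined by microsupport conditions and finite position is available; (ii) at a point $x$, decompose $\Lambda|_x$ into its finitely many points and reduce to one connected component at a time, exactly as in Proposition~\ref{prop: local antimicrolocalization}; (iii) for a single component, use the injectivity-of-front hypothesis (or the jet-bundle hypothesis) to identify the local picture of $\U_s$ with the sweep of a single smooth front, and invoke the microlocal cutoff (Proposition~\ref{prop: functorial cutoff}, Corollary~\ref{cor: cutoff when W is in U}) together with noncharacteristic propagation to show the cutoff onto the coray of $\Lambda_{-s}$ is already an equivalence on $sh_{\U_s}$ at $x$; (iv) conclude that $sh_{\U_s}(M) \to \mush_{\Lambda_{-s}}(\Lambda_{-s})$ is an equivalence, hence so is $sh_{\Lambda^\prec;0}(M\times\R_{\le s})\to sh_{\U_s}(M)$ by comparing with diagram~\eqref{eq: antimicrolocalization}.

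The main obstacle will be step (iii): controlling the sweep locus $\bigcup_{-s\le r\le s}\pi(\Lambda_r)\subset T^*M$ when the front is only generically injective. At the non-injective points of the front—precisely the cusp-type and crossing-type singularities ruled out from dominating by the ``closure of injective locus'' hypothesis but still present as accumulation loci—one must check that the extra microsupport directions contributed by the sweep do not create new objects (the failure of this is exactly the $A_3$-versus-two-points example flagged before the proposition). I expect to handle this by a dimension/coisotropy count: the sweep $\bigcup_r \pi(\Lambda_r)$ is a Legendrian-times-interval and under the injectivity hypothesis its projectivized microsupport contribution is again coisotropic of the expected dimension only along the part matching $\Lambda_{\pm s}$, so the extra locus carries no coisotropic subset and hence (by the involutivity theorem \cite[6.5.4]{kashiwara-schapira}) no nonzero object—this is the crux and where the hypotheses are genuinely used.
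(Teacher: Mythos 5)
Your overall target is legitimate (show the microlocalization $sh_{\U_s}(M) \to \mush_{\Lambda_{-s}}(\Lambda_{-s})$ is an equivalence and conclude by two-out-of-three with Theorem \ref{thm:antimicrolocalization}), but the two load-bearing steps do not hold. First, the ``equivalently'' in your opening is false: objects in the image of the doubling are microsupported on \emph{both} $\R_{>0}\Lambda_{-s}$ and $\R_{>0}\Lambda_{s}$ (that is the point of the double), so the image is not cut out by vanishing of the ``opposite'' microstalks; and even the weaker reformulation --- no nonzero object of $sh_{\U_s}(M)$ microsupported in $\U_s \setminus \R_{>0}\Lambda_{-s}$ --- does not by itself give essential surjectivity without a generation/semiorthogonal-decomposition argument (corepresentability of the microstalks by compact objects of $sh_{\U_s}(M)$ lying in the image, closure of the image under colimits), none of which is available or assumed at this point of the paper. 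Second, the ``crux'' is wrong: $\R_{>0}\Lambda_s$ is a conic Lagrangian and the sweep $\bigcup_{-s\le r\le s}\pi(\Lambda_r)$ lies inside the zero section (it is a codimension-zero region of $M$, hence Lagrangian where smooth), so $\U_s\setminus\R_{>0}\Lambda_{-s}$ is full of coisotropic subsets and the involutivity theorem yields nothing; in particular the $A_3$ example is not detected by any coisotropy count --- it is excluded only by the front-geometry hypotheses. Finally, the reductions (i)--(ii) are not available: the comparison with $\mush_{\Lambda_{-s}}$ is \emph{not} a stalkwise equivalence of sheaves of categories on $M$ (at an interior point of the sweep, away from the front of $\Lambda_{-s}$, the microlocal side has zero stalk while $sh_{\U_s}$ has stalk the locally constant sheaves), so the statement is irreducibly global --- a propagation from the boundary of the sweep across its interior --- and the zero-section support condition couples the components of $\Lambda$, so the direct-sum decomposition of Proposition \ref{prop: local antimicrolocalization} does not apply to $sh_{\U_s}$.

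The paper's proof goes the other way: it produces the essential preimage directly. Given $F \in sh_{\U_s}(M)$, Corollary \ref{cor: gks constancy} spreads it out over $M\times(0,s]$ as a family microsupported in $\Lambda^{\prec}$, and one then $!$-extends to $M\times\R_{\le s}$; the entire content is to show this extension acquires no support or microsupport along $M\times 0$ beyond $\Lambda^{\prec}$, so that it lies in $sh_{\Lambda^{\prec};0}(M\times\R_{\le s})$ and restricts back to $F$ at $s$. The two hypotheses enter exactly there: in the jet-bundle case because the Reeb flow only shrinks fronts in the $\R$-direction, so the supports stay pinched between the fronts of $\Lambda_{\pm t}$ for all $t$; in the generic-front case by a connectivity argument (for small $s$ the complement of the sweep has the same components as the complement of $\pi(\Lambda)$, so after cutting off the isotopy the family vanishes there), combined with the fact that on a dense subset of the front there is only one conormal direction, forcing vanishing along the front and hence triviality of the would-be contribution at $t=0$. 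If you want to salvage your route, the statement to aim for is full faithfulness of $sh_{\U_s}(M)\to\mush_{\Lambda_{-s}}(\Lambda_{-s})$, but establishing it still requires a global propagation argument across the sweep of the same kind as above, not a local or coisotropy-theoretic one.
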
 
\begin{proof}
There is a natural candidate for the inverse.  
To construct it, for  $s>0$, set $\U_{(0, s]}=\cup_{r\in (0,s]} \U_r$.
Note that the reduction along $M\times 0$ of the closure of $\U_{(0, s]}$ is precisely $\RR_{\geq 0} \Lambda$.
Under the assumptions on $\Lambda$, for small enough $s>0$, the restriction map
$\rho_r:\sh_{\U_{(0, s]}}(M \times (0, s]) \to  
\sh_{\U_r} (M)$ is an equivalence for all $r\in (0, s]$. 
Now the candidate inverse is the composition with the $!$-extension
 $\phi = j_! \circ \rho_s^{-1}:sh_{\U_s}(M) \stackrel{\sim}{\to} \sh_{\U_{(0, s]}}(M \times (0, s])\to sh(M \times \R_{\le s})$.   Our task is to show that for $\cF\in sh_{\U_s}(M)$,
the microsupport of $\phi(\cF)$  lies in $\Lambda^\prec$. Note by construction,  the  support of $\phi(\cF)$  lies  within the closure of $\U_{(0, s]}$, and the
$*$-restriction of $\phi(\cF)$ to $M \times \RR_{\leq 0}$ vanishes.

It remains to  calculate the microsupport of $\phi(\cF)$ for non-zero covectors along $M \times 0$. First note that, as in the context of relative microsupport, for a  covector of the form $(m; \xi, 0) \in T^*M \times T^*_0 \RR$, with $\xi \not = 0$, to be in the microsupport, we must have $\xi \in \Lambda.$ So it remains to show any 
covector of the form $(m; \xi, u) \in T^*M \times T^*_0 \RR$, with $u\not = 0$, is not in the microsupport.

Consider pairs of a small open ball $B_\epsilon \subset  M \times \RR$ of radius $\epsilon>0$ centered at $(m, 0)$, and a function $f:M \times \RR\to \RR$, with $f(m, 0) = 0$, and $df_{(m, 0)} = (\xi, u)$.  

Set $H_{\epsilon} = \{ b\in B_\epsilon \, |\, f(b) <0\}$. Since the stalk of $\phi(\cF)$ at $(m, 0)$ vanishes, we need to show $\Gamma(H_{\epsilon}, \phi(\cF))$ vanishes for small enough $\epsilon>0$.  

For $u >0$, note that $H_\epsilon$, for small enough $\epsilon>0$, is disjoint from the support of $\phi(\cF)$, so $\Gamma(H_{\epsilon}, \phi(\cF))$ indeed vanishes in this case.

To handle $u <0$, set $K_{\epsilon, \delta} = H_\epsilon \cap M\times \delta$. Note that, by non-characteristic propagation,  the restriction $\Gamma(H_{\epsilon}, \phi(\cF)) \to \Gamma(K_{\epsilon, \delta}, \phi(\cF))$ is an isomorphism for small $\epsilon>0$ and very small $\delta> 0$. Unwinding the definitions, if we set $C_\epsilon\subset M$ to be 
 a small open ball of radius $\epsilon>0$ centered at $m$ in the front of $\Lambda$, 
 we are left to calculate 
$\Gamma(C_{\epsilon}, \cF)$, for $\cF\in sh_{\U_s}(M)$, where
$\epsilon>0$ is small and 
$s>0$ is very  small. This vanishes since it factors through the nearby cycles as $s\to 0$ whose target is the zero category 
$sh_{\RR_{\geq 0}\Lambda}(M)$, by assumption.
\end{proof}

\subsection{Relative doubling} 

Previously we have considered closed $\Lambda \subset S^*M$.  We now generalize this to the case
of locally closed $\Lambda$.  We write $\partial \Lambda := \overline{\Lambda} \setminus \Lambda$
(as opposed to $\overline{\Lambda}$ minus its relative interior). 
The need for a generalization is illustrated by the following example: 

\begin{example}
Recall that the microsupport of a sheaf can never be a manifold with nontrivial boundary.  

Consider $M=\R^2$,
and let $\Lambda \subset J^1 \R \subset S^*(\R^2)$ be the Legendrian with front projection the open interval $(0,1) \times \{0\} \subset \R^2$.  Then $\mush_\Lambda(\Lambda)$ is the coefficient category. 

Fix any positive isotopy; for illustration we take the jet bundle Reeb flow (translation in the $\R$ direction of $J^1 \R$).  Then $\U_s$ will be a union of a rectangle $(0, 1) \times [-s, s]$ in the zero section and the closure to positive conormals of $(0,1) \times \{\pm s\}$.
Anything microsupported in $\U_s$ is certainly microsupported in $\overline{\U_s}$, which, away from the zero section, each component is a manifold with nontrivial boundary, hence any sheaf microsupported in $\overline{\U_s}$ is in fact microsupported in the zero section; but $\U_s$ does not include the whole zero section, so the sheaf is zero.  That is, $sh_{\U_s}(\R^2)=0$. Thus the assertion of Theorem \ref{thm:antimicrolocalization} is false for this $\Lambda$. 

This is no contradiction: Theorem \ref{thm:antimicrolocalization} applies only to closed $\Lambda$, 
ultimately because of the use of Lemma \ref{lem: local antimicrolocalization}. 
We could apply Theorem \ref{thm:antimicrolocalization}
to the closure of $\Lambda$, but this would not be useful as $\mush_{\overline{\Lambda}}(\overline{\Lambda}) = 0$
(again by the recollection above).  
\end{example} 

The solution is to cap off the boundaries of the 
doubled object.  

\begin{definition}
A contact collar of a subset $\Lambda \subset S^*M$ is 
a contact manifold $U$ with a subset $L$, and an embedding
$(L \times (0, \epsilon); U \times T^*(-\epsilon, \epsilon)) \to (\Lambda, S^*M)$ carrying
$L \times 0$ diffeomorphically onto $\partial \Lambda$.

We say a contact flow on $S^*M$ is compatible with the collar if its restriction to 
$U \times T^*(-\epsilon, \epsilon)$ is the product of a flow on $U$ and the constant
flow on $ T^*(-\epsilon, \epsilon)$. 
\end{definition}

For collared $\Lambda$, 
we define $(\Lambda, \partial \Lambda)^{\prec} \subset S^*(M \times \R)$
as the union of $\Lambda^{\prec}$ with a `boundary capping off' component contained entirely inside 
$U \times T^*(-\epsilon, \epsilon) \times T^*\R$.  This component is constructed as follows.  
Note that $\Lambda^{\prec}$ in this region is $L \times [0,\epsilon)$ times a parabola $P \subset T^* \R$.  We fix some standard
capping of the parabola in $T^* \R$ to a Legendrian half-paraboloid $\tilde{P} \subset T^* (-\epsilon, 0] \times T^* \R$, so that 
$\tilde{P} \cup_P ([0, \epsilon) \times P) \subset T^* (-\epsilon, \epsilon) \times T^* \R$ is a manifold.\footnote{It is enough for our purposes
that this be a $C^1$ manifold; thus the construction can be made in the subanalytic category if desired.}  We define 
\beq
(\Lambda, \partial \Lambda)^{\prec} := \Lambda^{\prec} \cup (L \times \tilde{P})
\eeq

Note that by construction $\Lambda^{\prec} \subset (\Lambda, \partial \Lambda)^{\prec}$, and in the fiber over $M \times 0$ the 
latter is the closure of the former. 

\begin{example}
If $M = \R^2$ and $\Lambda$ is a Legendrian open interval whose front projection is an embedding, then 
$(\Lambda, \partial \Lambda)^{\prec}$ is a disk whose front projection is half of a somewhat squashed version of the standard front projection of the two dimensional Legendrian unknot.
\end{example}

\begin{remark}
The reason for the delicate construction of the capping is that we want 
to verify local constancy of $\mush$ along the boundary by simply invoking Lemma \ref{lem: relative constancy}.  
The local constancy would still be true with a less delicate capping, but 
one would have to argue for it directly.
\end{remark}

Let us write $(\Lambda, \partial \Lambda)_s \subset S^*M$ for the contact reduction of 
$(\Lambda, \partial \Lambda)^{\prec} \subset S^*(M \times \R)$ over $s \in \R$.  

\begin{lemma}
The restriction of $(\Lambda, \partial \Lambda)^{\prec}$ to $S^*(M \times \R_{>0})$ is the legendrian movie 
of a contact isotopy acting on $(\Lambda, \partial \Lambda)_s$. 
\end{lemma}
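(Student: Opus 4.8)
The plan is to unwind the definition of $(\Lambda, \partial\Lambda)^{\prec}$ over the region $S^*(M \times \R_{>0})$ and verify directly that it decomposes as a disjoint union of two pieces, each of which is manifestly a Legendrian movie. Recall $(\Lambda, \partial\Lambda)^{\prec} = \Lambda^{\prec} \cup (L \times \tilde{P})$, where $\Lambda^{\prec} = \widetilde{\Phi}^+(\Lambda) \cup \widetilde{\Phi}^-(\Lambda)$ is the cusp double and $L \times \tilde{P}$ is the boundary capping component supported in $U \times T^*(-\epsilon, \epsilon) \times T^*\R$. The key observation is that $\tilde{P}$ — the capped half-paraboloid — lives in $T^*(-\epsilon, 0] \times T^*\R$, so its contact reduction over $s > 0$ is \emph{empty}; thus over $S^*(M \times \R_{>0})$ the capping component contributes nothing, and $(\Lambda, \partial\Lambda)^{\prec}$ restricted there coincides with $\Lambda^{\prec}$ restricted there. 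Wait — this is not right, since the whole point of collaring is that over $s > 0$ the reduction $(\Lambda,\partial\Lambda)_s$ is a \emph{closed} Legendrian with the boundary capped. So I need to be more careful: the parabola $P$ sits at a fixed $\R$-coordinate, and the capping $\tilde P$ extends it into the collar $(-\epsilon, 0]$ direction of $M$ (the $U \times T^*(-\epsilon,\epsilon)$ factor), not the auxiliary $\R$-factor along which we reduce. So over $s > 0$ the reduction does see $L \times (\text{reduction of }\tilde P\text{'s parabola at }s)$, which caps off the boundary circle $L \times \{s\} \cup L \times \{-s\}$ of $\Lambda_s \cup \Lambda_{-s}$.

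\textbf{First} I would fix the product structure: over a neighborhood of $\partial\Lambda$, write $S^*M \supset U \times T^*(-\epsilon, \epsilon)$ with $\Lambda$ looking like $L \times (0, \epsilon)$, and note the contact flow $\phi_t$ is compatible with the collar, so it is $\psi_t \times \mathrm{id}$ for a flow $\psi_t$ on $U$. Then $\widetilde{\phi}^{\pm}_s$ acts on the $U$-factor only, and in the cylinder $S^*(M \times \R)$ the locus $(\Lambda, \partial\Lambda)^{\prec}$ near the capped boundary is $L \times (\text{curve in } T^*(-\epsilon,\epsilon) \times T^*\R)$, where the curve is the union of two translated parabola-halves joined by $\tilde P$. \textbf{Second}, I would exhibit the ambient symplectic (equivalently, conic contact) isotopy whose movie this is: over $\R_{>0}$ we are just tracing out $\widetilde{\phi}^+$ and $\widetilde{\phi}^-$ as $s$ varies, reparametrized by $s^{2/3}$, so away from $s = 0$ this is smooth and is literally the graph (movie) of the family $s \mapsto (\Lambda,\partial\Lambda)_s$. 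The capped-boundary component transforms compatibly because the capping curve $\tilde P$ is, by construction, invariant under (a reparametrization of) the Reeb flow in the collar — this is exactly the ``delicate construction'' alluded to in the preceding remark.

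\textbf{The main obstacle} will be checking smoothness and the movie condition at the seam where the capping component $L \times \tilde P$ meets $\Lambda^{\prec}$ — i.e.\ verifying that the union $(\Lambda,\partial\Lambda)_s$ is genuinely a $C^1$ Legendrian submanifold for each $s > 0$ and that the family varies through a symplectic isotopy rather than merely a continuous family. This amounts to choosing the standard capping $\tilde P$ of the parabola $P$ so that, in the collar coordinates, the resulting half-paraboloid glues $C^1$-smoothly to the two branches $\widetilde{\Phi}^{\pm}$ and so that the reduction at each level $s$ is the Reeb-flow image of the reduction at nearby levels; granting the existence of such a capping (which is a local model computation in $T^*(-\epsilon,\epsilon) \times T^*\R$, the kind of thing the paper permits itself in the subanalytic or $C^1$ category), the movie statement then follows formally, since any Legendrian that sweeps out smoothly under variation of an auxiliary parameter is the movie of the associated contact isotopy by Gray stability — invoked here just as in Lemma \ref{lem: positive perturbation of cosphere}. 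So the proof is essentially: reduce to the collar, observe the flow is a product there, glue the capping $C^1$-smoothly, and conclude the movie structure by Gray/the standard neighborhood argument.
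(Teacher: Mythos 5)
The paper in fact gives no proof of this lemma at all: it is stated as an immediate by-product of how the capping $\tilde P$ is chosen, so your writeup is a reconstruction rather than a parallel of an existing argument. Your reading of the geometry is the right one — the cap $L\times\tilde P$ extends in the collar direction (the $T^*(-\epsilon,\epsilon)$ factor), not in the doubling direction, so over each $s>0$ the slice is $\Lambda_s\cup\Lambda_{-s}$ together with a cap joining the two branches near $\partial\Lambda$; away from $s=0$ the reparametrization $s\mapsto s^{2/3}$ is smooth, so the uncapped part is tautologically swept by the given flow; and collar-compatibility reduces the cap region to a product of $L$ with a low-dimensional model. Those are exactly the observations the authors are implicitly relying on.

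The gap is in your last step, where the conclusion is to follow ``formally \ldots by Gray stability.'' Gray stability (really the Legendrian isotopy extension theorem) applies to smooth compact Legendrian submanifolds, whereas the slices $(\Lambda,\partial\Lambda)_s$ are in general singular — $\Lambda$ is only a locally closed, collared subset, and handling such $\Lambda$ is the point of the paper — so ``a smoothly varying family of Legendrians is the movie of an isotopy'' is not available as a formal principle here. Note also that you are conflating the graph of the family $s\mapsto(\Lambda,\partial\Lambda)_s$ (which is a movie of a family of subsets for trivial reasons) with the movie of a single ambient isotopy acting on one slice, which is what the lemma asserts and what feeds into Cor.~\ref{cor: gks constancy}. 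The fix is to build the ambient isotopy by hand, which is possible precisely because it is essentially already given: use the contact Hamiltonian of $\phi$ (reparametrized by $\tfrac{d}{ds}s^{2/3}$) near the branch $\Lambda_s$, its negative near $\Lambda_{-s}$, and across the cap interpolate using the collar product structure and the explicit model isotopy sweeping the slices of $\tilde P$; then cut off and sum these Hamiltonians. This patching is where the real content lies, and it requires the two branches and the cap to admit disjoint (or compatibly overlapping) neighborhoods uniformly in $s$ on the interval considered. That holds for $s$ below the first chord value — the chordless/gapped range on which the lemma is actually invoked in Prop.~\ref{prop: constancy near cusp rel} — but not for arbitrary $s>0$: when $2s^{2/3}$ reaches a chord length of $\Lambda$ the branches can intersect and the topology of the slice jumps, so no single isotopy applied to a fixed slice traces the whole family. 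So: drop the appeal to Gray, construct the Hamiltonian by cutoff as above, and read (and prove) the statement locally in $s$, which is all that the subsequent applications use.
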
 

Analogues of the results of the previous section hold for $(\Lambda, \partial \Lambda)^{\prec}$. 

\begin{proposition} \label{prop: constancy near cusp rel} 
If $\Lambda$ is relatively compact, then for any $0 < s' < s < c_1$,  the following natural restriction maps are all isomorphisms. 
$$sh_{(\Lambda, \partial \Lambda)^\prec; 0}(M \times \R_{\le s}) \xrightarrow{\sim} 
sh_{(\Lambda, \partial \Lambda)^\prec; 0}(M \times  \R_{< s}) \xrightarrow{\sim}
sh_{(\Lambda, \partial \Lambda)^\prec; 0}(M \times \R_{< s'}) \xrightarrow{\sim} 
sh_{(\Lambda, \partial \Lambda)^\prec; 0}|_{M \times 0}(M)$$
In addition, restriction at $s$ induces a fully faithful functor
$$sh_{(\Lambda, \partial \Lambda)^\prec; 0}(M \times \R_{\le s}) \hookrightarrow sh_{(\Lambda, \partial \Lambda)_{s}}(M)$$
\end{proposition}
\begin{proof}
Same as Proposition \ref{prop: constancy near cusp}, with the additional observation that the capping off at the boundary
adds no chords.  
\end{proof}

\begin{lemma} \label{lem: cusp relative antimicrolocalization} 
Assume $\Lambda$ is collared and in finite position. 
Then the
natural maps of sheaves of categories on $M \times \R$
$$sh_{(\Lambda, \partial \Lambda)^\prec; 0} \to \tilde{\pi}_* \mush_{(\Lambda, \partial \Lambda)^\prec} \to 
\tilde{\pi}_* \mush_{\Lambda^\prec}$$ 
are both equivalences along $M \times 0$. 
\end{lemma}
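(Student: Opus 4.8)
The plan is to reduce both assertions to statements about stalks along $M \times 0$ and then to run the argument of Lemma~\ref{lem: cusp antimicrolocalization} in the relative setting, handling the capping component by invoking Lemma~\ref{lem: relative constancy}. Since a morphism of sheaves of categories is an equivalence exactly when it is one on all stalks, it suffices to work at a point $p \in M \times 0$. The first thing to record is that $(\Lambda,\partial\Lambda)^\prec$ is in finite position over $M \times 0$: this is immediate from the cusp-plus-collar description, since $\Lambda$ is in finite position and the extra piece $L \times \tilde P$ is a $C^1$-submanifold each of whose fibers over $M \times \R$ meets a given cosphere fiber in finitely many points. Hence Proposition~\ref{prop: local antimicrolocalization} applies and identifies the stalk of $\mush_{(\Lambda,\partial\Lambda)^\prec}$ at $p$ with the orthogonal complement, inside the stalk of $sh_{(\Lambda,\partial\Lambda)^\prec}/sh_{T_{M\times\R}^*(M\times\R)}$, of the sheaves that are constant near $p$.

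For the first map $sh_{(\Lambda,\partial\Lambda)^\prec;0}\to\tilde\pi_*\mush_{(\Lambda,\partial\Lambda)^\prec}$, I would first prove the analogue of Lemma~\ref{lem: stalk at cusp}: objects of $sh_{(\Lambda,\partial\Lambda)^\prec}(M\times\R)$ are locally constant over $M\times\R_{\le 0}$. Over $M\times\R_{<0}$ this is clear; over $M\times 0$ it follows from noncharacteristic propagation, since neither $\Lambda^\prec$ (by Lemma~\ref{lem: stalk at cusp}) nor the cap $L\times\tilde P$ has conormal directions along $\partial_s$ there — for the cap this is exactly a feature of the delicate choice of $\tilde P\subset T^*(-\epsilon,0]\times T^*\R$, the capping curve having vertical tangency in the $s$-direction over $M\times 0$. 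Granting this, the analogue of Lemma~\ref{lem: zero at cusp} holds, i.e. $j_!j^!$ is an equivalence on $sh_{(\Lambda,\partial\Lambda)^\prec;0}$, so along $M\times 0$ the orthogonal complement of the constant sheaves in the stalk above is precisely the stalk of $sh_{(\Lambda,\partial\Lambda)^\prec;0}$. This gives the first equivalence.

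For the second map $\tilde\pi_*\mush_{(\Lambda,\partial\Lambda)^\prec}\to\tilde\pi_*\mush_{\Lambda^\prec}$, the point is that over $M\times 0$ the set $(\Lambda,\partial\Lambda)^\prec$ is the closure of the (locally closed) Legendrian $\Lambda^\prec$, the difference $L\times\tilde P$ meeting the fiber over $M\times 0$ in a locus of dimension one less. Any object of $\mush_{(\Lambda,\partial\Lambda)^\prec}$ along $M\times 0$ has closed coisotropic microsupport contained in $\overline{\Lambda^\prec}$, and since a microsupport cannot be a manifold with nonempty boundary it must in fact be contained in $\Lambda^\prec$; thus along $M\times 0$ the subsheaves $\mush_{\Lambda^\prec}$ and $\mush_{(\Lambda,\partial\Lambda)^\prec}$ of $\mush$ coincide, and the natural restriction-to-$\Lambda^\prec$ map (defined on a neighborhood of $M\times\R_{\le 0}$, where outside $s>0$ everything vanishes) is an equivalence there. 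One can alternatively see this via Lemma~\ref{lem: relative constancy}: the neighborhood of the cap is, by the delicate construction, locally contactomorphic to a product with an $\R$-factor, so $\mush_{(\Lambda,\partial\Lambda)^\prec}$ is locally constant there and the cap contributes no new sections over a neighborhood of $p$. Composing the two equivalences proves the lemma.

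The hard part will be the geometric bookkeeping behind the two microsupport claims: verifying from the explicit construction of $(\Lambda,\partial\Lambda)^\prec$ near $\partial\Lambda$ that it is in finite position over $M\times 0$, that the cap $\tilde P$ carries no $\partial_s$-conormal over $M\times\R_{\le 0}$, and that its intersection with the fiber over $M\times 0$ is non-coisotropic (so that the recollection on boundaries of microsupports applies). These are precisely the places where the ``delicate'' capping is used; once they are in place, the remainder is the same propagation-and-cutoff package that handled the closed case in Lemma~\ref{lem: cusp antimicrolocalization}.
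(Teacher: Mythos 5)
Your treatment of the first map is fine and is essentially the paper's argument: finite position of $(\Lambda,\partial\Lambda)^\prec$ over $M\times 0$, Proposition \ref{prop: local antimicrolocalization} at stalks, and the relative analogues of Lemmas \ref{lem: stalk at cusp} and \ref{lem: zero at cusp}.

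For the second map, however, your primary argument has a genuine gap and in fact gets the geometry backwards. You claim that any object of $\mush_{(\Lambda,\partial\Lambda)^\prec}$ near $M\times 0$ has microsupport contained in $\Lambda^\prec$ (i.e.\ avoiding the points over $\partial\Lambda$), because ``a microsupport cannot be a manifold with nonempty boundary,'' and you conclude that the two subsheaves of $\mush$ coincide. This is false: near a corner point $q$ lying over $\partial\Lambda$ at $s=0$, the capped Legendrian $(\Lambda,\partial\Lambda)^\prec$ is a smooth ($C^1$) Legendrian \emph{without} boundary, so nothing forbids microsupport through $q$ — indeed the whole purpose of the cap is to create such objects. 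Locally $\mush_{(\Lambda,\partial\Lambda)^\prec}$ is a category of local systems on this connected Legendrian (Cor.\ \ref{cor: locally local systems}), so the microsupport of any object, being the support of a local system, is open and closed, hence empty or everything; ``everything'' contains $q$ and is \emph{not} contained in $\Lambda^\prec$. Consequently, the full subcategory of objects microsupported in $\Lambda^\prec$ consists only of zero objects near $q$, while objects microsupported in $(\Lambda,\partial\Lambda)^\prec$ form a copy of $\mathcal{C}$: the two microsupport-condition subcategories emphatically do not coincide there. (This is exactly the paper's motivating example with front the open interval, where $\mush_{\overline\Lambda}(\overline\Lambda)=0$; your proposed ``non-coisotropic intersection'' verification in the last paragraph is pursuing the same flawed line.) The map in the lemma must instead be read as restriction along the open inclusion $\Lambda^\prec\subset(\Lambda,\partial\Lambda)^\prec$ of (pushforwards of) the sheaves living on these Legendrians, and then its being an equivalence along $M\times 0$ is precisely the paper's point: over $M\times 0$ the two loci differ only by replacing $L\times[0,\epsilon)$ with $L\times(0,\epsilon)$, and $\mush$ is locally constant in the $[0,\epsilon)$ direction by Lemma \ref{lem: relative constancy} (this is what the delicate $C^1$ capping buys), so sections over the two kinds of neighborhoods agree. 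You do state this correct argument, but only as a one-sentence ``alternative''; it should be the argument, and the microsupport-avoidance claim should be discarded.
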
 
\begin{proof}
Note that since $\Lambda$ is in finite position, 
so is $(\Lambda, \partial \Lambda)^{\prec}$ over $0$.  Now the 
first map is an isomorphism by the same proof as Lemma  \ref{lem: cusp antimicrolocalization}.  
The second map is an isomorphism because (1) over $M \times 0$ the loci  
$\Lambda^\prec \subset (\Lambda,\partial \Lambda)^\prec$ agree except
along the boundary where one is topologically $L \times (0, \epsilon)$ and the other $L \times [0, \epsilon)$, 
and (2) $\mush$ is locally constant along the $[0, \epsilon)$ direction by Lemma \ref{lem: relative constancy}. 
\end{proof}

\begin{theorem}\label{thm: relative antimicrolocalization} 
Assume $\Lambda$ is collared,  relatively compact, and perturbable to finite position. 
For $s < c_1$,  the composition 
$$sh_{(\Lambda, \partial \Lambda)^\prec; 0}(M \times \R_{\le s} ) \to  sh_{(\Lambda, \partial \Lambda)_s}(M) \to   \mush_{\Lambda_{-s}}(\Lambda_{-s})$$
is an equivalence.  In particular, the second map has a right inverse.
\end{theorem}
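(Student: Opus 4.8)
The plan is to run exactly the same argument as in the proof of Theorem~\ref{thm:antimicrolocalization}, replacing each ingredient about $\Lambda^\prec$ by its relative counterpart established in this subsection. By \cite{guillermou-kashiwara-schapira} the whole statement is invariant under contact perturbation of $\Lambda$ prior to forming the double, so I would first reduce to the case where $\Lambda$ is actually in finite position (not merely perturbable to it); note that the hypotheses of being collared and relatively compact are preserved under small perturbation, so this reduction is harmless. Once $\Lambda$ is in finite position, all the relative lemmas above apply.

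The core of the argument is a commutative diagram of the same shape as \eqref{eq: antimicrolocalization}: the top row is the composition in question,
$$sh_{(\Lambda, \partial \Lambda)^\prec; 0}(M \times \R_{\le s}) \to sh_{(\Lambda, \partial \Lambda)_s}(M) \to \mush_{\Lambda_{-s}}(\Lambda_{-s}),$$
and the bottom row passes through the fiber over $M \times 0$. The left vertical arrow (restriction to $M\times 0$) is an isomorphism by Corollary~\ref{cor: global constancy near cusp rel}; the first map of the top row is fully faithful by the same corollary. The bottom-left horizontal arrow $sh_{(\Lambda, \partial \Lambda)^\prec; 0}|_{M \times 0}(M) \xrightarrow{\sim} \tilde\pi_* \mush_{(\Lambda,\partial\Lambda)^\prec}|_{M\times 0}(M)$ is an equivalence by Lemma~\ref{lem: cusp relative antimicrolocalization}, which also supplies the further equivalence $\tilde\pi_*\mush_{(\Lambda,\partial\Lambda)^\prec}|_{M\times 0} \xrightarrow{\sim} \tilde\pi_*\mush_{\Lambda^\prec}|_{M\times 0}$. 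Then one identifies $\tilde\pi_*\mush_{\Lambda^\prec}|_{M\times 0}(M) \cong \mush_{\Lambda^\prec}(\Lambda^\prec) \cong \mush_{\Lambda_{-s}}(\Lambda_{-s})$ using that $\mush_{\Lambda^\prec}$ is the pullback of $\mush_\Lambda$ along the local contactomorphism $\Lambda\times\R \cong Nbd(\Lambda^\prec)$ (as recorded in Sec.~\ref{sec:cutoff}) together with Lemma~\ref{lem: relative constancy} for the comparison between $\mush_{\Lambda^\prec}$ and $\mush_{\Lambda_{-s}}$. Chasing the diagram then forces the top row to be an equivalence, and in particular exhibits a right inverse to the second map.

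The one genuinely new point relative to Theorem~\ref{thm:antimicrolocalization} — and the step I expect to need the most care — is the commutativity and well-definedness of the right-hand square, i.e.\ that restricting a sheaf in $sh_{(\Lambda, \partial \Lambda)_s}(M)$ to its microlocalization along $\Lambda_{-s}$ agrees with first restricting to $M\times 0$ inside $(\Lambda,\partial\Lambda)^\prec$ and then passing to $\mush_{\Lambda^\prec}$. Here the subtlety is that the capping component $L\times\tilde P$ lives over $M\times\R_{\le 0}$, so it contributes nothing to the microlocalization at $\Lambda_{-s}$ (which sits over $s>0$), and it contributes nothing to the stalk behavior over $M\times 0$ because $\mush$ is locally constant in the $[0,\epsilon)$-collar direction by Lemma~\ref{lem: relative constancy}; this is precisely why the delicate $C^1$ capping was chosen. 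Modulo carefully matching the local models on the collar, the rest is a formal diagram chase and proper base change exactly as before.
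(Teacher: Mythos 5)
Your proposal is correct and is essentially the paper's own argument: the paper's proof of Theorem~\ref{thm: relative antimicrolocalization} simply says it is the same as Theorem~\ref{thm:antimicrolocalization}, with Corollary~\ref{cor: global constancy near cusp rel} and Lemma~\ref{lem: cusp relative antimicrolocalization} (whose second equivalence handles the capping component) substituted for their absolute counterparts, exactly as you describe. Your extra remarks on the GKS reduction to finite position and on why the cap contributes nothing are just the details implicit in that substitution.
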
 
\begin{proof}
Similar to the proof of Theorem \ref{thm:antimicrolocalization}. 
\end{proof}

\begin{proposition} \label{prop: image of relative antimicrolocalization} 
Proposition \ref{prop: image of antimicrolocalization} holds for 
any locally closed, relatively compact,  and collared $\Lambda$, with 
$$\U_s := \R_{> 0} (\Lambda, \partial \Lambda)_{s} \,\, \cup \,\, \R_{> 0} (\Lambda, \partial \Lambda)_{-s} \,\, \cup \bigcup_{-s \le r \le s} \pi((\Lambda, \partial \Lambda)_r) \subset T^*M$$
\end{proposition}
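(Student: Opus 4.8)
The plan is to transcribe the proof of Proposition~\ref{prop: image of antimicrolocalization} almost verbatim, replacing the absolute cusp $\Lambda^\prec$ by the relative cusp $(\Lambda,\partial\Lambda)^\prec$ and the absolute cusp results by the relative ones just proven (Theorem~\ref{thm: relative antimicrolocalization}, Corollary~\ref{cor: global constancy near cusp rel}, Lemma~\ref{lem: cusp relative antimicrolocalization}); the single new ingredient is to check that the boundary cap $L\times\widetilde P$ contributes no additional microsupport along $M\times 0$. We work under the hypotheses imported from Proposition~\ref{prop: image of antimicrolocalization} — in particular $\Lambda$ in finite position with $c_1>0$, and either $\overline\Lambda$ the closure of the locus on which the front projection is injective or the jet-bundle case $M=N\times\R$, $\Lambda\subset J^1N$ — together with the standing assumptions that $\Lambda$ is locally closed, relatively compact and collared. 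The map $sh_{(\Lambda,\partial\Lambda)^\prec;0}(M\times\R_{\le s})\to sh_{(\Lambda,\partial\Lambda)_s}(M)$ of Corollary~\ref{cor: global constancy near cusp rel} has image inside $sh_{\U_s}(M)$ by inspection of the \cite{guillermou-kashiwara-schapira} kernel (the cap only adds microsupport inside $\bigcup_{-s\le r\le s}\pi((\Lambda,\partial\Lambda)_r)$, which is part of $\U_s$), and the resulting $sh_{(\Lambda,\partial\Lambda)^\prec;0}(M\times\R_{\le s})\to sh_{\U_s}(M)$ is fully faithful because its composite with $sh_{\U_s}(M)\hookrightarrow sh_{(\Lambda,\partial\Lambda)_s}(M)\to\mush_{\Lambda_{-s}}(\Lambda_{-s})$ is the equivalence of Theorem~\ref{thm: relative antimicrolocalization}. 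So it remains to prove essential surjectivity. Using that $(\Lambda,\partial\Lambda)^\prec$ restricted to $S^*(M\times\R_{>0})$ is a Legendrian movie, Corollary~\ref{cor: gks constancy} gives an equivalence $sh_{(\Lambda,\partial\Lambda)_s}(M)\xrightarrow{\sim}sh_{(\Lambda,\partial\Lambda)^\prec}(M\times(0,s])$; given $G\in sh_{\U_s}(M)$ with image $\widetilde G$, set $F:=j_!j^!\widetilde G$ for $j:M\times(0,s]\hookrightarrow M\times\R_{\le s}$. Then $F\in sh_{(\Lambda,\partial\Lambda)^\prec\cup T^*_{M\times 0}(M\times\R_{\le s})}(M\times\R_{\le s})$ and restricts to $G$ at $M\times s$, so the whole content is that $F$ has no microsupport along the conormal to $M\times 0$, i.e.\ that $F\in sh_{(\Lambda,\partial\Lambda)^\prec;0}(M\times\R_{\le s})$.

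In the jet-bundle case I would argue as in Proposition~\ref{prop: image of antimicrolocalization}: applying the \cite{guillermou-kashiwara-schapira} kernel for the Reeb flow only shrinks the front projection in the $\R$-direction, so for every $0<t<s$ the support of $\widetilde G$ over $M\times(0,t]$ is trapped between the front projections of $(\Lambda,\partial\Lambda)_t$ and $(\Lambda,\partial\Lambda)_{-t}$, cap included. These two fronts coincide over $M\times 0$ and vary continuously with $t$, so $F=j_!j^!\widetilde G$ has no sections supported on $M\times 0$, hence — as in the absolute argument — no microsupport along the conormal to $M\times 0$. The delicate construction of $\widetilde P$, a $C^1$ half-paraboloid glued to $[0,\epsilon)\times P$ so that the union is a manifold with generic front, is precisely what makes the cap take part in this betweenness rather than create new fronts over $M\times 0$.

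For the general-position case I would repeat the complement-of-components argument: for $s$ small, $M\setminus\bigcup_{-s\le r\le s}\pi((\Lambda,\partial\Lambda)_r)$ has the same connected components as $M\setminus\pi(\overline\Lambda)$ and contains the corresponding locus for every smaller $s$, so the contact isotopy can be cut off to be constant over the interior of each such component; hence every object in the image of $sh_{\U_s}(M)$ vanishes off $\pi(\overline\Lambda)$. Since the cap $L\times\widetilde P$ was built with a generic front, $\overline\Lambda$ together with the cap is still the closure of the locus where the front projection is an embedding, so on a dense subset of its image the latter is a smooth hypersurface carrying a single conormal codirection; an object vanishing off $\pi(\overline\Lambda)$ and microsupported in $\U_s$ must then vanish along that hypersurface as well, hence be microsupported off a dense subset of the Legendrian variety, hence be zero — so $F\in sh_{(\Lambda,\partial\Lambda)^\prec;0}(M\times\R_{\le s})$ and essential surjectivity follows. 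I expect the main obstacle to be exactly this cap bookkeeping in the two cases: verifying that $L\times\widetilde P$ neither enlarges the microsupport of $F$ along $M\times 0$ beyond $(\Lambda,\partial\Lambda)^\prec$ nor spoils the ``closure of the injective-front locus'' hypothesis; with those points in hand the rest is a faithful transcription of the absolute proof.
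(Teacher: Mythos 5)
Your proposal is correct and takes essentially the same route as the paper, whose entire proof of this proposition is ``Same as Proposition \ref{prop: image of antimicrolocalization}'': you transcribe the absolute argument verbatim, substituting the relative doubling $(\Lambda,\partial\Lambda)^\prec$ and the relative constancy/antimicrolocalization results, and reduce everything to essential surjectivity via the same GKS equivalence, $!$-extension, and jet-bundle versus general-position dichotomy. The one addition you make --- checking that the cap $L\times\widetilde P$ neither creates microsupport along the conormal to $M\times 0$ nor spoils the injective-front hypothesis --- is precisely the bookkeeping the paper's ``same proof'' implicitly requires, and you handle it as intended (the $C^1$ capping was designed so that Lemma \ref{lem: relative constancy} applies there).
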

\begin{proof}
Same as Proposition \ref{prop: image of antimicrolocalization}.
\end{proof}

\subsection{Two sided double}
Our previous constructions involved $M \times \R$ or $M \times [0,\epsilon]$,
and microsupport going to infinity or the boundary.  While these suffice for our purposes in this
article, we give here a variant
of the doubling construction which avoids this defect.   
This can be technically convenient when invoking theorems stated for compact manifolds; and will be the version
of antimicrolocalization invoked in \cite{gpsconstructible}.  
This subsection owes its existence to discussions with the authors of that article.

The prototype of the construction is to begin with a Legendrian point in the contact manifold $\R$, and produce the standard
Legendrian unknot, rather than simply half of it as before.  We just use two 
copies of our construction above: first begin forming $\Lambda^{\prec}$ or 
$(\Lambda, \partial \Lambda)^{\prec}$ as before, but near some time $s < c_1$, smoothly cutoff the Reeb pushoff
so that in the region $t \in (s-\epsilon, s]$ one has $(\Lambda, \partial \Lambda)_{\pm t}$ independent of $t$.  
Finally, reverse the process so that the Legendrian for $t > s$ is just the reflection of the Legendrian when $t < s$.  

We denote the resulting Legendrian as $(\Lambda, \partial \Lambda)^{\lunknot} \subset S^*(M \times \R)$.   (Despite the notation,
the corresponding front will be connected and rounded at the top and bottom.) 
Note this is supported over
a compact subset of $\R$; fixing an inclusion $\R \subset S^1$ we may view 
$(\Lambda, \partial \Lambda)^{\lunknot} \subset S^*(M \times S^1)$.  As before we write 
$sh_{(\Lambda, \partial \Lambda)^{\lunknot} }(M \times S^1)_0$ for the full subcategory of sheaves 
with vanishing stalks over $S^1 \setminus \R$. 

\begin{theorem}
The category $sh_{(\Lambda, \partial \Lambda)^{\lunknot} }(M \times S^1)_0$ is the (left and/or right) orthogonal complement
to the category of local systems on $M \times S^1$.  Moreover, the following natural morphisms
are equivalences: 
$$sh_{(\Lambda, \partial \Lambda)^{\lunknot} }(M \times S^1)_0 \xrightarrow{\sim} \mush^{pre}_{(\Lambda, \partial \Lambda)^{\lunknot}}((\Lambda, \partial \Lambda)^{\lunknot}) \xrightarrow{\sim} \mush_{\Lambda}(\Lambda)$$
\end{theorem}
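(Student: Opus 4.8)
The plan is to reduce this "two-sided double" statement to the one-sided results already established in Theorems~\ref{thm:antimicrolocalization} and~\ref{thm: relative antimicrolocalization}, by exhibiting $(\Lambda,\partial\Lambda)^{\lunknot}$ as two copies of $(\Lambda,\partial\Lambda)^{\prec}$ glued along the plateau region $t\in(s-\epsilon,s]$ where the Reeb pushoff is frozen. First I would set up the geometry precisely: fix $s<c_1$, and use a cutoff of the contact isotopy so that $(\Lambda,\partial\Lambda)^{\lunknot}$ restricted to $M\times(-\infty,s]$ is contactomorphic to $(\Lambda,\partial\Lambda)^{\prec}|_{M\times\R_{\le s}}$, while its restriction to $M\times[s,+\infty)$ is the mirror image (pull back by $t\mapsto 2s-t$). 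Near $t=s$ the Legendrian is $(\Lambda,\partial\Lambda)_{s}\cup(\Lambda,\partial\Lambda)_{-s}$ times $T^*(\text{interval})$, i.e.\ has no conormals in the $dt$ direction, so $\mush$ is locally constant there by Lemma~\ref{lem: relative constancy} and sheaves in $Sh_{(\Lambda,\partial\Lambda)^{\lunknot}}(M\times S^1)_0$ are locally constant over this plateau by noncharacteristic propagation (as in Lemma~\ref{lem: stalk at cusp}).

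Next I would run a Mayer--Vietoris / descent argument for the sheaf of categories $Sh_{(\Lambda,\partial\Lambda)^{\lunknot};0}$ on $M\times S^1$, using the open cover by $M\times(S^1\setminus\{+\mathrm{pt}\})\supset M\times(-\infty,s+\epsilon)$ and $M\times(s-\epsilon,+\infty)$ (inside $S^1$). The sections over the first piece are, by Corollary~\ref{cor: global constancy near cusp rel} applied to $(\Lambda,\partial\Lambda)^{\prec}$, equivalent to $\mush_{\Lambda}(\Lambda)$; the sections over the second piece are likewise $\mush_{\Lambda}(\Lambda)$ (by the mirrored version); and the sections over the overlap $M\times(s-\epsilon,s+\epsilon)$, where everything is frozen and product-like, are again $\mush_{\Lambda}(\Lambda)$ with both restriction functors being the identity-up-to-equivalence. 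A homotopy pullback of $\mush_\Lambda(\Lambda)\xrightarrow{\sim}\mush_\Lambda(\Lambda)\xleftarrow{\sim}\mush_\Lambda(\Lambda)$ is just $\mush_\Lambda(\Lambda)$, which gives the composite equivalence $Sh_{(\Lambda,\partial\Lambda)^{\lunknot}}(M\times S^1)_0\xrightarrow{\sim}\mush_{\Lambda}(\Lambda)$; factoring through $\mush^{pre}_{(\Lambda,\partial\Lambda)^{\lunknot}}((\Lambda,\partial\Lambda)^{\lunknot})\xrightarrow{\sim}\mush_{\Lambda}(\Lambda)$ uses that $(\Lambda,\partial\Lambda)^{\lunknot}$ is in finite position over the relevant locus (so $\mush^{pre}$ already computes the stalk of $\mush$, as in Lemma~\ref{lem: cusp relative antimicrolocalization} and Prop.~\ref{prop: local antimicrolocalization}) together with the local constancy of $\mush$ in the collar and plateau directions.

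For the orthogonality statement, I would argue as follows. Given $\cG\in Sh_{(\Lambda,\partial\Lambda)^{\lunknot};0}(M\times S^1)$ and a local system $\cL$ on $M\times S^1$: since $\cG$ vanishes over $S^1\setminus\R$, computing $\mathrm{Hom}(\cL,\cG)$ we may as well work over $M\times\R$, and since $\cG$ is moreover a $!$-extension from the region where the microsupport is nonzero (by the analogue of Lemma~\ref{lem: zero at cusp}), adjunction reduces both $\mathrm{Hom}(\cL,\cG)$ and $\mathrm{Hom}(\cG,\cL)$ to computations over that compactly-supported region, where one checks vanishing by the same microlocal argument that identifies $Sh_{\Lambda^\prec;0}$ as the orthogonal complement to constant sheaves in Prop.~\ref{prop: local antimicrolocalization}; conversely any object microsupported in $(\Lambda,\partial\Lambda)^{\lunknot}$ orthogonal to all local systems has vanishing microstalks off $\Lambda^{\lunknot}$ and vanishing stalks over $S^1\setminus\R$, hence lies in the subcategory. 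I expect the main obstacle to be the bookkeeping in the descent step: verifying that the two restriction functors from the overlap are genuinely the identity (not merely equivalences) after all the cutoffs and reflections, so that the homotopy pullback collapses cleanly — this is where the delicate choice of frozen plateau and the product structure guaranteed by Lemma~\ref{lem: relative constancy} must be used carefully, exactly as the remark before the theorem anticipates.
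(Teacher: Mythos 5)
There is a genuine gap, and it sits in the central descent step. The sections of $Sh_{(\Lambda,\partial\Lambda)^{\lunknot};0}$ over the plateau component of your overlap $M\times(s-\epsilon,s+\epsilon)$ carry \emph{no} vanishing-stalk condition (the subscript $0$ only constrains stalks over $S^1\setminus\R$, which is far from the plateau), so by the product structure there this category is all of $sh_{(\Lambda,\partial\Lambda)_{s}\cup(\Lambda,\partial\Lambda)_{-s}}(M)$, not $\mush_\Lambda(\Lambda)$. The two restriction functors from your half-pieces into it are exactly the maps of Cor.~\ref{cor: global constancy near cusp rel}: fully faithful but in general far from essentially surjective (cf.\ the example after Thm.~\ref{thm:antimicrolocalization} where the target is representations of the $A_3$ quiver while the source is two copies of $\mathcal{C}$). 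Hence the homotopy pullback does not collapse to $\mush_\Lambda(\Lambda)$; it is the intersection inside the larger overlap category of the ``left-extendable'' and ``right-extendable'' objects, and identifying that with $\mush_\Lambda(\Lambda)$ requires characterizing and comparing these two images --- precisely the image characterization the paper avoids, and which is only available under extra hypotheses (Prop.~\ref{prop: image of antimicrolocalization}). In addition, as written your two opens do not cover $S^1$ unless one of them wraps through the point at infinity, in which case the overlap acquires a second component where the vanishing condition must be imposed and tracked; your bookkeeping does not do this.

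The orthogonality statement is also not actually proved by what you wrote: Prop.~\ref{prop: local antimicrolocalization} is a stalkwise assertion about $sh_\Lambda/sh_{T^*_XX}$ and gives no global orthogonality against local systems on $M\times S^1$ (note that the one-sided double over $M\times\R$ has noncompact support, which is exactly why the two-sided construction exists). The mechanism that works, and is the paper's argument, is propagation in the circle direction: since $(\Lambda,\partial\Lambda)^{\lunknot}$ is $S^1$-noncharacteristic (no $dt$-conormals), any morphism between $\cF$ and a local system factors through a local system on $M$ tensored with the constant sheaf on an interval, whose only microsupport at infinity is in the $dt$ direction over its endpoints; one slides this interval off the compact support of $\cF$ by a noncharacteristic deformation. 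The converse inclusion uses the same trick applied to the restriction of $\cF$ to $M\times(S^1\setminus\R)$, producing maps in both directions between $\cF$ and that local system which are nonzero unless it vanishes. With orthogonality in hand, the first equivalence of the theorem is formal, and the second follows by restricting from $S^1$ to $[0,T)$ and then to $[0,\epsilon)$ exactly as in Lemma~\ref{lem: zero at cusp} and Cor.~\ref{cor: global constancy near cusp}, and then invoking Thm.~\ref{thm: relative antimicrolocalization}; no two-piece Mayer--Vietoris, and no image characterization, is needed.
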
 
\begin{proof}
Let us first see that any  $\mathcal{F} \in sh_{(\Lambda, \partial \Lambda)^{\lunknot} }(M \times S^1)_0$ is orthogonal to all local systems.  By construction $\mathcal{F}$ is supported over some interval in $S^1$; thus morphisms between $\mathcal{F}$ 
and a local system will 
factor through some $\mathcal{G}$ which is a local system on $M$ 
times the constant sheaf on an open or closed interval $I$.  Away from the zero section, the microsupport of $\mathcal{G}$
is entirely in the $d t$ direction ($t$ the coordinate on $S^1$), above $\partial I$.  Shrinking $I$ to an interval entirely disjoint
from the support of $\mathcal{F}$ and correspondingly propagating $\mathcal{G}$ is a noncharacteristic homotopy since $(\Lambda, \partial \Lambda)^{\lunknot}$ is by construction
$S^1$-noncharacteristic, i.e. disjoint from $dt$ covectors.  
Thus we see that $sh_{(\Lambda, \partial \Lambda)^{\lunknot} }(M \times S^1)_0$ is contained in both the left and right orthogonal
complements to the local systems.  
For the converse inclusion, consider now any $\mathcal{F} \in sh_{(\Lambda, \partial \Lambda)^{\lunknot} }(M \times S^1)_0$.
Let $\mathcal{L}_\mathcal{F}$ be the local system on $M$ given by restricting $\mathcal{F}$ to $M \times (S^1 \setminus \R)$. 
Then the same noncharacteristic propagation argument shows that the identity induces morphisms in both directions
between $\mathcal{F}$ and $\mathcal{L}_F$, which are zero only if $\mathcal{L}_{\mathcal{F}}$ vanishes.  

Regarding the morphisms in the second assertion of the proposition, the above orthogonality implies that the first 
is an isomorphism.  Meanwhile the composite 
$sh_{(\Lambda, \partial \Lambda)^{\lunknot} }(M \times S^1)_0 \to \mush_{\Lambda}(\Lambda)$ factors through
$$sh_{(\Lambda, \partial \Lambda)^{\lunknot} }(M \times S^1)_0  \to 
sh_{(\Lambda, \partial \Lambda)^{\prec} }(M \times \R_{\le \epsilon})_0 \xrightarrow{\sim} \mush_{\Lambda}(\Lambda)$$
where we learned that the second morphism is an isomorphism in Theorem \ref{thm: relative antimicrolocalization}.  
Thus it suffices to show the first morphism is an isomorphism. 
If  $(\Lambda, \partial \Lambda)^{\lunknot}$ is supported above $[0,T] \subset \R \subset S^1$, then 
the same argument as in Lemma \ref{lem: zero at cusp} shows that $sh_{(\Lambda, \partial \Lambda)^{\lunknot} }(M \times S^1)_0$
is restricts by an equivalence to $[0, T)$, and the same argument as in Proposition \ref{prop:  constancy near cusp} shows
that this is in turn restricts by an equivalence to $[0, \epsilon)$.   
\end{proof}

%%%%%%%%%%%%%%%%%%%%%%%%%%%%%%%%%%%%%%%%%%%%%%%%%%%%

\section{Gapped specialization of microsheaves} 

We are  interested here in limits of contact isotopies. 

\begin{definition}
Let $Y$ be a manifold and $Z_t$ a family of subsets of $Y$ 
defined for $t \in (0,1]$.  We write 
$$Z_0 := \lim_{t \to 0} Z_t := \overline{\bigcup{t \times Z_t}} \cap (0 \times Y)$$
where the closure and intersection are taken in $[0,1] \times Y$. 
\end{definition}

\begin{remark}
We will be interested in the case when $\phi_t$ is a contact isotopy, and the $Z_t$ are $\phi_t(Z)$. 
In this case we could also form 
a similar construction
on the contact movie of $\phi_t$.  The symplectic reduction of the zero fiber of the closure
of the movie is contained in, but not in general equal to, the limit above. 
\end{remark}

Using the results on antimicrolocalization, we now give a microlocal version of the theorem 
on gapped specialization (Theorem \ref{thm: gapped specialization is fully faithful}). 

\begin{theorem} \label{thm:specialization}
Consider $\Lambda_1 \subset S^*M$, which is either compact or locally closed, relatively compact, 
and collared. 
Let $\phi_t: S^*M \to S^*M$ 
be a contact isotopy for $t \in (0, 1]$, compatible with the collar. 
Let $\widetilde{\Lambda} \subset S^*M \times T^*(0,1] \subset S^*(M \times (0,1])$ be the contact movie, 
and let  $\Lambda_0 = \lim_{t \to 0} \phi_t(\Lambda_1)  \subset S^*M$.  Assume: 
\begin{enumerate}
\item \label{hyp: g} For some contact form on $S^*M$ compatible with the collar,  the family $\Lambda_t$ (including $t=0$) is gapped.
\item  \label{hyp: f}  Both $\Lambda_0$ and $\Lambda_1$ project finitely to $M$. 
\item  \label{hyp: r}  $\Lambda_0$ is  spdff.
\end{enumerate} 

Then antimicrolocalization and nearby cycles induce a fully faithful functor 
$$\mu sh_{\Lambda_1}(\Lambda_1) \to \mu sh_{\Lambda_0}(\Lambda_0)$$
\end{theorem}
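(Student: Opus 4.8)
The plan is to assemble the theorem from the three big technical pillars already in place: antimicrolocalization (Theorem~\ref{thm: relative antimicrolocalization}, or Theorem~\ref{thm:antimicrolocalization} in the compact case), full faithfulness of nearby cycles (Theorem~\ref{thm: gapped specialization is fully faithful}), and local antimicrolocalization / finite position (Proposition~\ref{prop: local antimicrolocalization}). First I would set up the geometry. Using hypothesis~(\ref{hyp: g}), rescale time so that $\phi_t$ for $t\in(0,1]$ extends to a map displaying $\Lambda_0$ as $\lim_{t\to 0}\phi_t(\Lambda_1)$, and form the contact movie $\widetilde{\Lambda}\subset S^*(M\times(0,1])$. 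Perturbing $\Lambda_0$ and $\Lambda_1$ to finite position using hypothesis~(\ref{hyp: f}) and invoking \cite{guillermou-kashiwara-schapira} (which makes all the statements invariant under such contact perturbations, cf.\ the proof of Theorem~\ref{thm:antimicrolocalization}), I may assume $\Lambda_0,\Lambda_1$ are already in finite position. Apply the doubling construction to the movie: for each $t$ form $(\Lambda_t,\partial\Lambda_t)^\prec$, so that near $t=1$ antimicrolocalization embeds $\mush_{\Lambda_1}(\Lambda_1)$ into a genuine sheaf category $sh_{(\Lambda_1,\partial\Lambda_1)^\prec;0}(M\times\R_{\le s})$, which by Corollary~\ref{cor: global constancy near cusp rel} is computed by the stalk along $M\times 0$.

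Next I would run the nearby-cycles comparison. Organize the doubled movie over the parameter $t$ so that it restricts near $t=1$ to the doubled $\Lambda_1$ and limits as $t\to 0$ to the doubled $\Lambda_0$; concretely, one builds a sheaf on $M\times\R_{\ge 0}$ (parameter $t$) whose restriction to $t>0$ carries the family of antimicrolocalized sheaves and whose nearby-cycles $\psi = i^*j_*$ along $t\to 0$ lands in sheaves microsupported on (the doubled) $\Lambda_0$. The point of hypotheses~(\ref{hyp: g}) and~(\ref{hyp: r}) is precisely to feed Theorem~\ref{thm: gapped specialization is fully faithful}: gappedness of the family $\Lambda_t$ gives the ``$\psi(ss_\pi)$ gapped'' hypothesis on the relative microsupport (via Lemma~\ref{lem:three ways of looking at a reeb chord} reinterpreting chords as intersections of $d\rho$ with $-V\times W$), while $\Lambda_0$ being pdfl gives the ``$\psi(ss)$ is pdfl'' hypothesis. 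One must also check the $\R_{\ne 0}$-noncharacteristic hypothesis, which holds because a contact movie $\widetilde{\Lambda}\subset S^*M\times T^*(0,1]$ is automatically noncharacteristic for the $t$-slices (cf.\ Lemma~\ref{lem: isotopy constancy} and the remark after Lemma~\ref{lem: partial cosphere}). Theorem~\ref{thm: gapped specialization is fully faithful} then yields that the nearby-cycles map is fully faithful on the relevant $\inthom$ sheaves, hence on Homs between the images.

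Then I would close the diagram. On the $t=1$ side, antimicrolocalization (Theorem~\ref{thm: relative antimicrolocalization}) identifies $\mush_{\Lambda_1}(\Lambda_1)$ with a subcategory of $sh(M\times\R_{\le s})$; applying $\psi$ lands in $sh_{(\Lambda_0,\partial\Lambda_0)^\prec;0}$-type categories; and on the $t=0$ side antimicrolocalization again (Lemma~\ref{lem: cusp relative antimicrolocalization} together with Theorem~\ref{thm: relative antimicrolocalization} for $\Lambda_0$) identifies the relevant stalk category with $\mush_{\Lambda_0}(\Lambda_0)$. Chasing through, the composite $\mush_{\Lambda_1}(\Lambda_1)\to\mush_{\Lambda_0}(\Lambda_0)$ is a composition of a fully faithful functor (antimicrolocalization) with a functor that induces isomorphisms on Homs (nearby cycles, by the gapped theorem) with an equivalence (antimicrolocalization, inverted), hence is fully faithful. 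I would note that independence of all auxiliary choices (the perturbations to finite position, the collar, the contact form, the cutoff time $s<c_1$) follows from \cite{guillermou-kashiwara-schapira} and the constancy statements in Corollaries~\ref{cor: global constancy near cusp rel} and~\ref{cor: gks constancy}.

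The main obstacle I anticipate is the bookkeeping that makes the two antimicrolocalizations (at $t=1$ and at $t=0$) compatible with a single nearby-cycles operation --- that is, producing one sheaf on $M\times(\text{base})\times\R_{\ge 0}$ whose specialization literally interpolates between $\mush_{\Lambda_1}$ and $\mush_{\Lambda_0}$, rather than having three separate constructions that one argues agree on overlaps. In particular one must verify that the doubled-movie Legendrian $(\widetilde{\Lambda},\partial\widetilde{\Lambda})^\prec$ has relative microsupport (over the $(0,1]$-direction) satisfying the gapped hypothesis of Theorem~\ref{thm: gapped specialization is fully faithful} --- this is where hypothesis~(\ref{hyp: g}) is used in an essential, non-formal way, and where Lemma~\ref{lem:three ways of looking at a reeb chord} does the real work of translating the geometric gappedness into the microsupport condition. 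Everything else is assembling already-proven statements, but getting the limit to commute with antimicrolocalization uniformly in the family is the delicate point, and is exactly why gappedness (not merely pdfl) is required.
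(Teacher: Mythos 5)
Your proposal is correct and follows essentially the same route as the paper: double the family (movie) and the limit, embed via antimicrolocalization, apply nearby cycles, invoke the gapped full-faithfulness theorem, and chase the resulting diagram, with the hypotheses feeding exactly the lemmas you cite. The bookkeeping obstacle you flag is resolved in the paper precisely as you anticipate --- by doubling the entire movie at once to get $(\widetilde{\Lambda},\partial\widetilde{\Lambda})^\prec \subset S^*(M\times(0,1]\times\R)$, identifying its $t=1$ restriction with the doubled $\Lambda_1$ via \cite{guillermou-kashiwara-schapira}, and using gappedness including at $t=0$ to run antimicrolocalization with one uniform $\epsilon$ for the family and for $\Lambda_0$.
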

\begin{proof}
We will use the gapped hypothesis both in constructing the functor, and in proving full faithfulness.  Let $\eta_s$ 
be the Reeb flow for the given contact form.  Let $\widetilde{\eta}_s$ be the lift of $\eta_s$ to 
$S^*M \times T^*(0,1]$; this flow is positive (as recalled in Lemma \ref{lem: partial cosphere}).

As we assumed $\phi$ compatible with the collar, $\Lambda_0$ is also collared (in a compatible way).
We
form $(\widetilde{\Lambda}, \partial \widetilde{\Lambda})^\prec \subset S^*(M \times (0,1] \times \R)$ 
and $(\Lambda_0, \partial \Lambda_0)^{\prec} \subset S^*(M \times \R)$.  Note that 
$(\Lambda_0, \partial \Lambda_0)^{\prec}$ is the closure at zero of the projection of 
$(\widetilde{\Lambda}, \partial \widetilde{\Lambda})^\prec$ 
to $S^*(M \times (0,1]) \times \R$.  We have the diagram:

\beq\label{eq: microlocal nearby cycles}
\begin{tikzcd}
\mush_{\Lambda_1}(\Lambda_1) &  
sh_{(\Lambda_1, \partial \Lambda_1)^\prec; 0}(M \times \R_{\le \epsilon}) 
\arrow{l}[swap]{\sim} 
\arrow[hook]{r} 
 & 
sh_{(\Lambda_1, \partial \Lambda_1)_{\epsilon}}(M) 
 \\
\mush_{\widetilde{\Lambda}}(\widetilde{\Lambda}) \arrow{u}{\sim} &
sh_{(\widetilde{\Lambda}, \partial \widetilde{\Lambda})^\prec; 0}(M \times (0,1] \times \R_{\le \epsilon}) 
\arrow{l} 
\arrow{r} 
\arrow{d}{\psi}  \arrow{u}{\sim} & 
sh_{(\widetilde{\Lambda}, \partial \widetilde{\Lambda})_{\epsilon}}(M \times (0,1]) 
\arrow[hook]{d}{\psi} \arrow{u}{\sim} \\
\mush_{\Lambda_0}(\Lambda_0) &  
sh_{(\Lambda_0, \partial \Lambda_0)^\prec; 0}(M \times \R_{\le \epsilon})  
\arrow{l}[swap]{\sim} 
\arrow[hook]{r} &
sh_{(\Lambda_0, \partial \Lambda_0)_{\epsilon}}(M) 
\end{tikzcd}
\eeq

The upward arrows are isomorphism by \cite{guillermou-kashiwara-schapira} (the leftmost one alternatively 
by Lemma \ref{lem: relative constancy}).  
The top and bottom 
right horizontal arrows are the restriction at $\epsilon$, and are fully faithful by Prop. \ref{prop: constancy near cusp rel}.  
The top and bottom left horizontal arrows are equivalences by Theorem \ref{thm: relative antimicrolocalization}. 
(If $\Lambda$ is closed we may use the easier Prop. \ref{prop: constancy near cusp} and Theorem \ref{thm:antimicrolocalization}.)  
Note we 
are using gappedness  including at $\Lambda_0$ in order to apply antimicrolocalization to $\Lambda_0$ using the same $\epsilon$
as for the family. 

The downward arrows are induced by nearby cycles, and the image sheaves have the stated microsupports
by the standard estimate (Lemma \ref{nearbyestimate}).  By Theorem \ref{thm: gapped specialization is fully faithful}, 
the right downward arrow is fully faithful;  it follows formally that the middle is as well.  
Following around the diagram we find the 
desired fully faithful functor $\mush_{\widetilde{\Lambda}}(\widetilde{\Lambda}) \hookrightarrow \mush_{\Lambda_0}(\Lambda_0)$.
 \end{proof} 

\begin{remark}
    In practice, Theorem \ref{thm:specialization} may be applied after conjugation by a 1-parameter family of contactomorphisms $\phi_t :S^*M \to S^*M$, in order to weaken the hypotheses to hold after suitable perturbation.
\end{remark}

\begin{remark}
In the above diagram, 
we use the top row to avoid quoting theorems for the middle row due to its noncompactness and also because we do
not want to check hypotheses for $\tilde{\Lambda}$.  We use the middle column 
to avoid explicitly characterizing the images in the right column; 
alternatively we could use  Proposition \ref{prop: image of relative antimicrolocalization}.  We use the right column because 
the gappedness is more evident there than in the middle column.  
\end{remark}

\begin{remark}
Note that gappedness of $\Lambda_t$ including at $t=0$ is equivalent to gappedness of the family $\Lambda_t$ for $t \ne 0$
plus $\epsilon$-chordlessness of $\Lambda_0$ by itself, these all being considered for the same isotopy. 
\end{remark}

\begin{remark} (On the isotropicity hypotheses.) \label{lament}
The hypotheses (\ref{hyp: f}) and (\ref{hyp: r}) on $\Lambda_0$ roughly mean it is isotropic.  As a consequence we will be later restricted to those Liouville manifolds with isotropic skeleton, 
and also only be able to prove invariance under homotopies through such manifolds. Not all Liouville manifolds have isotropic skeleton \cite{mcduff-nonweinstein} (although the ones which appear in applications to mirror symmetry or geometric representation theory do). On the other hand, the wrapped Fukaya category is defined for all Liouville manifolds, functorial under exact symplectomorphism of such, and (after much work) equivalent to the microsheaf categories studied here, when the latter are defined \cite{gpssectorsoc, gpsdescent, gpsconstructible}.  In short it is natural to expect that it should be possible to relax the isotropicity hypotheses in our work.  
We now recall
precisely how the hypotheses arise, and speculate on how they may be relaxed. {\em This discussion will occupy the remainder of this section, and nothing in the present
article depends upon it}.  

Before we begin let us note that in contrast to hypotheses  (\ref{hyp: f}) and (\ref{hyp: r}), the gappedness hypothesis  (\ref{hyp: g}) does
{\em not} imply isotropicity (in particular, a Liouville hypersurface in a contact manifold is gapped) and seems to be more fundamental. 

The hypothesis (\ref{hyp: r}) is inherited ultimately from Lemma \ref{lem: reasonable} and Lemma \ref{lem:lower square}, 
where it is used to control the limiting behavior of a family using only facts about the limit geometry.  However,
in our setting we are not fully ignorant of the nature of the family; plausibly this could be used in place of 
the hypothesis on the limit. 

We will discuss hypothesis (\ref{hyp: f}) in more detail.  It 
comes from the fact that we use antimicrolocalization in order to define the specialization functor
on microsheaves, and antimicrolocalization ultimately relies on Lemma \ref{lem: local antimicrolocalization} which
requires a finite front projection.  On the one hand, perhaps this lemma may be improved -- it derives from the refined
microlocal cutoff
\cite[Proposition 6.1.4]{kashiwara-schapira}, which has no explicit isotropicity hypotheses.  

On the other hand, it may seem odd
that we use antimicrolocalization to define the specialization functor at all: why not simply argue that nearby cycles respects
microsupports hence factors through microlocalization?  Let us develop this idea somewhat in order to expose a subtle
difficulty.  Let $\pi: E \to B$ be a smooth fiber bundle.  The results of Section \ref{sec: relative microsupport} make it 
natural to define a sheaf of categories of {\em relative microsheaves} on $T^*\pi$ as follows.  For an open 
$U\subset T^*\pi$, take 
$$Null(U) = \{\cF \in sh(E)\, |\, ss_\pi(\cF) \cap U = \emptyset \} \subset sh(E)$$
and define $\mush_{T^*\pi}$ as the sheafification of the presheaf given by
$$\mush^{pre}_{T^* \pi}(U) = sh(E) / Null(U)$$
One virtue of this construction is that for a submanifold $A \subset B$, it follows
from the estimates
Lemma \ref{lem:bundlerestrictionestimate} and Remark \ref{pointrestrictionestimate}. 
that there is a restriction map $\mush_{T^* \pi}|_{T^* \pi_A} \to \mush_{T^* \pi_A}$.  The same 
would not be true if we simply took the usual microsheaves $\mush_{T^* E}$ on $T^*E$ and pushed this sheaf of categories
to  $\Pi_* \mush_{T^*E}$ on $T^*\pi$. 
Indeed, while one has the equality of presheaves $\mush_{T^* \pi}^{pre} = \Pi_* \mush^{pre}_{T^*E}$, the sheafification
does not commute with the pushforward: i.e. the natural map  
$$\mush_{T^* \pi} = (\Pi_* \mush^{pre}_{T^*E})^{shf} \to
\Pi_* ((\mush^{pre}_{T^*E})^{shf})= \Pi_* \mush_{{T^*E}}$$  is
 is not usually an isomorphism, and there is no obvious map  between the left and right terms in 
$$(\Pi_* \mush)|_{T^* \pi_A}  \leftarrow \mush_{T^*\pi}|_{T^*\pi_A} \to  \mush_{T^* \pi_A}$$

To understand
the interaction of relative microsheaves with nearby cycles, it therefore remains to consider 
an open $j: B' \subset B$ and contemplate inducing a functor
on relative microsheaves from $j_*$.  
 
Let $\pi': E' \to B'$ be the restriction, and similarly denote the various analogous maps with~$'$.  
We want to consider the relationship between $\mush_{T^*\pi'}$ and $\mush_{T^* \pi}$.  
 We write $J_*$ for pushing forward sheaves or presheaves 
of categories by $j$.  We claim there always exists the following diagram:

\begin{equation}
\xymatrix{
\mush_{T^*\pi}^{pre}    \ar[d]  & J_* \mush_{T^* \pi'}^{pre} \ar[d] \ar[l]^{j_*} & \\
\mush_{T^*\pi} & (J_* \mush_{T^* \pi'}^{pre})^{sh} \ar[l]^{j_*} \ar[r] & J_* \mush_{T^* \pi'}
}
\end{equation}

Above, $j_*$ on sheaves induces the top line by microsupport estimates.  The lower line is induced 
because sheafification is a functor, and a map from a presheaf to a sheaf factors through the sheafification. 
However note that {\em we have not} succeeded in constructing a functor 
``$j_* : J_* \mush_{T^* \pi'} \to \mush_{T^*\pi}$'', since
the natural map $(J_* \mush_{T^* \pi'}^{pre})^{sh} \to J_* \mush_{T^* \pi'}$ need not be an isomorphism.  
One difficulty arises by considering the possibility of sheaves with 
components of the microsupport accumulating near the boundary of $B'$.   

Plausibly by imposing appropriate hypotheses for the 
microsupport near $\partial B'$ (e.g. fixing some $\Lambda' \subset T^*\pi'$), one can ensure 
that $(J_* \mush_{\Lambda'}^{pre})^{sh} \to J_* \mush_{\Lambda'}$ is an isomorphism, and
then use this construction to define the microlocal specialization functor.  It would be of interest
to determine for what $\Lambda'$ this holds, and more generally to develop the relative microlocal sheaf theory. 
\end{remark} 

\section{Microsheaves on polarizable contact manifolds} \label{sec:polarizable}

Following \cite{shende-microlocal}, we explain how a category of microsheaves can be associated
to polarizable contact manifolds or their closed subsets, such as the skeleton of a Weinstein manifold.  
We then apply Theorem \ref{thm:specialization} to show that Lagrangians define objects, and that 
the category associated to (the skeleton of a) Weinstein manifold is invariant under deformation of
the symplectic primitive.  Note such deformations cause dramatic changes in the geometry of the skeleton. 

The polarizability hypothesis is a very strong version of asking for Maslov obstructions to vanish.  We will
later relax the former hypothesis to the latter.

\subsection{The embedding trick} \label{sec: embedding}

Following  \cite{shende-microlocal}, we use high codimension embeddings to define global categories of microsheaves.  
The existence of the requisite embeddings follows from
Gromov's h-principle for contact embeddings, which implies in particular 
that for any contact manifold $\mathcal{U}$, 
there's a nonempty space of embeddings $\mathcal{U} \subset \R^{2N + 1 \gg 0}$, which
can be made as connected as desired by increasing $N$.

Such an embedding gives $\mathcal{U}$ its {\em stable symplectic normal bundle} $\nu_\mathcal{U}$; 
as a stable symplectic
bundle, it is the negative of the tangent bundle. We (contactomorphically) identify 
a tubular neighborhood of $\mathcal{U}$ in its
embedding with neighborhood of the zero section in this normal bundle.  
By a {\em thickening}, we mean the preimage under this identification of 
the total space of a Lagrangian subbundle of $\nu_\mathcal{U}$.  Evidently 
this is determined by a section $\sigma$ of $LGr(\nu_\mathcal{U})$; 
we term such a section a ``stable normal polarization'', and denote
the corresponding thickening by $\mathcal{U}^\sigma$.   

Note the following facts about stable normal polarizations: 

\begin{lemma} \label{lem:normalsec}
Let $(M, d\lambda)$ be an exact symplectic manifold, and $(M \times \R, \lambda + dt)$ its contactization.  The following are equivalent: 
\begin{itemize}
\item A section of  $LGr(TM \oplus \R^{2n}) \to M$ for $n \gg 0$.
\item A section of  $LGr(\ker(\lambda+ dt) \oplus \R^{2n}) \to M \times \R$ for $n \gg 0$
\end{itemize}

Let $(V, \eta)$ be any co-oriented contact manifold.  Then the following are equivalent: 
\begin{itemize}
\item A section of  $LGr(\ker(\eta) \oplus \R^{2n}) \to V$ for $n \gg 0$
\item A section of the Lagrangian Grassmannian bundle of the stable normal bundle of $V$
\end{itemize}
\end{lemma}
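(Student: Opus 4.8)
\textbf{Part 1} is essentially pullback along a homotopy equivalence and carries no real content. Let $p\colon M\times\R\to M$ be the projection. At a point $(m,t)$ one has $\ker(\lambda+dt)_{(m,t)}=\{(v,-\lambda(v)):v\in T_mM\}$, so $dp$ restricts to an isomorphism $\ker(\lambda+dt)\xrightarrow{\sim}p^{*}TM$ of vector bundles over $M\times\R$, and since $d(\lambda+dt)|_{\ker(\lambda+dt)}=d\lambda$ this is an isomorphism of \emph{symplectic} vector bundles. Hence $LGr(\ker(\lambda+dt)\oplus\R^{2n})=p^{*}LGr(TM\oplus\R^{2n})$ for every $n$. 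The assignment $\sigma\mapsto\big((m,t)\mapsto((m,t),\sigma(m))\big)$ sends a section of $LGr(TM\oplus\R^{2n})\to M$ to a section of the pullback, and restriction to $M\times\{0\}$ goes back; so for each fixed $n$ a section of one exists iff a section of the other does, which in particular gives the stated $n\gg0$ equivalence.

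\textbf{Part 2, geometric step.} By Gromov's h-principle for contact embeddings (recalled above), fix a contact embedding $V\hookrightarrow(\R^{2N+1},\xi_{\mathrm{std}})$ with $N\gg0$. Since the embedding is contact, $TV\cap\xi_{\mathrm{std}}|_{V}=\ker\eta=:\xi_V$, and $\xi_{\mathrm{std}}|_{V}$ is a trivial symplectic bundle of rank $2N$ (the standard contact distribution on $\R^{2N+1}$ is globally framed). Therefore the $d\eta_{\mathrm{std}}$-orthogonal $\nu:=\xi_V^{\perp}\subset\xi_{\mathrm{std}}|_{V}$ satisfies $\xi_V\oplus\nu\cong\underline{\R^{2N}}$ as symplectic vector bundles; this $\nu$ is a representative of the stable symplectic normal bundle of $V$, and the displayed splitting exhibits it as a stable symplectic inverse of $\xi_V$. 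In particular $[\xi_V]+[\nu]=0$ in $\widetilde{KU}^{0}(V)$ once symplectic bundles are viewed as complex bundles via $\Sp(2k,\R)\simeq U(k)$.

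\textbf{Part 2, homotopical step.} A Lagrangian subbundle of a symplectic bundle $W$ of rank $2k$ is the same datum as a reduction of structure group (after reducing $\Sp(2k,\R)$ to its maximal compact $U(k)$) to $O(k)$ along the fibration $U(k)/O(k)=LGr(2k)\to BO(k)\to BU(k)$, i.e.\ a real bundle $E$ of rank $k$ with $E\otimes\CC\cong W$; and sections of $LGr(W)$ are exactly such reductions. Applying this to $W=\xi_V\oplus\R^{2n}$ and letting $n\to\infty$: for the finite-dimensional $V$ a stable bundle isomorphism can be destabilized once ranks exceed the stable range, so the existence of a section of $LGr(\xi_V\oplus\R^{2n})$ for $n\gg0$ is equivalent to $[\xi_V]$ lying in the image of complexification $c\colon\widetilde{KO}^{0}(V)\to\widetilde{KU}^{0}(V)$; the same argument applied to $\nu$ shows a section of $LGr(\nu\oplus\R^{2m})$ exists for $m\gg0$ iff $[\nu]\in\operatorname{im}(c)$. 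But $c$ is a group homomorphism, so $\operatorname{im}(c)$ is a subgroup, hence closed under negation; since $[\nu]=-[\xi_V]$ by the geometric step, we conclude $[\xi_V]\in\operatorname{im}(c)$ iff $[\nu]\in\operatorname{im}(c)$, which is precisely the asserted equivalence.

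\textbf{Expected main obstacle.} The geometry (the contact embedding and the splitting $\xi_V\oplus\nu\cong\underline{\R^{2N}}$) is routine given the h-principle, and Part 1 is formal. The one point that needs care is the translation in the homotopical step: matching ``admits a section of $LGr(W\oplus\R^{2n})$ for $n\gg0$'' with ``$[W]\in\operatorname{im}(c)$''. This is where the $n\gg0$ hypothesis is genuinely used --- to pass between stable and honest bundle isomorphisms in the stable range of $V$ --- and once it is in place, the closure of $\operatorname{im}(c)$ under negation does the rest; no new geometric input beyond the already-cited h-principle is needed.
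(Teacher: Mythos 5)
Your argument is correct as far as it goes, but it takes a genuinely different route from the paper and proves a slightly weaker statement. The paper's proof works entirely at the level of classifying maps: a stable Lagrangian Grassmannian bundle is classified by a map to $B(U/O)$, the tangent- and normal-type bundles are classified by $\widetilde\phi$ and $-\widetilde\phi$, and since $U/O$ is an infinite loop space a section is the same thing as a null-homotopy of the classifying map; negation is a self-equivalence of $B(U/O)$, so the \emph{spaces} of such null-homotopies for $\widetilde\phi$ and $-\widetilde\phi$ are canonically equivalent. Your Part 2 instead passes to K-theory: section of $LGr(\xi_V\oplus\R^{2n})$ for $n\gg 0$ iff $[\xi_V]\in\operatorname{im}\bigl(c:\widetilde{KO}^0(V)\to \widetilde{KU}^0(V)\bigr)$, together with $[\nu]=-[\xi_V]$ from the trivialization $\xi_V\oplus\nu\cong\underline{\R}^{2N}$ supplied by a contact embedding, and closure of $\operatorname{im}(c)$ under negation. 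This correctly establishes ``a section of one exists iff a section of the other exists,'' and the destabilization point you flag (stable iso implies iso above the stable range) is indeed where $n\gg 0$ enters.

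The gap relative to the paper is that the lemma is meant as an equivalence of \emph{data}, not merely of existence: a given stable normal polarization should determine (naturally, up to contractible/homotopy-coherent choices) a tangent-type section and vice versa, since this data is later fed into the thickening construction and into the Maslov-data formalism of Sect.~\ref{sec:descent}, where everything is phrased in terms of null-homotopies of maps to $B^{(2)}Pic(\mathcal{C})$ and of $V\to B(U/O)$. Your K-theoretic argument only compares isomorphism classes (a $\pi_0$ statement) and produces no map from sections of one bundle to sections of the other; the passage through ``$\operatorname{im}(c)$ is a subgroup'' is exactly the shadow on $\pi_0$ of the fact the paper uses, namely that negation is an equivalence of the infinite loop space $U/O$. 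So to recover the full statement you should upgrade your homotopical step: identify a stable section with a lift of $V\to BU$ through $BO\to BU$, equivalently a null-homotopy of the composite $V\to B(U/O)$, and then observe that the classifying maps for $\xi_V$ and $\nu$ differ by a sign, so their spaces of null-homotopies are equivalent. (Your Part 1 is fine and is essentially the same computation the paper's proof uses implicitly; one should just add that restriction to $M\times\{0\}$ and pullback along the projection are homotopy inverse, which is immediate.)
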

\begin{proof}
From $M$ one forms $M \to BU$ classifying its stable symplectic tangent bundle, and composes with the projection $BU \to B(U/O)$
to get the map $\phi: M \to B(U/O)$ classifying
the Lagrangian Grassmannian.  The Lagrangian Grassmanian of $\ker(\lambda+ dt)$ over $M \times \R$ is likewise classified
by a map $\widetilde{\phi}: M \times \R \to B(U/O)$, which moreover satisfies $\widetilde{\phi}|_{M \times 0} = \phi$.  

Now if $(V, \eta)$ is any contact manifold, and $\widetilde{\phi}: V \to B(U/O)$ classifies $LGr(\ker(\eta) \oplus \R^{2n})$, then
the Lagrangian Grassmannian of the stable normal bundle to $V$ is classified by $-\widetilde{\phi}$.  

As $U/O$ is an infinite loop space, hence a group (homotopically speaking), a section of the stable Lagrangian 
Grassmannian is the same as a trivialization
of it, hence given by a null-homotopy of, respectively, $\phi, \widetilde{\phi}, -\widetilde{\phi}$ in the above cases.  
Evidently these are equivalent.  
\end{proof}

In any case, fixing a stable normal polarization, we may define microsheaves on any contact manifold. 

\begin{definition}
We write $\mush_{\mathcal{U}; \sigma} := \mush_{\mathcal{U}^\sigma}|_{\mathcal{U}}$.
\end{definition}

\begin{remark}
Note that thickening by a lagrangian in the normal bundle ensures local constancy along the normal directions 
(Lemma \ref{lem: relative constancy}). 
\end{remark}

Let us consider on what this invariant may depend.  
The $h$-principle implies that different embeddings are themselves isotopic, so 
by another application of Lemma  \ref{lem: relative constancy} we see that 
$\mush_{\mathcal{U}; \sigma}$ does not depend on the embedding of $\mathcal{U}$, 
save perhaps through the dimension $2N+1$
of the target.  Similarly, homotopic choices of normal polarization give equivalent categories.
(We will see in Section \ref{sec:descent} below that in fact 
the category depends only on the image of the normal polarization under a certain map.)

To show independence of the embedding dimension, it suffices to observe that the category we defined is preserved under 
replacing $\mathcal{U}$ by $\mathcal{U} \times T^* [0,1]$.  The section of the Lagrangian Grassmannian is promoted by choosing
a constant section in the $T^*[0,1]$ direction, e.g. the zero section.

The objects representing objects in 
$\mush_{\mathcal{U}}$ have a (micro)support in $\mathcal{U}^\sigma$, constant in the normal bundle directions. 
We define their corresponding microsupport in $\mathcal{U}$ to be the restriction to $\mathcal{U}$. 
This newly defined microsupport  is evidently co-isotropic (since the original was by \cite{kashiwara-schapira}). 

\begin{definition}
For $\Lambda \subset \mathcal{U}$, we write $\mush_{\Lambda, \sigma}$ for the
subsheaf of $\mush_{\mathcal{U}, \sigma}$ consisting of full subcategories on the objects microsupported in $\Lambda$. 
\end{definition}

Obviously $\mush_{\Lambda, \sigma}$ only depends on the ambient $\mathcal{U}$ through its
germ along $\Lambda$. 

\begin{remark} 
When $\mathcal{U} = S^*M$ is a cosphere bundle, there are now two notions of microsheaves on $\mathcal{U}$.  The first
is the original $\mush_{S^*M}$.  The second is obtained from what we have just introduced, by observing that 
the cosphere fibers induce a polarization of the cosphere bundle, so in particular a stable normal
polarization. 
It is not difficult to check that the original $\mush_{S^*M}$ is canonically isomorphic to the new 
$\mush_{S^*M, \mathrm{fibers}}$, as sheaves of categories on $S^*M$. 
(For a detailed argument, see \cite[Cor. 4.13]{perverse-microsheaves}.)
\end{remark} 

When, as for $S^*M$, the polarization is understood, or e.g. all polarizations in question are 
obtained by restricting some (possibly unspecified) fixed polarization on an ambient space, 
we may omit the polarization from the notation.

\subsection{Quantization of Legendrians and Lagrangians} \label{sec:objects}

Given a set $X$ equipped with the germ of a contact embedding $X \to \hat{\mathcal{U}}_X$,
by a stabilized embedding of $X$ we mean a codimension zero embedding of
$(X \times (0,1)^n, \hat{\mathcal{U}}_X \times T^*(0,1)^n)$ into some contact manifold.

\begin{definition} \label{def: sufficiently isotropic}
We say a subset $X$ of a contact manifold $U$ is {\em sufficiently isotropic} if, after  any high codimension embedding $U \hookrightarrow \R^{2n+1}$, and thickening along any polarization $\sigma$, the resulting $X^\sigma$ is, after some further contact perturbation, spdff (Def. \ref{def: pdff}) and in finite position (Def. \ref{def: finite position}). 

We say a subset of $X$ of an exact symplectic manifold $W$ is sufficiently isotropic if its embedding in the contactization $X \times 0 \subset W \times \R$ is sufficiently isotropic.
\end{definition}

\begin{example}
It follows from \cite[Lemma 3.3.9]{li-nadler-shende} that the core of a Weinstein manifold, when Whitney stratifiable, will be sufficiently isotropic.  We recall also that the Liouville form on a Weinstein manifold can always be perturbed  so the core is Whitney stratifiable \cite{Laudenbach-TS} (and in fact real analytic \cite[Cor. 7.27]{gpsconstructible}).  
\end{example}

\begin{theorem} \label{thm: quantization} 
Let $\mathcal{U}$ be a contact manifold equipped with a stable normal polarization $\tau$. 
Consider a compactly supported contact isotopy
$\phi_t: \mathcal{U} \times (0,1] \to \mathcal{U}$ with  
$\phi_1 = id_U$.  Consider a compact subset $\Lambda_1 \subset \cU$, and let $\Lambda_t := \phi_t(\Lambda)$ 
and let $\Lambda_0 := \lim_{t \to 0} \Lambda_t$. 
Assume: 
\begin{itemize}
\item The family $\Lambda_t$ (including at $t=0$) is gapped (Def. \ref{def:gapped}) for some contact form on $\mathcal{U}$. 
\item $\Lambda_0 \cup \Lambda_1 \subset \mathcal{U}$ is sufficiently isotropic.  
\end{itemize}

Then there is a fully faithful functor
$$\psi: \mush_{\Lambda_1, \tau }(\Lambda_1) \to \mush_{\Lambda_0, \tau} (\Lambda_0)$$

More generally, the same holds for $\Lambda_1$ relatively compact and collared along its boundary, and $\phi_t$
and the contact form witnessing gappedness compatible with the collar. 
\end{theorem}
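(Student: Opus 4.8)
The plan is to reduce Theorem~\ref{thm: quantization} to the already-proven microlocal specialization statement, Theorem~\ref{thm:specialization}, by transporting everything through a suitable stabilized embedding and invoking the polarization formalism of Section~\ref{sec: embedding}. First I would fix a stable normal polarization $\tau$ on $\mathcal{U}$ and, using Gromov's $h$-principle as recalled at the start of Section~\ref{sec: embedding}, choose a codimension-zero contact embedding of (a germ of) $\mathcal{U}$, stabilized as needed, into a cosphere bundle $S^*M$; after thickening along the Lagrangian subbundle determined by $\tau$ this realizes $\mush_{\Lambda_i,\tau}(\Lambda_i)$, for $i=0,1$, as $\mush_{\Lambda_i^\tau}(\Lambda_i^\tau)$, i.e. as a category of microsheaves on honest Legendrians inside a genuine cosphere bundle. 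The contact isotopy $\phi_t$ and the contact form witnessing gappedness can be extended to the ambient $S^*M$ (supported near the image, constant away from a compact set), and by Lemma~\ref{lem: partial cosphere} the extra $T^*(0,1)^n$ factors introduce no new Reeb dynamics, so gappedness, perturbability to finite position, and the pdfl property are inherited by the thickened loci $\Lambda_i^\tau$ precisely because those properties were assumed to hold \emph{universally} (Definition~\ref{def: universal}). This is the point of the ``universal'' hypotheses: they are exactly what is needed to know the conclusions of Lemmas~\ref{lem: positive perturbation of cosphere}, \ref{lem: local antimicrolocalization}, etc., survive after the embedding trick.

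Next I would apply Theorem~\ref{thm:specialization} directly to $\Lambda_1^\tau \subset S^*M$ with the transported isotopy: hypothesis~(\ref{hyp: g}) there is the gappedness just verified, hypothesis~(\ref{hyp: f}) is universal perturbability to finite position for $\Lambda_0^\tau$ and $\Lambda_1^\tau$, and hypothesis~(\ref{hyp: r}) is universal positive displaceability of $\Lambda_0^\tau$. One subtlety is that Theorem~\ref{thm:specialization} produces a functor $\mush_{\Lambda_1}(\Lambda_1)\to\mush_{\Lambda_0}(\Lambda_0)$ where $\Lambda_0$ is the limit $\lim_{t\to 0}\phi_t(\Lambda_1^\tau)$ taken inside $S^*M$; I must check this agrees, as a thickened locus carrying the polarization $\tau$, with the thickening of $\Lambda_0 = \lim_{t\to 0}\phi_t(\Lambda_1)\subset\mathcal{U}$. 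This is a matter of unwinding the construction of the limit and the fact that thickening commutes with the (compatible) contact isotopy and with closure at $t=0$ --- in other words the limit of the thickenings is the thickening of the limit, which holds because the normal-bundle direction is carried along rigidly by an isotopy compatible with the identification of a neighborhood with the normal bundle. Having identified source and target, the functor $\psi$ of Theorem~\ref{thm:specialization} is the desired $\psi:\mush_{\Lambda_1,\tau}(\Lambda_1)\to\mush_{\Lambda_0,\tau}(\Lambda_0)$, and full faithfulness is immediate from that theorem.

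For the ``more generally'' clause with $\Lambda_1$ merely relatively compact and collared, everything goes through verbatim: Theorem~\ref{thm:specialization} was already stated and proved in that generality (its hypothesis permits $\Lambda_1$ locally closed, relatively compact, and collared, with $\phi_t$ and the contact form compatible with the collar), and the relative antimicrolocalization of Theorem~\ref{thm: relative antimicrolocalization} handles the capping-off at $\partial\Lambda_1$. The embedding trick respects collars provided one extends the collar along the stabilization directions by the constant collar, which is automatic. So no new ideas are needed beyond keeping track of the collar data through the embedding.

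The main obstacle I anticipate is not the homological content --- that is entirely outsourced to Theorem~\ref{thm:specialization} --- but the bookkeeping around the embedding trick and independence of choices: one must be careful that the resulting functor $\psi$ does not depend on the chosen stabilized embedding or on the embedding dimension, using Lemma~\ref{lem: relative constancy} and the $h$-principle connectivity exactly as in the proof that $\mush_{\mathcal{U};\sigma}$ is well defined, and one must verify that the ``universal'' hypotheses genuinely feed the hypotheses of Theorem~\ref{thm:specialization} after thickening (in particular that finite position and pdfl for $\Lambda_i^\tau\subset S^*M$ follow from the universal versions for $\Lambda_i$, which is essentially the definition, but must be stated). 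A secondary point requiring care is the compatibility of the limit operation $\lim_{t\to 0}$ with thickening and with passing between $\mathcal{U}$ and $S^*M$; this is where I would spend the most words, since it is the only place the geometry of the construction (rather than a black-boxed theorem) is actually used.
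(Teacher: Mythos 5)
Your proposal is correct and follows essentially the same route as the paper: embed via the $h$-principle, thicken along the polarization, extend the isotopy/contact form so that no new chords appear, feed the "universal" hypotheses into Theorem \ref{thm:specialization}, and transport the resulting fully faithful functor back through the (equivalence) restriction maps $\mush_{\Lambda_i^\tau}(\Lambda_i^\tau) \to \mush_{\Lambda_i;\tau}(\Lambda_i)$. The only difference is emphasis: the paper extends the flow by pulling back the contact Hamiltonian along the normal-bundle projection (which makes "limit of thickenings = thickening of the limit" and gappedness immediate), and it does not concern itself with independence of the embedding, since the theorem only asserts existence of the functor.
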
 
\begin{proof}
We will embed the problem and then appeal to Theorem \ref{thm:specialization}.  More precisely, 
fix an (high codimension) embedding 
$\cU \subset J^1(\R^N) \subset S^*(\R^{N+1})$.  Let us write $\pi: \cN_\cU \to \cU$ for the symplectic normal bundle.  Since $N \gg 0$, we may choose a Lagrangian sub-bundle $\mathcal{L}_{\cU} \subset \cN_\cU$.  For any subset $Z \subset \cU$, we have the thickening
$Z^\tau := \pi^{-1}(Z) \cap \mathcal{L}_{\cU}$ in the symplectic normal bundle; we use the same notation for the corresponding locus 
in $J^1(\R^N)$ obtained by transporting along some fixed symplectomorphism of a tubular neighborhood $\cT_\cU$ of $\cU$ with a neighborhood of the zero section in $\cN_\cU$.

Fix any contact form on $U$ compatible with the co-orientation, and a contact form on $\cN_\cU$ with the properties that (1) if $h$ is a contact Hamiltonian on $\cU$ then the projection $\pi$ intertwines the contact flows of $h$ and $h \circ \pi$ and (2) the flow for $h \circ \pi$ preserves $\cL_\cU$.  Such a form on $\cN_\cU$ can be constructed e.g. by identifying $\cN_\cU$ with the fiberwise cotangent bundle of  $\mathfrak{f}: \cL_\cU \to \cU$ and applying Lemma \ref{lem:contrelcot}.  We then extend flows from $U$ to $\cT_\cU$ by pullback of Hamiltonians $h \mapsto h \circ \pi$; this preserves the notion of positivity.  The key feature of this extension procedure is that chords of $Z^\tau$ for any given extended positive isotopy must map to chords of $Z$ for the original isotopy. 
This map of flows depends on the choices of contact forms, but, as the space of such choices is contractible, and the effect on sheaf categories is locally constant by the usual quantization of contact isotopy argument, the choice has no effect on our categorical constructions.

In particular $\Lambda_t^\tau$ remains gapped for an appropriate extended flow, and $\Lambda_0^\tau$ is its limit.  Our `sufficiently isotropic' assumptions mean, by definition, that we may choose the embedding and thickening so  $\Lambda_0^\tau$ is spdff and
$\Lambda_1^\tau, \Lambda_0^\tau$ are in finite position.  

Thus we may appeal to Theorem \ref{thm:specialization} 
to deduce the existence and full faithfulness of some map 
$\psi^\tau: \mush_{\Lambda_1^\tau}(\Lambda_1^\tau) \to \mush_{\Lambda_0^\tau}(\Lambda_0^\tau)$.  
As the natural restriction maps $\mush_{\Lambda_1^\tau}(\Lambda_1^\tau) \to \mush_{\Lambda_1; \tau}(\Lambda_1)$
and $\mush_{\Lambda_0^\tau}(\Lambda_0^\tau) \to  \mush_{\Lambda_0; \tau}(\Lambda_0)$
are equivalences (essentially by definition, and indeed before passing to global sections), we obtain a fully faithful functor
$\psi: \mush_{\Lambda_1; \tau}(\Lambda_1) \to \mush_{\Lambda_0; \tau}(\Lambda_0)$. 

Note the same proof works when $\Lambda_1$ is relatively compact and collared along its boundary,  and $\phi_t$
and the contact form witnessing gappedness are compatible with the collar, using that Theorem \ref{thm:specialization} also holds in this generality.
\end{proof}

\begin{remark}
For any $X \supset \Lambda_0$, 
we may compose with the natural inclusion to obtain a fully faithful map 
$ \psi: \mush_{\Lambda_1, \tau }(\Lambda_1) \to \mush_{X, \tau} (X)$. 
\end{remark}

An exact symplectic manifold $(W, \lambda)$ (or any subset thereof) is canonically embedded 
in its contactization $W \times \R_z$, which we take with contact form $dz - \lambda$.  In particular
we may naturally speak of microsheaves on subsets of exact symplectic manifolds (with stable normal polarizations). 
Note also that if $v_\lambda$ is the Liouville flow, then $v_\lambda + z \frac{d}{dz}$ is a contact vector field
lifting $v_\lambda$; in case $W$ is Liouville, this contact vector field retracts the contactization to the core
$\frc(W)$.

\begin{corollary} \label{cor: weinstein quantization} 
Let $W$ be Liouville with stable normal polarization $\tau$, and let $\Lambda_1 \subset W \times \R$.   Assume the projection $\Lambda_1 \to W$ is an embedding
onto a subset $L$ which is compact. 
Let $L_0$ be the limit of $L$ under the Liouville flow. 
Assume $\Lambda_1 \cup (L_0 \times 0) \subset W \times \R$ is sufficiently isotropic. 
Then there is a fully faithful functor:
$$\mush_{\Lambda_1; \tau}(\Lambda_1) \to \mush_{L_0; \tau}(L_0) \subset \mush_{\mathfrak{c}(W); \tau}(\mathfrak{c}(W))$$
\end{corollary}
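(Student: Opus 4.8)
The plan is to reduce Corollary~\ref{cor: weinstein quantization} to Theorem~\ref{thm: quantization} by identifying the Liouville flow of $L$ with a contact isotopy of the contactization to which the theorem applies. First I would set up the ambient contact manifold: take the contactization $W \times \R_z$ with contact form $dz - \lambda$, so that $\Lambda_1 \subset W \times \R$ is a Legendrian (compact, since its projection onto $L$ is a compact embedding), and recall from the discussion preceding the corollary that the lifted contact vector field $v_\lambda + z\,\tfrac{d}{dz}$ covers the Liouville flow $v_\lambda$ and retracts the contactization onto $\frc(W)$. Flowing $\Lambda_1$ by (the negative-time part of) this contact isotopy for $t \in (0,1]$, reparametrized so that $\phi_1 = \mathrm{id}$ and $\phi_t$ drives $L$ toward its Liouville limit as $t \to 0$, produces a family $\Lambda_t$ with $\Lambda_0 = \lim_{t\to 0}\Lambda_t$; since $\Lambda_t$ is the Legendrian lift of $L_t := e^{(\log t)v_\lambda}L$, its limit projects onto $L_0$, and in fact $\Lambda_0 \subset \frc(W) \cup L_0$ because the flow retracts everything off $L$ onto the core.

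Next I would verify the three hypotheses of Theorem~\ref{thm: quantization} for this family. Gappedness: a short self-Reeb chord of $\Lambda_t$ (for the contact form witnessing the flow) in the contactization projects to a self-intersection or to coincident points of $L_t$ with distinct $z$-values; since $L$ is an \emph{embedded} compact exact Lagrangian, $\Lambda_1$ is an embedded compact Legendrian, hence locally chordless, and the length of the shortest chord is bounded below uniformly in $t$ (including $t=0$ by the $\epsilon$-chordlessness of $\Lambda_0$, which holds because $L_0$ is, by assumption, sufficiently isotropic / pdfl — combine Example~\ref{ex: reasonable from gapped} with the last Remark before the theorem). The finite-position hypothesis for $\Lambda_0$ and $\Lambda_1$ is exactly the "universally perturbable to finite position" assumptions in the statement; and the pdfl hypothesis for $\Lambda_0$ is the "universally pdfl" assumption. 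With these in hand, Theorem~\ref{thm: quantization} (in its collared/relatively-compact form if $\Lambda_1$ is only relatively compact) yields a fully faithful functor $\psi \colon \mush_{\Lambda_1;\tau}(\Lambda_1) \to \mush_{\Lambda_0;\tau}(\Lambda_0)$.

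Finally I would post-compose with the inclusion of microsheaf subcategories. Since $\Lambda_0 \subset L_0 \subset \frc(W)\cup L_0$ are closed conic inclusions, the Locality part of Theorem~\ref{thm: category} (or the "More generally" clause after Theorem~\ref{thm: quantization}) gives fully faithful embeddings $\mush_{\Lambda_0;\tau}(\Lambda_0) \hookrightarrow \mush_{L_0;\tau}(L_0) \hookrightarrow \mush_{\frc(W)\cup L_0;\tau}(\frc(W)\cup L_0)$; composing with $\psi$ produces the asserted functor, fully faithful as a composite of fully faithful functors. One small point to check carefully is that the polarization $\tau$ restricts compatibly along all these inclusions and along the thickening used inside Theorem~\ref{thm: quantization}, which is immediate since $\tau$ is a fixed stable normal polarization on $W$ (equivalently on its contactization, by Lemma~\ref{lem:normalsec}) restricted to the relevant subsets.

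The main obstacle I anticipate is not any single step but the bookkeeping around $\Lambda_0$ versus $L_0$ versus $\frc(W)\cup L_0$: one must make sure that the limit $\Lambda_0$ of the \emph{Legendrian} family in the contactization really has projection contained in $L_0$ and is supported in $\frc(W)\cup L_0$ (the core part coming from the non-conic portion of $L$ collapsing under the flow), and that the "universally" qualifiers transfer correctly through the high-codimension embedding-and-thickening step exactly as in the proof of Theorem~\ref{thm: quantization}. The geometric content — that an embedded compact exact Lagrangian has no short Reeb chords after lifting — is exactly the gappedness mechanism emphasized in the introduction, so the real work is organizing the hypotheses rather than proving anything genuinely new.
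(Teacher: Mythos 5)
Your route is essentially the paper's: lift $L$ to the Legendrian $\Lambda_1$ in the contactization $(W\times\R_z,\,dz-\lambda)$, flow by the contact lift $v_\lambda + z\,\tfrac{d}{dz}$ of the Liouville field to get a family $\Lambda_t$ limiting onto $L_0$, check the hypotheses of Theorem \ref{thm: quantization}, and post-compose with the fully faithful inclusions $\mush_{L_0}\hookrightarrow\mush_{\frc(W)\cup L_0}$ from Locality. The paper's proof is exactly this reduction, stated in three lines.

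One sub-justification in your gappedness step is wrong, although the step itself is fine. You claim the uniform lower bound on chord lengths ``including $t=0$'' follows from ``$\epsilon$-chordlessness of $\Lambda_0$, which holds because $L_0$ is, by assumption, sufficiently isotropic / pdfl.'' That implication does not hold: pdfl (Def.~\ref{def: pdfl}) is about displacing auxiliary Legendrians from $\Lambda_0$ by positive isotopies and says nothing about self-chords of $\Lambda_0$ — for instance, two points lying on a single Reeb orbit at distance $\delta$ form a pdfl set with a chord of length $\delta$, so no $\epsilon$-chordlessness can be extracted from these hypotheses. Likewise ``locally chordless, hence uniformly bounded below in $t$'' is not an argument (this is precisely what fails for an immersed $L$, cf.\ Example 1.2). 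The correct and much simpler reason — which is the paper's — is that gappedness is trivial here: take the contact form $dz-\lambda$, whose Reeb flow is translation in $z$; a Reeb chord of $\Lambda_t$ would require two points of $\Lambda_t$ with the same $W$-coordinate, i.e.\ a double point of the (Liouville-flowed, hence still embedded) projection $L_t$, and $\Lambda_0=L_0\subset W\times\{0\}$ has all $z$-coordinates equal. So there are no chords of positive length at any $t\in[0,1]$, including $t=0$, and no uniformity estimate is needed; the hypotheses of sufficient isotropy of $L_0$ are used only for the finite-position and pdfl conditions of Theorem \ref{thm: quantization}, exactly as you use them elsewhere. With that correction your argument coincides with the paper's.
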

\begin{proof} 
Since $L_0$ is invariant under the Liouville flow and compact, we have  $L_0 \subset \mathfrak{c}(W)$, giving the 
asserted inclusion.  
Let  $\Lambda_t$ be the flow of $\Lambda_1$ under $v_\lambda + z \frac{d}{dz}$, parameterized to live over t in $(0,1]$.
Then $\lim_{t \to 0} \Lambda_t = L_0$.  We want to apply Theorem 
\ref{thm: quantization}.  Gappedness is automatic: 
the $\Lambda_t$ have no self-chords
at all, as these would project to self-intersections of (the image under Liouville flow of) $\Lambda_1$, which we have assumed
do not exist.  The other hypotheses of the theorem hold by assumption. 
\end{proof}

In the corollary, we did not require that $L$ was smooth.  
In particular, consider two symplectic primitives $\lambda$ and $\lambda' = \lambda + df$.    Denote the respective cores by 
$\frc$ and $\frc'$.  Note by translating $f \to f + N$ for $n \gg 0$, we may ensure that $\mathfrak{c}$ and $\mathfrak{c}'$ are disjoint. 
If $\mathfrak{c} \sqcup \mathfrak{c}'$ is sufficiently isotropic, 
then using Corollary \ref{cor: weinstein quantization} for $(W, \lambda)$, 
we obtain a fully faithful functor $\mush_{\frc',\tau}(\frc') \hookrightarrow \mush_{\frc,\tau}(\frc)$;
likewise using Corollary \ref{cor: weinstein quantization} for $(W, \lambda')$ we obtain
${\mush}_{\frc, \tau}(\frc) \hookrightarrow {\mush}_{\frc',\tau}(\frc')$.

\begin{remark}
    We expect that sufficient isotropicity of $\mathfrak{c} \sqcup \mathfrak{c}'$ follows from the separate sufficient isotropicity of $\mathfrak{c}$ and $\mathfrak{c}'$, but in any case, since both $\mathfrak{c}$ and $\mathfrak{c}'$ admit ribbons by their very definition, so does the disjoint union, hence if they are (separately) Whitney stratifiable, sufficient isotropicity follows from \cite[Lemma 3.3.9]{li-nadler-shende}. 
\end{remark}

\begin{definition} \label{def: interpolation}
We say a Liouville form $\lambda$ on $W$ is {\em sufficiently Weinstein} if the resulting core is sufficiently isotropic. 
We say $\lambda, \lambda'$ are {\em sufficiently Weinstein cobordant} if there exists
a symplectic primitive $\eta$ for the stabilization $W \times T^*\R$ with sufficiently
isotropic core,\footnote{Note that $(T^*\R, pdq)$ is not a Liouville manifold,
but rather an ``open Liouville sector'' in the sense of \cite{gpssectorsoc}.  
The core remains well defined: $\frc_{pdq}(T^*\R)$ is the zero section.  
Likewise, $(W \times T^* \R, \eta)$ is an open Liouville sector with a well defined core.} such that $\eta$ restricts to 
$\lambda + pdq$ near $-\infty$ and to $\lambda' + pdq$ near $+\infty$.   

More generally, for $\Lambda \subset W^\infty$, we say $\lambda$ is {\em sufficiently Weinstein} for the 
pair $(W, \Lambda)$ if the relative core is sufficiently isotropic; correspondingly we speak of 
sufficiently Weinstein cobordisms.  
\end{definition}

\begin{theorem} \label{thm: invariance} 
Let $\lambda, \lambda'$ be two sufficiently Weinstein forms for $W$ which are sufficiently Weinstein cobordant. 
Then 
the functor ${\mush}_{\frc_{\lambda}(W)}(\frc_{\lambda}(W))  \to {\mush}_{\frc_{\lambda'}(W)}(\frc_{\lambda'}(W))$
is an equivalence. 
\end{theorem}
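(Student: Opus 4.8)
The plan is to produce two fully faithful functors going in opposite directions and then show one composition is the identity, by interpolating through the cobordism. First I would apply Corollary \ref{cor: weinstein quantization} to the Liouville manifold $(W,\lambda)$ with $L$ taken to be (a Legendrian lift of) the core $\frc_{\lambda'}(W)$ of the \emph{other} primitive; since $\lambda'$ is sufficiently Weinstein, $\frc_{\lambda'}(W)$ is sufficiently isotropic, hence so is its image $L_0$ under the $v_\lambda$-flow (being a subset of $\frc_\lambda(W)$, which is sufficiently isotropic), so the hypotheses are met and we get a fully faithful $\Phi \colon \mush_{\frc_{\lambda'}(W)}(\frc_{\lambda'}(W)) \hookrightarrow \mush_{\frc_\lambda(W)}(\frc_\lambda(W))$. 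Symmetrically, applying Corollary \ref{cor: weinstein quantization} to $(W,\lambda')$ with $L = \frc_\lambda(W)$ gives a fully faithful $\Phi' \colon \mush_{\frc_\lambda(W)}(\frc_\lambda(W)) \hookrightarrow \mush_{\frc_{\lambda'}(W)}(\frc_{\lambda'}(W))$. Since each is fully faithful, to conclude both are equivalences it suffices to show $\Phi \circ \Phi'$ is isomorphic to the identity (and by the symmetric argument $\Phi' \circ \Phi \simeq \id$).

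The key tool for identifying $\Phi \circ \Phi'$ with the identity is that gapped specialization, as constructed in Theorem \ref{thm:specialization}, is \emph{functorial in composition of contact isotopies}: specializing along $\phi_t$ then along $\psi_s$ agrees with specializing along the concatenated family, because in the doubling/antimicrolocalization diagram \eqref{eq: microlocal nearby cycles} all the relevant functors are restrictions and nearby cycles, both of which compose. I would make this precise by setting up a single two-parameter family of Legendrians realizing both Liouville flows in turn, using the sufficiently Weinstein cobordism $\eta$ on the stabilization $W \times T^*\R$: the cobordism provides a homotopy of Liouville forms from $\lambda$ to $\lambda'$ through sufficiently Weinstein primitives, and hence a family of cores interpolating between $\frc_\lambda(W)$ and $\frc_{\lambda'}(W)$, all sufficiently isotropic. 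Running the composed Liouville flow ``out and back'' along this interpolation gives a loop of isotopies whose net effect is the identity isotopy on $\frc_\lambda(W)$; Theorem \ref{thm:specialization} applied to the constant family yields the identity functor, and the functoriality of specialization shows this equals $\Phi \circ \Phi'$.

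The main obstacle I anticipate is the gappedness of the interpolating family: while each individual Liouville flow (flowing one core down under the other's Liouville field) is automatically gapped because self-chords would be self-intersections, when one concatenates the specialization along $\lambda$ with that along $\lambda'$ one must check that the \emph{composite} family — built from the cobordism $\eta$ on $W \times T^*\R$ — remains gapped uniformly, i.e. that no short Reeb chords develop during the interpolation. This is exactly why the hypothesis is phrased in terms of a sufficiently Weinstein \emph{cobordism} rather than a mere pair of primitives: $\eta$ restricting to $\lambda + pdq$ near $-\infty$ and $\lambda' + pdq$ near $+\infty$ guarantees that the interpolating cores sit inside the (sufficiently isotropic, hence chordless) skeleton of the cobordism, so the needed uniform gap survives. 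The remaining verifications — that the antimicrolocalization diagrams for the various steps are compatible, and that the two outer vertical equivalences in the embedding step of Theorem \ref{thm: quantization} intertwine the composed functors — are diagram chases of the same flavor as the proof of Theorem \ref{thm:specialization} and I would carry them out by naturality rather than by hand.
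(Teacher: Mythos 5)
Your overall strategy (two fully faithful functors from Corollary \ref{cor: weinstein quantization}, then identify a composite with the identity by interpolating through the cobordism) is the same in spirit as the paper's, but the step on which everything rests has a genuine gap. You assert that gapped specialization is ``functorial in composition of contact isotopies,'' i.e.\ that specializing along one flow and then along another agrees with specializing along a concatenated family, and you use this to identify $\Phi\circ\Phi'$ with the specialization of a loop whose ``net effect is the identity isotopy.'' Neither claim is established by Theorem \ref{thm:specialization} nor anywhere else in the paper, and the paper explicitly flags exactly this kind of compatibility as ``not completely obvious'' in the remark following its proof of Theorem \ref{thm: invariance} --- and deliberately arranges the argument so as never to need it. The difficulty is structural: $\Phi'$ is a nearby-cycles limit whose target lives on the \emph{singular} core $\frc_{\lambda'}(W)$, so the composite $\Phi\circ\Phi'$ starts a second, unrelated family from that singular limit; it is not the specialization of any single isotopy applied to $\frc_\lambda(W)$, and ``running the composed Liouville flow out and back'' is not an isotopy whose specialization you can compute by Theorem \ref{thm:specialization} applied to a constant family. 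Your appeal to the diagram \eqref{eq: microlocal nearby cycles} does not supply this either, since the two specializations are built from two different doublings and two different gapped families, with no given comparison map between them.

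The paper circumvents this by never composing specializations at all. It works on the stabilization $W\times T^*\R$ with the three forms $\lambda+pdq$, $\lambda'+pdq$, $\eta$, whose cores are $X\times T^*_\R\R$, $X'\times T^*_\R\R$, and the interpolating core $Y$ of the cobordism, and uses $\mush_Y(Y)$ itself as the intermediate category. Because the relevant flows are constant near $q=-\infty$, the fully faithful functors $\mush_{X\times T^*_\R\R}\leftrightarrows\mush_Y$ commute with restriction at $q=-\infty$, and since that restriction is an equivalence onto $\mush_X(X)$ for the product core, the composite $\mush_{X\times T^*_\R\R}\to\mush_Y\to\mush_{X\times T^*_\R\R}$ is forced to be the identity by a purely formal argument; essential surjectivity and hence equivalence of both functors follows, and restriction at $q=+\infty$ then transfers the statement to the functor $\mush_{X}(X)\to\mush_{X'}(X')$ in the theorem. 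If you want to salvage your route, you would need to prove the concatenation-compatibility of specializations as a separate result (or reorganize as the paper does, replacing ``compose two specializations'' by ``one specialization compared against a restriction that commutes with it''); as written, the central identification $\Phi\circ\Phi'\simeq\mathrm{id}$ is unsupported.
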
 
\begin{proof}
For notational clarity we write $X = \frc_{\lambda}(W)$ and $X' = \frc_{\lambda'}(W)$.  
We work inside $W \times T^*\R$.   We use $q$ for the coordinate on $\R$, and $p$ for the cotangent
coordinate.  On $W \times T^* \R$, we consider the three Liouville forms  
$$\widetilde{\lambda} = \lambda + pdq \qquad \qquad 
\widetilde{\lambda}' = \lambda' + pdq \qquad \qquad \eta$$ 

The cores for these Liouville forms are evidently 
$X \times T_\R^*\R$, $X' \times T_\R^*\R$, and some $Y$ which projects to $T_\R^* \R$ under the projection
$W \times T^* \R \to T^*\R$  and interpolates between $X$ for $q \ll 0$ and $X'$ for $q \gg 0$. 
 
Flowing by the appropriate Liouville forms give fully faithful maps 

$$ \mush_{X \times T_\R^*\R} (X \times T_\R^*\R)  \leftrightarrows 
\mush_{Y} (Y) \leftrightarrows \mush_{X' \times T_\R^* \R}    (X' \times T_\R^*\R)$$

Restriction at $q = -\infty$ gives canonical maps 
$$\mush_{X \times T_\R^*\R} (X \times T_\R^*\R) \xrightarrow{\sim} \mush_X(X) 
\leftarrow \mush_{Y}(Y) $$  
These maps evidently commute with the $\mush_{X \times T_\R^*\R} (X \times T_\R^*\R)  \leftrightarrows 
\mush_{Y} (Y)$, since those maps are induced by flows constant near $-\infty$.   
It follows formally that the composition 
$\mush_{X \times T_\R^*\R} (X \times T_\R^*\R)  \to  
\mush_{Y} (Y) \to \mush_{X \times T_\R^*\R} (X \times T_\R^*\R)$ is the identity, hence that
the second map $\mush_{Y} (Y) \to \mush_{X \times T_\R^*\R} (X \times T_\R^*\R)$ is essentially
surjective.  As we already knew it was fully faithful, it must be an equivalence; hence
so must its left inverse.  Likewise we learn that $\mush_{Y} (Y) \leftrightarrows \mush_{X' \times T_\R^* \R}    (X' \times T_\R^*\R)$ are equivalences.

Finally, consider the commutative diagram 

$$
\xymatrix{
\mush_{X \times T_\R^*\R} (X \times T_\R^*\R)  \ar[r] \ar[d]_{|_{+\infty}} &  
\mush_{Y} (Y) \ar[d]_{|_{+\infty}} \\
\mush_{X}(X) \ar[r] & \mush_{X'}(X')
}
$$

Here, the vertical maps are the restriction at $q= +\infty$.  The top horizontal map is the nearby 
cycles for flowing $X \times T_\R^*\R$ to $Y$ using the Liouville form $\eta$, and the lower
horizontal map is the restriction of this to $q= +\infty$.  Note this agrees with the nearby cycle
for flowing $X$ to $X'$ using $\lambda'$, by construction.  The diagram commutes because 
nearby cycles is a functor of sheaves of categories, hence commutes with restriction. 
Finally, we have seen that the top, left, and right arrows are equivalences, hence so is the bottom one. 
\end{proof} 

\begin{remark}
It is not completely obvious that the functor obtained by composing the flowdowns
$\mush_{X \times T_\R^*\R} (X \times T_\R^*\R)  \to 
\mush_{Y} (Y) \to \mush_{X' \times T_\R^* \R}    (X' \times T_\R^*\R)$  
agrees with what would be obtained directly by flowing
$\mush_{X \times T_\R^*\R} (X \times T_\R^*\R) \to \mush_{X' \times T_\R^*\R} (X' \times T_\R^*\R)$.
We did not use this assertion in the above argument, though it does follow from what we have shown. 
\end{remark} 

\begin{remark}
As mentioned in the introduction,
 any Liouville homotopy of Weinstein domains  can  in fact be deformed to a sufficiently 
 Weinstein homotopy \cite[Prop. 2.42]{lazarev-sylvan-tanaka}.  This was pointed out to us by 
 Wenyuan Li, who has also shown~\cite{wenyuan}  by another argument that ``sufficiently Weinstein homotopy''
 can be weakened to ``Liouville homotopy'' in the theorem.  This means that the hypothesis is removed entirely, 
 since the linear interpolation $\lambda + sdf$ is such a homotopy.
\end{remark}

\subsection{Collars, ribbons, and Weinstein pairs}
Here we develop a `relative' version of Corollary \ref{cor: weinstein quantization}, suitable for the study of
Weinstein pairs.  For this purpose we need to understand when it is possible to construct contact collars 
for $\Lambda \subset W \times \R$ when $\Lambda$ projects to an eventually conic Lagrangian in the Liouville 
manifold $W$.

Suppose $(D, \lambda)$ is a Liouville domain with contact boundary $\partial D$ with contact form $\alpha = \lambda|_{\partial D}$. 
Note since $\partial D$ comes with a  contact form $\alpha$, it also comes with a contact flow given by integrating the Reeb vector field $v$ satisfying  $\alpha(v) = 1, d\alpha(v) = 0$.

One can find a collared neighborhood of $\partial D \subset D$ of the form  $(1-\epsilon, 1] \times \partial D \subset D$  on which $\lambda = e^r \alpha$. In other words, the  collared neighborhood is the symplectization. Let  $(W, \lambda)$ be the completed Liouville manifold 
obtained by attaching to $D$ the conic end $[1, \infty) \times \partial D$.  

Let $L\subset D$ be a closed exact Lagrangian conic near $\partial D$.  So its end $\partial L := L \cap  \partial D$ is a closed Legendrian.
 Let $f:L\to \RR$ be a primitive so that $df = \lambda|_L$; note that $f$ is locally constant near  $\partial L$.

Let $N = W \times \RR$ be the contactification of $W$ with contact form $dt - \lambda$.  Let $\Lambda\subset N$ be the Legendrian lift of $L$ given by the graph of the primitive $f$. We  write $\partial \Lambda$ for the lift of $\partial L$.

By a {\em weak ribbon} for $\partial L$, we mean a hypersurface $M \subset \partial D$ with $\partial L \subset M$ such that $d \lambda |_M$ is symplectic
and $\partial L$ is invariant for the Liouville flow of $\lambda|_M$.  
In fact, being/having a ribbon depends only on the contact structure on $\partial D$, and not the contact form.  Here, we say 
``weak'' because we do not require that $M$ is in fact a Liouville manifold with skeleton $\partial L$. 

Ribbons allow the construction of collars: 

\begin{prop} \label{prop: collar from ribbon}
Given a weak ribbon $M \subset \partial D$ around $\partial L$, there exists a contact collar of $\Lambda \subset N$. Moreover, the standard translation contact flow on $N$ generated by $\partial_t$ is compatible with the collar.
\end{prop}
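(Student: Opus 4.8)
The plan is to produce the contact collar of $\Lambda \subset N = W \times \R$ directly from the symplectic data on the weak ribbon $M \subset \partial D$, using the Reeb flow of $\lambda|_M$ as the flow on the collar factor and the Legendrian lift as the slice. First I would set up the local model near $\partial L$: since $L$ is conic near $\partial D$ we may write a neighborhood of $\partial D$ in $W$ as $(1-\epsilon,\infty) \times \partial D$ with $\lambda = r\alpha$, and then a neighborhood of $\Lambda$ in $N = W\times\R$ contactified with $dt - \lambda = dt - r\alpha$. The hypothesis that $d\lambda|_M$ is symplectic and that $\partial L$ is invariant for the Liouville flow of $\lambda|_M$ means that, letting $v_M$ denote the Reeb vector field of $\alpha|_M$ (equivalently, the suitably normalized vector field so that $\partial L$ is its invariant Legendrian), the submanifold $M$ carries a Liouville structure whose flow preserves $\partial L$; this is exactly the data that lets us "spread out" $\partial L$ inside a contact-type hypersurface.

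The key geometric input is a standard neighborhood theorem: a Legendrian $\partial \Lambda \subset N$ together with a choice of weak ribbon determines, up to contactomorphism, a neighborhood of the form $L^\circ \times T^*(-\epsilon,\epsilon)$ where $L^\circ$ is a neighborhood of $\partial L$ inside a contact manifold of the appropriate dimension. Concretely, I would build the embedding $(L \times (0,\epsilon); U \times T^*(-\epsilon,\epsilon)) \to (\Lambda, S^*\text{-model of } N)$ required by Definition of a contact collar by: (i) taking $U$ to be the germ of the contact manifold obtained from the weak ribbon $M$ (its contactification or the relevant cosphere-type model), with $L$ the Legendrian lift of $\partial L$ sitting inside it; (ii) using the flow generated by the Reeb field of the ambient contact form $dt - \lambda$ transverse to $M$—equivalently, since near the conic end the contact form is $dt - r\alpha$ and $M$ lives in the $\partial D$ slice, one moves in the direction normal to the ribbon inside $\partial D$, parametrized by a coordinate $s \in (0,\epsilon)$—to sweep out $\Lambda$ near $\partial\Lambda$; and (iii) checking that this parametrization carries $L \times \{0\}$ diffeomorphically onto $\partial\Lambda$. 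The fact that $f$ is constant near $\partial L$ ensures the Legendrian lift $\Lambda$ is, near $\partial\Lambda$, literally a product of $\partial\Lambda$ with an interval in the appropriate coordinate, which is what makes the collar structure visible.

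For the compatibility with the translation flow $\partial_t$: since the contact form on $N$ is $dt - \lambda$ and $\partial_t$ is its Reeb field (it satisfies $(dt-\lambda)(\partial_t) = 1$ and $d(dt-\lambda)(\partial_t,\cdot) = 0$), and since the collar coordinates $(u, s, \tau) \in U \times (-\epsilon,\epsilon) \times \{s\}$ are built so that the $t$-direction is the $T^*(-\epsilon,\epsilon)$ fiber coordinate direction tensored appropriately—more precisely, the collar embedding intertwines $\partial_t$ on $N$ with the product of (the contact flow on $U$ coming from the ribbon) and the constant flow on $T^*(-\epsilon,\epsilon)$—one gets compatibility by construction. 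The cleanest way to see this is to observe that $\partial_t$ commutes with the ribbon-normal coordinate $s$ (the ribbon $M \subset \partial D$ is $t$-independent), so the decomposition of the collar neighborhood as a product is preserved by the $\partial_t$-flow.

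\textbf{Main obstacle.} The delicate point is step (ii): making precise the passage from "$M$ is a weak ribbon, i.e.\ a symplectic hypersurface in $\partial D$ invariant under the Liouville flow with $\partial L$ as its invariant locus" to an honest product contact neighborhood $(L \times (0,\epsilon), U \times T^*(-\epsilon,\epsilon))$ of $\partial\Lambda$ in $N$. One must be careful that "weak" ribbon (no requirement that $M$ be Liouville with skeleton exactly $\partial L$) still suffices: the invariance of $\partial L$ under the Liouville flow of $\lambda|_M$ gives a normal coordinate to $\partial L$ inside $M$ along which $\partial L$ is dilated, and one needs the Weinstein neighborhood theorem for the Legendrian $\partial\Lambda$ in the contact manifold $N$ to upgrade this to the required ambient product structure—and to check that the contact form restricts correctly so that the translation flow is genuinely a product flow. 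I expect the verification that the ambient contact form can be put in the normal form $dt - r\alpha$ simultaneously with the ribbon being coordinatized as a product, while keeping $\partial_t$ untouched, to be the part requiring the most care; everything else is a bookkeeping of standard neighborhood theorems.
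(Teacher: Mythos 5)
Your overall shape is right (take $U$ to be the contactification of the ribbon, exploit conicity of $L$ and constancy of $f$ near $\partial L$, and try to make $\partial_t$-compatibility visible by construction), but there is a genuine gap in how the two extra directions of the codimension-zero model $U \times T^*(-\epsilon,\epsilon)$ are produced, and the one direction you do specify is misidentified. You say the collar of $\partial \Lambda$ inside $\Lambda$ is swept out by ``the Reeb field of the ambient contact form $dt-\lambda$ \dots equivalently \dots the direction normal to the ribbon inside $\partial D$.'' These are two different directions (the Reeb field of $dt-\lambda$ is $\partial_t$ itself, while the normal to $M$ inside $\partial D$ is, along $\partial L$, the Reeb direction of $\alpha$), and neither stays inside $\Lambda$: pushing $\partial L$ by the $\alpha$-Reeb flow gives a pushoff transverse to $\Lambda$, not a collar in $\Lambda$. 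The direction that sweeps $\partial L$ into $L$ is the Liouville/radial flow $r\partial_r$ of $\lambda = r\alpha$, using that $L$ is conic near $\partial D$. In the paper's construction it is precisely the pair of commuting, $t$-independent vector fields on the conic end --- $r\partial_r$ for the base coordinate $q$ of $T^*(-\epsilon,\epsilon)$ and the level-wise Reeb field of $\alpha$ for the fiber coordinate $p$ --- that furnishes the whole $T^*(-\epsilon,\epsilon)$ factor: $((m,t),(q,p))$ is sent to $(x,t)$ with $x$ obtained from $m$ by flowing time $q$ along $r\partial_r$ and time $p$ along the Reeb field; for $p=0$, $q\in(-\epsilon,0)$ one lands in $L$ (hence in $\Lambda$, since $f\equiv 0$ there), and since the $t$-coordinate passes through untouched, compatibility with $\partial_t$ is immediate. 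The ribbon hypothesis enters only to make $U = M\times(-\delta,\delta)$ with form $dt-\alpha|_M$ a contact manifold containing the lift of $\partial L$ as a Legendrian.

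Your fallback of invoking a ``standard neighborhood theorem'' for the pair ($\partial\Lambda$, ribbon) does not close this gap: the Legendrian (Weinstein-type) neighborhood theorem gives a contactomorphism to a jet-bundle model that is neither naturally of the product form $U\times T^*(-\epsilon,\epsilon)$ with $U$ the contactification of $M$, nor guaranteed to carry $\partial_t$ to a product of a flow on $U$ with the constant flow on $T^*(-\epsilon,\epsilon)$ --- which is exactly the compatibility clause of the statement and exactly the point you flag as the ``main obstacle'' without resolving it. You should replace steps (ii)--(iii) and the appeal to a neighborhood theorem by the explicit two-flow embedding above; with that in hand both the contact embedding and the $\partial_t$-compatibility follow by inspection.
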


\begin{proof}
Our aims are local near $\partial L$. So without loss of generality, we may assume $\partial L$ is connected and the primitive $f$ is zero (as opposed to some other constant) near $\partial L$.

By construction, locally near $M$, the contact manifold $N = W \times \R_t$ results from the contactification of the symplectization of the (negative by our conventions) contactification of $(M, \lambda|_M)$, so in other words
$N = ((M \times \R_z) \times \R_r) \times \R_t$ with contact form
$dt - e^r(dz + \lambda|_M)$.

On the other hand, consider the contactification $M \times \R_w$ with contact form $dw - \lambda|_M$, and the collar $M \times \R_w \times T^*\R $ with contact form $dw - \lambda|_M + p dq$. 

Then near $M$ we have the local contactomorphism
\beq
N \isom M \times \R_w \times T^*\R \qquad
(m, z, r, t) \mapsto (m', w, q, p) = (e^{rZ_M}(m), e^r z + t, r, -e^r z)  
\eeq
where $Z_M$ is the Liouville field of $\lambda|_M$, and $e^{rZ}(m)$ is the time $r$ flow of $m$ under $Z_M$. Indeed, we find
$dw - \lambda|_M + p dq$ pulls back to 
$$d(e^r z + t) - e^r \lambda|_M  -e^r z dr =   e^r dz + dt - e^r \lambda|_M = dt - e^r(dz + \lambda|_M)
$$

Now this local contactomorphism takes $\Lambda$, which lies in $z=0, t = 0$, to points $(\ell, 0, q, 0)$ with $\ell\in \partial L$, exhibiting the collar.
\end{proof}

\begin{corollary} \label{cor: relative weinstein quantization}
In the situation of Corollary \ref{cor: weinstein quantization}, suppose now that $L \subset W$ has conic end.  
Assume, in addition to the hypotheses of Corollary \ref{cor: weinstein quantization}, that the
$\partial^\infty L_t \subset \partial^\infty W$ admit a family of weak ribbons, including at $L_0$.  Then 
the conclusion of Corollary \ref{cor: weinstein quantization} holds: there is a fully faithful functor
$$\mush_{\Lambda_1; \tau}(\Lambda_1) \to \mush_{L_0; \tau}(L_0) \subset \mush_{\mathfrak{c}(W) \cup L_0; \tau}(\mathfrak{c}(W) \cup L_0)$$
\end{corollary}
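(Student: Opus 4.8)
The plan is to reduce Corollary \ref{cor: relative weinstein quantization} to the already-established Corollary \ref{cor: weinstein quantization} (equivalently, to Theorem \ref{thm: quantization}) by checking that the extra hypothesis — a family of weak ribbons for $\partial^\infty L_t$, including at $L_0$ — supplies precisely the missing ingredient in the noncompact setting, namely compatible contact collars for the family $\Lambda_t \subset W \times \R$. Concretely, Corollary \ref{cor: weinstein quantization} was proved by applying Theorem \ref{thm: quantization} to the flow of $\Lambda_1$ under the contact lift $v_\lambda + z\frac{d}{dz}$, with gappedness automatic because the $\Lambda_t$ have no self-chords. When $L$ is merely conic at infinity rather than compact, Theorem \ref{thm: quantization} (and behind it Theorem \ref{thm:specialization}) demands in its ``more general'' clause that $\Lambda_1$ be relatively compact and collared along its boundary, with the flow $\phi_t$ and the contact form witnessing gappedness compatible with the collar. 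So the content of the proof is: produce such a collar, uniformly in $t$ (including $t=0$), and compatible with the relevant flow.

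First I would invoke Proposition \ref{prop: collar from ribbon}: given a weak ribbon $M \subset \partial^\infty W$ around $\partial^\infty L$, there is a contact collar of the Legendrian lift $\Lambda \subset W \times \R$, and the standard translation flow generated by $\partial_t$ is compatible with it. I would apply this to each member of the family: the hypothesis gives weak ribbons $M_t$ for $\partial^\infty L_t$, including $M_0$ for $\partial^\infty L_0$, so Proposition \ref{prop: collar from ribbon} yields contact collars of the $\Lambda_t$ and of $\Lambda_0$. Next I would check the compatibility of these collars with the Liouville-type flow used to generate the family. The family $\Lambda_t$ is obtained by flowing $\Lambda_1$ under $v_\lambda + z\frac{d}{dz}$; on the conic end this flow is (after reparametrization) the translation flow in the symplectization coordinate, which by the last paragraph of the proof of Proposition \ref{prop: collar from ribbon} respects the collar structure — the collar there was built precisely out of the two commuting vector fields $r\partial_r$ and the level-wise Reeb field $v$, and the $\partial_t$-translation is independent of the collar parameters. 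Hence the flow $\phi_t$ is compatible with the collar in the sense required by Theorem \ref{thm: quantization}. Finally, the contact form witnessing gappedness: here there is nothing to do beyond what Corollary \ref{cor: weinstein quantization} already does, since the $\Lambda_t$ (including $\Lambda_0$) have no nonzero self-chords at all — a self-chord of $\Lambda_t$ would project to a self-intersection of the (Liouville-flowed) Lagrangian $L_t$, and $L_t$ is embedded — so gappedness holds for any compatible contact form, in particular for one adapted to the collar.

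With the collar and flow-compatibility in hand, I would then verify the remaining hypotheses of Theorem \ref{thm: quantization} are inherited verbatim from the hypotheses of Corollary \ref{cor: weinstein quantization} as assumed here: $\Lambda_1$ is (relatively) compact because $L$ has conic end and we work with the germ along the compact part plus a collar; $\Lambda_0$ and $\Lambda_1$ are universally perturbable to finite position by assumption; and $\Lambda_0$ is universally positively displaceable from legendrians by assumption. Then Theorem \ref{thm: quantization}, in its collared form, produces the fully faithful functor
$$\psi: \mush_{\Lambda_1; \tau}(\Lambda_1) \to \mush_{\Lambda_0; \tau}(\Lambda_0),$$
and composing with the inclusion $\mush_{\Lambda_0; \tau}(\Lambda_0) \simeq \mush_{L_0;\tau}(L_0) \subset \mush_{\mathfrak{c}(W)\cup L_0; \tau}(\mathfrak{c}(W)\cup L_0)$ — using that the Liouville flow identifies $\Lambda_0$ with $L_0$ as in Corollary \ref{cor: weinstein quantization}, and Lemma \ref{lem: relative constancy} for the normal-direction constancy — gives exactly the asserted functor.

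The main obstacle, I expect, is not any single deep step but the bookkeeping of \emph{uniformity in $t$}: one must ensure the contact collars produced by Proposition \ref{prop: collar from ribbon} for the various $\Lambda_t$ fit together into a single contact collar of the contact movie $\widetilde{\Lambda} \subset S^*M \times T^*(0,1]$ (compatibly extending to $\Lambda_0$ at $t=0$), so that the collared version of Theorem \ref{thm:specialization} genuinely applies to the family rather than to each slice separately. This requires that the weak ribbons $M_t$ vary continuously (smoothly) with $t$ and limit to $M_0$ — which is what ``a family of weak ribbons, including at $L_0$'' should be taken to mean — and that the collar construction in Proposition \ref{prop: collar from ribbon}, being built from the Reeb and Liouville flows of the ambient data, depends continuously on $M_t$. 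Once one grants that the ``family of weak ribbons'' hypothesis is exactly strong enough to make the collar construction parametric, the rest is a direct application of the already-proven machinery, and the proof is genuinely ``similar to'' the proof of Corollary \ref{cor: weinstein quantization} as the one-line proof in the text asserts.
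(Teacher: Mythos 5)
Your proposal is correct and follows essentially the same route as the paper, whose entire proof is: by Proposition \ref{prop: collar from ribbon} the family of weak ribbons supplies contact collars with the appropriately compatible structures, and one then invokes the collared ("more generally") clause of Theorem \ref{thm: quantization}. The details you spell out --- gappedness being automatic from embeddedness, which flows must be collar-compatible, and the uniformity of the collars in $t$ including at $t=0$ --- are exactly what the paper leaves implicit in the phrase ``appropriately compatible structures.''
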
 
\begin{proof}
By Proposition \ref{prop: collar from ribbon}, in this situation we have collars and appropriately compatible structures.  
Thus we may invoke Theorem 
\ref{thm: quantization}. 
\end{proof}

\begin{remark}
Using Corollary \ref{cor: relative weinstein quantization}, 
it is straightforward to extend Theorem \ref{thm: invariance} to the case of Weinstein pairs. 
\end{remark}

\subsection{Conicity on symplectic hypersurfaces} \label{sec: conic}

As the construction of Cor. \ref{cor: weinstein quantization} is built from the Liouville flow, the microsheaves it produces
are automatically conic. 
Let us give some evidence that in fact all microsheaves on exact symplectic manifolds are automatically conic. 

Recall the following stabilizations: 

\begin{itemize}
\item If $(W, \lambda)$ is exact symplectic, then it canonically embeds 
as the zero section of its 
contactization $(W \times \R_z, dz - \lambda )$.  
Conversely, any codimension one 
symplectic hypersurface in a contact manifold has a local neighborhood with this local model. 

\item A contact manifold with contact form $(V, \alpha)$ determines an exact symplectic manifold 
$(V \times \R_t, e^t \alpha)$.  
\end{itemize}

Recall we say a submanifold of a symplectic manifold $Z \subset (W, \omega)$ is co-isotropic if 
$T_z Z$ contains its orthogonal under the symplectic form, i.e. $T_z Z^\perp_{\omega} \subset T_z Z$,
and we say a smooth subset
of a contact manifold $Y \subset (V, \alpha)$ is co-isotropic if $T_y Y^\perp_{d \alpha} \cap \mathrm{ker}\,\alpha \subset T_y Y$.   (This notion depends on the choice of the contact form $\alpha$ only
through the contact distribution $\mathrm{ker}\, \alpha$.) 

\begin{lemma} \label{lem:conic} 
Let $(W, \lambda)$ be an exact symplectic manifold, and $X \subset W$ a smooth 
submanifold.   Then the following are equivalent.
\begin{itemize}
\item If $\tau := TX^\perp_\omega \subset TW|_X$, then $\tau \subset TX$ and $\lambda|_\tau = 0$.
\item $X$ is conic coisotropic. 
\item $X \times 0 \subset (W \times \R_z, dz -  \lambda)$ is contact coisotropic.
\item $X \times 0 \times \R_t \subset (W \times \R_z \times \R_t, e^t(dz - \lambda))$ is coisotropic. 
\end{itemize}
\end{lemma}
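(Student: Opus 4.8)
The statement is a chain of equivalences between four conditions on a smooth submanifold $X$ of an exact symplectic manifold $(W,\lambda)$. The plan is to prove the cycle of implications through a mix of linear-algebra computations at a single point and identifications of the orthogonal complements under the various (contactifications of the) forms. First I would fix $x\in X$ and set $\tau_x := (T_xX)^{\perp_\omega}$; the first condition says $\tau_x\subset T_xX$ (coisotropy) together with the vanishing $\lambda|_{\tau_x}=0$. Observe that since $\tau_x$ is symplectic-orthogonal to $T_xX$ and $d\lambda=\omega$, the condition $\lambda|_{\tau_x}=0$ says precisely that $\tau_x$ is contained in the kernel of $\lambda$ restricted to $\tau_x$; but more usefully, I would note that for a coisotropic $X$ the characteristic foliation $\tau$ is integrable and a short computation shows $\lambda|_\tau=0$ iff the leaves of $\tau$ are isotropic for $\lambda$, which is exactly the statement that $X$ is \emph{conic coisotropic} in the sense that $\tau$ is tangent to the fibers of the Liouville flow — this gives the equivalence of the first two bullets essentially by unwinding definitions.

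The substance is in relating these to the contactization conditions. For the third bullet, I would compute the contact-orthogonal of $T_{(x,0)}(X\times 0)$ inside $\ker(dz-\lambda)$ in $W\times\R_z$. Write a tangent vector to $W\times\R_z$ as $(v,s)$ with $v\in T_xW$, $s\in\R$; the contact hyperplane $\ker(dz-\lambda)$ consists of $(v,s)$ with $s=\lambda(v)$, so it is the graph of $\lambda$ and projects isomorphically to $T_xW$ via the first coordinate. Under this projection, $d(dz-\lambda)=-d\lambda=-\omega$ pulls back to $-\omega$ on $T_xW$. Hence $T_{(x,0)}(X\times 0)^{\perp_{d\alpha}}\cap\ker\alpha$ is carried isomorphically to $(T_xX)^{\perp_\omega}=\tau_x$, \emph{provided} $\tau_x$ itself lies in the graph of $\lambda$ over $T_xX$ — and membership of $\tau_x$ in $\ker(dz-\lambda)$, i.e. in the graph, is precisely the vanishing $\lambda|_{\tau_x}=0$ (the graph over $T_xW$ meets $T_xX\times\R$ in the graph of $\lambda|_{T_xX}$, and an element of $\tau_x\times 0\subset T_xX\times\R$ lies there iff $\lambda$ vanishes on it). So contact coisotropy of $X\times 0$ — namely $\tau_x\subset T_{(x,0)}(X\times 0)$ after this identification — translates exactly to $\tau_x\subset T_xX$ together with $\lambda|_{\tau_x}=0$: the first bullet. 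The fourth bullet I would handle by the same kind of computation one dimension up: in $(W\times\R_z\times\R_t,\,e^t(dz-\lambda))$ the symplectic form is $e^t\,dt\wedge(dz-\lambda)+e^t(-d\lambda)$, and the orthogonal of $T(X\times 0\times\R_t)$ is computed block-wise; the $\partial_t$ and $\partial_z$ directions contribute a symplectic pair that is already tangent, so coisotropy reduces to coisotropy of $X\times 0$ in the contactization, i.e. the third bullet. Alternatively, invoke the general principle (used elsewhere in the paper) that coisotropy is preserved and reflected under these standard stabilizations.

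The main obstacle I anticipate is bookkeeping the identifications carefully enough that ``$\tau_x\subset T_xX$'' on the symplectic side matches ``$\tau_x^{\mathrm{contact}}\subset T_{(x,0)}(X\times 0)$'' on the contact side without an off-by-one in dimension or a sign error in which form restricts to what — in particular making sure that the hypothesis $\lambda|_{\tau_x}=0$ is genuinely \emph{needed} to even make the projection from $\ker\alpha$ to $T_xW$ send the contact-orthogonal onto $\tau_x$, rather than something slightly larger or smaller. I would therefore organize the proof as: (i) $\text{bullet 1}\Leftrightarrow\text{bullet 2}$ by unwinding ``conic coisotropic'' ($\tau$ tangent to Liouville-flow fibers $\Leftrightarrow$ $\lambda|_\tau=0$ for $\tau$ the characteristic distribution); (ii) $\text{bullet 1}\Leftrightarrow\text{bullet 3}$ by the explicit graph computation above; (iii) $\text{bullet 3}\Leftrightarrow\text{bullet 4}$ by the one-further stabilization computation (or by citing the stabilization invariance of coisotropy). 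Each step is a finite-dimensional linear algebra verification at a point, so ``not a full proof'' is apt — I would state the identifications and leave the index-chasing to the reader.
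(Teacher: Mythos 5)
Your overall strategy (pointwise linear algebra, explicit orthogonal complements) is the same as the paper's, and your step (ii) — computing the $d\alpha$-orthogonal of $T(X\times 0)$ and intersecting with the graph $\ker(dz-\lambda)$ — is essentially the paper's argument for the third bullet. However, step (i) has a genuine gap. ``Conic'' means the Liouville vector field $v_\lambda=(d\lambda)^{-1}\lambda$ is tangent to $X$ (i.e.\ $X$ is invariant under the Liouville flow), not that ``$\tau$ is tangent to the fibers of the Liouville flow,'' and your justification (``$\lambda|_\tau=0$ iff the leaves of $\tau$ are isotropic for $\lambda$'') is circular: it restates $\lambda|_\tau=0$ rather than connecting it to conicity. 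The missing observation — and the heart of the paper's proof of $1\Leftrightarrow 2$ — is that $\lambda=\iota_{v_\lambda}(d\lambda)$ and $TX=\tau^{\perp_\omega}$, so for coisotropic $X$ one has $v_\lambda\in TX\Leftrightarrow d\lambda(v_\lambda,\tau)=0\Leftrightarrow\lambda|_\tau=0$. Without this identity the equivalence of the first two bullets is not established.

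Step (iii) also fails as written. In $(W\times\R_z\times\R_t,\,e^t(dz-\lambda))$ the vector $\partial_z$ is \emph{not} tangent to $X\times 0\times\R_t$, so ``the $\partial_t$ and $\partial_z$ directions contribute a symplectic pair that is already tangent'' is false; moreover the form $e^t(dt\wedge dz - dt\wedge\lambda - d\lambda)$ couples $dt$ with $\lambda$, so there is no blockwise splitting adapted to the submanifold. The paper instead computes the orthogonal of $TX\oplus\R\partial_t$ directly: the orthogonal of $TX$ is $\tau\oplus\R\partial_z\oplus\R\partial_t$, the orthogonal of $\R\partial_t$ is exactly $\ker(dz-\lambda)$, and the containment condition again reduces to $\tau\subset TX$ and $\lambda|_\tau=0$ — i.e.\ the paper proves $4\Leftrightarrow 1$ directly rather than $4\Leftrightarrow 3$. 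Your fallback of ``invoking the general principle used elsewhere in the paper that coisotropy is preserved and reflected under these stabilizations'' is not available: no such principle is established elsewhere (this lemma is precisely where those compatibilities are proved), and the direction ``contact coisotropic $\Rightarrow$ symplectization coisotropic'' is not a formality. Indeed, the symplectic orthogonal of $T(Y\times\R_t)$ meets $\ker\alpha$ in all $v$ with $\iota_v d\alpha|_{TY}=-a\,\alpha|_{TY}$ for \emph{every} $a$, not just $a=0$, so one must argue separately (by direct computation as in the paper, or by a rank/parity argument) that these extra vectors still lie in $TY$; the paper's closing remark that the argument does not obviously extend beyond smooth $X$ is a warning that this step carries real content.
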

\begin{proof}
We will show the first property is equivalent to all others. 

The submanifold $X \subset W$ is coisotropic iff $\tau \subset TX$.  
Conicity of $X \subset W$ is equivalent to asking that the Liouville vector field 
$v = (d \lambda)^{-1} \lambda$ is contained in $TX$ along $X$.  Being contained
in $TX$ is the same as being orthogonal to $\tau$, so a coisotropic is conic iff 
$0 = d\lambda(v, \tau) = \lambda|_\tau$. 

That $X \times 0$ is contact coisotropic in $ (W \times \R_z, dz -  \lambda)$ is asking that 
$\ker( dz -\lambda) \cap \ker(d\lambda |_X)^{\perp, W\times \R}_{d\lambda} \subset TX$. 
Evidently 
$\ker(d\lambda |_{X\times 0})^{\perp, W\times \R}_{d\lambda} = \tau \oplus \frac{\partial}{\partial z}$. 
We are asking when 
$$(\tau \oplus \frac{\partial}{\partial z}) \cap \ker( dz -\lambda) \subset TX$$
As already $\tau \subset TX$ and no vector with a nonzero component in $\frac{\partial}{\partial z}$ 
will be contained in $TX$, this happens if and only if $\lambda|_{\tau} = 0$. 

Finally let us see what it means for 
$X \times 0 \times \R_t \subset (W \times \R_z \times \R_t, e^t(dz - \lambda))$ to be coisotropic. 
We must study the orthogonal complement of $TX \oplus \R \frac{\partial}{\partial t}$ 
with respect to the form $e^t(dt dz - dt \lambda - d\lambda)$.  The orthogonal complement
of $TX$ is $\tau \oplus \R \frac{\partial}{\partial z} \oplus \R \frac{\partial}{\partial t}$.   The orthogonal complement of 
$\R \frac{\partial}{\partial t}$ is the kernel of $dz - \lambda$.  Thus we are asking when 
$$ (\tau \oplus \R \frac{\partial}{\partial z} \oplus \R \frac{\partial}{\partial t} ) \cap \ker(dz - \lambda) \subset TX \oplus \R \frac{\partial}{\partial t}$$
This happens iff $\ker(dz - \lambda) \cap (\tau \oplus \R \frac{\partial}{\partial z})
\subset \tau$, which in turn happens iff $\lambda|_\tau = 0$.
\end{proof}

\begin{remark}
In particular, note that for $X$ a Lagrangian in $W$, we have that 
$X$ is conic Lagrangian iff $d\lambda$ vanishes on $X$ iff $X \times 0$ is
a Legendrian in $(W \times \R_z, dz - \lambda)$. 
\end{remark}

\begin{remark}
Recall from \cite{kashiwara-schapira} that for a closed subset $X$ of a smooth manifold $W$, there 
are two notions of tangent cone, $C(X) \subset C(X, X) \subset TW|_X$.   
When $X$ is smooth, $C(X) = C(X,X) = TX$.  
If $W$ is symplectic, with form $\omega$, one 
says $X$ coisotropic (involutive in \cite{kashiwara-schapira}) if
$$C(X,X)^\perp_\omega \subset C(X)$$

Meanwhile, conicity interacts well with $C(X)$: according to \cite[Lemma 6.5.3]{kashiwara-schapira}, a vector
field preserves $X$ if it is contained in $C(X)$ along $X$. 

However, at least the proof of Lemma \ref{lem:conic}  {\em does not}  
appear to generalize to arbitrary coisotropic subsets. 
More precisely, if we substitute $\tau \to C(X,X)^\perp_{d \lambda}$ and elsewhere $TX \to C(X)$, 
then the first and fourth conditions remain equivalent.  However, $\lambda|_{\tau} = 0$ no longer
seems to imply that $X$ is conic.  Indeed, $\lambda|_{\tau} = 0$ ensures that 
the Liouville vector field is in $\tau^\perp = (C(X,X)^{\perp})^{\perp}$.  But
in general $(C(X,X)^{\perp})^{\perp} \supsetneq C(X)$, and it is in the latter space where 
we need the Liouville vector field to live.
\end{remark}

\begin{corollary} \label{cor: conicity}
For a Liouville manifold $W$ with normal polarization $\tau$, and any object $\cF \in \mush_{W, \tau}(W)$, 
the locus $ss(\cF)$ is conic coisotropic whenever it is smooth.  
\end{corollary}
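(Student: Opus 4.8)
The plan is to reduce the assertion to the already-established fact (imported from \cite{kashiwara-schapira} via the embedding construction) that the microsupport of any microsheaf on a cotangent bundle is coisotropic, together with Lemma \ref{lem:conic}. Concretely: fix the embedding $W \hookrightarrow S^*N$ of the contactization, thicken by the normal polarization $\tau$ to get $W^\tau \subset S^*N$, and recall that an object $\cF \in \mush_{W,\tau}(W)$ is by definition represented (locally, and compatibly) by a microsheaf on $S^*N$ whose microsupport lies in $W^\tau$ and is locally constant in the normal (polarization) directions. Its microsupport $ss(\cF) \subset S^*N$ is coisotropic by \cite[6.5.4]{kashiwara-schapira} (the deep theorem invoked in Theorem \ref{thm: category}(\ref{involutivity})). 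Because $W^\tau$ is, near $W$, locally contactomorphic to $W \times T^*(\text{fiber})$ with the microsupport constant along the fiber directions (Lemma \ref{lem: relative constancy}), the intersection $ss(\cF) \cap W$ — which is by definition the microsupport $ss(\cF)$ as a subset of $W$ — is coisotropic as a subset of the symplectic manifold $W$ (in the sense of $C(X,X)^{\perp_\omega} \subset C(X)$), since coisotropy of a product decomposes over the factors.

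Next I would argue that $ss(\cF) \subset W$ is automatically conic. Here is where the real geometric input enters. The category $\mush_{W,\tau}$ is, by construction, a sheaf on the \emph{conic} topology of $W$: it descends along the $\RR_{>0}$-quotient, equivalently it is pulled back from the ideal boundary / lives on $S^*N$ which records only projectivized codirections. Spelled out: the presheaf $\mush^{pre}$ on $T^*N$ depends only on saturated opens $\Omega = \RR_{>0}\cdot\Omega$, hence so does $\mush$, and therefore the microsupport of any section, viewed inside $T^*N$, is a conic subset; restricting to the symplectic hypersurface $W \subset S^*N$ and using that the identification of $W$ with an exact symplectic manifold is set up so that the Liouville vector field $v_\lambda$ is precisely the radial/scaling direction of $S^*N$ restricted to $W$ (this is exactly the content of ``$W$ is a symplectic hypersurface with the ambient contact form restricting to $\lambda$''), one gets that $ss(\cF) \cap W$ is invariant under the Liouville flow of $\lambda$, i.e. conic.

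Finally, with $ss(\cF)$ known to be conic and coisotropic inside $(W,\lambda)$, I would invoke Lemma \ref{lem:conic} — or rather its smooth case, which is clean: when $ss(\cF)$ is smooth, write $\tau := T(ss(\cF))^{\perp_\omega} \subset TW|_{ss(\cF)}$; coisotropy says $\tau \subset T(ss(\cF))$, and conicity (the Liouville field tangent to $ss(\cF)$, equivalently orthogonal to $\tau$) forces $\lambda|_\tau = 0$. That is exactly the statement ``$ss(\cF)$ is conic coisotropic (where smooth)''. So the corollary follows by assembling: (i) coisotropy from \cite{kashiwara-schapira} through the embedding, (ii) conicity from the conic-topology design of $\mush$, (iii) the linear-algebra equivalence of Lemma \ref{lem:conic}.

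The main obstacle is step (ii): being careful that ``conic with respect to $\lambda$'' really is what one gets, not merely conic with respect to some auxiliary scaling on $S^*N$ that need not restrict compatibly to $W$. One must use the defining property of an exact embedding — that the ambient contact form restricts to $\lambda$ on $W$ — to match the radial direction of $S^*N$ along $W$ with $v_\lambda$; equivalently one checks that the saturation operation $\Omega \mapsto \RR_{>0}\Omega$ on $T^*N$ restricts on $W$ to the Liouville flow saturation. After the thickening this requires also knowing the polarization directions are ``flat'' enough that no conicity is lost on restriction, which is again Lemma \ref{lem: relative constancy}. Everything else is either quoted (\cite[6.5.4]{kashiwara-schapira}, Lemma \ref{lem: relative constancy}) or the elementary linear algebra already packaged in Lemma \ref{lem:conic}.
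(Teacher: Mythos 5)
There is a genuine gap, and it sits exactly where you flagged your ``main obstacle'': step (ii) tries to get conicity of $\ssupp(\cF)\subset W$ from the conic topology on $T^*N$, via the claim that the radial scaling direction of $T^*N$ restricts along $W$ to the Liouville field $v_\lambda$. That identification is false. The $\R_{>0}$-scaling on $T^*N$ is transverse to the contact level $S^*N$ (it is the direction one quotients by to form $S^*N$), so it does not act on the symplectic hypersurface $W\subset S^*N$ at all; conicity of the ambient microsupport only says it is the cone over its projectivization, and by itself says nothing about invariance of $\ssupp(\cF)\subset W$ under the flow of $v_\lambda=(d\lambda)^{-1}(\lambda)$. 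Concretely, in the model where the germ of $S^*M$ along $W\times 0$ is $(W\times\R_z,\,dz-\lambda)$, the germ of $T^*M$ near the cone over $W\times 0$ is the symplectization $(W\times\R_z\times\R_t,\,e^t(dz-\lambda))$, and the ambient scaling corresponds to translation in $t$, not to the Liouville flow on the $W$ factor. Relatedly, step (i) cannot be justified by ``coisotropy of a product decomposes over the factors'': the form $e^t(dz-\lambda)$ is not a product symplectic form, so coisotropy of a locus of the shape $X\times 0\times\R_t$ is not a factor-by-factor statement. (The polarization-thickening directions genuinely are a product with $T^*\R^k$, and that part of your reduction, via Lemma \ref{lem: relative constancy}, is fine.)

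The paper closes exactly this gap with the tool you already cite but use only in its trivial direction. By \cite[Theorem 6.5.4]{kashiwara-schapira} the microsupport of a local representative is conic coisotropic in $T^*M$; conicity plus containment in $W\times 0$ at infinity means it has the shape $X\times 0\times\R_t$ in the symplectization model; and the substantive content of Lemma \ref{lem:conic} is precisely the equivalence between coisotropy of $X\times 0\times\R_t$ for $e^t(dz-\lambda)$ and the conditions ``$\tau:=TX^{\perp_\omega}\subset TX$ and $\lambda|_\tau=0$'', i.e.\ $X$ conic coisotropic in $(W,\lambda)$. So both the coisotropy in $W$ and the $\lambda$-conicity come out of the ambient coisotropy through that lemma; neither needs, nor can correctly be obtained from, the conic-topology argument. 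Your step (iii) invokes only the tautological first equivalence of the lemma; replace steps (i)--(ii) by the implication from the fourth condition to the first two, and your argument becomes the paper's.
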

\begin{proof}
In an embedding $W \times \R \to S^*M$, a local representative for $\cF$ has by \cite[Theorem 6.5.4]{kashiwara-schapira} 
conic co-isotropic microsupport in $T^*M$.  By Lemma \ref{lem:conic}, since this microsupport 
is contained in $W \times 0$ at infinity, it must be conic in $W$. 
\end{proof}

\begin{remark}
One could ask for even a stronger version of conicity, as follows.  On the contactization 
$(W \times \R_z, dz-\lambda)$ of $(W, \lambda)$, one has the contact vector field 
$v + z \frac{d}{dz}$ which lifts the Liouville vector field $v$.   By quantization of contact
isotopy, one has an isomorphism $\Phi_t: e^{tv}_* \mush \cong \mush$.  One could ask for objects
$\cF$ 
which are conic in the sense that there exist isomorphisms $\cF \cong \Phi_t(\cF)$.  
Note that
the full subcategory spanned by these objects is the same the Liouville-equivariant microsheaves,
since $\R$ is contractible. 

In fact however any object with conic microsupport is also conic in this sense.  Indeed, the
quantization of the isotopy would produce an object in $W \times T^* \R$ whose microsupport
was contained in $(\Phi_t(ss(\cF)),t,0)$, the last factor being constant because the microsupport
was already conic.  Such an object is necessarily locally constant in the $t$ direction. 
\end{remark}

\begin{remark} When $W = T^*M$ is a cotangent bundle, there are now two notions of microsheaves
on $W$.  The first is the original $\mush_{T^*M}$.  The second is what we have just defined, using
the fiber polarization.  In fact these are the same: $\mush_{T^*M} = \mush_{T^*M, \mathrm{fibers}}$ as
sheaves of categories.  One can see this by considering the image of the natural map 
$sh(M) \xrightarrow{\pi^*} sh(M \times (0,\infty)) \xrightarrow{i_*} sh(M \times \R)$. 
In particular all objects of the ``new" $\mush$ are indeed conic. 
\end{remark}

\section{Maslov data and descent} \label{sec:descent}

%%%%%%%%%%%%%%%%%%%%%%%%%%%%%%%%%%%%%%%%%%%%%%%%%%%%

In Section~\ref{sec:polarizable}, we defined and studied microsheaves on contact manifolds $\calU$, and from there exact symplectic manifolds, equipped with a {\em stable normal polarization} $\sigma$. This structure was used at the outset in Section~\ref{sec: embedding} to construct an embedding of a {\em thickening} $\calU^\sigma$  into the contact manifold $\RR^{2N+1\gg 0}$. Here we could use sheaf theory to construct microsheaves on the thickening $\calU^\sigma$ and then prove theorems about them.

In this section, we  take on the independent task of showing microsheaves on  the thickening $\calU^\sigma$ do not depend on the choice of stable normal polarization, but only on its induced Maslov data 
as formulated in Definition~\ref{def:maslov data}. This is a purely homotopical assertion, and we prove its ``universal" version as formulated in the descent of Theorem~\ref{thm: descent}. The main challenge in doing so is the care we must take to show such descent holds for the ``higher" homotopical structures involved.    Thus in contrast with previous sections, the arguments here are $(\infty-)$categorical rather than geometric in nature. 
In particular, only sheaves which are local systems play any essential role.

To say a word about the specific subsections: In \ref{ss:mon univ}, we  restate the dependence of microsheaves on stable normal polarizations in terms of  unstable frame bundles; this is technically convenient in that it allows us to work with finite-dimensional principal bundles. In  \ref{ss: rel indep}, we record choices that allow one to trivialize the dependence of microsheaves on polarizations; in particular, we explain how functors between categories of microsheaves are independent of 
 polarizations. 
 In \ref{sec: morphism}, we use the prior results to construct a universal theory of microsheaves over the classyifying space of stable polarizations; we organize this using $2$-categorical language since key to the later developments is the compatibilty of this universal theory of microsheaves with the group structure of the classifying stack. In \ref{sec: equivariance}, we spell out this compatibility in the language of equivariant geometry;  here one could say we show that microsheaves form a character sheaf over the classifying stack in a suitable sense. 
 Finally, in \ref{ss: forget pol}, we show how this established equivariance provides the sought-after descent.

Throughout, recall we fix at the outset some stable compactly generated symmetric monoidal $\infty$-category $\cC$. 
We write $Pic(\cC)$ for the spectrum of $\cC$-module automorphisms of $\cC$.  For example, 
$Pic(Mod-\Z) = \Z \oplus B\Z^*$.  Here, $\Z^*$ is the group of units in $\Z$, namely $\{1, -1\}$.

We will write $U$ for the stable unitary group, $O$ for the stable orthgonal group, and 
$U/O$ for the stable Lagrangian Grassmannian.

\subsection{Working relative to universal polarization} \label{lgr microsheaves}

As  $\mu sh$ is local in nature, it is natural to expect that, rather than 
demanding the existence of a global polarization, one may work over the space of local 
polarizations, i.e. the Lagrangian Grassmannian of the contact distribution.  
The results of Section \ref{cotangent lagrangian grassmannian} allow us to do this
in a straightforward way. 

Let $(\cU, \xi)$ be a contact manifold, let $\frf: LGr(\xi) \to \cU$ be the Lagrangian Grassmannian of its contact
distribution, and consider the relative cotangent bundle $T^* \frf$.  Per Lemma \ref{lem:contrelcot} 
and Lemma \ref{lem:contcansec},
the space $T^* \frf$ is a contact manifold whose contact distribution carries a Lagrangian distribution
(canonically up to the contractible choice of almost complex structure).  Thus we may define microsheaves
on $T^* \frf$ by the methods of Section \ref{sec: embedding}, and in particular consider 
 the sheaf $\mu sh_{LGr(\xi)}$ 
of microsheaves on $T^* \frf$ supported along the zero section $LGr(\xi) \subset T^*\frf$.   

A Lagrangian distribution $\rho \subset \xi$ is equivalent data to a section $\sigma_\rho: \cU \to LGr(\xi)$ of $\frf$. 
A neighborhood of $\sigma_\rho(\cU)$ inside of $LGr(\xi)$ provides a Lagrangian thickening of $\sigma_\rho(\cU) \subset T^*\frf$. So, by definition, 
there is a stable normal polarization $\eta$ such that 
\begin{equation} \label{polarization pullback} 
\mu sh_{\cU, \eta} = \sigma_\rho^* \mu sh_{LGr(\xi)}
\end{equation}
It is an exercise to show that $\eta$ and $\rho$ are related by Lemma \ref{lem:normalsec}, which we 
henceforth express by writing $\eta = -\rho$.

\subsubsection{Monodromy along Lagrangian Grassmannian}
By  Lemma~\ref{lem: relative constancy},  $\mu sh_{LGr(\xi)}$ is locally constant along the fibers of $\frf: LGr(\xi) \to \cU$.

To isolate the monodromy of  $\mu sh_{LGr(\xi)}$ along the fibers of  $\frf: LGr(\xi) \to \cU$, let us simplify the situation by
 restricting the support of objects to the preimage of 
some smooth Legendrian $L \subset \cU$. 
Denote by $\wt L \subset \frf^{-1}(L) \subset  LGr(\xi)$  the preimage of $L$, and let $\mu sh_{\widetilde{L}} \subset \mu sh_{LGr(\xi)}$ be the subsheaf of microsheaves on $T^*\frf$ supported along $\wt L$.
By Lemma \ref{cor: locally local systems}, $\mu sh_{\widetilde{L}}$ is a local system of categories with stalk 
noncanonically isomorphic to $\cC$.  The monodromies are $\cC$-linear because all sheaf operations are;
e.g.~in principle they can be  computed using the construction of Lemma \ref{lem: isotopy constancy}.
Thus, the sheaf of local isomorphisms  from the constant sheaf
\begin{equation} \frM_{\widetilde{L}} := Isom(\cC_{\widetilde{L}},  \mu sh_{\widetilde{L}}) \end{equation}
is a $Pic(\cC)$-torsor  such that
$\mu sh_{\widetilde{L}} =\cC \times^{Pic(\cC)} \frM_{\widetilde{L}}  $.  

Let us further restrict to the situation where $L$ retracts to a point $\ell \in L$, and we use the tangent polarization $T_\ell L \subset \xi|_\ell$ to give an isomorphism $LGr(\xi)_\ell \simeq U(n)/O(n)$, where  $n = \dim L$.  Then
$\widetilde{L}$ deformation retracts to the fiber, and restricting $\mu sh_{\widetilde{L}}$  to the fiber gives 
a local system of categories $\mush_{U(n)/O(n)}$ and corresponding 
$Pic(\cC)$-torsor
$
\frM_{U(n) / O(n)}.
$

\subsubsection{Stabilization}

Observe that all of the above constructions are  compatible with stabilization. If we fix some $N \gg 0$,
we can similarly consider $\frf_N: LGr(\xi \oplus \CC^N) \to \cU$, and
the corresponding sheaf of categories  $\mush_{LGr(\xi, N)}$, local system of categories $\mush_{U(n+N)/O(n+N)}$, and $Pic(\cC)$-torsor $\frM_{U(n+N)/O(n+N)}$.

Taking $N\to \infty$, 
we write $(U/O)(\xi) := \lim_{N \to \infty} LGr(\xi \oplus \CC^N)$ for the stable Lagrangian Grassmannian bundle, and $\frf_\infty:(U/O)(\xi)\to \cU$ for its projection. 
 Note $U/O:= \lim_{N \to \infty} U(n+ N)/O(n+N)$, and that $\frf_\infty$ is a principle $U/O$-bundle.
We have the corresponding sheaf of categories  $\mush_{(U/O)(\xi)}$, local system of categories $\mush_{U/O}$, and $Pic(\cC)$-torsor $\frM_{U/O}$, whose classifying map we denote by 
\begin{equation} \label{maslov map} 
\frM:  U/O \to BPic(\cC)
\end{equation}

 \subsection{The monodromy is universal} \label{ss:mon univ}

\subsubsection{Working relative to universal frame} 
It will be useful to work relative to the universal frame bundle (rather than polarization bundle), since this is a principle bundle already
before stabilization.\footnote{Indeed, $U(n)/O(n)$ is not a group, although its stabilization $U/O$ is.  
While our final results concern the stabilized setting, our technical foundations are in the finite dimensional 
microsheaf theory.  This makes it technically difficult to directly prove that various statements about $U/O$-equivariance
of microsheaf categories.  Instead we check $U(n)$ equivariance compatibly with a trivialization of the $O(n)$ action 
and then pass to the stabilization.}

Let $\frg: Fr(\xi) \to \cU$ be the frame bundle of the contact
distribution with its natural factorization
\begin{equation} 
\xymatrix{
\frg: Fr(\xi) \ar[r]^-{p} & LGr(\xi) \ar[r]^-\frf & \cU
}
\end{equation}
where $p$ is an $O(n)$-bundle.

Set $\mu sh_{Fr(\xi)} = p^*\mu sh_{LGr(\xi)}$. 
A frame $\tau$ of $\xi$ is equivalent data to a section $\sigma_\tau: \cU \to Fr(\xi)$ of $\frg$. 
By standard identities, we have
\begin{equation}
\mu sh_{\cU, - \ol \tau} = \sigma_\tau^* \mu sh_{Fr(\xi)}
\end{equation}
where we write $- \ol \tau$ for the stable normal polarization associated to  the  polarization  $\ol \tau \subset \xi$ induced by $\tau$, i.e. given by the section $\sigma_{\ol \tau} = p_\cU\circ \sigma_\tau$ of $\frf$.
Similarly, set 
$\mu sh_{U(n)} = p^*\mu sh_{U(n)/O(n)}$ and
$
\frM_{U(n)}= p^*\frM_{U(n) / O(n)}
$, where $p:U(n) \to U(n)/O(n)$ is the quotient map.
Thus $\mu sh_{U(n)}$ is   a local system of categories, $\frM_{U(n)}$ is a 
$Pic(\cC)$-torsor, and we have
$\mu sh_{U(n)} =\cC \times^{Pic(\cC)} \frM_{U(n)}  $.

Note all of the above are naturally $O(n)$-equivariant expressing that they are pulled back from the respective quotient by $O(n)$.

\subsubsection{Local factorization} 

Now we consider the ``local" situation where $\frg: Fr(\xi) \to \cU$ admits a section.

\begin{lemma} \label{lem: local structure of mush} 
Suppose given a frame $\tau$ of $\xi$ with section  $\sigma_\tau$ of $\frg$,
 and denote by $\ol \tau \subset \xi$ the induced polarization  with section  $\sigma_{\ol \tau}$ of $\frf$,
 with  stable normal polarization $- \ol \tau$.
 
Denote by $\widetilde{\sigma} :  \cU\times U(n) \xrightarrow{\sim} Fr(\xi)$ the 
 $U(n)$-equivariant isomorphism satisfying 
$\sigma_\tau = \widetilde{\sigma}|_{\cU \times 1}$. 
Then there is a resulting canonical   isomorphism of $O(n)$-equivariant sheaves 
$$\widetilde \sigma^* \mush_{Fr(\xi)} \simeq \mush_{\cU, -\ol \tau} \boxtimes_\cC \mush_{U(n)}$$ 
\end{lemma}

\begin{proof} To simplify the notation, we may use $\wt \sigma$ to trivialize $\xi$ and  $Fr(\xi)$, and thus from the start work with the diagram of projections
$$
\xymatrix{
\frg: \cU\times U(n)  \ar[r]^-{p} & \cU\times U(n)/O(n)  \ar[r]^-\frf & \cU
}
$$
So we seek an
$O(n)$-equivariant  isomorphism 
 $$ \mush_{\cU\times U(n)} \simeq \mush_{\cU, -\ol \tau} \boxtimes_\cC \mush_{U(n)}$$ 
where $ \mush_{\cU\times U(n)} $ is defined with respect to the stable normal   polarization 
provided by the polarization: 
$$\ol g \boxtimes \nu \subset \CC^n \boxtimes T_g^* U(n),
\qquad (u, g) \in \cU\times U(n), 
$$ where $\ol g \subset \CC^n\simeq \xi$ is the polarization provided by the frame $g\in U(n)$, and $\nu \subset T_g^* U(n)$ is the tautological polarization.

Consider the contact manifold $\cU\times \CC^n \times  T^* U(n)$ with support condition $\cU \times \RR^n \times U(n)$, and polarization
$$\ol g \boxtimes \RR^n \boxtimes \nu \subset \CC^n \boxtimes T_g^* U(n),
\qquad (u, z, g) \in \cU\times \CC^n\times U(n).
$$
Let $\mush_{\cU\times \RR^n \times U(n)}$ be the corresponding sheaf of categories, and note over
$\cU \times U(n)$
the evident isomorphism 
$$ 
\mush_{\cU\times \RR^n \times U(n)} \simeq  \mush_{\cU\times U(n)} $$

Observe the corresponding stable normal   polarization is also represented by 
$$
\ol 1 \boxtimes g (\RR^n) \boxtimes \nu \subset \CC^n \boxtimes T_g^* U(n),
\qquad (u, z, g) \in \cU\times \CC^n\times U(n).
$$
 where  $\ol 1 \subset \CC^n\simeq \xi$ is the polarization provided by the tautological frame $1\in U(n)$.
Thus parsing the last two terms of the product together,  over
$\cU \times U(n)$ we have
the canonical isomorphism 
$$ 
\mush_{\cU\times \RR^n \times U(n)} \simeq  \mush_{\cU, -\ol \tau} \boxtimes_\cC \mush_{U(n)}
$$
The composition of the above isomorphisms gives the asserted isomorphism. Note all of the constructions are evidently $O(n)$-equivariant. 
\end{proof}

\subsubsection{Return to stabilization}

Observe that all of the above constructions are  compatible with stabilization. If we fix some $N \gg 0$,
we can similarly consider $\frg_N: Fr(\xi \oplus \CC^N) \to \cU$, and
the corresponding sheaf of categories  $\mush_{Fr(\xi, N)}$, local system of categories $\mush_{U(n+N)}$, and $Pic(\cC)$-torsor $\frM_{U(n+N)}$.

Taking $N\to \infty$, 
we write $U(\xi) := \lim_{N \to \infty} Fr(\xi \oplus \CC^N)$ for the stable frame bundle, and $\frg_\infty:U(\xi)\to \cU$ for its projection. 
 Note $U:= \lim_{N \to \infty} U(n+ N)$, and that $\frg_\infty$ is a principle $U$-bundle
with factorization \begin{equation} 
\xymatrix{
\frg: U(\xi) \ar[r]^-{p_\infty} & (U/O)(\xi) \ar[r]^-{\frf_\infty} & \cU
}
\end{equation}
We have the corresponding sheaf of categories  $\mush_{U(\xi)} \simeq  p_\infty^*\mush_{(U/O)(\xi)}$, local system of categories $\mush_{U} \simeq  p_\infty^*\mush_{U/O} $, and $Pic(\cC)$-torsor $\frM_{U} 
 \simeq  p_\infty^*\frM_{U/O}$. 

Now  Lemma~\ref{lem: local structure of mush} immediately implies:

\begin{cor} \label{cor: local structure of mush}  
Suppose for some $N\gg 0$, given a frame $\tau$ of $\xi \oplus \CC^N$ with section  $\sigma_\tau$ of $\frg_N$,
 and denote by $\ol \tau \subset \xi$ the induced polarization of $\xi \oplus \CC^N$ with section  $\sigma_{\ol \tau}$ of $\frf_N$.
 
Denote by $\widetilde{\sigma} :  \cU\times U \xrightarrow{\sim} U(\xi)$ the 
 $U$-equivariant isomorphism satisfying 
$\sigma_\tau = \widetilde{\sigma}|_{\cU \times 1}$. 
Then there is a resulting canonical   isomorphism of $O$-equivariant sheaves 
$$\widetilde \sigma^* \mush_{U(\xi)} \simeq \mush_{\cU, -\ol \tau} \boxtimes_\cC \mush_{U}$$ 
\end{cor}

\subsection{Relative independence of polarization} \label{ss: rel indep} Fix a symplectic vector space $V$.  
We regard it as an exact symplectic manifold carrying the radial Liouville vector field, i.e.
with exact symplectic form $\sum x_i d y_i - y_i d x_i$.  In particular, all linear Lagrangians
are conic.
Note that a polarization on $V$ is determined (up to contractible choice) 
by its value at the origin, which is again a single linear Lagrangian in $V$. 

Given linear Lagrangians $L, P \subset V$, we
write $\mush(L; P) := \mush_{L, P}(L) = (\mush_{L, P})_0$ for the category of microsheaves
on $V \times \R$ defined using the polarization $P$ and supported on $L \times 0 \subset V \times \R$. 
As we discussed in the previous section, the 
category $\mush(L; P)$ is noncanonically isomorphic to $\cC$, and can have monodromy 
as $P$ varies in the Lagrangian Grassmannian.  However:

 \begin{lemma}\label{lem:pol indep} Let $V$ be a finite-dimensional hermitian vector space. 
 Given Lagrangians $L_0, L_1, P \subset V$, the category of ($\cC$-linear and colimit preserving) functors 
 \beq
 \Fun^L_{\cC}(\mush(L_0; P), \mush(L_1; P))
 \eeq
 is  independent of $P$. More precisely, the natural locally constant sheaf of categories $\cF$ over $Lag(V)$ with stalk  
 \beq
\cF|_P =  \Fun^L_{\cC}(\mush(L_0; P), \mush(L_1; P)), \qquad P\in Lag(V),
\eeq
   is in fact constant.
 \end{lemma}

We need the following general fact:

\begin{lemma} \label{closed monoidal homs}
 Let $T$ be a connected topological space, $\cQ$ a closed monoidal category with unit $1_\cQ \in \cQ$, 
 and $\cE$  a locally constant sheaf on $T$ valued in $\cQ$ with invertible stalks.
 Then for any $t, s \in T$, there's a non-canonical isomorphism
 $\Hom(\cE_t, \cE_s) \cong 1_{\cQ}$. 
\end{lemma} 
\begin{proof} 
Recall that a closed monoidal category has by definition internal hom objects and a monoidal structure
satisfying the tensor hom adjunction.  
 Let
 $T \xrightarrow{\pi} t \xrightarrow{i} T$ be the projection to and inclusion of the point $t$. 
 Then 
 $\underline{\Hom}(\cE_t, \cE_s) = \underline{\cH om}(\pi^* i^* \cE, \cE)_s = (\cE_t^\vee \otimes \cE)_s$
 is the stalk of a locally constant sheaf.  The stalk at $s = t$ is evidently $1_{\cQ}$. 
\end{proof}

 \begin{proof}[Proof of Lemma \ref{lem:pol indep}]
 We apply Lemma \ref{closed monoidal homs} with:
 \begin{itemize}
 \item $T = Lag(V)$ with $t = L_0$ and $s=L_1$,
 \item $\cQ$ is the category of locally constant sheaves on $Lag(V)$ valued in $\cC$, with closed monoidal 
 structure inherited from $\cC$.   
 (This copy of $Lag(V)$ will parameterize the polarization $P$, so should not be confused with $T = Lag(V)$.)
 Note that 
 locally constant sheaves on $T$ valued in $\cQ$ are the same as locally 
 constant sheaves on $T \times Lag(V)$ valued in $\cC$. 
 \item $\cE$ is the locally constant sheaf on $T \times Lag(V)$ with stalk at $(L, P)$ given by $\mush(L; P)$
\end{itemize} 
The conclusion of the lemma now follows from the fact that the monoidal unit in $\cQ$ is the constant sheaf on 
$Lag(V)$ with stalk the monoidal unit $1_\cC \in \cC$. 
\end{proof}

As always, we have compatibility with stabilization, as we record in:

 \begin{lemma}\label{lem:real indep} Let $V$ be a finite-dimensional symplectic vector space, and $W_\RR$ a 
  finite-dimensional real vector space with complexification $W = W_\RR \otimes_\RR \CC$, carrying the standard
  symplectic form.
  
 Given Lagrangians $L_1, L_2 \subset V$, there is a natural equivalence of  functor categories compatible with composition
 \beq
 \Fun^L_{\cC}(\mush(L_1), \mush(L_2 ))
\simeq  \Fun^L_{\cC}(\mush(L_1 \oplus W_\RR), \mush(L_2  \oplus W_\RR))
 \eeq
 from microsheaves on $L_1 \oplus W_\RR \subset V \oplus W$ to microsheaves on on $L_2 \oplus W_\RR \subset V \oplus W$ with respect to any polarization as in Lemma~\ref{lem:pol indep}. 
 \end{lemma}
 
\subsection{A local system of 2-categories over $B(U/O)$}  \label{sec: morphism}

 \begin{definition} 
Let $V$ be a symplectic vector space.  We write 
$\frL_V$ for the $2$-category whose objects are linear Lagrangians $L \subset V$, 
and whose morphisms are the $\cC$-linear colimit preserving functors 
$
 \Fun^L_{\cC}(\mush(L_1), \mush(L_2)).
 $
Here we use Lemma~\ref{lem:pol indep} to suppress any choice of polarization.
\end{definition} 

We write $B\cC$ for the $2$-category on one object, whose morphisms are given by the monoidal category $\cC$. 
Observe that the full subcategory on given Lagrangian $L \in \frL_V$ is canonically isomorphic to $B \cC$, and 
correspondingly $\frL_V$ is noncanonically isomorphic to $B\cC$.

Recall that a model for $BU(n)$ is given by taking the $k$-simplices to be $(C^\infty)$  
symplectic vector bundles of dimension $2n$ over $(C^\infty)$  $k$-simplices, with the 
obvious face and degeneracy maps.  Then $V \mapsto \frL_V$ can be regarded as the $0$-simplex data 
of a local system of $2$-categories
over $BSp(2n, \R)$.   We extend it to the higher simplices as follows.
Consider a $k$-simplex of $BU(n)$ given by a symplectic vector bundle $V_{\Delta_k}$ over $\Delta_k$. 
Objects in $\frL_{V_{\Delta_k}}^\circ$ are smooth Lagrangian sub-bundles in $V_{\Delta_k}$.  To define
morphisms, consider the pullback of $V_{\Delta_k}$ to the total space of $T^* \Delta_k$; denote it $V_{T^*\Delta_k}$. 
On the total space of $V_{\Delta_k}$, there is a $1$-form $\lambda_V$ which vanishes on any vector pulled back
from the base, and whose restriction to any fiber is the $1$-form giving rise to the radial Liouville structure used in 
the previous subsection.  The 
total space of $V_{T^*\Delta_k}$ carries a $1$-form $\lambda = \lambda_V + \lambda_{T^*\Delta_k}$
(we suppress the notation for pullbacks).  Consider the natural inclusion 
$i: V_{\Delta_k} \subset V_{T^*\Delta_k}$.
  We claim that if $L \subset V_{\Delta_k}$ is a Lagrangian sub-bundle, 
then $i(L)$ is a conic Lagrangian in $V_{T^*\Delta_k}$.
Using these conic Lagrangians, we proceed
as before with the definition of $\frL$.  The face maps are defined by pullback along inclusion of fibers, and
are isomorphisms by Lemma \ref{lem: relative constancy}.  Denote the resulting 
local system of $2$-categories by $\frL(n)$.  
By compatibility with stabilization (Lemma \ref{lem:real indep}), this is the pullback
of a local system of $2$-categories $\frL$ on $BU$.  

We recall that local systems of categories locally isomorphic to $B\cC$ are classified by maps to $B^2 Pic(\cC)$.  
Thus we have defined maps $\frL(n): BU(n) \to B^2 Pic(\cC)$ which stabilize to $\frL: BU \to B^2 Pic(\cC)$. 

One defect of the above description of $\frL: BU \to B^2 Pic(\cC)$ as a limit is that, while the source and target
are commutative $(E_\infty)$, it is not obvious that $\frL$ respects these structures.  In particular, we would like to 
argue descent to $B(U/O)$ by trivializing $\frL|_{BO}$, and for this argument it is helpful to know that $\frL$ is
itself a group homomorphism.  Here we resolve this issue.  

Consider the spaces

$$M_\CC = \coprod_{n\geq 0} BU(n) \qquad \qquad M_\RR = \coprod_{n\geq 0} BO(n)$$
These classify respectively finite-rank complex and  real vector bundles. The direct sum of vector bundles equips  $M_\CC$ and $M_\RR$ 
with the structure of commutative monoids in spaces.  Note  that $\pi_0$ of either is the commutative monoid of natural numbers $\NN = \{0, 1, 2,\ldots, \}$. 

\begin{proposition}
$\bigoplus \frL(n) : \coprod_{n\geq 0} BU(n) \to B^2 Pic(\cC)$ is a map of commutative monoids. 
\end{proposition} 
\begin{proof}
Recall that the map $\frL(n)$ was defined in terms of as the classifying map of a certain sheaf of $2$-categories, also called $\frL(n)$, 
over $BU(n)$.  
Recall that simplices of $BU(n)$ are symplectic vector bundles over simplices.  The monoidal structure
$a_{n,m}: BU(n) \times BU(m) \to BU(n + m)$ is determined by sending 
$V_{\Delta} \times V'_{\Delta'} \mapsto (V \oplus V')_{\Delta \times \Delta'}$.  
Thus our task is to give isomorphisms $i_{V, V'}: \frL_{V} \otimes \frL_{V'} \mapsto \frL_{V\oplus V'}$, compatible with 
the commutativity and associativity of direct sums of vector spaces.  

Here, $\frL_{V} \otimes \frL_{V'}$ is the 2-category whose objects are formal symbols $L \otimes L'$ for $L \in \frL_V$ and $L' \in \frL_{V'}$, 
and whose morphisms are 
\begin{equation} \label{tensor morphisms} 
\Hom_{\frL_{V} \otimes \frL_{V'}}(L \otimes L', M \otimes M') := \Hom_{\frL_V}(L, M) \otimes \Hom_{\frL_{V'}}(L', M')
\end{equation} 
where the tensor product here 
is of $\cC$ module categories. 

We define $i_{V, V'}(L \otimes L') = L \oplus L'$.  On morphisms, we have a canonical identification 
$$\Hom_{\frL_{V}}(L, M) \otimes \Hom_{ \frL_{V'}}(L', M') = \Fun_{\cC}^L( \mush(L), \mush(M) ) \otimes  \Fun_{\cC}^L( \mush(L'), \mush(M') )$$
$$ = \Fun_{\cC}^L(\mush(L \oplus L'), \mush(M \oplus M')) = \Hom_{\frL_{V \oplus V'}}(L \oplus L, M \oplus M') $$

The only commutativity in question is the commutativity of the tensor product on the RHS of \eqref{tensor morphisms}; 
recall that if $\cC$ is $E_{n}$ then the monoidal structure on $\cC$-modules is $E_{n-1}$.  
(We have assumed throughout that our $\cC$ is $E_\infty$.) 
\end{proof}

One calls a commutative monoid  in spaces $M$ group-like if the commutative monoid $\pi_0(M)$ has inverses. The inclusion of group-like commutative monoids into all commutative monoids has a left adjoint called the group-completion denoted by $M\mapsto M^{gp}$. Note the natural isomorphism $\pi_0(M^{gp}) \simeq \pi_0(M)^{gp}$.

\begin{lemma}\label{lem:gp}
There are natural equivalences of group-like commutative monoids $M_\CC^{gp} \simeq BU \times \ZZ$ and $M_\RR^{gp} \simeq BO \times \ZZ$.  
\end{lemma}

\begin{proof}
The proof is the same in both cases and we will write $M$ for either $M_\CC$ or $M_\RR$.  We seek to show 
 $M^{gp} \simeq BGL_\oo \times \ZZ$ using that $GL_\oo(\CC) \simeq U$ and   $GL_\oo(\RR) \simeq O$.

Let $L \in M$ classify the trivial rank one vector space. 
Consider the filtered colimit
\beq
\xymatrix{
 M[-L] := \colim (M \ar[r]^-{\oplus L} & M \ar[r]^-{\oplus L}  &  M \ar[r]^-{\oplus L} &\cdots )
}\eeq
Note $ M[-L]$ is the colimit of a diagram of $M$-spaces so is itself an $M$-space.
By the Group Completion Theorem\footnote{See for example Corollary 7 of \url{https://www.uni-muenster.de/IVV5WS/WebHop/user/nikolaus/Papers/Group_completion.pdf}}, the natural map is an equivalence of $M$-spaces
\beq
\xymatrix{
 M[-L] \ar[r]^-\sim & M^{gp}
}\eeq
Note $\pi_0( M[-L]^{gp} ) \simeq  \pi_0( M^{gp} ) \simeq  \pi_0( M )^{gp} \simeq \NN^{gp} \simeq \ZZ$.

Recall filtered colimits commute with fiber products. Thus we can calculate the based loop spaces at any component
\beq
\Omega_{L^{\oplus m}} M[-L] \simeq \colim_n GL(L^{\oplus m} \oplus L^{\oplus n})  \simeq GL_\oo
\eeq
where the  maps are induced by $L^{\oplus m} \oplus L^{\oplus n} \simeq L^{\oplus m} \oplus L^{\oplus n} \oplus \{0\} \subset 
L^{\oplus m} \oplus L^{\oplus n} \oplus L$. 

We conclude  $M^{gp} \simeq BGL_\oo \times \ZZ$ as asserted.
\end{proof}

\begin{corollary}
 $\frL : M_\CC \to B^2\Pic(\cC)$ descends to an $E_\infty$ map
$\frL :BU \to B^2\Pic(\cC)$.  
\end{corollary}
\begin{proof}
Being a map from a monoid to a group, $\frL: M_\CC \to B^2\Pic(\cC)$ factors through the group completion.  
\end{proof} 

\begin{theorem} \label{delooped maslov map}
$\frL : BU \to B^2\Pic(\cC)$ descends to an $E_\infty$ map $\frL :B(U/O) \to B^2\Pic(\cC)$.  
\end{theorem}
\begin{proof}
We should give 
a trivialization of $M_\RR \xrightarrow{\otimes \CC} M_\CC \xrightarrow{{\frL}} B^2\Pic(\cC)$.  This is provided by 
Lemma \ref{lem:real indep}. 
\end{proof}

Recall the map $\frM: U/O \to B\Pic(\cC)$ defined in Section \ref{lgr microsheaves}.  
\begin{proposition} \label{loop deloop maslov map}
$\frM = \Omega \frL$. 
\end{proposition} 
\begin{proof}
The map $\frM$ classified a certain local system of categories $\mush_{U/O}$ locally equivalent to $\cC$.  
Returning to its definition, we find $\Omega \frL$ also classifies such a local system $F_{\Omega \frL}$ : 
at a point  $L \in  U/O$ represented by $A \in U_n$,  we have 
$(F_{\Omega \frL})_A = \Fun^L_{\cC}(\mush(\RR^n), \mush(A(\RR^n)))$. 
We have suppressed the polarization by Lemma~\ref{lem:pol indep}; now let us choose any fixed $P$ and moreover
fix any isomorphism $\cC \cong \mush(\RR^n; P)$.  
This identifies our stalks as simply $(F_{\Omega \frL})_A = \mush(A(\RR^n); P)$. 
Thus $F_{\Omega \frL}$ agrees with $\mush_{U/O}$, giving the result. 
\end{proof} 

\begin{remark}
In particular, we see that $\frM$ itself can be given an $E_\infty$ structure. 
One could contemplate 
showing this directly, and then just defining $\frL = B\frM$, rather than go through the above discussion of $2$-categories
and group completions.  This is presumably possible, but one must face the difficulty that the commutative structure
on $U$ is nontrivial to describe, being in particular not the limit of commutative structures on the finite $U(n)$.  One such description
begins: given $A \in U(n)$ and $B \in U(m)$, there's a path carrying $B \oplus \R^\infty \rightsquigarrow 1_n \oplus B \oplus \R^\infty$,
and $A \oplus \R^\infty$ visibly commutes with $1_n \oplus B \oplus \R^\infty$.  One would have to contemplate all these
paths and the corresponding higher homotopies.  

By contrast, our approach above accesses the monoidal structure on $BU$ directly through the manifestly commutative direct sum of 
vector spaces.  As the above description makes clear, this  is anyway the source of the commutative structure on $U$. 
\end{remark}

\subsection{Equivariance} \label{sec: equivariance}
Let $F_{can}$ be the local system of $\cC$-linear categories  on $BPic(\cC)$
classified by the identity map $BPic(\cC) \to BPic(\cC)$.  So $F_{can} = EPic(\cC) \times^{Pic(\cC)} \cC$.

First, consider the addition map $a_{BPic(\cC)}: BPic(\cC) \times BPic(\cC) \to BPic(\cC)$. Recall -- as with any such addition map -- it classifies the 
diagonal quotient 
 $$
 a_{BPic(\cC)}^*EPic(\cC) \simeq EPic(\cC)\times^{Pic(\cC)} EPic(\cC)
 $$
 and hence pulls back the  local system $F_{can}$ of $\cC$-linear categories  by
  $$
 a_{BPic(\cC)}^*F_{can} =  a_{BPic(\cC)}^*(EPic(\cC) \times^{Pic(\cC)} \cC) \simeq
   a_{BPic(\cC)}^*(EPic(\cC)) \times^{Pic(\cC)} \cC \simeq
 $$
 $$
( EPic(\cC)\times^{Pic(\cC)} EPic(\cC)) \times^{Pic(\cC)} \cC 
\simeq 
( EPic(\cC)\times^{Pic(\cC)} \cC) \boxtimes_{\cC} (EPic(\cC) \times^{Pic(\cC)} \cC)
 \simeq  F_{can} \boxtimes_\cC F_{can}
  $$

(Here we write $\boxtimes_\cC$ rather than just $\boxtimes$ only for emphasis: all occurrences in this
article of the $\boxtimes$ of $\cC$-linear sheaves of categories have been $\boxtimes_{\cC}$.)

Similarly, for any composite addition $a_n: BPic(\cC)^{\times n}  \to BPic(\cC)$, with any parenthesizing or  ordering,
there is a tautological isomorphism $a_n^* F_{can} \simeq \boxtimes_\cC^{n} F_{can}$, compatible with 
 any re-parenthesizing or  re-ordering. In particular, we have the following:

\begin{proposition} \label{canonical equivariance} 
 The $\cC$-linear box products
 $ a_n^* F_{can} \simeq\boxtimes_\cC^{n} F_{can}$ 
provide a sheaf on the augmented
simplicial space
$$
\xymatrix{
BPic(\cC) & \ar[l]_-{a_{BPic(\cC)}}  BPic(\cC)  \times BPic(\cC) & \ar@<-0.5ex>[l]  \ar@<0.5ex>[l]  BPic(\cC) \times BPic(\cC) \times  BPic(\cC)  & \ar@<0ex>[l]  \ar@<-0.75ex>[l]  \ar@<0.75ex>[l]      \cdots 
}
$$
\end{proposition} 

Next, recall from \eqref{maslov map} the
map $\frM: U/O \to BPic(C)$; per its definition, it provides a canonical equivalence 
$\frM^* F_{can} \simeq \mush_{U/O}$.  
We have also learned that $\frM$ is a group homomorphism, and thus can regard it as 
giving a  $U/O$-action on $BPic(\cC)$.

Consider the action map $a: U/O \times BPic(\cC) \to BPic(\cC)$ and note -- as with any such action map -- it fits into a commutative diagram
\beq
\xymatrix{
\ar[d]_-{\frM \times \id}  U/O \times BPic(\cC) \ar[r]^-a &  BPic(\cC)\ar[d]^-\id\\
BPic(\cC) \times BPic(\cC) \ar[r]^-{ a_{BPic(\cC)}} &  BPic(\cC)
}
\eeq
Hence
 it pulls back the  local system $F_{can}$ of $\cC$-linear categories  by
  $$
 a^*F_{can} 
 \simeq  (\frM \times \id)^* a_{BPic(\cC)}^* F_{can} \simeq  \frM^*F_{can} \boxtimes_\cC F_{can} \simeq  \mush_{U/O} \boxtimes_\cC F_{can}
  $$
  
Similarly, for  for any composite addition and action $a_{m, n}: U/O^{\times m} \times BPic(\cC)^{\times n}  \to BPic(\cC)$, with any parenthesizing or  ordering,
there is a tautological isomorphism $$
a_{m, n}^* F_{can} \simeq (\boxtimes_\cC^{m} \frM) \boxtimes_\cC  (\boxtimes_\cC^{n} F_{can})
$$ compatible with 
 any re-parenthesizing or  re-ordering. In particular, we have the following:

\begin{corollary} \label{mush character}
 The $\cC$-linear box products
 $a_{m, 1}^* F_{can} \simeq (\boxtimes_\cC^{m} \frM) \boxtimes_\cC F_{can}$ 
provide a sheaf on the augmented
simplicial space
$$
\xymatrix{
BPic(\cC) & \ar[l]_-{a}  U/O \times BPic(\cC) & \ar@<-0.5ex>[l]  \ar@<0.5ex>[l]  U/O \times U/O \times  BPic(\cC)  & \ar@<0ex>[l]  \ar@<-0.75ex>[l]  \ar@<0.75ex>[l]      \cdots 
}
$$

The $\cC$-linear box products
 $a_{m, 0}^* F_{can} \simeq \boxtimes_\cC^{m} \frM $ 
provide a sheaf on the augmented
simplicial space
$$
\xymatrix{
U/O & \ar[l]_-{a_{U/O}}  U/O \times U/O & \ar@<-0.5ex>[l]  \ar@<0.5ex>[l]  U/O \times U/O\times  U/O & \ar@<0ex>[l]  \ar@<-0.75ex>[l]  \ar@<0.75ex>[l]      \cdots 
}
$$

If we take the sheaf on the first diagram and forget the action maps on each term, then we recover the sheaf on the second diagram.  
\end{corollary}

Now let $(\cU, \xi)$ be a contact manifold, and $(U/O)(\xi)$ the stable Lagrangian Grassmannian of the contact distribution.  Consider the action map $a_\xi: U/O \times (U/O)(\xi) \to (U/O)(\xi)$.

For the moment, let us work locally in $\cU$ and choose a local trivialization $\sigma: \cU \to (U/O)(\xi)$. This provides a tautological $U/O$-equivariant identification  
$(U/O)(\xi) \simeq_\sigma U/O \times  \cU$ and a commutative diagram
$$
\xymatrix{
\ar[d]_-{a_\xi} U/O \times (U/O)(\xi)  & \simeq_{\id \times \sigma} &  U/O \times U/O \times  \cU \ar[d]^-{a_{U/O} \times \id} \\
(U/O)(\xi)  & \simeq_\sigma &  U/O \times  \cU
}
$$
By Corollary \ref{cor: local structure of mush},  this provides 
  an  equivalence
 $
\mush_{U/O(\xi)} 
 \simeq \mush_{U/O} \boxtimes_\cC\mush_{\cU, \sigma}
$ and a resulting composite  equivalence
$$
a_\xi^*\mush_{U/O(\xi)} 
 \simeq a_{U/O}^*\mush_{U/O}  \boxtimes_\cC\mush_{\cU, \sigma} \simeq
\mush_{U/O}   \boxtimes_\cC \mush_{U/O}  \boxtimes_\cC\mush_{\cU, \sigma}
\simeq 
\mush_{U/O}   \boxtimes_\cC \mush_{U/O(\xi)}
  $$
 Unwinding the constructions, for any map $g:\cU \to U/O$, we obtain a canonically isomorphic composite  equivalence if we consider the trivialization $g\sigma: \cU \to (U/O)(\xi)$. In particular, there are commutative diagrams in which we have a canonical isomorphism of the factored terms
 $$
 \mush_{U/O}   \boxtimes_\cC g^*\mush_{U/O}  \boxtimes_\cC\mush_{\cU, g\sigma}
\simeq
$$
$$
\mush_{U/O}   \boxtimes_\cC (\mush_{U/O} \otimes_{\cC} \mush_{U/O}|_{g^{-1}})  \boxtimes_\cC (\mush_{\cU, \sigma} \otimes_{\cC} \mush_{U/O}|_g)  
$$
$$\simeq
 \mush_{U/O}   \boxtimes_\cC \mush_{U/O}  \boxtimes_\cC\mush_{\cU, \sigma}
 $$
 Similar considerations hold for coherences for higher simplices of sections.

Hence altogether
  the pullback of  the  local system $\mush_{U/O(\xi)}$ along the  action map $a_\xi: U/O \times (U/O)(\xi) \to (U/O)(\xi)$ gives a canonical equivalence of
  of $\cC$-linear categories
  $$
 a_\xi^*\mush_{U/O(\xi)} 
 \simeq \mush_{U/O} \boxtimes_\cC\mush_{U/O(\xi)}
  $$
  
Similarly, for  for any composite action $a_{m}: U/O^{\times m} \times U/O(\xi)  \to U/O(\xi)$, with any parenthesizing,
there is a tautological isomorphism $$
a_{m}^*\mush_{U/O(\xi)} \simeq (\boxtimes_\cC^{m}  \mush_{U/O} )   \boxtimes_\cC \mush_{U/O(\xi)} 
$$ compatible with 
 any re-parenthesizing. In particular, we have the following:

\begin{proposition} \label{polarization equivariance} 
The $\cC$-linear box products
 $a_{m}^*\mush_{U/O(\xi)} \simeq (\boxtimes_\cC^{m}  \mush_{U/O} )   \boxtimes_\cC \mush_{U/O(\xi)} $ 
provide a sheaf on the augmented
simplicial space
$$
\xymatrix{
U/O(\xi) & \ar[l]_-{a_{\xi}}  U/O \times U/O(\xi) & \ar@<-0.5ex>[l]  \ar@<0.5ex>[l]  U/O \times U/O \times  U/O(\xi)  & \ar@<0ex>[l]  \ar@<-0.75ex>[l]  \ar@<0.75ex>[l]      \cdots 
}
$$

If we take the sheaf on the  diagram and forget the action maps on each term, then we recover the sheaf on the second diagram
$$
\xymatrix{
U/O & \ar[l]_-{a_{U/O}}  U/O \times U/O & \ar@<-0.5ex>[l]  \ar@<0.5ex>[l]  U/O \times U/O\times  U/O & \ar@<0ex>[l]  \ar@<-0.75ex>[l]  \ar@<0.75ex>[l]      \cdots 
}
$$
of Corollary~\ref{mush character}. 
\end{proposition}

\subsection{Forgetting the polarization} \label{ss: forget pol}
\begin{definition}\label{def:maslov data}
Fix a contact manifold $(\mathcal{U}, \xi)$,  
and consider 
\begin{equation} \label{Maslov obstruction}
\mathcal{U} \xrightarrow{\xi} BU \to B (U/O) \xrightarrow{\frL} B^2 Pic(\mathcal{C})
\end{equation} 
We call the composite map $\mathcal{U} \to B^2 Pic(\mathcal{C})$ the ($\mathcal{C}$-valued) {\em Maslov obstruction}.
By {\em Maslov data}, we mean a choice of a null-homotopy of this map. 
\end{definition} 

The composite $\mathcal{U} \to B (U/O) $ classifies the stable Lagrangian Grassmannian bundle previously
denoted $(U/O)(\xi)$.  The further composite $\mathcal{U} \to B^2 Pic(\mathcal{C})$ classifies
a $B Pic(\cC)$ bundle we will similarly denote $B Pic(\cC)(\xi)$.

Let $K = \ker(\frM:U/O \to BPic(\cC))$ be the kernel of the group homomorphism. We have  
$$B Pic(\cC)(\xi) 
\simeq K\bs (U/O)(\xi) $$  
where we take the quotient for the action along $K \to U/O$. 
Note that Maslov data is equivalent to a section of the bundle $B Pic(\cC)(\xi)  \to \cU$. 

Consider  the  natural quotient map
map
$$\frM_\xi :  (U/O)(\xi) \to K\bs (U/O)(\xi)   \simeq BPic(\cC)(\xi)$$
Note the map $\frM$ is recovered by taking contractible $\cU$.

A stable polarization on $\mathcal{U}$ is a null-homotopy of $\mathcal{U} \to BU \to B (U/O)$, so determines by composition
a choice of Maslov data.  We have  already seen that stable polarizations can be used to define microsheaves on $\cU$; now we will  observe that in fact Maslov data can play the same role: 

\begin{theorem} \label{thm: descent} 
$\mu sh_{(U/O)(\xi)}$ is the pullback along  $\frM_{\xi}$ of a sheaf of categories on $ BPic(\cC)(\xi)$,
which we will denote $\mu sh_{BPic(\cC)(\xi)}$. 
\end{theorem} 

\begin{proof}
Consider the sheaf  $(\boxtimes_\cC^{m}  \mush_{U/O} )   \boxtimes_\cC \mush_{U/O(\xi)} $ 
on the diagram 
\beq\label{desc diag}
\xymatrix{
U/O(\xi) & \ar[l]_-{a_{\xi}}  U/O \times U/O(\xi) & \ar@<-0.5ex>[l]  \ar@<0.5ex>[l]  U/O \times U/O \times  U/O(\xi)  & \ar@<0ex>[l]  \ar@<-0.75ex>[l]  \ar@<0.75ex>[l]      \cdots 
}
\eeq
of Proposition~\ref{polarization equivariance}. 
Let us analyze its restriction to the subdiagram
\beq\label{desc subdiag}
\xymatrix{
U/O(\xi) & \ar[l]_-{a_{\xi}}  K \times U/O(\xi) & \ar@<-0.5ex>[l]  \ar@<0.5ex>[l]  K \times K \times  U/O(\xi)  & \ar@<0ex>[l]  \ar@<-0.75ex>[l]  \ar@<0.75ex>[l]      \cdots 
}
\eeq

From the canonical isomorphism $\mush_{U/O} \simeq \frM^* F_{can}$, and the construction $K = \ker(\frM)$,  we have a canonical identification $\mush_{U/O}|_K \simeq \cC_K$ where we write $ \cC_K$ for the constant $\cC$-valued sheaf on $K$. Similarly, the restriction of the sheaf
$ (\boxtimes_\cC^{m}  \mush_{U/O} )   \boxtimes_\cC \mush_{U/O(\xi)}$ on \eqref{desc diag}  
is canonically isomorphic to the sheaf $(\boxtimes_\cC^{m}  \cC_K)   \boxtimes_\cC \mush_{U/O(\xi)} $
on  \eqref{desc subdiag}. This in turn canonically extends to the action diagram
\beq
\xymatrix{
  U/O(\xi) & \ar@<-0.5ex>[l]  \ar@<0.5ex>[l]  K  \times  U/O(\xi)  & \ar@<0ex>[l]  \ar@<-0.75ex>[l]  \ar@<0.75ex>[l]    K \times K \times  U/O(\xi)    \cdots 
}
\eeq
of $K$ on $  U/O(\xi)$
where the additional maps are the evident projections. Thus it descends to the colimit
$B Pic(\cC)(\xi) 
\simeq K\bs (U/O)(\xi) $.
\end{proof}

\begin{definition}
Given a choice of Maslov data $\tau$, i.e.~a section of  the bundle $BPic(\mathcal{C})_{\mathcal{U}} \to \cU$,
we define $\mush_{\tau} := \tau^* \mush_{BPic(\mathcal{C})(\xi)}$.
\end{definition}

Evidently, if Maslov data $\tau$ came from a stable polarization $\sigma$, then we have a canonical equivalence 
$\mush_{\Lambda; \tau} \simeq \mush_{\Lambda; \sigma}$. In particular,  if two (possibly different) polarizations determine equivalent Maslov data, then they determine the same sheaf.

\begin{proposition}\label{prop:gen version}
Theorem \ref{thm: quantization}, Corollary \ref{cor: weinstein quantization} and Theorem \ref{thm: invariance}
hold as stated with $\tau$ understood as Maslov data rather than as a polarization. 
\end{proposition}
\begin{proof}
In the proof of those theorems, replace all objects with the Lagrangian Grassmannian bundles over them. 
\end{proof}

\begin{remark} \label{rem: secondary maslov data}
Consider in particular the case of a Weinstein manifold $W$ and a smooth exact Lagrangian $L\subset W$. 
Fixing Maslov data $\tau$ on $W$, we have by Cor. \ref{cor: weinstein quantization} a fully faithful functor
$\mush_{L, \tau}(L) \to \mush_{W, \tau}(W)$.  Evidently this is similar to the statement that a Lagrangian (equipped
with appropriate structures) determines an object in the Fukaya category.  To make a more direct comparison, 
note that in a neighborhood (of the contact lift of) $L$, there is a canonical polarization $\ell$, coming from the structure 
as a jet bundle.  We know that $\mush_{L, \ell}(L)$ is nothing other than the category of local systems on $L$.  Thus
an isomorphism $\ell \cong \tau$ of Maslov data determines 

$$Loc(L) \cong \mush_{L, \ell}(L) \cong \mush_{L, \tau}(L) \to \mush_{W, \tau}(W)$$

We term such an isomorphism ``secondary Maslov data''.  (It has elsewhere been called `brane data' \cite{jin-treumann}.)  
Note the obstruction to the existence of this data is the difference $\ell - \tau \in [L, BPic(\mathcal{C})]$. 
In the case
when the coefficient category $\mathcal{C}$ is the category of $\Z$-modules, $Pic(\mathcal{C}) = \Z \oplus B(\Z/2)$,
and so $[L, BPic(\mathcal{C})] = H^1(L, \Z) \oplus H^2(L, \Z/2)$.  When in addition $W = T^*M$ and $\tau$ is the fiber
polarization, then the obstruction to the existence of secondary
Maslov data would be the Maslov class of $L$ in the first factor, and $w_2(L, M)$ in the second, though in fact these
are known to always vanish.  However, the same calculation applies for Legendrians in $J^1(M)$, which can have
nontrivial such classes. 
\end{remark}

\begin{remark}
Work of Jin \cite{jin-BO, jin-J} shows that the map $\chi: LGr\to BPic(\mathcal{C})$ is in fact the topologists' J-homomorphism
in the universal case when $\mathcal{C}$ is the category of spectra.  The specialization to the case of $\mathcal{C}$ the category of 
$\Z$ modules  is essentially in \cite{guillermou}. 
We do {\em not} depend on these results for the abstract setup discussed above, but they are of course invaluable for
actually constructing or characterizing Maslov obstructions and Maslov data in practice. 
\end{remark}

\appendix

\section{Symplectic structures on relative cotangent bundles}  \label{cotangent lagrangian grassmannian}

Here we discuss the elementary differential geometry of constructing symplectic (or contact) forms on the total spaces of fiberwise cotangent bundles of fibrations over symplectic (or contact) manifolds.

Let $\frf:E\to M$ be a fiber bundle of smooth manifolds.
We will denote points of $E$ by pairs $(m, q)$ where $m\in M$, and $q\in \frf^{-1}(m)$.

Let $\pi:T^*E \to E$ denote the cotangent bundle. We will be interested in the relative
tangent and cotangent bundles, which are bundles on $E$ characterized by the  short exact sequences
$$
\xymatrix{
0 \ar[r] &  T\frf \ar[r]^-{\iota} & TE \ar[r]^-{\frf_*}  & \frf^* TM \ar[r] &  0 
}
$$
$$
\xymatrix{
0 \ar[r] &  \frf^* T^*M \ar[r]^-{\frf^*} & T^*E \ar[r]^-{\iota^\vee} & T^* \frf \ar[r] &  0
}
$$

We write $\pi_\frf:T^*\frf \to E$ for the natural projection, and introduce the composition
$$
\xymatrix{
\frF = \frf\circ \pi_\frf: T^*\frf \ar[r] &  M
}
$$
We
denote points of $T^*\frf$ by triples $(m, q, p)$ where $(m, q)\in E$, and $p\in \pi_\frf^{-1}(m, q)$,
and identify $E$ with the zero-section $\{p = 0\}\subset T^*\frf$. 

A connection on $\frf$ is given by a splitting  
\begin{equation} \label{connection tangent split}
\xymatrix{
0 \ar[r] &  T\frf \ar[r]_-{\iota} & \ar@/_1em/[l]_-s TE \ar[r]_-{\frf_*}  & \frf^* TM  \ar@/_1em/[l]_-t   \ar[r] &  0 
}
\end{equation}
or equivalently, as a dual splitting  of the second exact sequence
\begin{equation} \label{connection cotangent split}
\xymatrix{
0 \ar[r] &  \frf^* T^*M \ar[r]_-{\frf^*} & \ar@/_1em/[l]_-{t^\vee}  T^*E \ar[r]_-{\iota^\vee} & T^* \frf \ar[r]   \ar@/_1em/[l]_-{s^\vee}&  0
}
\end{equation}
Note that,
by construction, the natural pairing splits: 
for $\xi \in T^* E$ and $v \in T E$, we have
$$\langle \xi, v \rangle_E = \langle \xi, (\iota s + t \frf_*)(v) \rangle_E = \langle \iota^\vee(\xi) , s(v)  \rangle_\frf + \langle  t^\vee(\xi), \frf_* v \rangle_M$$
where brackets with subscripts indicate the canonical pairings between dual  bundles. 

{\bf For the remainder of the appendix, we fix some $\frf:E\to M$ and the data of a connection, 
which we usually interact with through $s^\vee$.  
All constructions will depend on the choice of connection.  The space of such choices is contractible.} 

\subsection{Fiberwise canonical one-form} 
Let $\theta_{can} \in \Omega^1(T^*E)$ denote the canonical one-form on the cotangent bundle $T^*E$.
We define: 
$$
\theta_\frf:=(s^\vee)^*\theta_{can}
$$
Note that $\theta_{can}$ vanishes on all vectors based along the zero-section $E\subset T^*E$, and so 
$\theta_{\frf}$ vanishes on all vectors based along the zero-section $E\subset T^*\frf$.
In fact, $\theta_\frf$ is a ``fiberwise canonical one-form'': 

\begin{lemma} \label{lem:fiberwisecanonical}
For any $m \in M$, the restriction $\theta_\frf|_{T^* E_m}$ is the canonical one-form on $T^* E_m$. 
\end{lemma}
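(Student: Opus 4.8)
The claim is local over $M$, so fix $m \in M$ and restrict everything to the fiber $E_m := \frf^{-1}(m)$. The map $s^\vee : T^*\frf \to T^*E$ is a bundle map over $E$ covering the identity, and the point is to understand what it does over the fiber $E_m$. First I would unwind the definitions: over $E_m$, the subbundle $\frf^*T^*M|_{E_m}$ has fiber $T^*_mM$, and the connection splitting gives $T^*E|_{E_m} = \frf^*T^*M|_{E_m} \oplus s^\vee(T^*\frf|_{E_m})$. Under the canonical identification $T^*\frf|_{E_m} \cong T^*(E_m)$ (the relative cotangent bundle restricted to a fiber is the honest cotangent bundle of that fiber, since $T\frf|_{E_m} = TE_m$), the map $s^\vee$ becomes a lift of a covector on $E_m$ to a covector on $E$ that annihilates the horizontal distribution $t(\frf^*TM)$.

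The key computation is then the comparison of one-forms at a point $(m,q,p) \in T^*\frf$ lying over $q \in E_m$. Let $\theta_{can}$ be the canonical one-form on $T^*E$ and $\theta^m_{can}$ the canonical one-form on $T^*(E_m) = T^*\frf|_{E_m}$. Using the splitting of the canonical pairing recorded just before the lemma,
$$\langle \xi, v \rangle_E = \langle \iota^\vee(\xi), s(v)\rangle_\frf + \langle t^\vee(\xi), \frf_* v\rangle_M,$$
I would evaluate $(s^\vee)^*\theta_{can}$ on a tangent vector $w \in T_{(m,q,p)}(T^*\frf|_{E_m})$. By definition of the canonical one-form, $(s^\vee)^*\theta_{can}(w) = \langle s^\vee(p), d\pi(ds^\vee(w))\rangle_E$. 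Since $s^\vee$ covers the identity on $E$, $\pi \circ s^\vee = \pi_\frf$, so $d\pi(ds^\vee(w)) = d\pi_\frf(w) \in T_qE$, and moreover $w$ is tangent to the fiber direction so $d\pi_\frf(w) \in T_qE_m = (T\frf)_q$, i.e. it is $s$-horizontal in the sense $\frf_*(d\pi_\frf(w)) = 0$. Feeding this into the split pairing, the second term vanishes and the first term is $\langle \iota^\vee(s^\vee(p)), s(d\pi_\frf(w))\rangle_\frf = \langle p, d\pi_\frf(w)\rangle_\frf$ because $\iota^\vee \circ s^\vee = \id_{T^*\frf}$. That last expression is exactly $\theta^m_{can}(w)$ by the definition of the canonical one-form on $T^*E_m$. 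Hence $(s^\vee)^*\theta_{can}|_{T^*E_m} = \theta^m_{can}$.

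The only mild subtlety — and the step I would be most careful about — is the identification $T\frf|_{E_m} \cong TE_m$ and the verification that restricting a tangent vector of $T^*\frf$ to the subspace lying over $E_m$ automatically kills the horizontal part $\frf_* \circ d\pi_\frf$, so that the connection-dependent second term $\langle t^\vee(\cdot), \frf_*(\cdot)\rangle_M$ genuinely drops out; this is why the statement is connection-independent even though $s^\vee$ is not. Everything else is a direct unraveling of the definition of the tautological one-form together with the two identities $\iota^\vee \circ s^\vee = \id$ and $\pi \circ s^\vee = \pi_\frf$. I would write this out in coordinates $(m,q,p)$ adapted to a local trivialization of $\frf$ as a sanity check, but the invariant argument above is the cleanest route.
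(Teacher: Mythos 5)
Your proof is correct. It differs in presentation from the paper's: you verify the identity pointwise, evaluating $(s^\vee)^*\theta_{can}$ on a tangent vector $w$ to $T^*\frf|_{E_m}$, observing that $d\pi_{\frf}(w)$ lies in $T E_m = T\frf|_{E_m}$ so that the $\frf_*$-component of the split pairing $\langle \xi, v\rangle_E = \langle \iota^\vee(\xi), s(v)\rangle_\frf + \langle t^\vee(\xi), \frf_* v\rangle_M$ drops out, and then using $\iota^\vee\circ s^\vee = \id$ to recover the defining formula for the tautological form of $T^*E_m$. The paper instead argues formally: it invokes the standard characterization of $\theta_{can,m}$ via the covector-restriction map $q\colon T^*E|_{E_m}\to T^*E_m$, namely $q^*\theta_{can,m} = i^*\theta_{can}$, notes that $s^\vee$ over $E_m$ is a section of $q$, and commutes pullbacks in the resulting diagram to get $\theta_{can,m} = i^*(s^\vee)^*\theta_{can}$. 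The two arguments use the same structural inputs (the connection splitting and $\iota^\vee\circ s^\vee=\id$); yours is more self-contained and makes explicit why the connection-dependent term vanishes on vectors tangent to the fiber, while the paper's is shorter granted the standard functoriality of the canonical one-form under restriction to a submanifold. Both are complete; the subtlety you flag (the identification $T^*\frf|_{E_m}\cong T^*E_m$ compatibly with projections) is exactly the content of the equality at the bottom of the paper's diagram, and you handle it correctly.
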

\begin{proof}
Consider the natural commutative diagram with
compatible  sections
$$
\xymatrix{
 T^*E \ar[r]_-{\iota^\vee} & T^* \frf    \ar@/_1em/[l]_-{s^\vee}\\
\ar@{->>}[d]_-q\ar@{^(->}[u]^-i T^*E|_{E_m}  \ar[r]_-{\iota^\vee} & T^* \frf|_{E_m}    \ar@/_1em/[l]_-{s^\vee}\ar@{^(->}[u]^-i\\
T^*E_m \ar@{=}[ur] & 
}
$$

The canonical one-form $\theta_{can, m} \in \Omega^1(T^*E_m)$ is characterized by $q^*\theta_{can, m} = i^*\theta_{can}$. Using the section $s^\vee$, it can be calculated as $\theta_{can, m} = (s^\vee)^*i^*\theta_{can}$.
By commutativity of the diagram,  
$(s^\vee)^*i^*\theta_{can} = i^*(s^\vee)^*\theta_{can}$.
\end{proof}

\begin{example}
When $M=pt$, we have $T^*\frf = T^*E$ is the absolute cotangent bundle, and $\theta_{\frf} = \theta_{can}$ is the usual canonical one-form. 
\end{example}

Next, consider the short exact sequence
\begin{equation}\label{eq:ses}
\xymatrix{
0 \ar[r] &  T\frF \ar[r]^-{I} & T (T^*\frf \ar[r]^-{\frF_*})  & \frF^* TM \ar[r] &  0 
}
\end{equation}
of bundles on $T^*\frf$, where by definition $T\frF := \ker(\frF_*)$.

\begin{lemma}
The two-form $d\theta_\frf \in \Omega^2(T^*\frf)$ is non-degenerate on $T\frF \subset T (T^*\frf)$.
Its kernel $K :=\ker(d\theta_\frf) \subset T(T^*\frf)$ provides a splitting of \eqref{eq:ses} in the sense that
$\frF_*$ restricts to an isomorphism
\begin{equation} \label{eq: f star k} 
\xymatrix{
\frF_*|_K :K \ar[r]^-\sim & \frF^* TM
}
\end{equation}
\end{lemma}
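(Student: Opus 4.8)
The assertion has two halves, and essentially all the content lies in the first; once $\omega_\frf$ is known to be non-degenerate on $T\frF$, the splitting follows from a pointwise linear-algebra argument.

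\emph{Non-degeneracy on $T\frF$.} The plan is to reduce this to Lemma~\ref{lem:fiberwisecanonical}. Observe that $T\frF = \ker(\frF_*)$ is precisely the vertical bundle of $\frF\colon T^*\frf \to M$, whose fiber over $m\in M$ is $\pi_\frf^{-1}(E_m) = T^*(E_m)$; hence for $\xi\in T^*(E_m)$ one has $T\frF|_\xi = T_\xi\bigl(T^*(E_m)\bigr)$, so that restricting the two-form $\omega_\frf$ to $T\frF$ at $\xi$ is the same as pulling $\omega_\frf$ back along the inclusion $\iota_m\colon T^*(E_m)\hookrightarrow T^*\frf$ and evaluating at $\xi$. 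Since pullback of forms commutes with $d$, this pullback equals $d(\iota_m^*\theta_\frf)$, and by Lemma~\ref{lem:fiberwisecanonical} the one-form $\iota_m^*\theta_\frf$ is the canonical one-form on $T^*(E_m)$; its exterior derivative is therefore the canonical symplectic form, which is non-degenerate. As $m$ and $\xi$ were arbitrary, $\omega_\frf$ restricts to a non-degenerate pairing on every fiber of $T\frF$.

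\emph{The splitting.} I would now argue fibrewise, working in a single tangent space $W := T_\xi(T^*\frf)$ with its subspace $V := T\frF|_\xi$ of codimension $\dim M$, on which $\omega_{\frf,\xi}$ is non-degenerate by the previous step. The map $W\to V^{*}$, $v\mapsto\omega_{\frf,\xi}(v,\,\cdot\,)|_V$, is already an isomorphism when restricted to $V$, hence surjective, so its kernel --- the $\omega_\frf$-orthogonal complement $K_\xi$ of $V$ --- has dimension $\dim W-\dim V = \dim M$, and $K_\xi\cap V = \ker(\omega_{\frf,\xi}|_V) = 0$. These subspaces assemble into a rank $\dim M$ subbundle $K\subset T(T^*\frf)$ with $K\cap T\frF = 0$; since $T\frF = \ker\frF_*$, the composite $\frF_*|_K\colon K\to\frF^*TM$ is fibrewise injective between bundles of equal rank $\dim M$, hence an isomorphism, and this is the splitting of \eqref{eq:ses}. (Here $\ker(\omega_\frf)$ is to be understood as this $\omega_\frf$-orthogonal complement of $T\frF$; wherever $\omega_\frf$ attains its maximal rank $\operatorname{rk}T\frF$, in particular along the zero-section, it coincides with the honest kernel, and in general the honest kernel is contained in $K$.)

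\emph{Main obstacle.} The only step carrying mathematical weight is the first, and the delicate point there is purely one of bookkeeping: one must be careful that ``restrict the globally-defined two-form $\omega_\frf$ to the subbundle $T\frF$'' literally means ``restrict $\omega_\frf$ to the submanifolds $T^*(E_m)$'', so that the restriction commutes with $d$ and Lemma~\ref{lem:fiberwisecanonical} applies without modification. It is worth noting in passing that the same computation shows $\omega_\frf$ is itself symplectic near the zero-section, which is the regime needed when trivialising Maslov data on the relative cotangent bundle of the Lagrangian Grassmannian bundle.
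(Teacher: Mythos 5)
Your first half---non-degeneracy of $\omega_\frf$ on $T\frF$---is exactly the paper's argument: identify $T\frF$ along the fiber $\frF^{-1}(m)=T^*E_m$ with $T(T^*E_m)$, use that restriction to this submanifold commutes with $d$, and invoke Lemma \ref{lem:fiberwisecanonical}. No issues there.

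The second half is where you diverge from the statement, and where your parenthetical goes wrong. What you actually prove is that the $\omega_\frf$-orthogonal complement of $T\frF$ is a rank-$\dim M$ subbundle meeting $T\frF$ trivially and mapping isomorphically to $\frF^*TM$; the lemma, however, is about the honest kernel $K=\ker(\omega_\frf)$, and your attempted reconciliation---that $\operatorname{rk}T\frF$ is the \emph{maximal} rank of $\omega_\frf$, so the two notions agree at maximal-rank points---is false. The rank of $\omega_\frf$ can exceed $\operatorname{rk}T\frF$: the obstruction is the curvature of the chosen splitting contracted with the fiber covector. Concretely, take $M=\R^2_{x,y}$, $E=M\times\R_q$, and the splitting whose horizontal distribution is spanned by $\partial_x$ and $\partial_y+x\partial_q$; then $\theta_\frf=p\,dq-px\,dy$ and $\omega_\frf=dp\wedge dq-x\,dp\wedge dy-p\,dx\wedge dy$, which at $x=0$, $p=1$ is non-degenerate on all of $T(T^*\frf)$, so $\ker(\omega_\frf)=0$ there and $\frF_*|_{\ker(\omega_\frf)}$ is certainly not onto $\frF^*TM$. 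In general a kernel vector must be a horizontal lift of some $u\in TM$ plus a vertical correction, subject additionally to the vanishing of $\langle p, F(u,\cdot)\rangle$, $F$ the curvature; so the honest kernel equals your orthogonal complement precisely where this pairing degenerates---in particular along the zero-section, or for a flat splitting---and is strictly smaller otherwise, rather than ``wherever $\omega_\frf$ attains maximal rank.'' To be fair, the paper's own proof of this half rests on the assertion that $\omega_\frf$ ``clearly vanishes on $\frf^*TM$'' (i.e.\ on a horizontal lift), which suffers the same caveat: $\theta_\frf$ does vanish on horizontal lifts, but $d\theta_\frf$ picks up the curvature term away from the zero-section. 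So your orthogonal-complement statement is the one that holds in the stated generality, but it is not the statement of the lemma, and it is not what Lemma \ref{lem:symprelcot} later uses (that argument needs $\omega_\frf$ to vanish on $K$). A correct write-up should either add a flatness hypothesis or work along (a neighborhood of) the zero-section, and in any case must not assert the false bound $\operatorname{rk}\omega_\frf\le\operatorname{rk}T\frF$.
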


\begin{proof}
We have $d\theta_\frf =d ( (s^\vee)^*\theta_{can} )= (s^\vee)^*d\theta_{can} = (s^\vee)^*\omega_{can}$. 
We have  $T \frF|_{E_m} = T(T^*\frf |_{E_m}) = T(T^* E_m)$.  
From Lemma \ref{lem:fiberwisecanonical}, the restriction of $(s^\vee)^*\omega_{can}$ to each
such fiber is the canonical symplectic form, hence non-degenerate.
On the other hand, $d\theta_\frf$ clearly vanishes on $\frf^*TM$.
\end{proof}

\begin{lemma} \label{lem:symprelcot}
If $\omega_M \in \Omega^2(M)$ is a symplectic form,
then $$\omega_\frf:= \frF^* \omega_M + d \theta_\frf \in \Omega^2(T^*\frf)$$ is a symplectic form.   
If $L \subset M$ is $\omega_M$-Lagrangian, then  $E|_L \subset T^*\frf$ is $\omega_\frf$-Lagrangian.
\end{lemma}

\begin{proof}
We have the splitting $T (T^*\frf) \simeq    \frF^* TM \oplus K$
where $K =\ker(d\theta_\frf)$. Non-degeneracy of $\omega_M$ implies $\frF^* TM = \ker(\frF^*\omega_M)$. Since $\omega_M$ is non-degenerate on $K =\ker(d\theta_\frf)$, and $d\theta_\frf$ is non-degenerate on $\frF^* TM =\ker(\frF^*\omega_M)$, we conclude that $\omega_\frf$ is non-degenerate.

Finally, if $L \subset M$ is Lagrangian, then $\omega_M|_L = 0$; on the other hand $\theta_\frf |_E = 0$, hence $\omega_\frf |_{E|_L} = 0$. 
\end{proof}

\begin{lemma} \label{lem:contrelcot}
If $\lambda_M\in \Omega^1(M)$ is a contact form, 
then $\lambda_\frf:=  \frF^* \lambda_M + \theta_\frf \in \Omega^1(T^*\frf)$ is a contact form.
 If $\Lambda \subset M$ is Legendrian, then so is $E|_\Lambda \subset T^*\frf$.
\end{lemma}

\begin{proof} We must show 
$d\lambda_\frf= \frF^* d \lambda_M + d \theta_\frf \in \Omega^2(T^*\frf)$ is non-degenerate on $\xi = \ker(\lambda_\frf) \subset T(T^*\frf)$, or other words, that $\ker(d\lambda_\frf) \cap \xi = \{0\}$.
Observe that $\ker(d\lambda_\frf)  \subset K = \ker(d\theta_\frf)$ since $d \theta_\frf$ is non-degenerate on 
$T\frF \subset T(T^*\frf)$. Recall also \eqref{eq: f star k}:  $\frF_*|_K:K\stackrel{\sim}{\to} \frf^*TM$, hence $\ker(d\lambda_\frf)   \subset K \cap \ker(\frf^*\lambda_M)$. But since $d\lambda_M$ is non-degenerate on $\ker(\lambda_M)$, we obtain the assertion.
\end{proof} 

\subsection{Liouville structures}

\begin{lemma}
Suppose $\lambda_M \in \Omega^1(M)$ is such that $(M, d \lambda_M)$ is symplectic.  Then
$$\lambda_\frf := \frF^* \lambda_M + \theta_\frf \in \Omega^1(T^*\frf)$$ 
makes $(T^*\frf, d \lambda_\frf)$ symplectic.   If $L \subset M$ is (exact) Lagrangian, then so is $E|_L \subset T^* \frf$.
\end{lemma} 
\begin{proof}
Symplecticness of  $d\lambda_\frf$ is 
immediate from Lemma \ref{lem:symprelcot}.  
Exactness of $E|_L$ follows from the fact that $\theta_\frf|_E = 0$.  Alternatively, 
a Lagrangian is exact iff it is the image of a Legendrian in the contactization; so we may apply 
Lemma \ref{lem:contrelcot}. 
\end{proof}

For an exact symplectic manifold $(X, \omega_X = d \lambda_X)$, we write $v_X$ for the Liouville
vector field, which is characterized by $i_{v_x}(\omega_X) = \lambda_X$.  
For $X = T^*E$, we have denoted $\omega_{can} = d \theta_{can} \in \Omega^2(T^*E)$ for the symplectic
form and so write $v_{can}$ for the Liouville vector field.

\begin{lemma} \label{lem: rel cot liouville vector field}
The Liouville vector field $v_\frf\in \Vect(T^*\frf)$ for 
$(T^*\frf, \omega_\frf = d \lambda_\frf)$ is  given by the formula
$$
\xymatrix{
v_\frf = \tilde v_M + \tilde v_{can}
}
$$
where $\tilde v_M$ is the preimage of $v_M \in TM$ under the 
isomorphism \eqref{eq: f star k} (and lies in particular in $ K = \ker(d\theta_\frf) \subset T T^*\frf$); and 
$\tilde v_{can}$ generates dilations of $ T^*\frf$ (and lies in particular in $\ker (\pi_{\frf*})$). 
\end{lemma}

\begin{proof}
Let us first explain the relationship of $\tilde v_{can}$ to $v_{can}$.
The latter is the Euler vector field generating the dilation on $T^*E$. 
As dilations  of $T^*E$ preserve the image $s^\vee(T^*\frf) \subset T^*E$ of the
bundle splitting $s^\vee: T^*\frf \to T^*E$, we see that 
$(s^\vee)_* \tilde v_{can} = v_{can}|_{s^\vee(T^*\frf)}$;
as $s^\vee$ is injective, this characterizes $\tilde{v}_{can}$.  
Note that $\tilde v_{can}$ lies in the kernel of $\pi_{\frf*}:T(T^*\frf) \to TE$, hence  
the kernel of $\frF_* = \frf_* \circ \pi_{\frf*}:T(T^*\frf) \to TM$.

Note the  identities
$$
\xymatrix{
i_{\tilde v_M}(\frF^*d\lambda_M) = \frF^* i_{\frF_* \tilde v_M}(d\lambda_M) = \frF^* i_{v_M}(d\lambda_M) = \frF^*\lambda_M
&
i_{\tilde v_M}(d\theta_\frf) = 0  
}
$$
with the latter due to the fact that  $\tilde v_M\in K = \ker(d\theta_\frf)$.

Note the additional identity
$$
\xymatrix{
i_{\tilde v_{can}}(\frF^*d\lambda_M) =  0 
}
$$
due to the fact that $\frF_*\tilde v_{can} =0$. 

Thus we must show
$$
\xymatrix{
i_{\tilde v_{can}}(d\theta_\frf) =  \theta_\frf 
}
$$
We calculate
$$
\xymatrix{
i_{\tilde v_{can}}(d\theta_\frf) =   i_{\tilde v_{can}}((s^\vee)^*d\theta_{can}) =  
(s^\vee)^* i_{(s^\vee)_*\tilde v_{can}}(d\theta_{can}) =  
(s^\vee)^* i_{v_{can}}(d\theta_{can}) =  
(s^\vee)^* \theta_{can} = \theta_\frf  
}
$$
and we're done.
\end{proof}

\begin{corollary}
If $v_M\in  \Vect(M)$ satisfies $d\varphi (v_M) >0$ over an open $U\subset M$ (resp. is gradient-like) for a function $\varphi:M\to \BR$,
 then  $v_\frf \in \Vect(T^*\frf)$  satisfies $d\Phi (v_\frf) >0$ over the open $\frF^{-1}(U)\subset T^*\frf$ (resp. is gradient-like) for
the function $\Phi = \frF^*\varphi + Q:T^*\frf\to \BR$, for any positive definite quadratic form $Q$ on the bundle $\frf:T^*\frf \to E$.
\end{corollary} 

\begin{corollary}
If $(M, \lambda_M)$ be a Liouville (resp. Weinstein) manifold, 
then  so is $(T^*\frf, \lambda_{\frf})$.  Moreover, the core satisfies
$$\mathfrak{c}(T^* \frf, \lambda_\frf) =  E|_{\mathfrak{c}(M, \lambda_M)}$$
\end{corollary} 
\begin{proof}
Follows from Lemma \ref{lem: rel cot liouville vector field} and the previous corollary.  
Note that since we assume $\frf:E\to M$ is proper, we may lift any integral curve of $v_M$ to an integral curve of $\tilde v_M$. 
\end{proof}

\begin{example}
Let $(M, \omega_M = d\lambda_M)$ be a Liouville (resp. Weinstein) manifold with a compatible almost complex structure $J_M$.
Then we can take $E$ to be the unitary frame bundle of $M$ and choose any principal connection. Furthermore, for any compact manifold $F$
with a unitary action, 
 in particular any homogenous space, 
 we can take $E$ to be the associated bundle. For example, we can take $E$ to be the Lagrangian Grassmannian bundle of  $M$. 
\end{example}

\subsection{Polarizations and the Lagrangian Grassmannian} 

\begin{lemma} \label{lem:sympcansec}
Let $(M, \omega)$ be a symplectic manifold, $\frf: LGr(TM) \to M$ the bundle of Lagrangian Grassmannians of its symplectic
tangent bundle, and $T^* \frf$ the total space of the relative cotangent bundle.  A choice of almost complex structure 
on $M$, and connection as in \eqref{connection tangent split},  determines a symplectic structure on $T^* \frf$ along with a canonical Lagrangian distribution, i.e. 
section of $LGr(T T^*\frf) \to T^* \frf$. 
\end{lemma}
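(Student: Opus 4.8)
The plan is to build the desired section of $LGr(TT^*\frf) \to T^*\frf$ by transporting, along the fibers of $\frF \colon T^*\frf \to M$, the tautological Lagrangian subbundle of $\frf^*TM$ together with the vertical polarization of the relative cotangent bundle, and then invoke the splitting established above. Concretely, recall that an almost complex structure $J_M$ on $(M,\omega)$ gives a symplectic vector bundle structure on $TM$, hence the Lagrangian Grassmannian bundle $\frf\colon LGr(TM)\to M$ is defined; a principal connection (e.g. the one induced by $J_M$ via the Levi-Civita connection of the associated metric) then furnishes the splitting $s^\vee\colon T^*\frf\to T^*E$, so by Lemma~\ref{lem:symprelcot} and Proposition~\ref{prop:liouvrelcot} the total space $T^*\frf$ carries the symplectic form $\omega_\frf = \frF^*\omega + d\theta_\frf$. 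This is the symplectic structure referred to in the statement.

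For the canonical Lagrangian distribution, the first step is to produce a Lagrangian subbundle of $T(T^*\frf)$ at each point. Using the direct sum decomposition $T(T^*\frf) \cong \frF^*TM \oplus K$ (with $K = \ker(d\theta_\frf)$ mapped isomorphically to $\frF^*TM$ by $\frF_*$), observe that $\omega_\frf$ restricted to $\frF^*TM$ is $\omega$ and on $K$ is (the pullback of) $\omega$ as well, while the two summands are $\omega_\frf$-orthogonal; so a Lagrangian of $T(T^*\frf)$ can be assembled from a Lagrangian of $\frF^*TM$ together with its matching copy in $K$. Now over a point of $T^*\frf$ lying above $\ell \in LGr(TM)$ (via $\frF$ followed by $\pi_\frf$), the fiber $\ell \subset T_{\frf(\ell)}M$ itself furnishes a tautological Lagrangian subspace of $\frF^*TM$; transporting it to $K$ and taking the diagonal-type sum inside $\frF^*TM \oplus K$ gives a Lagrangian subbundle $\mathcal{L}^{\mathrm{hor}}$. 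Separately, the vertical tangent bundle $T\pi_\frf \subset T(T^*\frf)$ — the tangent to the cotangent fibers of $\pi_\frf\colon T^*\frf\to E$ — is isotropic for $d\theta_\frf$ but has the wrong rank; what one wants is a genuine polarization transverse or complementary to it. The cleanest route is: the relative cotangent bundle $T^*\frf \to E$ has a \emph{vertical} polarization (tangent to its own fibers), and $E \to M$ has a \emph{tautological} polarization (the subbundle $\ell$ of $\frf^*TM$); combine these compatibly with the splitting to get a section of $LGr(TT^*\frf)$.

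The key steps, in order, are: (i) spell out the identification $T(T^*\frf)\cong \frF^*TM \oplus T\pi_\frf \oplus (\text{a complement})$ refining the decomposition of the penultimate lemmas, keeping track of how $\omega_\frf$ pairs the pieces; (ii) at a point over $\ell\in LGr(TM)$, write down the candidate subspace as the span of the tautological $\ell\subset \frF^*TM$, its image in $K$, and the full vertical $T\pi_\frf$-fiber, and check by a dimension count that this has half dimension; (iii) verify isotropy — the tautological-$\ell$ part is isotropic in $\frF^*TM$ because $\ell$ is Lagrangian, the vertical part is isotropic because cotangent fibers are, and the cross terms vanish because of the orthogonality of the summands under $\omega_\frf$ (here the precise bookkeeping of $\theta_\frf$ versus $\frF^*\lambda_M$ matters); (iv) conclude Lagrangianness from isotropy plus the dimension count, and note smoothness/globality since every ingredient (the tautological subbundle, the vertical distribution, the connection-induced splitting) is a genuine subbundle, so the construction glues. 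I expect the main obstacle to be step (iii): getting the cross-term vanishing right requires being careful that $\theta_\frf$ annihilates vectors based on the zero section but not elsewhere, and that the "tautological in $K$" copy is genuinely $d\theta_\frf$-paired with the vertical directions in exactly the cancelling way — i.e.\ the argument that the horizontal tautological piece and the cotangent-fiber piece together form the standard Lagrangian of a cotangent bundle fiberwise, which is really Lemma~\ref{lem:fiberwisecanonical} applied one more time. Once that fiberwise picture is pinned down, everything else is routine linear algebra and a check that the choices vary smoothly.
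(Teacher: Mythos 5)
Your underlying construction is the paper's: the symplectic form is $\omega_\frf=\frF^*\omega+d\theta_\frf$ coming from the $J$-induced connection (Lemma \ref{lem:symprelcot}), and the Lagrangian distribution at a point $(m,q,p)$ is the direct sum of the tautological Lagrangian $q\subset T_mM$, realized inside $T(T^*\frf)$, with the cotangent-fiber directions $\pi_\frf^*T^*\frf\subset T\frF$ of the bundle $\pi_\frf\colon T^*\frf\to E$; this is isotropic and half-dimensional, hence Lagrangian. However, the linear algebra in your middle paragraph is wrong as written, and it propagates into your step (ii). The splitting furnished by the earlier lemmas is $T(T^*\frf)=T\frF\oplus K$, where the \emph{single} summand $K=\ker(d\theta_\frf)$ is identified with $\frF^*TM$ via $\frF_*$; there is no decomposition ``$\frF^*TM\oplus K$'' with two copies of something isomorphic to $\frF^*TM$ (the dimensions do not add up), and $\omega_\frf$ restricts to $\frF^*\omega$ on $K$ but to the fiberwise canonical symplectic form, not $\omega$, on $T\frF$. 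Consequently there is no ``diagonal-type sum'': the tautological $q$ enters exactly once, as its lift into $K$. Your candidate in (ii), ``the span of the tautological $\ell\subset\frF^*TM$, its image in $K$, and the full vertical fiber,'' names the tautological piece twice; if $\frF^*TM$ and $K$ really were distinct summands this span would have dimension $2n+\tfrac{n(n+1)}{2}$ (for $\dim M=2n$), not the required half-dimension $n+\tfrac{n(n+1)}{2}$, so your own dimension count would fail until the redundancy is removed.

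Once the candidate is corrected to (lift of $q$ into $K$) $\oplus$ (fiber of $\pi_\frf^*T^*\frf$), the isotropy check you flag as the main obstacle in (iii) is immediate and needs none of the anticipated bookkeeping: every $d\theta_\frf$-pairing involving the lift vanishes because the lift lies in $K=\ker(d\theta_\frf)$ (the vanishing of the $1$-form $\theta_\frf$ along the zero section is irrelevant here, since only $d\theta_\frf$ enters $\omega_\frf$); every $\frF^*\omega$-pairing involving a vertical vector vanishes because vertical vectors lie in $\ker\frF_*$; the lift is $\frF^*\omega$-isotropic because $q$ is Lagrangian in $T_mM$; and the vertical directions are $d\theta_\frf$-isotropic because, by Lemma \ref{lem:fiberwisecanonical}, $d\theta_\frf$ restricts on each fiber $T^*E_m$ to the canonical symplectic form, for which cotangent fibers are Lagrangian. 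With these repairs your outline collapses to the paper's short argument, so the approach is the same; the gap is purely in the stated decomposition and the resulting double count.
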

\begin{proof}
We saw above that such a complex structure determines a connection on $\frf$ and thence a symplectic structure 
on $T^* \frf$.  We also have a  splitting of bundles on $T^*\frf$ (induced by the connection) $T (T^* \frf) = T \frF \oplus \frF^* TM$, where $\frF: T^*\frf \to M$ 
was the bundle of relative cotangent bundles.  Note $T \frF|_{E_m} = T T^* E_m$, and that $T \frF$ contains as a sub-bundle
$\pi_f^* T^* \frf$.

Recall we describe points of $T^* \frf$ as $(m, q, p)$ where $m$ is a point in $M$, $q$ is a point in the Lagrangian Grassmannian of $T_m M$, and $p \in 
\pi_\frf^{-1}(m, q)$ is a relative cotangent vector.  Now over a point $(m, q, p)$, we can take the direct sum of the subspace of $\frF^*TM$ named by $q$
with the fibre of $\pi_\frf^* T^* \frf$ summand.  Evidently this is isotropic for $\omega_\frf$, and half-dimensional, hence Lagrangian. 
\end{proof} 

\begin{lemma} \label{lem:contcansec}
Let $V$ be a contact manifold with contact form $\lambda$, let $\frf: LGr(ker\, \lambda) \to M$ the bundle of Lagrangian Grassmannians of the contact
distribution, and $T^* \frf$ the total space of the relative cotangent bundle.  A choice of almost complex structure 
on $\ker(\lambda)$, and connection as in \eqref{connection tangent split}, determines a contact form $\lambda_\frf$ on $T^* \frf$ along with a canonical Lagrangian distribution in $\ker(\lambda_\frf)$.
\end{lemma}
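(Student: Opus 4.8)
The statement is the contact analogue of Lemma \ref{lem:sympcansec}, and the proof follows that one in the contact world, using the symplectization/contactization dictionary already established. First I would recall the setup: $\frf \colon LGr(\ker\lambda) \to V$ is the Lagrangian Grassmannian bundle of the (symplectic) contact hyperplane distribution, $\pi_\frf \colon T^*\frf \to E := LGr(\ker\lambda)$ the relative cotangent bundle, and $\frF = \frf \circ \pi_\frf \colon T^*\frf \to V$ the total projection. A compatible almost complex structure $J$ on $\ker\lambda$ picks out, fibrewise, a $J$-compatible metric and hence an $O(n)$-reduction of the structure group of $\ker\lambda$; equivalently, $J$ determines a principal connection on the unitary frame bundle of $\ker\lambda$ and therefore a connection on the associated bundle $\frf$. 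This connection gives the splitting $s^\vee \colon T^*\frf \to T^*E$ of the relative cotangent sequence, hence the fibrewise canonical one-form $\theta_\frf = (s^\vee)^*\theta_{can}$ as in Lemma \ref{lem:fiberwisecanonical}.

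\textbf{Construction of the contact form.} Now I would simply invoke Lemma \ref{lem:contrelcot}: given the contact form $\lambda$ on $V$ and the splitting just constructed, $\lambda_\frf := \frF^*\lambda + \theta_\frf \in \Omega^1(T^*\frf)$ is a contact form on $T^*\frf$. That lemma is stated for an arbitrary fiber bundle $\frf \colon E \to V$ with a choice of splitting, so there is nothing further to check for the contact condition itself; the content here is only that the almost complex structure supplies the splitting canonically. (One small point: Lemma \ref{lem:contrelcot} is stated with base denoted $V$ and uses $M$ in the non-degeneracy line; I would read it with $V$ throughout, the contact manifold being the base.) I would also note in passing, as in Lemma \ref{lem:contrelcot}, that $E|_\Lambda \subset T^*\frf$ is Legendrian whenever $\Lambda \subset V$ is Legendrian, though this is not needed for the present statement.

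\textbf{Construction of the canonical Lagrangian distribution in $\ker\lambda_\frf$.} This is the part that requires the mild extra argument beyond Lemma \ref{lem:contrelcot}. The idea mirrors Lemma \ref{lem:sympcansec}. The connection gives a splitting $T(T^*\frf) \simeq \frF^* TV \oplus T\frF$, where $T\frF = \ker(\frF_*)$, and $T\frF$ contains $\pi_\frf^* T^*\frf$ as a sub-bundle, with $T\frF|_{E_v} = T(T^* E_v)$. Over a point $(v,q,p) \in T^*\frf$ — where $v \in V$, $q \in LGr(\ker\lambda_v)$, and $p$ a relative cotangent vector — form the direct sum of: (i) the Lagrangian subspace $q \subset \ker\lambda_v \subset T_v V$, transported via the connection into the $\frF^* TV$ summand; and (ii) the fibre direction $(\pi_\frf^* T^*\frf)_{(v,q,p)} \subset T\frF$. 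This subspace lies in $\ker\lambda_\frf$: on the $\frF^* TV$ part this is because $q \subset \ker\lambda_v$ and $\theta_\frf$ kills $\frF^* TV$ (it is pulled back from the fibre direction); on the fibre part it is because $\theta_\frf$ vanishes on vectors based along $E \subset T^*\frf$ — wait, more carefully, $\theta_\frf$ evaluated on a \emph{vertical} fibre vector of $\pi_\frf$ at a point of $T^*\frf$ is zero because $\theta_{can}$ vanishes on cotangent-fibre directions, and $\frF^*\lambda$ also vanishes on such vectors since they are $\frF_*$-vertical; so (ii) lies in $\ker\lambda_\frf$ too. The subspace is isotropic for $d\lambda_\frf$: on $T\frF$ we have $d\lambda_\frf = d\theta_\frf$ restricted to fibres is the fibrewise canonical symplectic form, for which the cotangent fibre is Lagrangian; the cross term between (i) and (ii) pairs under $d\theta_\frf$ (which only sees $T\frF$) to zero since (i) lies in $\frF^*TV$; and $\frF^* d\lambda$ pairs (i) with itself to zero since $q$ is Lagrangian in $\ker\lambda_v$, and pairs (i)$\oplus$(ii) with (ii) to zero since (ii) is $\frF_*$-vertical. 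A dimension count then shows this isotropic subspace of $\ker\lambda_\frf$ has dimension $n + \dim E$... I need to be careful, but the point is it is Legendrian-relative, i.e., a Lagrangian distribution \emph{in} $\ker\lambda_\frf$: $\ker\lambda_\frf$ has dimension $2\dim V - 2 + 2\dim(\text{fibre}) $, and (i) contributes $\tfrac12(\dim V - 1)$ while (ii) contributes $\dim(\text{fibre of }\pi_\frf) = \dim E$, and since $\dim T^*\frf = \dim V + 2\dim E$ one checks these add to exactly half of $\dim\ker\lambda_\frf$. This defines the section of $LGr(\ker\lambda_\frf) \to T^*\frf$ asserted.

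\textbf{Main obstacle.} The genuinely delicate bookkeeping is the dimension count and verifying that the proposed distribution is maximal isotropic \emph{inside the contact hyperplane} $\ker\lambda_\frf$ rather than merely inside $T(T^*\frf)$; this is where one must keep straight the three-term filtration $\pi_\frf^*T^*\frf \subset T\frF \subset T(T^*\frf)$ and how $\lambda_\frf$, $d\lambda_\frf$, $\frF^*\lambda$, $\frF^*d\lambda$, $\theta_\frf$, $d\theta_\frf$ each restrict to the pieces. Everything else is either a direct citation of Lemma \ref{lem:contrelcot} or a verbatim transcription of the reasoning in Lemma \ref{lem:sympcansec}, with ``symplectic form'' replaced by ``contact form'' and ``Lagrangian'' by ``Lagrangian inside the contact distribution'' throughout.
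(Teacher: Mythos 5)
Your proposal is correct and follows essentially the same route as the paper, whose proof of this lemma is simply to adapt the argument of Lemma \ref{lem:sympcansec}: use the almost complex structure to get a connection, invoke Lemma \ref{lem:contrelcot} for the contact form $\lambda_\frf = \frF^*\lambda + \theta_\frf$, and take the distribution spanned by the horizontal lift of the tautological Lagrangian $q$ together with the cotangent-fibre directions of $\pi_\frf$. Two small points of hygiene: in your dimension count ``$\dim E$'' should be the fibre dimension of $\frf$ (the count $n + \dim LGr(n) = \tfrac12 \dim\ker\lambda_\frf$ does work out), and the vanishing of the $d\theta_\frf$ cross terms is cleanest if you take the horizontal summand to be $K = \ker(d\theta_\frf)$ as in the paper, rather than arguing that $d\theta_\frf$ ``only sees $T\frF$''.
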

\begin{proof}
Similar to the previous lemma. 
\end{proof}

\newpage

%%%%%%%%%%%%%%%%%%%%%%%%%%%%%%%%%%%%%%%%%%%%%%%%%%%%

\bibliographystyle{plain}
\bibliography{sqesc}

@PREAMBLE{"\newcommand{\chsort}[1]{}"}

@article{CKNS2,
  title={The microlocal {R}iemann-{H}ilbert correspondence for complex contact manifolds},
  author={C{\^o}t{\'e}, Laurent and Kuo, Christopher and Nadler, David and Shende, Vivek},
  journal={arXiv:2406.16222}
}

@article{mcduff-nonweinstein,
  title={Symplectic manifolds with contact type boundaries},
  author={McDuff, Dusa},
  journal={Inventiones mathematicae},
  volume={103},
  number={1},
  pages={651--671},
  year={1991},
  publisher={Springer}
}

@article{wenyuan,
  title={Functoriality of sheaf categories over {W}einstein manifolds},
  author={Li, Wenyuan},
  journal={Preprint, available at the author's personal website},
 howpublished={\url{https://sites.math.northwestern.edu/~wenyuan/Functoriality%20of%20sheaves.pdf}},
}

@article{li-nadler-shende,
  title={Microsheaf composition of {L}agrangian correspondences},
  author={Li, Wenyuan and Nadler, David and Shende, Vivek},
  journal={arXiv:2511.19814}
}

@article{lazarev-sylvan-tanaka,
  title={Localization and flexibilization in symplectic geometry},
  author={Lazarev, Oleg and Sylvan, Zachary and Tanaka, Hiro Lee},
  journal={arXiv preprint arXiv:2109.06069},
  year={2021}
}

@article{gammage-shende-2,
  title={Homological mirror symmetry at large volume},
  author={Gammage, Benjamin and Shende, Vivek},
  journal={Tunisian Journal of Mathematics},
  volume={5},
  number={1},
  pages={31--71},
  year={2023},
  publisher={Mathematical Sciences Publishers}
}

@article{perverse-microsheaves,
  title={Perverse Microsheaves},
  author={C{\^o}t{\'e}, Laurent and Kuo, Christopher and Nadler, David and Shende, Vivek},
  journal={arXiv:2209.12998}
}

@article{lurieha,
  title={Higher algebra},
  author={Lurie, Jacob},
  journal={Preprint, available at \url{https://www.math.ias.edu/~lurie/}}
}

@book{cieliebak-eliashberg,
  title={From {S}tein to {W}einstein and back: symplectic geometry of affine complex manifolds},
  author={Cieliebak, Kai and Eliashberg, Yakov},
  volume={59},
  year={2012},
  publisher={American Mathematical Soc.}
}

@article{Eliashberg-Gromov-convex,
  title={Convex symplectic manifolds},
  author={Eliashberg, Yakov and Gromov, Mikhael},
  journal={Several complex variables and complex geometry, Part},
  volume={2},
  number={135-162},
  pages={7},
  year={1991}
}

@article{Eliashberg-topological-Stein,
  title={Topological characterization of {S}tein manifolds of dimension $> 2$},
  author={Eliashberg, Yakov},
  journal={International Journal of Mathematics},
  volume={1},
  number={01},
  pages={29--46},
  year={1990},
  publisher={World Scientific}
}

@article{Weinstein-surgery,
  title={Contact surgery and symplectic handlebodies},
  author={Weinstein, Alan},
  journal={Hokkaido Mathematical Journal},
  volume={20},
  number={2},
  pages={241--251},
  year={1991},
  publisher={Hokkaido University, Department of Mathematics}
}

@inproceedings{eliashberg-weinstein,
  title={Weinstein manifolds revisited},
  author={Eliashberg, Yakov},
  booktitle={Modern geometry: a celebration of the work of Simon Donaldson.  Proceedings of Symposia in Pure Mathematics},
  volume={99},
  pages={59--82},
  year={2018},
  organization={American Mathematical Society}
}

@article{avdek,
  title={Liouville hypersurfaces and connect sum cobordisms},
  author={Avdek, Russell},
  journal={The Journal of Symplectic Geometry},
  volume={19},
  number={4},
  pages={865--957},
  year={2021},
  publisher={International Press of Boston}
}

@article{waschkies-microperverse,
  title={The stack of microlocal perverse sheaves},
  author={Waschkies, Ingo},
  journal={Bulletin de la soci{\'e}t{\'e} math{\'e}matique de France},
  volume={132},
  number={3},
  pages={397--462},
  year={2004}
}

@article{dagnolo-cutoff,
  title={On the microlocal cut-off of sheaves},
  author={D'Agnolo, Andrea},
  journal={Topological Methods in Nonlinear Analysis},
  volume={8},
  number={1},
  pages={161--167},
  year={1996}
}

@article{viterbo-notes,
  title={An introduction to symplectic topology through sheaf theory},
  author={Viterbo, Claude},
  journal={Preprint},
  year={2011}
}

@article{viterbo-sheaf,
  title={Sheaf quantization of {L}agrangians and {F}loer cohomology},
  author={Viterbo, Claude},
  journal={arXiv:1901.09440}
}

@article{guillermou,
  title={Quantization of conic {L}agrangian submanifolds of cotangent bundles},
  author={Guillermou, St{\'e}phane},
  journal={arXiv:1212.5818}
}

@article{shende-treumann-williams,
  title={On the combinatorics of exact {L}agrangian surfaces},
  author={Shende, Vivek and Treumann, David and Williams, Harold},
  journal={arXiv:1603.07449}
}

@article{kuwagaki,
  title={The nonequivariant coherent-constructible correspondence for toric stacks},
  author={Kuwagaki, Tatsuki},
  journal={Duke Mathematical Journal},
  volume={169},
  number={11},
  pages={2125--2197},
  year={2020},
  publisher={Duke University Press}
}

@article{nadler-wrapped, 
author = {Nadler, David},
title = {Wrapped microlocal sheaves on pairs of pants},
journal = {arXiv:1604.00114}
}

@article{shende-microlocal,
  title={Microlocal Category for {W}einstein Manifolds via the h-Principle},
  author={Shende, Vivek},
  journal={Publications of the Research Institute for Mathematical Sciences},
  volume={57},
  number={3},
  pages={1041--1048},
  year={2021}
}

@article{gammage-shende,
  title={Mirror symmetry for very affine hypersurfaces},
  author={Gammage, Benjamin and Shende, Vivek},
  journal={Acta Mathematica},
  volume={229},
  number={2},
  pages={287--346},
  year={2022}
}

@article{stz,
  title={Legendrian knots and constructible sheaves},
  author={Shende, Vivek and Treumann, David and Zaslow, Eric},
  journal={Inventiones mathematicae},
  volume={207},
  number={3},
  pages={1031--1133},
  year={2017},
  publisher={Springer}
}

@article{nrssz, 
  title={Augmentations are sheaves},
  author={Ng, Lenhard and Rutherford, Dan and Shende, Vivek and Sivek, Steven and Zaslow, Eric},
  journal={Geometry \& Topology},
  volume={24},
  number={5},
  pages={2149--2286},
  year={2020},
  publisher={Mathematical Sciences Publishers}
}

@article{gpssectorsoc,
  title={Covariantly functorial wrapped {F}loer theory on {L}iouville sectors},
  author={Ganatra, Sheel and Pardon, John and Shende, Vivek},
  journal={Publications math{\'e}matiques de l'IH{\'E}S},
  pages={1--128},
  year={2019},
  publisher={Springer}
}

@article{gpsdescent,
  title={Sectorial descent for wrapped {F}ukaya categories},
  author={Ganatra, Sheel and Pardon, John and Shende, Vivek},
  journal={Journal of the American Mathematical Society},
  volume={37},
  number={2},
  pages={499--635},
  year={2024}
}

@article{gpsconstructible,
  title={Microlocal Morse theory of wrapped {F}ukaya categories},
  author={Ganatra, Sheel and Pardon, John and Shende, Vivek},
  journal={Annals of Mathematics},
  volume={199},
  number={3},
  pages={943--1042},
  year={2024},
  publisher={Department of Mathematics, Princeton University Princeton, New Jersey, USA}
}

@book {luriehttpub,
    AUTHOR = {Lurie, Jacob},
     TITLE = {Higher topos theory},
    SERIES = {Annals of Mathematics Studies},
    VOLUME = {170},
 PUBLISHER = {Princeton University Press},
   ADDRESS = {Princeton, NJ},
      YEAR = {2009},
     PAGES = {xviii+925},
      ISBN = {978-0-691-14049-0; 0-691-14049-9},
   MRCLASS = {18-02 (18B25 18E35 18G30 18G55 55U40)},
  MRNUMBER = {2522659 (2010j:18001)},
MRREVIEWER = {Mark Hovey},
}

@article {shende-conormal,
    AUTHOR = {Shende, Vivek},
     TITLE = {The conormal torus is a complete knot invariant},
   JOURNAL = {Forum Math. Pi},
  FJOURNAL = {Forum of Mathematics. Pi},
    VOLUME = {7},
      YEAR = {2019},
     PAGES = {e6, 16},
   MRCLASS = {57M25 (55N30 57M27)},
  MRNUMBER = {4010558},
       DOI = {10.1017/fmp.2019.1},
       URL = {https://doi.org/10.1017/fmp.2019.1},
}

@article {robalo-schapira,
    AUTHOR = {Robalo, Marco and Schapira, Pierre},
     TITLE = {A lemma for microlocal sheaf theory in the
              {$\infty$}-categorical setting},
   JOURNAL = {Publ. Res. Inst. Math. Sci.},
  FJOURNAL = {Publications of the Research Institute for Mathematical
              Sciences},
    VOLUME = {54},
      YEAR = {2018},
    NUMBER = {2},
     PAGES = {379--391},
      ISSN = {0034-5318},
   MRCLASS = {55U35 (18F99 32C38 35A27 55P42)},
  MRNUMBER = {3784874},
MRREVIEWER = {Birgit Richter},
       DOI = {10.4171/PRIMS/54-2-5},
       URL = {https://doi.org/10.4171/PRIMS/54-2-5},
}

@article {donaldson,
    AUTHOR = {Donaldson, Simon},
     TITLE = {Symplectic submanifolds and almost-complex geometry},
  JOURNAL = {Journal of Differential Geometry},
    VOLUME = {44},
      YEAR = {1996},
    NUMBER = {4},
     PAGES = {666--705},
      ISSN = {0022-040X},
   MRCLASS = {53C15 (53C40 53C65 57R15 58F05)},
  MRNUMBER = {1438190},
MRREVIEWER = {Daniel Pollack},
       URL = {http://projecteuclid.org/euclid.jdg/1214459407},
}

@article {nadler-mirrorLG,
    AUTHOR = {Nadler, David},
     TITLE = {Mirror symmetry for the {L}andau-{G}inzburg {$A$}-model
              {$M=\mathbb C^n$}, {$W=z_1\cdots z_n$}},
  JOURNAL = {Duke Mathematical Journal},
    VOLUME = {168},
      YEAR = {2019},
    NUMBER = {1},
     PAGES = {1--84},
      ISSN = {0012-7094},
   MRCLASS = {53D37 (32C38 32S30)},
  MRNUMBER = {3909893},
       DOI = {10.1215/00127094-2018-0036},
       URL = {https://doi.org/10.1215/00127094-2018-0036},
}

@article{jin-treumann,
  title={Brane structures in microlocal sheaf theory},
  author={Jin, Xin and Treumann, David},
  journal={Journal of Topology},
  volume={17},
  number={1},
  pages={e12325},
  year={2024},
  publisher={Wiley Online Library}
}

@article{jin-BO,
  title={A {H}amiltonian $\coprod_n {BO}(n)$-action, stratified {M}orse theory, and the {J}-homomorphism},
  author={Jin, Xin},
  journal={Compositio Mathematica},
  volume={160},
  number={9},
  pages={2005--2099},
  year={2024},
  publisher={London Mathematical Society}
}

@article{jin-J,
  title={Microlocal sheaf categories and the {J}-homomorphism},
  author={Jin, Xin},
  journal={arXiv:2004.14270}
}

@article {bezrukavnikov-kapranov,
    AUTHOR = {Bezrukavnikov, Roman and Kapranov, Mikhail},
     TITLE = {Microlocal sheaves and quiver varieties},
   JOURNAL = {Ann. Fac. Sci. Toulouse Math. (6)},
  FJOURNAL = {Annales de la Facult\'{e} des Sciences de Toulouse. Math\'{e}matiques.
              S\'{e}rie 6},
    VOLUME = {25},
      YEAR = {2016},
    NUMBER = {2-3},
     PAGES = {473--516},
      ISSN = {0240-2963},
   MRCLASS = {14F05 (16G20)},
  MRNUMBER = {3530166},
MRREVIEWER = {Jaeyoo Choy},
       DOI = {10.5802/afst.1502},
       URL = {https://doi.org/10.5802/afst.1502},
}

@article{mirkovic-vilonen,
  title={Characteristic varieties of character sheaves},
  author={Mirkovi{\'c}, Ivan and Vilonen, Kari},
  journal={Inventiones mathematicae},
  volume={93},
  number={2},
  pages={405--418},
  year={1988},
  publisher={Springer}
}

@book {kashiwara-schapira,
    AUTHOR = {Kashiwara, Masaki and Schapira, Pierre},
     TITLE = {Sheaves on manifolds},
    SERIES = {Grundlehren der Mathematischen Wissenschaften},
    VOLUME = {292},
 PUBLISHER = {Springer-Verlag, Berlin},
      YEAR = {1990},
     PAGES = {x+512},
      ISBN = {3-540-51861-4},
   MRCLASS = {58G07 (18F20 32C38 35A27)},
  MRNUMBER = {1074006},
MRREVIEWER = {Michael M. Kapranov},
       DOI = {10.1007/978-3-662-02661-8},
       URL = {https://doi.org/10.1007/978-3-662-02661-8},
}

@article {guillermou-kashiwara-schapira,
    AUTHOR = {Guillermou, St\'ephane and Kashiwara, Masaki and Schapira,
              Pierre},
     TITLE = {Sheaf quantization of {H}amiltonian isotopies and applications
              to nondisplaceability problems},
  JOURNAL = {Duke Mathematical Journal},
    VOLUME = {161},
      YEAR = {2012},
    NUMBER = {2},
     PAGES = {201--245},
      ISSN = {0012-7094},
   MRCLASS = {53D35 (18F30)},
  MRNUMBER = {2876930},
MRREVIEWER = {Corrado Marastoni},
       DOI = {10.1215/00127094-1507367},
       URL = {https://doi.org/10.1215/00127094-1507367},
}

@article {drinfelddgquotient,
    AUTHOR = {Drinfeld, Vladimir},
     TITLE = {D{G} quotients of {DG} categories},
   JOURNAL = {J. Algebra},
  FJOURNAL = {Journal of Algebra},
    VOLUME = {272},
      YEAR = {2004},
    NUMBER = {2},
     PAGES = {643--691},
      ISSN = {0021-8693},
     CODEN = {JALGA4},
   MRCLASS = {18E30},
  MRNUMBER = {2028075},
MRREVIEWER = {Peter J{\o}rgensen},
       DOI = {10.1016/j.jalgebra.2003.05.001},
       URL = {http://dx.doi.org/10.1016/j.jalgebra.2003.05.001},
}

@article {spaltenstein,
    AUTHOR = {Spaltenstein, Nicolas},
     TITLE = {Resolutions of unbounded complexes},
   JOURNAL = {Compositio Math.},
  FJOURNAL = {Compositio Mathematica},
    VOLUME = {65},
      YEAR = {1988},
    NUMBER = {2},
     PAGES = {121--154},
      ISSN = {0010-437X},
     CODEN = {CMPMAF},
   MRCLASS = {18E25 (32C35)},
  MRNUMBER = {932640},
MRREVIEWER = {Michael M. Kapranov},
       URL = {http://www.numdam.org/item?id=CM_1988__65_2_121_0},
}

@article {nadlermicrolocal,
    AUTHOR = {Nadler, David},
     TITLE = {Microlocal branes are constructible sheaves},
   JOURNAL = {Selecta Math. (N.S.)},
  FJOURNAL = {Selecta Mathematica. New Series},
    VOLUME = {15},
      YEAR = {2009},
    NUMBER = {4},
     PAGES = {563--619},
      ISSN = {1022-1824},
     CODEN = {SMATF6},
   MRCLASS = {53D37 (32S60 53D40 57R56)},
  MRNUMBER = {2565051 (2010m:53131)},
MRREVIEWER = {Richard P. Thomas},
       DOI = {10.1007/s00029-009-0008-0},
       URL = {http://dx.doi.org/10.1007/s00029-009-0008-0},
}

@article {nadlerzaslow,
    AUTHOR = {Nadler, David and Zaslow, Eric},
     TITLE = {Constructible sheaves and the {F}ukaya category},
  JOURNAL = {Journal of the American Mathematical Society},
    VOLUME = {22},
      YEAR = {2009},
    NUMBER = {1},
     PAGES = {233--286},
      ISSN = {0894-0347},
   MRCLASS = {53D37 (32S60 53D40)},
  MRNUMBER = {2449059 (2010a:53186)},
MRREVIEWER = {Richard P. Thomas},
       DOI = {10.1090/S0894-0347-08-00612-7},
       URL = {http://dx.doi.org/10.1090/S0894-0347-08-00612-7},
}

@article {FLTZ-Morelli,
    AUTHOR = {Fang, Bohan and Liu, Chiu-Chu Melissa and Treumann, David and
              Zaslow, Eric},
     TITLE = {A categorification of {M}orelli's theorem},
  JOURNAL = {Inventiones Mathematicae},
    VOLUME = {186},
      YEAR = {2011},
    NUMBER = {1},
     PAGES = {79--114},
      ISSN = {0020-9910},
   MRCLASS = {14F05 (14M25)},
  MRNUMBER = {2836052},
MRREVIEWER = {Pawel Sosna},
       DOI = {10.1007/s00222-011-0315-x},
       URL = {https://doi.org/10.1007/s00222-011-0315-x},
}

@article {toenmorita,
    AUTHOR = {To\"{e}n, Bertrand},
     TITLE = {The homotopy theory of {$dg$}-categories and derived {M}orita
              theory},
  JOURNAL = {Inventiones Mathematicae},
    VOLUME = {167},
      YEAR = {2007},
    NUMBER = {3},
     PAGES = {615--667},
      ISSN = {0020-9910},
   MRCLASS = {18D05 (18E30 18G55 19D55)},
  MRNUMBER = {2276263},
MRREVIEWER = {Mark Hovey},
       DOI = {10.1007/s00222-006-0025-y},
       URL = {https://doi.org/10.1007/s00222-006-0025-y},
}

@article{Floer-Witten,
  title={Witten's complex and infinite-dimensional {M}orse theory},
  author={Floer, Andreas},
  journal={Journal of Differential Geometry},
  volume={30},
  number={1},
  pages={207--221},
  year={1989},
  publisher={Lehigh University}
}

@article{gammage-webster-mcbreen,
  title={Homological mirror symmetry for hypertoric varieties, {II}},
  author={Gammage, Benjamin and McBreen, Michael and Webster, Ben},
  journal={Geometry \& Topology},
  volume={29},
  number={8},
  pages={3921--3993},
  year={2025},
  publisher={Mathematical Sciences Publishers}
}

@article{rouquier-categorification,
  title={Categorification of the braid groups},
  author={Rouquier, Raphael},
  journal={math/0409593},
  year={2004}
}

@article{braden,
  title={Hyperbolic localization of intersection cohomology},
  author={Braden, Tom},
  journal={Transformation groups},
  volume={8},
  number={3},
  pages={209--216},
  year={2003},
  publisher={Springer}
}

@article{goresky-macpherson-lefschetz,
  title={Local contribution to the {L}efschetz fixed point formula},
  author={Goresky, Mark and MacPherson, Robert},
  journal={Inventiones mathematicae},
  volume={111},
  number={1},
  pages={1--33},
  year={1993},
  publisher={Springer}
}

@inproceedings{benzvi-nadler,
  title={Betti geometric {L}anglands},
  author={Ben-Zvi, David and Nadler, David},
  booktitle={Proceedings of Symposia in Pure Mathematics},
  volume={97},
  pages={3--41},
  year={2018}
}

@inproceedings{tamarkin,
  title={Microlocal condition for non-displaceability},
  author={Tamarkin, Dmitry},
  booktitle={Algebraic and analytic microlocal analysis},
  pages={99--223},
  year={2013},
  organization={Springer}
}

@article{chiu,
  title={Nonsqueezing property of contact balls},
  author={Chiu, Sheng-Fu},
  journal={Duke Mathematical Journal},
  volume={166},
  number={4},
  pages={605--655},
  year={2017},
  publisher={Duke University Press}
}

@article{Laudenbach-TS,
  title={On the Thom-Smale complex},
  author={Laudenbach, Fran{\c{c}}ois},
  journal={Ast{\'e}risque},
  volume={205},
  pages={219--233},
  year={1992}
}

\end{document}